\numberwithin{equation}{subsection}
\newtheorem{lemma}{Lemma}[subsection]
\newtheorem{thm}[lemma]{Theorem}
\newtheorem{theorem}{Theorem}
\newtheorem{prop}[lemma]{Proposition}
\newtheorem{cor}[lemma]{Corollary}
\newtheorem{corollary}[lemma]{Corollary}
\newtheorem{lemdef}[lemma]{Lemma-Definition}
\theoremstyle{definition}
\newtheorem{defn}[lemma]{Definition}
\newtheorem{definition}[lemma]{Definition}
\newtheorem{Definition}{Definition}
\newtheorem{nota}[lemma]{Notation}
\theoremstyle{remark}
\newtheorem{rk}[lemma]{Remark}
\newtheorem{remark}[lemma]{Remark}
\newtheorem{remarks}[lemma]{Remarks}
\newtheorem{ex}[lemma]{Example}
\newcommand{\A}{\mathbf{A}}
\newcommand{\N}{\mathbb{N}}
\newcommand{\Z}{\mathbb{Z}}
\newcommand{\sA}{\mathcal{A}}
\newcommand{\sB}{\mathcal{B}}
\newcommand{\sC}{\mathcal{C}}
\newcommand{\sD}{\mathcal{D}}
\newcommand{\sE}{\mathcal{E}}
\newcommand{\sT}{\mathcal{T}}
\newcommand{\bH}{\mathbb{H}}
\newcommand{\bZ}{\mathbb{Z}}
\newcommand{\Mod}{\operatorname{Mod}\hbox{--}}
\newcommand{\Cor}{\operatorname{\mathbf{Cor}}}
\newcommand{\ulSigma}{\ul{\Sigma}}
\newcommand{\MEt}{\operatorname{\mathbf{MEt}}}
\newcommand{\OD}{\mathrm{OD}}
\newcommand{\Ext}{\operatorname{Ext}}
\newcommand{\ul}[1]{{\underline{#1}}}
\newcommand{\PST}{{\operatorname{\mathbf{PST}}}}
\newcommand{\PS}{{\operatorname{\mathbf{PS}}}}
\newcommand{\NS}{{\operatorname{\mathbf{NS}}}}
\newcommand{\NST}{\operatorname{\mathbf{NST}}}
\newcommand{\Hom}{\operatorname{Hom}}
\newcommand{\Ker}{\operatorname{Ker}}
\newcommand{\Coker}{\operatorname{Coker}}
\newcommand{\Sm}{\operatorname{\mathbf{Sm}}}
\newcommand{\Sch}{\operatorname{\mathbf{Sch}}}
\newcommand{\Ab}{\operatorname{\mathbf{Ab}}}
\newcommand{\by}{\xrightarrow}
\newcommand{\yb}{\xleftarrow}
\newcommand{\iso}{\by{\sim}}
\newcommand{\osi}{\yb{\sim}}
\newcommand{\tr}{{\operatorname{tr}}}
\newcommand{\fin}{{\operatorname{fin}}}
\renewcommand{\o}{{\operatorname{o}}}
\newcommand{\red}{{\operatorname{red}}}
\newcommand{\Nis}{{\operatorname{Nis}}}
\newcommand{\et}{{\operatorname{\acute{e}t}}}
\newcommand{\inj}{\hookrightarrow}
\newcommand{\id}{{\operatorname{Id}}}
\renewcommand{\lim}{\operatornamewithlimits{\varprojlim}}
\newcommand{\colim}{\operatornamewithlimits{\varinjlim}}
\newcommand{\hocolim}{\operatorname{hocolim}}
\newcommand{\ol}{\overline}
\renewcommand{\phi}{\varphi}
\renewcommand{\epsilon}{\varepsilon}
\newcommand{\MNS}{\operatorname{\mathbf{MNS}}}
\newcommand{\MNST}{\operatorname{\mathbf{MNST}}}
\newcommand{\MCor}{\operatorname{\mathbf{MCor}}}
\newcommand{\MP}{\operatorname{\mathbf{MSm}}}
\newcommand{\MSm}{\operatorname{\mathbf{MSm}}}
\newcommand{\MPS}{\operatorname{\mathbf{MPS}}}
\newcommand{\MPST}{\operatorname{\mathbf{MPST}}}
\newcommand{\Bl}{{\mathbf{Bl}}}
\newcommand{\Sq}{{\operatorname{\mathbf{Sq}}}}
\newcommand{\Mb}{{\overline{M}}}
\newcommand{\Nb}{{\overline{N}}}
\newcommand{\Lb}{{\overline{L}}}
\newcommand{\Zb}{{\overline{Z}}}
\newcommand{\ulMP}{\operatorname{\mathbf{\underline{M}Sm}}}
\newcommand{\ulMSm}{\operatorname{\mathbf{\underline{M}Sm}}}
\newcommand{\ulMNS}{\operatorname{\mathbf{\underline{M}NS}}}
\newcommand{\ulMPS}{\operatorname{\mathbf{\underline{M}PS}}}
\newcommand{\ulMPST}{\operatorname{\mathbf{\underline{M}PST}}}
\newcommand{\ulMNST}{\operatorname{\mathbf{\underline{M}NST}}}
\newcommand{\ulMCor}{\operatorname{\mathbf{\underline{M}Cor}}}
\newcommand{\ulomega}{\underline{\omega}}
\newcommand{\Comp}{\operatorname{\mathbf{Comp}}}
\newcommand{\CompM}{\Comp(M)}
\newcommand{\MV}{\operatorname{MV}}
\newcommand{\ulMV}{\operatorname{\underline{MV}}}
\newcommand{\ulMVfin}{\operatorname{\underline{MV}^{\mathrm{fin}}}}
\def\Zp{\Z^p}
\def\bZ{\mathbb{Z}}
\def\Ztr{\bZ_\tr}
\def\Ninf{N^\infty}
\def\Minf{M^\infty}
\def\Ztrtau#1{\bZ_\tr(#1)^\tau}
\def\Ztrfintau#1{\Ztr^{\fin}(#1)^{\tau}}
\def\ulMCortau{\ulMCor^\tau}
\def\Minf{M^\infty}
\def\Xlam{X_{\lambda}}
\def\Xmu{X_{\mu}}
\newcounter{spec}
\newenvironment{thlist}{\begin{list}{\rm{(\roman{spec})}}%
{\usecounter{spec}\labelwidth=20pt\itemindent=0pt\labelsep=10pt}}%
{\end{list}}%
\def\aNis{a_{\Nis}}
\def\ulaNis{\underline{a}_{\Nis}}
\def\ulasNis{\underline{a}_{s,\Nis}}
\def\ulaNisfin{\underline{a}^{\fin}_{\Nis}}
\def\asNis{a_{s,\Nis}}
\def\ulasNis{\underline{a}_{s,\Nis}}
\def\qaq{\quad\text{ and }\quad}
\def\Comp{\Comp^{\fin}}
\def\MSm{\operatorname{\mathbf{MSm}}}
\def\ulMSm{\operatorname{\mathbf{\ul{M}Sm}}}
\def\ulMPS{\operatorname{\mathbf{\ul{M}PS}}}
\def\ulMNS{\operatorname{\mathbf{\ul{M}NS}}}
\def\ulMNST{\operatorname{\mathbf{\ul{M}NST}}}
\def\MNS{\operatorname{\mathbf{MNS}}}
\def\MNST{\operatorname{\mathbf{MNST}}}
\def\Comp{\operatorname{\mathbf{Comp}}}
\def\uli{\ul{i}}
\title{Motives with modulus, II: \\ Modulus sheaves with transfers for proper modulus pairs}
\author{Bruno Kahn}
\address{IMJ-PRG, Case 247, 4 place Jussieu, 75252 Paris Cedex 05, France}
\email{bruno.kahn@imj-prg.fr}
\author{Hiroyasu Miyazaki}
\address{RIKEN iTHEMS, Wako, Saitama 351-0198, Japan}
\email{hiroyasu.miyazaki@riken.jp}
\author{Shuji Saito}
\address{Graduate School of Mathematical Sciences, University of Tokyo, 3-8-1 Komaba, Tokyo 153-8941, Japan}
\email{sshuji@msb.biglobe.ne.jp}
\author{Takao Yamazaki}
\address{Institute of Mathematics, Tohoku University, Aoba, Sendai 980-8578, Japan}
\email{ytakao@math.tohoku.ac.jp}
\begin{document}


\removeabove{0.5cm} \removebetween{0.5cm} \removebelow{0.6cm}

\maketitle

\begin{prelims}

\DisplayAbstractInEnglish

\bigskip

\DisplayKeyWords

\medskip

\DisplayMSCclass

\bigskip

\languagesection{Fran\c{c}ais}

\bigskip

\DisplayTitleInFrench

\medskip

\DisplayAbstractInFrench

\end{prelims}


\newpage

\setcounter{tocdepth}{1}

\tableofcontents


\section*{Introduction}\addcontentsline{toc}{section}{Introduction}

This is a sequel to \cite{kmsy}, where a theory of sheaves and
cohomology on the category $\ulMCor$ of non-proper modulus pairs has
been developed. This paper complements it by using work from
\cite{KaMi} and \cite{nistopmod} to develop a theory of sheaves and
cohomology on the category $\MCor$ of proper modulus pairs. This
completes the repairs to the mistake in \cite{motmod}. The basic aim
of both works is to lay a foundation for a theory of \emph{motives
with modulus}, to be completed in \cite{motmod2}, generalizing
Voevodsky's theory of motives in order to capture non
$\A^1$-invariant phenomena.

In \cite{kmsy}, Voevodsky's category $\Cor$ of finite
correspondences on smooth separated schemes of finite type over a
fixed base field $k$, was enlarged to the larger category of
(non-proper) \emph{modulus pairs}, $\ulMCor$. Objects are pairs
$M=(\ol{M}, \Minf)$ consisting of a separated $k$-scheme of finite
type $\ol{M}$ and an effective (possibly empty) Cartier divisor
$\Minf$ on it such that the complement $M^\circ:=\ol{M}\setminus
\Minf$ is in $\Sm$ (we call it the \emph{smooth interior}). The
group $\ulMCor(M,N)$ of morphisms is defined as the subgroup of
$\Cor(M^\circ,N^\circ)$ consisting of finite correspondences between
smooth interiors whose closures in $\ol{M} \times_k \ol{N}$ are
proper\footnote{Here we stress that we do not assume it is finite
over $\ol{M}$.} over $\ol{M}$ and satisfy certain admissibility
conditions with respect to $\Minf$ and $\Ninf$. Let $\MCor\subset
\ulMCor$ be the full subcategory consisting of objects
$(\ol{M},\Minf)$ with $\ol{M}$ proper over $k$.

We then define $\ulMPST$ (resp. $\MPST$) as the category of additive
presheaves of abelian groups on $\ulMCor$ (resp. $\MCor$). We have a
pair of adjunctions
\begin{equation}\label{eq1}
\MPST\begin{smallmatrix} \tau^*\\ \longleftarrow\\ \tau_!\\ \longrightarrow\\
\end{smallmatrix}\ulMPST,
\end{equation}
where $\tau^*$ is induced by the inclusion $\tau:\MCor\to \ulMCor$
and $\tau_!$ is its left Kan extension (see Lemma \ref{eq.tau}).

The main aim of \cite{kmsy} was to develop a \emph{sheaf theory} on
$\ulMCor$ generalizing Voevodsky's theory of sheaves on $\Cor$.

\begin{Definition}\label{defintro;sheavesulMCor}
We define $\ulMNST$ to be the full subcategory of $\ulMPST$ of 
objects $F$ such that $F_M$ is a Nisnevich sheaf on $\Mb$ for every
$M=(\Mb,\Minf)\in \ulMCor$, where $F_M$ is the presheaf on
$\Mb_\Nis$ which associates $F(U,\Minf\times_{\Mb} U)$ to an \'etale
map $U\to \Mb$.
\end{Definition}

\begin{Definition}\label{def;ulSigmafin}
Let $\ul{\Sigma}^\fin$ be the subcategory of $\ulMCor$ which have
the same objects as $\ulMCor$ and such that a morphism $f \in
\ulMCor(M, N)$ belongs to $\ul{\Sigma}^\fin$ if and only if $f^\o
\in \Cor(M^\o, N^\o)$ is the graph of an isomorphism $M^\o \simeq
N^\o$ in $\Sm$ that extends to a proper morphism $\ol{f} : \ol{M}
\to \ol{N}$ of $k$-schemes such that $\Minf=\ol{f}^*\Ninf$.
\end{Definition}

Now the main result of \cite{kmsy} is the following.

\begin{theorem}[\protect{\cite[Theorem~2]{kmsy}}]\label{thmintro;ulMNST}
The following assertions hold.
\begin{itemize}
\item[\rm (1)]
The inclusion $\ulMNST\to \ulMPST$ has an exact left adjoint
$\ulaNis$ such that
\begin{equation*}
(\ulaNis F)(M) =\colim_{N\in \ul{\Sigma}^\fin\downarrow M}
(F_N)_\Nis(N)
\end{equation*}
for every $F\in \ulMPST$ and $M \in \ulMCor$, where $(F_M)_\Nis$ is
the Nisnevich sheafification of the preshseaf $F_M$ on $\Mb_\Nis$.
In particular $\ulMNST$ is a Grothendieck abelian category.
\item[\rm (2)]
For $M\in \ulMCor$, let $\Ztr(M)=\ulMCor(-,M) \in \ulMPST$ be the
associated representable presheaf. Then we have $\Ztr(M)\in \ulMNST$
and there is a canonical isomorphism for any $i\ge 0$ and $F\in
\ulMNST$:
\[\Ext^i_{\ulMNST}(\Z_\tr(M),F)\simeq \colim_{N\in \ul{\Sigma}^{\fin}\downarrow M}
H_\Nis^i(\Nb,F_N).
\]
\end{itemize}
\end{theorem}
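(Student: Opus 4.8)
The plan is to realise $\ulaNis$ as the composite of two operations on $\ulMPST$: Nisnevich sheafification performed ``in the $\ol M$-direction'' (the classical Voevodsky step, applied one modulus pair at a time), followed by a filtered colimit over the category $\ulSigma^\fin\downarrow M$ of modifications of $M$ (the new ingredient, which is what forces the result to carry transfers). Explicitly one sets $a(F)(M)=\colim_{N\in\ulSigma^\fin\downarrow M}(F_N)_\Nis(N)$, the transition map attached to a morphism $N\to N'$ over $M$ being the pull-back $(F_{N'})_\Nis(N')\to(F_N)_\Nis(N)$, and the task is to check that $a(F)\in\ulMNST$, that $F\mapsto a(F)$ is left adjoint to the inclusion, and that it is exact.

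First I would record that $\ulSigma^\fin\downarrow M$ is cofiltered: two modifications $\ol N_1\to\ol M$ and $\ol N_2\to\ol M$ are dominated by the closure of the diagonal $M^\circ$ inside $\ol N_1\times_{\ol M}\ol N_2$ equipped with the pull-back of $M^\infty$ --- properness and ``identity on the smooth interior'' are visibly preserved, and the divisor identity $M^\infty=\ol f^*N^\infty$ propagates because every divisor in sight is pulled back from $M^\infty$ --- so the colimit above is filtered. Next, $a(F)$ is a presheaf on $\ulMCor$; the only non-formal point is functoriality along a correspondence $\alpha\in\ulMCor(M',M)$, obtained by ``resolving $\alpha$'' (take the closure of $\alpha$ in $\ol M'\times_k\ol M$, normalise, and use the resulting correspondence to carry a modification of $M$ back to a modification of $M'$ mapping to it), which is the push--pull bookkeeping already done in \cite{kmsy}. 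That $a(F)_M$ is a Nisnevich sheaf on $\ol M$ follows because $\ol M$ has finite Krull dimension, so that a filtered colimit of Nisnevich sheaves --- here the $(F_N)_\Nis$ pushed forward along the proper maps $\ol N\to\ol M$ --- is again a sheaf. The genuinely substantial point, which I expect to be the main obstacle, is that $a(F)$ inherits a transfer structure: filtered colimits of presheaves with transfers trivially have transfers, but naive Nisnevich sheafification does \emph{not} preserve transfers on modulus pairs, and it is exactly the interleaving with the modification-colimit that repairs this. This is the modulus analogue of Voevodsky's theorem that $a_\Nis$ preserves transfers, and it is what the analysis of the Nisnevich topology on $\ulMCor$ in \cite{KaMi} and \cite{nistopmod} --- its compatibility with transfers via a suitable cd-structure, and the role of modifications --- is designed to supply; I would quote those results rather than reprove them.

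Granting $a(F)\in\ulMNST$, the adjunction $\Hom_{\ulMPST}(F,G)\cong\Hom_{\ulMNST}(a(F),G)$ for $G\in\ulMNST$ is checked objectwise: a morphism $F\to G$ factors uniquely through each $(F_N)_\Nis$ since $G_N$ is a sheaf, hence through the colimit; what remains is that the unit $F\to a(F)$ is an isomorphism when $F$ already lies in $\ulMNST$, i.e.\ that the modification-colimit is constant on objects of $\ulMNST$ --- which again rests on the sheaf theory of \cite{kmsy} (concretely, a Nisnevich sheaf with transfers takes the same value on $M$ and on any modification of $M$, as one checks directly for the representables $\Ztr(M')$ by comparing the relevant closures and their normalisations). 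Exactness of $\ulaNis$ is then immediate from the description ``filtered colimit $\circ$ Nisnevich sheafification'', both being exact; and $\ulMNST$ is Grothendieck abelian because $\bigoplus_M\Ztr(M)$ is a generator and, colimits being computed by applying $\ulaNis$ to the presheaf colimit, filtered colimits are exact.

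For part (2): $\Ztr(M)=\ulMCor(-,M)$ lies in $\ulMNST$ because, for fixed $N$, the presheaf $U\mapsto\ulMCor((U,N^\infty|_U),M)$ on $\ol N_\Nis$ is a Nisnevich sheaf --- it is the subgroup of Voevodsky's (\'etale, hence Nisnevich) sheaf $\Cor(-,M^\circ)$ cut out by properness over $\ol N$ and by admissibility on the normalised closure of the correspondence, both Nisnevich-local on $U$. The Ext computation then proceeds: pick an injective resolution $F\to I^\bullet$ in $\ulMNST$ (possible by part (1)); by Yoneda and the formula of part (1), $\Hom_{\ulMNST}(\Ztr(M),-)=\mathrm{ev}_M=\colim_{N\in\ulSigma^\fin\downarrow M}\Gamma(\ol N,(-)_N)$, and since (i) the restriction $(-)_N\colon\ulMNST\to\mathrm{Shv}_\Nis(\ol N)$ is exact with an exact left adjoint (a $\Ztr$-construction relative to $N$, again from \cite{kmsy}), hence sends injectives to $\Gamma(\ol N,-)$-acyclics, and (ii) filtered colimits of complexes are exact, one obtains $\Ext^i_{\ulMNST}(\Ztr(M),F)=H^i(I^\bullet(M))=\colim_{N}H^i\big(I^\bullet_N(\ol N)\big)=\colim_{N}H^i_\Nis(\ol N,F_N)$, which is the assertion. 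Apart from this, the only ingredients used beyond formalities are the ones quoted from \cite{kmsy}, \cite{KaMi} and \cite{nistopmod}.
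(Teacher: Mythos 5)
This theorem is quoted from \cite[Theorem~2]{kmsy}; the present paper does not reprove it, but recalls the constituent pieces in \S2, namely Theorem \ref{thm:sheaf-transfer} (sheafification preserves transfers in the $\fin$ setting, via the $\ulMVfin$ cd-structure), Proposition \ref{lem;b!ulMNST} ($\ul{b}_!$ preserves the sheaf property), and Theorem \ref{thm:sheafification-ulMNST} (the factorisation $\ulaNis=\ul{b}_\Nis\,\ulaNisfin\,\ul{b}^*$). Your decomposition ``filtered colimit over $\ul{\Sigma}^\fin\downarrow M$ $\circ$ Nisnevich sheafification in the $\ol M$-direction'' is exactly this factorisation, so your outline is structurally correct, and the Ext computation in part (2) is also sound in spirit.

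There are, however, two substantive gaps. First, you assert that $a(F)_M$ is a Nisnevich sheaf on $\ol M_\Nis$ because ``a filtered colimit of Nisnevich sheaves $\dots$ is again a sheaf.'' But $a(F)_M(U)=\colim_{L\in\ul{\Sigma}^\fin\downarrow(U,M^\infty|_U)}(F_L)_\Nis(\ol L)$, and the indexing category here varies with $U$; to write this as a filtered colimit of sheaves on the fixed site $\ol M_\Nis$, one needs the cofinality statement that base change $N\mapsto(\ol N\times_{\ol M}U,\dots)$ from $\ul{\Sigma}^\fin\downarrow M$ is cofinal in $\ul{\Sigma}^\fin\downarrow(U,M^\infty|_U)$ for every \'etale $U\to\ol M$. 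This is the actual content of \cite[Proposition~4.5.4]{kmsy} (recalled here as Proposition \ref{lem;b!ulMNST}), and the reference proves it using completeness and regularity of the $\ulMVfin$ cd-structure rather than by a bare cofinality argument; you neither state nor prove this, and it is not obviously trivial. Second, you attribute the transfer-preservation step (the modulus analogue of Voevodsky's theorem) to \cite{KaMi} and \cite{nistopmod}. Those papers concern the cd-structure on $\MSm$ for \emph{proper} modulus pairs and the continuity of $\tau_s$; the relevant statement for the present theorem is \cite[Theorem~3.5.3]{kmsy} (recalled as Theorem \ref{thm:sheaf-transfer}), which is proved there by an explicit \v Cech-complex exactness argument in $\ulMPST^\fin$ (the analogue of the $\tau$-free version of Theorem \ref{thm1;MV} in this paper). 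Correcting the attribution matters, because the proper cd-structure results genuinely do not suffice to establish the non-proper transfer statement.
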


The aim of the present paper is to introduce a sheaf theory on
$\MCor$.

\begin{Definition}\label{defintro;sheavesMCor}
We define $\MNST$ to be the full subcategory of $\MPST$ of 
objects $F$ such that $\tau_!F \in \ulMNST$.
\end{Definition}

Note that by definition, $\tau_!:\MPST \to \ulMPST$ induces a
functor
\[\tau_\Nis:\MNST \to \ulMNST.\]
Now the main result of this paper is the following.

\begin{theorem}[see Lemmas \ref{l4.2} and \ref{lem:tau!-exact}, Theorems \ref{thm:a-nis-final}, \ref{thm;tauexact} and \ref{thm:coh-MNST}]\label{thmintro;MNST}\
\begin{itemize}
\item[\rm (1)]
We have $\tau^*(\ulMNST)\subset \MNST$. Letting
\[ \tau^\Nis:\ulMNST \to \MNST\]
be the induced functor, the pair of adjoint functors $(*)$ induces a
pair of adjoint functors
\begin{equation*}\label{adjunction;tau-omegaNis}
\MNST\begin{smallmatrix} \tau^\Nis\\ \longleftarrow\\ \tau_\Nis\\
\longrightarrow
\end{smallmatrix}\ulMNST,\\
\end{equation*}
where $\tau_\Nis$ is exact and fully faithful. The functor
$\tau^\Nis$ is also exact.
\item[\rm (2)]
The inclusion $\MNST\to \MPST$ has an exact left adjoint $\aNis$
such that $\ulaNis\tau_! = \tau_\Nis \aNis$. In particular, $\MNST$
is a Grothendieck abelian category.
\item[\rm (3)]
For $M\in \MCor$, let $\Ztr(M)=\MCor(-,M) \in \MPST$ be the
associated representable presheaf. Then we have $\Ztr(M)\in \MNST$
and there is a canonical isomorphism for any $i\ge 0$ and $F\in
\MNST$:
\[\Ext^i_{\MNST}(\Z_\tr(M),F)\simeq \colim_{N\in \Sigma^{\fin}\downarrow M}
H_\Nis^i(\Nb,(\tau_\Nis F)_N),
\]
where $\Sigma^\fin := \ul{\Sigma}^\fin \cap \MCor$.
\end{itemize}
\end{theorem}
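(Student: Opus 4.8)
The plan is to deduce each part of Theorem~\ref{thmintro;MNST} from the corresponding part of Theorem~\ref{thmintro;ulMNST} by transporting structure along the adjunction $(\tau_!,\tau^*)$ of \eqref{eq1}, using the description of $\MNST$ via $\tau_!$ in Definition~\ref{defintro;sheavesMCor}. The first preliminary I would establish is that $\tau_!$ is exact (Lemma~\ref{lem:tau!-exact}), which requires understanding $\tau_!$ concretely; since $\tau:\MCor\hookrightarrow\ulMCor$ is a fully faithful inclusion of additive categories, $\tau_!$ is given by a left Kan extension that on a representable $\Ztr(M)$ returns $\Ztr(\tau M)$, and I expect exactness to follow once one checks $\tau_!$ preserves monomorphisms between representables, or more directly from a pro-description of $\tau_!$ analogous to \cite{kmsy}. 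Granting exactness of $\tau_!$, the fact that $\tau^*(\ulMNST)\subset\MNST$ (part (1), first assertion) reduces to the identity $\tau_!\tau^* \cong \id$ on $\ulMNST$, or at least to $\tau_!\tau^* F\in\ulMNST$ for $F\in\ulMNST$; this in turn comes from the counit $\tau_!\tau^*\to\id$ being an isomorphism on the relevant objects, which should hold because $\tau$ is fully faithful and the Nisnevich-local structure is detected on proper modulus pairs. The exactness of $\tau^\Nis$ then follows from exactness of $\tau^*$ (immediate, being a restriction of presheaves) together with the fact that sheafification is exact; and fully faithfulness plus exactness of $\tau_\Nis$ is inherited from $\tau_!$ once one knows $\tau_!$ lands in $\ulMNST$ when restricted to $\MNST$, which is Definition~\ref{defintro;sheavesMCor} itself, plus fully faithfulness of $\tau_!$ (again from $\tau$ fully faithful).

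For part (2), the construction of the left adjoint $\aNis$ to $\MNST\hookrightarrow\MPST$ I would obtain by the formula $\aNis := \tau^\Nis\circ\ulaNis\circ\tau_!$, i.e.\ push a presheaf on $\MCor$ up to $\ulMPST$, sheafify there using $\ulaNis$ from Theorem~\ref{thmintro;ulMNST}(1), and pull back via $\tau^\Nis$. One then checks: (a) this functor takes values in $\MNST$ — because $\tau^\Nis$ does, by part~(1); (b) it is left adjoint to the inclusion — by composing the three adjunctions $(\tau_!,\tau^*)$, $(\ulaNis,\text{incl})$, $(\tau^\Nis,\tau_\Nis)$, using that the inclusion $\MNST\hookrightarrow\MPST$ factors as $\tau^*\circ(\text{incl}_{\ulMNST})\circ\tau_\Nis$ up to the natural comparison; (c) exactness — $\ulaNis$ is exact by Theorem~\ref{thmintro;ulMNST}(1), $\tau_!$ is exact by Lemma~\ref{lem:tau!-exact}, and $\tau^\Nis$ is exact by part~(1). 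The identity $\ulaNis\tau_! = \tau_\Nis\aNis$ is then essentially built into the definition of $\aNis$, via $\tau_\Nis\tau^\Nis\ulaNis\tau_!\cong\ulaNis\tau_!$ (the latter because $\ulaNis\tau_! F$ is already a sheaf and $\tau_\Nis\tau^\Nis$ is the identity on $\ulMNST$). Grothendieck-abelianness of $\MNST$ is then formal: it is a reflective subcategory of the Grothendieck abelian category $\MPST$ with exact reflector, hence itself Grothendieck abelian.

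For part (3), I would first show $\Ztr(M)\in\MNST$ for $M\in\MCor$: by Definition~\ref{defintro;sheavesMCor} this means $\tau_!\Ztr(M)\in\ulMNST$, and since $\tau_!$ sends the representable $\MCor(-,M)$ to the representable $\ulMCor(-,\tau M)=\Ztr(\tau M)$ (left Kan extension of a representable along a full embedding), this is exactly Theorem~\ref{thmintro;ulMNST}(2) applied to $\tau M$. For the Ext-computation, I would use the adjunction to write, for $F\in\MNST$,
\[
\Ext^i_{\MNST}(\Ztr(M),F)\cong \Ext^i_{\ulMNST}(\tau_!\Ztr(M),\tau_\Nis F)\cong \Ext^i_{\ulMNST}(\Ztr(\tau M),\tau_\Nis F),
\]
where the first isomorphism uses that $\tau_\Nis$ is exact and fully faithful with exact left adjoint (so it identifies Ext-groups, being a Serre-type embedding, or more carefully: $\tau^\Nis$ is exact, preserves the relevant projective-type generators, hence derives compatibly), and the second is the computation of $\tau_!$ on representables. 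Now apply Theorem~\ref{thmintro;ulMNST}(2) to the right-hand side to get $\colim_{N\in\ulSigma^\fin\downarrow\tau M} H^i_\Nis(\Nb,(\tau_\Nis F)_N)$, and finally I would show this colimit over $\ulSigma^\fin\downarrow\tau M$ may be replaced by the colimit over the subcategory $\Sigma^\fin\downarrow M = (\ulSigma^\fin\cap\MCor)\downarrow M$, using a cofinality argument: every object $N\to\tau M$ of $\ulSigma^\fin\downarrow\tau M$ admits a map to one coming from $\MCor$ — concretely, one compactifies $\Nb$ relative to the proper $\ol{\tau M}$, i.e.\ takes the closure/normalization inside a suitable proper model, which is available precisely because $\ol{M}$ is proper.

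The main obstacle I expect is this final cofinality step together with the faithful comparison of Ext-groups through $\tau_\Nis$: one must argue that the category $\Sigma^\fin\downarrow M$ is cofinal (or at least colimit-cofinal) in $\ulSigma^\fin\downarrow\tau M$, which needs a compactification statement for modulus pairs over a proper base that is compatible with the Nisnevich-local data $(\tau_\Nis F)_N$ — this is exactly where the hypotheses from \cite{KaMi} and \cite{nistopmod} on proper modulus pairs enter, and verifying that the sheaf values are unchanged under the compactification (so the colimits genuinely agree) is the delicate point. The rest — exactness and adjunction bookkeeping — is formal category theory once $\tau_!$ is known to be exact, and the exactness of $\tau_!$ itself is the other place where real input is needed, presumably a direct analysis of $\tau_!$ on generators or a pro-object description.
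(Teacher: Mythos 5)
Your outline follows the right high-level strategy (transport along $(\tau_!,\tau^*)$), but it leaves two genuine gaps that correspond exactly to the hard content of the paper, while worrying about a step that is actually vacuous. The central gap is the exactness of $\tau^\Nis$. Your claim that this "follows from exactness of $\tau^*$ together with the fact that sheafification is exact" does not work: from $i_\Nis\tau^\Nis = \tau^*\ul{i}_\Nis$ one gets only left exactness, since $\ul{i}_\Nis$ is merely left exact; writing $\tau^\Nis = \aNis\tau^*\ul{i}_\Nis$ does not help because $\ul{i}_\Nis$ still sits in the middle, and the exactness of $\aNis$ does not cancel its failure to preserve cokernels. This is why the paper isolates exactness of $\tau^\Nis$ as Theorem~\ref{thm;tauexact} and devotes a full section to it: one needs the generation Lemma~\ref{lem;uaNisvanish}, the $\tau$-construction $\Ztrtau{M}$ and Lemmas~\ref{lem1;MV}--\ref{lem1.1;MV}, and above all the exactness of a \v Cech complex (Theorem~\ref{thm1;MV}), proved by a henselization argument together with the proper-closure condition $(\clubsuit)_\lambda$. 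None of that is formal, and none of it appears in your sketch.

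The second gap is in part (2). Your justification of $\ulaNis\tau_! = \tau_\Nis\aNis$ rests on the assertion that ``$\tau_\Nis\tau^\Nis$ is the identity on $\ulMNST$,'' which is false: full faithfulness of $\tau_\Nis$ gives $\tau^\Nis\tau_\Nis\cong\id$, but $\tau_\Nis$ is not essentially surjective, so $\tau_\Nis\tau^\Nis\not\cong\id$. With your definition $\aNis := \tau^\Nis\ulaNis\tau_!$, the desired identity would require $\ulaNis\tau_! F$ to lie in the essential image of $\tau_\Nis$ for all $F\in\MPST$, which is precisely the nontrivial second identity of \eqref{eq:Anis-c-tau}; the paper obtains it not from the $\tau$-adjunction alone but by the patching Lemma~\ref{lem:patching2} ($\MNST\simeq\MNS\times_{\ulMNS}\ulMNST$), gluing $\asNis$ and $\ulaNis$ over $\ulasNis$, and then deduces exactness of $\aNis$ from the $c^\Nis$-identity in \eqref{eq:Anis-c-tau} (faithful exactness of $c^\Nis$), not from exactness of $\tau^\Nis$. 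This sidesteps a real circularity in your plan: the paper's proof that $\tau^\Nis$ is exact uses properties of $\aNis$, so you cannot also deduce exactness of $\aNis$ from exactness of $\tau^\Nis$. In the same vein, your proof that $\tau^*(\ulMNST)\subset\MNST$ via the counit $\tau_!\tau^*\to\id$ being an isomorphism is not available (the counit is not an isomorphism); the paper instead uses continuity of $\tau_s$ from \cite{KaMi} to get $\tau_s^*(\ulMNS)\subset\MNS$. Finally, the ``cofinality step'' you single out as the delicate point is actually a non-issue: if $M\in\MCor$ and $N\to M$ is in $\ulSigma^\fin$, then $\Nb\to\Mb$ is proper with $\Mb$ proper, so $\Nb$ is proper and $N\in\MCor$ automatically. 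Hence $\ulSigma^\fin\downarrow M=\Sigma^\fin\downarrow M$ already, and no cofinality argument is needed.
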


Finally we explain relations between cohomologies for $\MNST$ and
$\NST$. We denote by $\PST$ (resp. $\NST$) Voevodsky's category of
presheaves (resp. Nisnevich sheaves) with transfers. The functor given by
$\omega : \MCor \to \Cor,~ \omega(\ol{M}, M^\infty)=\ol{M} \setminus
|M^\infty|$ induces a pair of adjunctions
\begin{equation*}
\MPST\begin{smallmatrix} \omega^*\\ \longleftarrow\\ \omega_!\\ \longrightarrow\\
\end{smallmatrix}\PST,
\end{equation*}
in a similar way as \eqref{eq1}. (See \S \ref{sect:NST} for
details.)

\begin{theorem}[see Proposition \ref{prop:MNST-NST} and Theorem \ref{prop:MNST-NST-coh}]\label{thm;MNSTNST}
The following assertions hold.
\begin{itemize}
\item[\rm (1)]
We have
\[\omega_!(\MNST)\subset \NST \qaq \omega^*(\NST)\subset \MNST.\]
The functors $\omega_!$ and $\omega^*$ induce a pair of adjoint
functors
\begin{equation*}
\MNST
\begin{smallmatrix} \omega^\Nis\\ \longleftarrow\\ \omega_\Nis\\ \longrightarrow\\
\end{smallmatrix}\NST,
\end{equation*}
such that
\begin{align*}
& \omega_\Nis a_\Nis = a_\Nis^V \omega_!, \quad \omega^\Nis a_\Nis^V
= a_\Nis \omega^*,
\end{align*}
where $a_\Nis^V:\PST\to \NST$ is Voevodsky's sheafification functor.
Moreover, $\omega^\Nis$ and $\omega_\Nis$ are exact and
$\omega^\Nis$ is fully faithful.
\item[\rm (2)]
For any $F\in \MNST$ and $X\in \Sm$ and $i\geq 0$, we have a
canonical isomorphism
\[ H_\Nis^i(X,\omega_\Nis F) \simeq \colim_{M\in \MSm(X)} H_\Nis^i(\Mb,F_M)\]
where $\MSm(X)=\{M\in \MCor\mid M^\o=X\}$ is viewed as a cofiltered
ordered set \cite[Lemma 1.7.4]{kmsy}.
\end{itemize}
\end{theorem}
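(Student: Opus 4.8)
The plan is to exploit the factorisation $\omega=\ulomega\circ\tau$, where $\tau\colon\MCor\to\ulMCor$ is the inclusion and $\ulomega\colon\ulMCor\to\Cor$ sends $(\ol M,M^\infty)$ to $\ol M\setminus|M^\infty|$; this gives $\omega_!=\ulomega_!\,\tau_!$ and $\omega^*=\tau^*\,\ulomega^*$, and it lets one bootstrap everything from two inputs: Theorem~\ref{thmintro;MNST} (the comparison between $\MNST$ and $\ulMNST$), and the analogue of the present statement for $\ulomega$, established in \cite{kmsy} (or provable by the same methods). From the latter I would take: $\ulomega_!(\ulMNST)\subset\NST$, $\ulomega^*(\NST)\subset\ulMNST$; the induced adjunction $\ulomega_\Nis\dashv\ulomega^\Nis$ with both functors exact; the identities $\ulomega_\Nis\ulaNis=a_\Nis^V\ulomega_!$ and $\ulomega^\Nis a_\Nis^V=\ulaNis\ulomega^*$; the formula $H^i_\Nis(X,\ulomega_\Nis G)\simeq\colim_{N\in\ulMSm(X)}H^i_\Nis(\ol N,G_N)$ for $G\in\ulMNST$; and the colimit description $\omega_!P(X)\simeq\colim_{M\in\MSm(X)}P(M)$ for $P\in\MPST$ (the comma category computing the left Kan extension being cofinal over $\MSm(X)$, cf.\ \cite[Lemma~1.7.4]{kmsy}).

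For part~(1), I would first observe that the two inclusions are immediate: $\tau_!(\MNST)\subset\ulMNST$ by Definition~\ref{defintro;sheavesMCor}, hence $\omega_!(\MNST)=\ulomega_!\tau_!(\MNST)\subset\ulomega_!(\ulMNST)\subset\NST$; and $\omega^*(\NST)=\tau^*\ulomega^*(\NST)\subset\tau^*(\ulMNST)\subset\MNST$ by Theorem~\ref{thmintro;MNST}(1). One then sets $\omega_\Nis:=\ulomega_\Nis\circ\tau_\Nis$ and $\omega^\Nis:=\tau^\Nis\circ\ulomega^\Nis$ and checks, using that $\tau_\Nis$ (resp.\ $\tau^\Nis$) has underlying presheaf-level functor $\tau_!$ (resp.\ $\tau^*$), that these are the (co)restrictions of $\omega_!$ and $\omega^*$. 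The adjunction $\omega_\Nis\dashv\omega^\Nis$ is the composite of $\tau_\Nis\dashv\tau^\Nis$ and $\ulomega_\Nis\dashv\ulomega^\Nis$, and exactness of all four functors is exactness of a composite, by Theorem~\ref{thmintro;MNST}(1) and the input above. The sheafification identities are obtained by chaining:
\[
\omega_\Nis a_\Nis=\ulomega_\Nis\tau_\Nis a_\Nis=\ulomega_\Nis\ulaNis\tau_!=a_\Nis^V\ulomega_!\tau_!=a_\Nis^V\omega_!
\]
(using $\tau_\Nis a_\Nis=\ulaNis\tau_!$ from Theorem~\ref{thmintro;MNST}(2)), and dually $\omega^\Nis a_\Nis^V=\tau^\Nis\ulomega^\Nis a_\Nis^V=\tau^\Nis\ulaNis\ulomega^*$, so it remains to identify $\tau^\Nis\ulaNis$ with $a_\Nis\tau^*$; this follows from the observation that $\ul{\Sigma}^\fin\downarrow M=\Sigma^\fin\downarrow M$ for $M\in\MCor$ (a proper morphism onto a $k$-proper scheme has $k$-proper source), which lets one evaluate both $\tau^\Nis\ulaNis H$ and $a_\Nis\tau^*H$ at every $M\in\MCor$ by the same colimit over $\Sigma^\fin\downarrow M$ coming from Theorem~\ref{thmintro;ulMNST}(1).

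Still in part~(1), full faithfulness of $\omega^\Nis$ reduces (it being the restriction of $\omega^*$, with $\NST\subset\PST$ and $\MNST\subset\MPST$ full) to full faithfulness of $\omega^*\colon\PST\to\MPST$, i.e.\ to the counit $\omega_!\omega^*\to\id$ being invertible; evaluating at $X$ with the colimit formula, $\omega_!\omega^*P(X)=\colim_{M\in\MSm(X)}\omega^*P(M)=\colim_{M\in\MSm(X)}P(X)$ is a colimit of a constant diagram (all transition maps equal $P(\id_X)=\id$) over the connected category $\MSm(X)$, so equals $P(X)$, and one checks this matches the counit and is natural in $X\in\Cor$. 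For part~(2), I would apply the $\ulomega$-cohomology formula to $G=\tau_\Nis F\in\ulMNST$, giving $H^i_\Nis(X,\omega_\Nis F)=H^i_\Nis(X,\ulomega_\Nis\tau_\Nis F)\simeq\colim_{N\in\ulMSm(X)}H^i_\Nis(\ol N,(\tau_\Nis F)_N)$; since $\MSm(X)$ is coinitial in $\ulMSm(X)$ — given $N$, compactify $\ol N$ over $k$ and blow up so that the boundary together with the closure of $N^\infty$ becomes the support of a Cartier divisor, obtaining $M\in\MSm(X)$ with a morphism $M\to N$ in $\ul{\Sigma}^\fin$, and $\MSm(X)$ is cofiltered \cite[Lemma~1.7.4]{kmsy} — the colimit may be taken over $M\in\MSm(X)$, where $(\tau_\Nis F)_M$ is by construction the sheaf $F_M$; this yields the asserted isomorphism.

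The main obstacle is not formal but lies in the inputs imported from \cite{kmsy} (and in the proof of Theorem~\ref{thmintro;MNST}): the fact that $\omega_!$ on presheaves is computed by a colimit cofinally indexed by $\MSm(X)$, the dual coinitiality of $\MSm(X)$ in $\ulMSm(X)$ — both resting on Nagata compactification of modulus pairs and the creation of Cartier boundaries by blowing up — and the compatibility of $\tau^\Nis$ with Nisnevich sheafification needed for $\omega^\Nis a_\Nis^V=a_\Nis\omega^*$. Once these are granted, the remainder is a routine assembly of adjunctions, exactness and the two factorisations.
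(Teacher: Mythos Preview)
Your strategy of factoring $\omega=\ulomega\tau$ and bootstrapping from the $\ulomega$-analogue together with Theorem~\ref{thmintro;MNST} is different from the paper's route, which instead passes through the categories without transfers via $c:\MSm\to\MCor$: it first establishes the $\omega_s$-version for $\MNS/\NS$ (Proposition~\ref{prop:MNS-NS}) using continuity and cocontinuity of $\omega_s$, then lifts to $\MNST/\NST$ by checking compatibility with $c^\Nis$ and reducing the base-change identity $\omega^\Nis a_\Nis^V=a_\Nis\omega^*$ to its non-transfer counterpart via conservativity of $c^\Nis$. For part~(2) the paper works directly with $\omega_\Nis$, proves that it sends injectives to flasque sheaves (Lemma~\ref{lem:inj-flasque-NST}), and then computes via Theorem~\ref{thm:coh-MNST}. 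Your approach is arguably cleaner, but note that the $\ulomega$-versions you invoke are proved in this paper (Proposition~\ref{prop:MNST-NST}\,d)), not in \cite{kmsy}, and the cohomology formula $H^i_\Nis(X,\ulomega_\Nis G)\simeq\colim_{N\in\ulMSm(X)}H^i_\Nis(\ol N,G_N)$ you use for part~(2) is not stated anywhere; it is plausibly provable by the same method, but you should say so rather than cite it as given.

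There is a genuine gap in your justification of $\tau^\Nis\ulaNis=a_\Nis\tau^*$. You claim it follows from $\ul{\Sigma}^\fin\downarrow M=\Sigma^\fin\downarrow M$ together with the colimit formula of Theorem~\ref{thmintro;ulMNST}(1), but that formula describes $\ulaNis$, not $a_\Nis$. Unwinding via $a_\Nis=\tau^\Nis\ulaNis\tau_!$, one finds $(a_\Nis\tau^*H)(M)=(\ulaNis\tau_!\tau^*H)(M)$, and the presheaf $(\tau_!\tau^*H)_N$ on $\ol N_\Nis$ involves, at each \'etale $U\to\ol N$, a colimit over $\Comp(U,\ldots)$ rather than the value $H(U,\ldots)$; your $\Sigma^\fin$-observation does not collapse this. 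The identity you need is exactly Corollary~\ref{tex1}\,(2), whose proof rests on the exactness of the \v{C}ech complex for $\Z_\tr(-)^\tau$ (Theorem~\ref{thm1;MV}); this is the substantive content behind the exactness of $\tau^\Nis$ and cannot be shortcut. You implicitly concede this in your final paragraph, but the in-text derivation is incorrect as written.
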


A key ingredient of the proofs is Theorem \ref{cd-str-MSm}, which is
based on the works \cite{nistopmod} and \cite{KaMi}.

\subsection*{Acknowledgements}
Part of this work was done while the first author was visiting RIKEN
iTHEMS under the invitation of the second author: the first and the
second authors wish to thank both for their hospitality and
excellent working conditions. Part of this work was done while the
third author stayed at the university of Regensburg supported by the
SFB grant ``Higher Invariants". The third author is grateful to the
support and hospitality received there. We also thank the referee
for a thorough reading.

The first author thanks Joseph Ayoub for explaining him an easy but
crucial result on unbounded derived categories (Lemma \ref{layoub}).

\subsection*{Notation and conventions}
In the whole paper we fix a base field $k$. Let $\Sch$ be the
category of separated schemes of finite type over $k$, and let $\Sm$
be its full subcategory of smooth schemes. We write $\Cor$ for
Voevodsky's category of finite correspondences \cite{voetri}.

An additive functor between additive categories is called
\emph{strongly additive} if it commutes with all representable
direct sums. A Grothendieck topology is called \emph{subcanonical}
if every representable presheaf is a sheaf.

Let $\sC$ and $\sD$ be sites, and $u : \sC \to \sD$ a functor. We
say that $u$ is \emph{continuous} (resp. \emph{cocontinuous}) if the
functor $u^*:\hat\sD\to \hat\sC$ (resp. $u_*:\hat\sC\to \hat \sD$)
between categories of presheaves carries sheaves to sheaves. (\emph{cf.}
\cite[expos\'e~III]{SGA4} and \cite[\S A.1]{KaMi}.)

\section{Review of presheaf theory on modulus pairs}\label{s1}

\subsection{Categories of modulus pairs}

A \emph{modulus pair} $M$ consists of $\ol{M} \in \Sch$ and an
effective Cartier divisor $M^\infty \subset \ol{M}$ such that the
open subset $M^\o:=\ol{M} - |M^\infty|$ is smooth over $k$. (The
case $|M^\infty|=\emptyset$ is allowed.) We say that $M$ is
\emph{proper} if $\Mb$ is. Note  that $\ol{M}$ is automatically
reduced, and $M^\o$ is dense in $\ol{M}$ \cite[Remark~1.1.2
(3)]{kmsy}.

Let $M_1, M_2$ be  modulus pairs. Let $Z \in \Cor(M_1^\o, M_2^\o)$
be an elementary (\emph{i.\,e.} integral) finite correspondence in the sense of
Voevodsky \cite{voetri}. We write $\ol{Z}^N$ for the normalization
of the closure $\ol{Z}$ of $Z$ in $\ol{M}_1 \times \ol{M}_2$ and
$p_i : \ol{Z}^N \to \ol{M}_i$ for the canonical morphisms for $i=1,
2$. We say $Z$ is \emph{admissible} (resp. \emph{left-proper}) for
$(M_1, M_2)$ if $p_1^* M_1^\infty \geq p_2^* M_2^\infty$ (resp.
$\ol{Z}$ is proper over $\ol{M}_1$).

By \cite[Proposition~1.2.3 and 1.2.6]{kmsy}, modulus pairs and left proper
admissible correspondences define an additive category that we
denote by $\ulMCor$. We write $\MCor$ for the full subcategory of
$\ulMCor$ whose objects  are proper modulus pairs.

We write $\ulMP$ for the category with same objects as $\ulMCor$, a
morphism of $\ulMP(M_1,M_2)$ being a (scheme-theoretic) $k$-morphism
$f^\o:M_1^\o\to M_2^\o$ whose graph belongs to $\ulMCor(M_1,M_2)$.
We write $\MP$ for the full subcategory of $\ulMP$ whose objects are
proper modulus pairs.

We write $\ulMCor^\fin$ for the subcategory of $\ulMCor$ with the
same objects and the following condition on morphisms: $\alpha\in
\ulMCor(M,N)$ belongs to $\ulMCor^\fin(M,N)$ if and only if, for any
component $Z$ of $\alpha$, the projection $\Zb\to \Mb$ is
\emph{finite}, where $\ol{Z}$ is the closure of $Z$ in $\ol{M}
\times \ol{N}$. We write $\MCor^\fin$ for the full subcategory of
$\ulMCor$ whose objects  are proper modulus pairs.

We write $\ulMP^\fin$ for the subcategory of $\ulMSm$  with the same
objects and such that a morphism $f:M\to N$ belongs to $\ulMP^\fin$
if and only if $f^\o : M^\o  \to N^\o$ extends to a $k$-morphism
$\ol{f} : \ol{M}\to \ol{N}$. We write $\MSm^\fin$ for the full
subcategory of $\ulMSm$ whose objects  are proper modulus pairs. A
morphism $f : M \to N$ in $\ulMSm^\fin$ is \emph{minimal} if we have
$f^\ast N^\infty = M^\infty$.

\begin{remarks}\label{rk-graph-trick}\
\begin{enumerate}
\item
For $M \in \ulMSm^\fin$, set $M^N:=(\ol{M}^N, M^\infty
|_{\ol{M}^N})$ where $p:\ol{M}^N \to \ol{M}$ is the normalization
and $M^\infty |_{\ol{M}^N}$ is the pull-back of $M^\infty$ to
$\ol{M}^N$. Then  $p : M^N \to M$ is an isomorphism in
$\ulMCor^\fin$ and $\ulMSm$ (but not in $\ulMSm^\fin$ in general).
\item
Let $f:M\to N$ be a morphism in $\ulMP^\fin$. The reducedness of
$\ol{M}$, the separatedness of $\ol{N}$ and the denseness of $M^\o $
in $\ol{M}$ imply that this extension $\ol{f}$ is unique. This
yields a forgetful functor $\ulMSm^\fin \to \Sch$, which sends $M$
to $\ol{M}$.
\end{enumerate}
\end{remarks}

We have the following commutative diagram of inclusion functors
\begin{equation}\label{MPcategories}
\vcenter {\xymatrix{ \ulMSm^\fin \ar[r]^{\ul{b}_s}
\ar[d]^{\ul{c}^\fin} & \ulMSm \ar[d] ^{\ul{c}} &
\ar[l]_{\tau_s} \ar[d]^{c} \MSm\\
\ulMCor^\fin \ar[r]^{\ul{b}}  & \ulMCor &\ar[l]_{\tau} \MCor } }
\end{equation}

\subsection{Presheaves}

\begin{definition}\label{d2.7}
By a presheaf we mean here an additive contravariant functor to the
category of abelian groups. (A functor is called additive if it
commutes with finite coproducts.)
\begin{enumerate}
\item
The category of presheaves on $\MP$ (resp. $\ulMP$, $\ulMP^\fin$) is
denoted by $\MPS$ (resp. $\ulMPS$, $\ulMPS^\fin$).
\item
The category of presheaves on $\MCor$ (resp. $\ulMCor$,
$\ulMCor^\fin$) is denoted by $\MPST$ (resp. $\ulMPST$,
$\ulMPST^\fin$).
\item
We write
\begin{align*}
\Z_\tr:& \ulMCor\to\ulMPST, \quad \MCor\to \MPST,
\\
\Z_\tr^\fin:& \ulMCor^\fin\to \ulMPST^\fin,
\end{align*}
for the associated representable presheaf functors.
\item
For $M \in \ulMSm$, we denote by $\Zp (M)$ the presheaf with values
in abelian groups defined by
\[
\ulMSm \ni N \mapsto \Z \ulMSm (N,M),
\]
where for any set $S$ we denote by $\Z S$ the free abelian group on
$S$.
\end{enumerate}
\end{definition}

Diagram \eqref{MPcategories} induces a commutative diagram of
functors on presheaf categories:
\begin{equation}\label{MPScategories}
\vcenter{ \xymatrix{
\ulMPS^\fin & \ar[l]_{\ul{b}_s^*} \ulMPS \ar[r]^{\tau_s^*} & \MPS\\
\ulMPST^\fin  \ar[u]_{\ul{c}^{\fin *}} & \ar[l]_{\ul{b}^*} \ulMPST
\ar[u]_{\ul{c}^*} \ar[r]^{\tau^*} & \MPST \ar[u]_{c^*} } }
\end{equation}

\begin{lemdef}[\protect{\cite[Definition~1.8.1 and Lemma~1.8.2]{kmsy}}]\label{d1.12} For $M=(\Mb,M^\infty)\in \ulMSm$, we denote by $\Comp(M)$ the category whose objects are morphisms $M\by{j_N} N$ in $\ulMSm^\fin$ such that
\begin{thlist}
\item $N\in \MSm$;
\item $\ol{j_N}:\ol{M}\to \ol{N}$ is a dense open immersion;
\item $j_N$ is minimal, i.e., $M^\infty = \ol{j_N}^*N^\infty$;
\item we have $N^\infty=M_N^\infty+C$
for some effective Cartier divisors $M_N^\infty, C$ on $\ol{N}$
satisfying $\ol{N} \setminus |C| = j(\ol{M})$. $($Note that,
therefore, $M^\infty = \ol{j_N}^*M_N^\infty$.$)$
\end{thlist}
For $N_1,N_2\in \Comp(M)$ we define
\[\CompM(N_1,N_2)=\{\gamma\in \MSm(N_1,N_2)\;|\; \gamma\circ j_{N_1}  =j_{N_2}\}.\]
The category $\Comp(M)$ is nonempty, ordered and cofiltered.
\end{lemdef}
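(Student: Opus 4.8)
The plan is to verify the three assertions separately, starting with the formal one. \emph{Orderedness.} For $N_1,N_2\in\Comp(M)$, conditions (iii) and (iv) give $N_i^\o=M^\o$ and $j_{N_i}^\o=\id_{M^\o}$, so the requirement $\gamma\circ j_{N_1}=j_{N_2}$ forces $\gamma^\o=\id_{M^\o}$ for every $\gamma\in\CompM(N_1,N_2)$. Since a morphism in $\ulMSm$, hence in $\MSm$, is nothing but a morphism of smooth interiors satisfying the modulus condition, $\gamma$ is unique; thus $\Comp(M)$ is a preorder, and as morphisms $N_1\to N_2$ and $N_2\to N_1$, when both exist, compose to the identities by this uniqueness, it is a poset up to isomorphism. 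Because a preorder has at most one arrow between any two objects, the coequalizer clause in the definition of ``cofiltered'' is automatic, so it remains to prove that $\Comp(M)$ is nonempty and that any two of its objects admit a common lower bound.

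\emph{Nonemptiness.} By Nagata's compactification theorem, choose a dense open immersion $\ol M\hookrightarrow\ol Q$ with $\ol Q$ proper over $k$, and replace $\ol Q$ by the scheme-theoretic closure of $\ol M$, so that $\ol Q$ is reduced and $\ol M$ dense. Blow up the reduced ideal of $\ol Q\setminus\ol M$ to obtain $\ol{Q}_1\to\ol Q$, an isomorphism over $\ol M$, with $\ol{Q}_1\setminus\ol M$ the support of an effective Cartier divisor $C$. Next choose a coherent ideal $\sJ\subseteq\sO_{\ol{Q}_1}$ with $\sJ|_{\ol M}=\sO_{\ol M}(-M^\infty)$ --- e.g.\ $\sJ=\ker(\sO_{\ol{Q}_1}\to j_*(\sO_{\ol M}/\sO_{\ol M}(-M^\infty)))$ --- and blow it up to obtain $\ol N\to\ol{Q}_1$: since $\sJ|_{\ol M}$ is invertible this too is an isomorphism over $\ol M$, and $\sJ\cdot\sO_{\ol N}$ is the ideal sheaf $\sO_{\ol N}(-M_N^\infty)$ of an effective Cartier divisor $M_N^\infty$ with $M_N^\infty|_{\ol M}=M^\infty$. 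Setting $N^\infty:=M_N^\infty+C$ (with $C$ pulled back to $\ol N$), one checks directly that $N:=(\ol N,N^\infty)\in\MSm$, that $N^\o=M^\o$, and that $\ol M\hookrightarrow\ol N$ underlies a minimal morphism $j_N\colon M\to N$ in $\ulMSm^\fin$ satisfying (i)--(iv); hence $\Comp(M)\ne\emptyset$.

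\emph{Common lower bounds.} Given $N_1,N_2\in\Comp(M)$, since $\ol M$ is separated the map $\ol M\to\ol{N}_1\times_k\ol{N}_2$ (the diagonal followed by $\ol{j_{N_1}}\times\ol{j_{N_2}}$) is a locally closed immersion; let $\ol P$ be its scheme-theoretic closure, a reduced proper $k$-scheme in which $\ol M$ is a dense open and whose projections $\pi_i\colon\ol P\to\ol{N}_i$ restrict to $\ol{j_{N_i}}$. No generic point of $\ol P$ lies in $|N_i^\infty|$, so $D_i:=\pi_i^* N_i^\infty$ are effective Cartier divisors on $\ol P$ with $D_i|_{\ol M}=M^\infty$. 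Blow up $\ol P$ along $\sI_{D_1}+\sI_{D_2}$: locally writing $D_i=\div(f_i)$, the ideal $(f_1,f_2)$ becomes principal, say $(g)$, yielding an effective Cartier divisor $D_3:=\div(g)$ with $D_i-D_3\ge 0$ and $D_3|_{\ol M}=M^\infty$; then blow up once more along the reduced ideal of the complement of $\ol M$ to obtain $\ol{N}_3\to\ol P$, an isomorphism over $\ol M$, with $\ol{N}_3\setminus\ol M=|C'|$ for an effective Cartier divisor $C'$. Set $N_3^\infty:=D_3+C_3$ with $C_3:=(D_1-D_3)+(D_2-D_3)+C'$, all divisors now viewed on $\ol{N}_3$: then $N_3^\infty|_{\ol M}=M^\infty$ and $|C_3|=\ol{N}_3\setminus\ol M$, so $N_3:=(\ol{N}_3,N_3^\infty)\in\Comp(M)$, while $N_3^\infty-D_i=(D_j-D_3)+C'\ge 0$ for $\{i,j\}=\{1,2\}$, so each composite $\ol{N}_3\to\ol P\xrightarrow{\pi_i}\ol{N}_i$, which restricts to $\id_{M^\o}$ on interiors, is admissible and defines a morphism $N_3\to N_i$ in $\MSm$ compatible with the $j$'s, i.e.\ in $\Comp(M)$.

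The step I expect to be the real obstacle is the construction of $N_3^\infty$: it must be minimal over $\ol M$ (condition (iii)), carry the entire boundary in its support in the shape demanded by (iv), and simultaneously dominate the pullbacks of both $N_1^\infty$ and $N_2^\infty$ so that the two projections are admissible. The device of forming the ``minimum of two Cartier divisors by a blow-up'' is precisely what reconciles these three constraints. Everything else is routine bookkeeping --- that the blow-ups are isomorphisms over $\ol M$, that reducedness and density are preserved, that the relevant pullbacks of Cartier divisors are defined, and that effectivity of a divisor passes to normalizations (needed for admissibility of the projections) --- and the nonemptiness argument is just this same blow-up technique applied to a single Nagata compactification in place of $\ol{N}_1\times_k\ol{N}_2$.
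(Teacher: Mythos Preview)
The paper does not give its own proof of this statement; it merely recalls the definition and the lemma from \cite[Definition~1.8.1 and Lemma~1.8.2]{kmsy}, so there is nothing here to compare your argument against line by line. Your proof is correct and is exactly the kind of argument one expects in \cite{kmsy}: orderedness from the fact that a morphism in $\MSm$ is determined by its restriction to the interior, nonemptiness from Nagata compactification followed by blow-ups to make the boundary and an extension of $M^\infty$ Cartier, and cofilteredness via the closure of the diagonal in $\ol{N}_1\times_k\ol{N}_2$ together with the ``minimum of two Cartier divisors'' blow-up of $\sI_{D_1}+\sI_{D_2}$ to reconcile minimality, condition (iv), and admissibility of both projections. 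A couple of minor points: when you write ``blow up once more \dots\ to obtain $\ol{N}_3\to\ol P$'' you of course mean the map to the first blow-up of $\ol P$; and your reducedness check for $\ol P$ (needed so that $\pi_i^*N_i^\infty$ is Cartier) goes through because the scheme-theoretic closure of a reduced locally closed subscheme is reduced. Otherwise the bookkeeping is in order.
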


\begin{lemma}[\protect{\cite[Proposition~2.4.1 and Lemma~2.4.2]{kmsy}}]\label{eq.tau}\
\begin{itemize}
\item[\rm (1)]
The functor $\tau:\MCor\to \ulMCor$ of  \eqref{MPcategories} yields
a string of $3$ adjoint functors $(\tau_!,\tau^*,\tau_*)$:
\[\MPST\begin{smallmatrix}\tau_!\\\longrightarrow\\\tau^*\\\longleftarrow\\\tau_*\\ \longrightarrow\end{smallmatrix}\ulMPST, \]
where $\tau_!,\tau_*$ are fully faithful, $\tau^*$ is a localisation
and the adjunction map $\id \to \tau^* \tau_!$ is an isomorphism.
The functors $\tau_!$ and $\tau^*$ commute with all colimits and
$\tau_!$ has a pro-left adjoint represented by $\Comp$, hence is
exact.

\item[\rm (2)]
The same statements as (1) hold for the functor $\tau_s:\MSm\to
\ulMSm$.
\item[\rm (3)]
For $G \in \MPST$ and $M \in \ulMP$, we have
\[ \tau_!G(M) \simeq \colim_{N\in \Comp(M)} G(N).  \]
\item[\rm (4)]
For $G \in \MPS$ and $M \in \ulMP$, we have
\[ \tau_{s !}G(M)\simeq \colim_{N\in \Comp(M)} G(N) .
\]
\end{itemize}
\end{lemma}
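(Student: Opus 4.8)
The plan is to separate the formal and the geometric content. First I would treat (1) and (2), minus the pro-left adjoint clause, purely formally: $\tau$ is by construction the inclusion of a full additive subcategory of $\ulMCor$ and $\tau_s$ that of a full subcategory of $\ulMSm$, so for each of them the restriction functor $u^*$ on presheaf categories has a left adjoint $u_!$ and a right adjoint $u_*$ (the additive left and right Kan extensions along $u$), $u_!$ carries representables to representables (so $\tau_!\Z_\tr(M)\cong\Z_\tr(M)$ for $M\in\MCor$), and — by the standard facts about fully faithful functors — $u_!$ and $u_*$ are fully faithful, the unit $\id\to u^*u_!$ and the counit $u^*u_*\to\id$ are isomorphisms, and $u^*$ is a localisation functor (being exact with fully faithful right adjoint $u_*$, it exhibits its target as the Serre quotient of its source by $\ker u^*$); finally $u_!$ is a left adjoint and $u^*$ has adjoints on both sides, so both commute with all colimits.

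Next I would isolate the one statement carrying the geometry: for $M\in\ulMCor$ (equivalently $M\in\ulMSm$, the objects being the same) and $N_0\in\MCor$, there is a natural isomorphism
\[
\ulMCor(M,N_0)\;\cong\;\colim_{N\in\Comp(M)}\MCor(N,N_0),
\]
and similarly $\ulMSm(M,N_0)\cong\colim_{N\in\Comp(M)}\MSm(N,N_0)$ for $N_0\in\MSm$; since $\Comp(M)$ is cofiltered (Lemma~\ref{d1.12}) these are filtered colimits, and this is precisely the assertion that $\Comp$, sending $M$ to the pro-object $(N)_{N\in\Comp(M)}$, is pro-left adjoint to $\tau$ (resp.\ to $\tau_s$). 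I would prove it in two steps: (a) every morphism $\phi\colon M\to N_0$ in $\ulMSm$, and every left-proper admissible correspondence in $\ulMCor(M,N_0)$, with $\ol{N_0}$ proper, factors through some $N\in\Comp(M)$ — compactify $\ol{M}$ by Nagata, take inside the product with $\ol{N_0}$ the closure of (the graph of each component of) $\phi$, and normalise and blow up along the boundary $\ol{N}\setminus\ol{M}$ so as to arrange conditions (i)--(iv) of Lemma~\ref{d1.12} together with the admissibility inequality $p_1^*N^\infty\ge p_2^*N_0^\infty$, which one can do because the modulus of an object of $\Comp(M)$ may be enlarged arbitrarily along the boundary; and (b) any two such factorisations are dominated by a third, because $\Comp(M)$ is cofiltered and a morphism out of an object of $\Comp(M)$ is determined by its restriction to the dense smooth interior $M^\o$ (Remark~\ref{rk-graph-trick}). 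This step is where the real work lies, the passage from correspondences to morphisms being handled componentwise through the graph trick.

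Finally I would deduce (3) and (4). Since $\MCor$ is preadditive, $\tau_!G$ is computed by the additive coend $\tau_!G(M)=\int^{N_0\in\MCor}\ulMCor(M,N_0)\otimes_{\Z}G(N_0)$; substituting the isomorphism above and using that colimits commute with coends and with the co-Yoneda identity $\int^{N_0}\MCor(N,N_0)\otimes_{\Z}G(N_0)\cong G(N)$ gives $\tau_!G(M)\cong\colim_{N\in\Comp(M)}G(N)$, which is (3), while the pro-left adjoint clause of (1) is the special case $G=\Z_\tr(N_0)$. As $\Comp(M)$ is cofiltered this is a filtered colimit of the exact functors $G\mapsto G(N)$, so — limits, colimits and exactness in $\ulMPST$ being computed objectwise, and filtered colimits being exact in $\Ab$ — $\tau_!$ is exact. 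Statement (4) follows in the same way with $\tau_s$, $\MSm$, $\ulMSm$ replacing $\tau$, $\MCor$, $\ulMCor$, using ordinary (non-additive) left Kan extensions and the second isomorphism above, and this also supplies the pro-left adjoint clause of (2). The main obstacle throughout is step (a): constructing enough compactifications of a given modulus pair that carry a prescribed, sufficiently large modulus — the one place where algebraic geometry (Nagata compactification, refined to control the Cartier-divisor data) genuinely enters.
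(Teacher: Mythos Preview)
The paper does not prove this lemma here: it is quoted from \cite[Proposition~2.4.1 and Lemma~2.4.2]{kmsy}, so there is no in-paper argument to compare against. Your sketch is a correct outline and matches the structure of the proof in \cite{kmsy}: the formal adjunction triple from full faithfulness of $\tau$, the identification of the pro-left adjoint with $\Comp$, and the colimit formulae for $\tau_!$ and $\tau_{s!}$.

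One simplification to your step~(a): you need not build the compactification around $\phi$. For any $N\in\Comp(M)$ one has $N^\o=M^\o$, so the cycle underlying $\phi$ is already a Voevodsky correspondence from $N^\o$ to $N_0^\o$; left-properness over $\ol{N}$ is automatic because $\ol{N_0}$ is proper; and admissibility over the boundary $\ol{N}\setminus\ol{M}=|C|$ is obtained by replacing $N$ with $(\ol{N},M_N^\infty+nC)\in\Comp(M)$ for $n\gg0$, as in \cite[Lemma~1.8.3]{kmsy} (invoked later in this paper in the proof of Lemma~\ref{lem1;MV}). The Nagata-plus-blow-up step is needed once, to show $\Comp(M)\ne\emptyset$, not separately for each correspondence.
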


\begin{lemma}[\protect{\cite[Proposition~2.5.1]{kmsy}}]  \label{eq:bruno-functor}
Let $\ul{b}:\ulMCor^\fin\to \ulMCor$ be the inclusion functor from
\eqref{MPcategories}. Then $\ul{b}$ yields a string of $3$ adjoint
functors $(\ul{b}_!,\ul{b}^*,\ul{b}_*)$:
\[\ulMPST^\fin\begin{smallmatrix}\ul{b}_!\\\longrightarrow\\ \ul{b}^*\\\longleftarrow\\ \ul{b}_*\\ \longrightarrow\end{smallmatrix}\ulMPST, \]
where $\ul{b}_!,\ul{b}_*$ are localisations; $\ul{b}^*$ is exact and
fully faithful; $\ul{b}_!$ has a pro-left adjoint, hence is exact.
For $F\in \ulMPST^\fin$ and $M\in \ulMCor$, we have (see Definition
\ref{def;ulSigmafin})
\begin{equation}\label{eq:b-sh-explicit}
\ul{b}_! F(M) = \colim_{N\in \ul{\Sigma}^{\fin}\downarrow M} F(N).
\end{equation}
The same statements hold for $\ul{b}_s$ from \eqref{MPcategories}.
\end{lemma}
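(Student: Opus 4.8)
The plan is to follow the strategy used for $\tau$ in Lemma~\ref{eq.tau}, with the subcategory $\ul{\Sigma}^\fin\subset\ulMCor$ of Definition~\ref{def;ulSigmafin} and the comma categories $\ul{\Sigma}^\fin\downarrow M$ playing the role that $\Comp(M)$ (Lemma-Definition~\ref{d1.12}) plays there. The existence of the adjoint triple $(\ul{b}_!,\ul{b}^*,\ul{b}_*)$ is purely formal: $\ulMCor^\fin$ and $\ulMCor$ are essentially small additive categories with the same objects, $\ul{b}^*$ is restriction of additive presheaves, and it has the additive left and right Kan extensions along $\ul{b}$ as its left adjoint $\ul{b}_!$ and right adjoint $\ul{b}_*$. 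Exactness of $\ul{b}^*$ is immediate, since kernels, cokernels and images in $\ulMPST$ and $\ulMPST^\fin$ are computed objectwise and $\ul{b}$ is the identity on objects. Thus the only substantial point is the explicit colimit formula \eqref{eq:b-sh-explicit}; once it is established, every remaining assertion is deduced from it formally, exactly as the corresponding properties of $(\tau_!,\tau^*,\tau_*)$ are deduced in Lemma~\ref{eq.tau}.

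To prove \eqref{eq:b-sh-explicit} I would start from the pointwise formula for the left Kan extension, which expresses $\ul{b}_!F(M)$ as a colimit of the values $F(N)$ over the comma category of all left-proper admissible correspondences $N\to M$ with $N\in\ulMCor^\fin$, and then show that the natural functor $\ul{\Sigma}^\fin\downarrow M$ into this comma category is cofinal and that the source is essentially small and cofiltered. The geometric heart of the cofinality is the statement that a left-proper admissible correspondence $\phi\colon N\to M$ with $N\in\ulMCor^\fin$ can be made finite, cofinally, after replacing $M$ by a modulus pair $M'$ with $M'\to M$ in $\ul{\Sigma}^\fin$: one passes to the normalization of the closure of $\phi^\o$ in $\ol N\times\ol M$, applies a blow-up/flattening modification $\ol{M'}\to\ol M$ that is an isomorphism over $M^\o$ and carries an appropriate modulus (so that $M'\to M$ is minimal and lies in $\ul{\Sigma}^\fin$, and $\phi$ becomes finite for $(N,M')$), and reduces the case of a general correspondence to that of an integral one by additivity. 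The normalization trick of Remark~\ref{rk-graph-trick}(1) is what makes the $\ul{b}_s$-variant, where correspondences are replaced by scheme morphisms, go through unchanged.

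Granting \eqref{eq:b-sh-explicit}, the remaining assertions are formal. Full faithfulness of $\ul{b}^*$ is equivalent to the counit $\ul{b}_!\ul{b}^*\to\id$ (equivalently, the unit $\id\to\ul{b}_*\ul{b}^*$) being an isomorphism; since $\ul{b}_!$ and $\ul{b}^*$ are cocontinuous and the $\Z_\tr(M)$ generate $\ulMPST$, this is checked on representables using \eqref{eq:b-sh-explicit}, where it reduces to a (co)finality property of the identity object in $\ul{\Sigma}^\fin\downarrow M$. Full faithfulness of $\ul{b}^*$ then says precisely that it realizes $\ulMPST$ as both a reflective and a coreflective subcategory of $\ulMPST^\fin$, so that the reflector $\ul{b}_!$ and the coreflector $\ul{b}_*$ are localisations. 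For the pro-left adjoint of $\ul{b}_!$: imitating the way $\tau_!$ is pro-represented by $\Comp$ in Lemma~\ref{eq.tau}, one defines a functor $\ulMPST\to\operatorname{Pro}(\ulMPST^\fin)$ that on representables sends $\Z_\tr(M)$ to the pro-object $(\Z_\tr^\fin(N))_{N\in(\ul{\Sigma}^\fin\downarrow M)^{\op}}$, extends it cocontinuously, and verifies the adjunction against \eqref{eq:b-sh-explicit}; since a functor admitting a pro-left adjoint preserves finite limits it is left exact, and being a left adjoint $\ul{b}_!$ is also right exact, hence exact. The assertions for $\ul{b}_s$ are proved identically, replacing $\ulMCor,\ulMCor^\fin$ by $\ulMSm,\ulMSm^\fin$ and correspondences by their graphs.

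The main obstacle is the cofinality underlying \eqref{eq:b-sh-explicit}: showing that an arbitrary left-proper admissible correspondence out of a "finite" modulus pair becomes finite after a modulus-preserving proper birational modification of the target is a genuine piece of algebraic geometry (compactification, normalization, flattening), and it must be accompanied by enough bookkeeping to guarantee that the relevant comma categories are small and cofiltered, which is what ultimately yields exactness of $\ul{b}_!$. Everything else reduces to the formalism of Kan extensions, reflective/coreflective subcategories, and pro-adjoints, exactly as in the treatment of $\tau$ in Lemma~\ref{eq.tau}.
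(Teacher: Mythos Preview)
The paper does not prove this lemma; it is quoted verbatim from \cite[Proposition~2.5.1]{kmsy}. The approach there is the localisation/calculus-of-fractions framework set up in the appendix of \cite{kmsy}, with the key geometric input being \cite[Proposition~1.9.2]{kmsy}, which you see invoked later in this paper (proof of Lemma~\ref{lem1.1;MV}).

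Your overall architecture is right --- the adjoint triple is formal, $\ul{b}^*$ is obviously exact, and everything hinges on \eqref{eq:b-sh-explicit}. But the geometric step you sketch is in the wrong direction, and this is a genuine gap. You propose to replace the \emph{target} $M$ by a modification $M'\to M$ in $\ul{\Sigma}^\fin$ so that $\phi$ becomes finite for $(N,M')$. This cannot work: the finiteness condition in $\ulMCor^\fin$ is that the closure $\ol{Z}\subset \ol{N}\times\ol{M}$ be finite over $\ol{N}$, and blowing up $\ol{M}$ only enlarges $\ol{Z}$ over $\ol{N}$. Concretely, take $\ol{N}=\P^2$, $\ol{M}=\P^1$, $\phi$ the graph of a linear projection $\A^2\to\A^1$. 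The closure $\ol{Z}$ is the blow-up of $\P^2$ at the base point, hence not finite over $\ol{N}$; but $\P^1$ admits no nontrivial proper birational modifications, so your procedure does nothing.

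What works --- and what \cite[Proposition~1.9.2]{kmsy} does --- is to modify the \emph{source}: by Raynaud--Gruson platification of $\ol{Z}\to\ol{N}$ one finds a blow-up $\ol{N'}\to\ol{N}$, an isomorphism over $N^\o$, after which the strict transform of $\ol{Z}$ is flat, hence finite, over $\ol{N'}$. This gives $N'\to N$ in $\ul{\Sigma}^\fin$ with $\phi|_{N'}\in\ulMCor^\fin(N',M)$, i.e.\ the formula
\[
\ulMCor(N,M)=\colim_{N'\in \ul{\Sigma}^\fin\downarrow N}\ulMCor^\fin(N',M).
\]
This is exactly the statement that $\ul{b}$ realises $\ulMCor$ as the localisation of $\ulMCor^\fin$ at $\ul{\Sigma}^\fin$ with a calculus of right fractions; \eqref{eq:b-sh-explicit}, the full faithfulness of $\ul{b}^*$, and the existence of the pro-left adjoint then follow from the general localisation formalism of \cite[Appendix~A]{kmsy}, in parallel with the treatment of $\tau$. (A side remark: your ``cofinality in $\ul{b}\downarrow M$'' framing is also off --- for left Kan extension of a \emph{contravariant} functor the relevant comma category is $M\downarrow\ul{b}$, not $\ul{b}\downarrow M$ --- but once you pass to the calculus-of-fractions viewpoint this issue dissolves.)
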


\begin{lemma}[\protect{\cite[Proposition~2.6.1]{kmsy}}]  \label{eq:c-functor}
Let $\ul{c} :\ulMSm \to \ulMCor$ be the functor from
\eqref{MPcategories}. Then $\ul{c}$ yields a string of $3$ adjoint
functors $(\ul{c}_!,\ul{c}^*,\ul{c}_*)$:
\[\ulMPS\begin{smallmatrix}\ul{c}_!\\\longrightarrow\\ \ul{c}^*\\\longleftarrow\\ \ul{c}_*\\ \longrightarrow\end{smallmatrix}\ulMPST,
\]
where $\ul{c}^*$ is exact and faithful (but not full). The same
statements hold for $\ul{c}^\fin$ and $c$ from \eqref{MPcategories}.
We have
\begin{equation}\label{eq:c-and-tau}
c^* \tau^* = \tau_s^* \ul{c}^*, \quad \ul{c}^* \tau_! = \tau_{s !}
c^*,
\end{equation}
\begin{equation}\label{eq:b-and-c}
\ul{c}^{\fin *} \ul{b}^* = \ul{b}_s^* \ul{c}^*, \quad \ul{b}_!
\ul{c}^\fin_! = \ul{c}_! \ul{b}_{s !}, \quad \ul{c}^* \ul{b}_!=
\ul{b}_{s !} \ul{c}^{\fin *}.
\end{equation}
\end{lemma}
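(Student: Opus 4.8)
The plan is to deduce the adjoint triple from the general Kan-extension machinery for additive presheaf categories set up in \cite[\S 2]{kmsy}, and then to reduce the five compatibility identities to the explicit colimit formulas already recorded in Lemmas~\ref{eq.tau} and~\ref{eq:bruno-functor}. The key observation is that $\ul{c}$, like $\ul{c}^\fin$ and $c$, is the identity on objects: it only sends a morphism to its graph, which by the very definition of $\ulMSm$ is a left-proper admissible correspondence. In particular $\ul{c}$ preserves finite coproducts, so restriction $\ul{c}^*:\ulMPST\to\ulMPS$ carries additive presheaves to additive presheaves; since kernels and cokernels of maps of additive presheaves are formed objectwise, $\ul{c}^*$ is exact and preserves all colimits and limits, hence admits a left adjoint $\ul{c}_!$ (determined by $\ul{c}_!\Zp(M)=\Z_\tr(M)$ and commutation with colimits) and a right adjoint $\ul{c}_*$. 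Faithfulness of $\ul{c}^*$ is formal from ``identity on objects'': if $\eta,\eta'$ are natural transformations of presheaves on $\ulMCor$ with $\ul{c}^*\eta=\ul{c}^*\eta'$, then $\eta_M=\eta'_M$ at every object $M$, hence $\eta=\eta'$. It is not full, since compatibility with the action of non-graph correspondences is genuine extra structure; this can be witnessed by an explicit $\ulMPS$-morphism between restrictions of representables (for instance, in positive characteristic, the Frobenius endomorphism of a suitable additive sheaf) which is not transfer-compatible. The same reasoning applies verbatim to $\ul{c}^\fin$ and to $c$, as these are identity on objects as well.

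Turning to the compatibilities, the identities $c^*\tau^*=\tau_s^*\ul{c}^*$ and $\ul{c}^{\fin *}\ul{b}^*=\ul{b}_s^*\ul{c}^*$ involve only restriction functors, and follow at once from the two commuting squares in \eqref{MPcategories}, namely $\ul{c}\circ\tau_s=\tau\circ c$ and $\ul{c}\circ\ul{b}_s=\ul{b}\circ\ul{c}^\fin$, combined with the strict contravariant functoriality $(vu)^*=u^*v^*$ of presheaf restriction. Passing to left adjoints in $\ul{c}^{\fin *}\ul{b}^*=\ul{b}_s^*\ul{c}^*$, and using that the left adjoint of a composite is the composite of the left adjoints in the reverse order (together with uniqueness of adjoints), yields $\ul{b}_!\ul{c}^\fin_!=\ul{c}_!\ul{b}_{s !}$.

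The remaining identities $\ul{c}^*\tau_!=\tau_{s !}c^*$ and $\ul{c}^*\ul{b}_!=\ul{b}_{s !}\ul{c}^{\fin *}$ are base-change (Beck--Chevalley) isomorphisms, which are not formal consequences of adjunction; I would establish them directly from the explicit colimit descriptions. For $G\in\MPST$ and an object $M$ (viewed in $\ulMCor$, equivalently in $\ulMSm$), Lemma~\ref{eq.tau}(3) gives $(\ul{c}^*\tau_!G)(M)=(\tau_!G)(M)=\colim_{N\in\Comp(M)}G(N)$, while Lemma~\ref{eq.tau}(4) applied to $c^*G$ gives $(\tau_{s !}c^*G)(M)=\colim_{N\in\Comp(M)}(c^*G)(N)$; since $c$ is the identity on objects, the two colimit systems have the same terms and the same transition maps, hence coincide, and one checks the identification is natural in both $M$ and $G$. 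The identity $\ul{c}^*\ul{b}_!=\ul{b}_{s !}\ul{c}^{\fin *}$ is proved the same way, with Lemma~\ref{eq:bruno-functor} in place of Lemma~\ref{eq.tau}, the indexing category being $\ul{\Sigma}^{\fin}\downarrow M$ and the identity-on-objects functor being $\ul{c}^\fin$.

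The mathematics here is light; the point requiring care is bookkeeping. One must verify that the colimit identifications in the last step are natural (not merely objectwise), and---relatedly---that the indexing categories $\Comp(M)$ and $\ul{\Sigma}^{\fin}\downarrow M$ are literally the same whether formed in the smooth-scheme world or in the correspondence world; this holds because their objects and morphisms are cut out by conditions on scheme morphisms (graphs, denseness, minimality, properness) that do not see the ambient category. One also needs a clean witness for the non-fullness of $\ul{c}^*$. No set-theoretic subtleties arise, as all the categories in sight are essentially small.
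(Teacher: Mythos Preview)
The paper does not prove this lemma here; it simply quotes it from \cite[Proposition~2.6.1]{kmsy}, so there is no in-paper argument to compare against. Your proof is correct and is exactly the standard argument one would give: the adjoint triple and faithfulness of $\ul{c}^*$ are formal from ``identity on objects'', the first identities in \eqref{eq:c-and-tau} and \eqref{eq:b-and-c} are contravariant functoriality of restriction along the commuting squares in \eqref{MPcategories}, the middle identity of \eqref{eq:b-and-c} follows by passing to left adjoints, and the two Beck--Chevalley identities are read off from the explicit colimit formulas in Lemmas~\ref{eq.tau} and~\ref{eq:bruno-functor} using that the relevant indexing categories $\Comp(M)$ and $\ul{\Sigma}^\fin\downarrow M$ are defined purely in terms of scheme-theoretic morphisms and hence are literally the same on either side. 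The only place your write-up is thin is the witness for non-fullness of $\ul{c}^*$; the Frobenius hint is in the right direction but you should pin down a concrete example if this were to stand on its own.
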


For the readers' convenience, we recall the following lemma from
\cite[Lemma A.8.1]{kmsy}:

\begin{lemma}\label{lem:lr-adjoint}
Let $\sC,  \sD$ be abelian categories and let $\sC' \subset \sC,
\sD' \subset \sD$ be full abelian subcategories. Let $c :\sC \to
\sD$ and $c' :\sC' \to \sD'$ be additive functors satisfying $c i_C
= i_D c'$, where $i_C : \sC' \to \sC$ and $i_D : \sD' \to \sD$ are
the inclusion functors.
\begin{itemize}
\item[\rm (1)] If $c$ is faithful, so is $c'$.
\item[\rm (2)] Suppose that $i_D$ is strongly additive or  has a strongly additive left inverse (for example, a left adjoint). If $c$ and $i_C$ are strongly additive, so is $c'$.
\item[\rm (3)] Suppose that $i_C$ has a left adjoint $a_C$.
If $c$ has a left adjoint $d$, then $d'=a_C d i_D$ is a left adjoint
of $c'$. If $d$ and $a_\sC$ are exact, so is $d'$.  Moreover, $a_\sC
d=d'a_\sD$  if $i_\sD$ has a left adjoint $a_\sD$.
\item[\rm (4)]
Suppose that $i_C$ and $i_D$ have left adjoints $a_C$ and $a_D$,
that $a_D$ is exact, and that $a_D c= c' a_C$. If $c$ is exact, then
so is $c'$.
\end{itemize}
\end{lemma}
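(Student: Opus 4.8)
The plan is to derive all four statements from the single relation $c\,i_C=i_D\,c'$ together with the formalism of adjoint functors; they are all essentially formal, and the only real care needed is in tracking which of the functors involved are exact, left exact, or merely right exact. For (1): if $f,g$ are parallel arrows of $\sC'$ with $c'(f)=c'(g)$, then $c\,i_C(f)=i_D c'(f)=i_D c'(g)=c\,i_C(g)$, so $i_C(f)=i_C(g)$ by faithfulness of $c$, hence $f=g$ since $i_C$, being the inclusion of a full subcategory, is faithful. For (2): let $(X_j)$ be a family of $\sC'$ whose direct sum exists there. Strong additivity of $i_C$ gives $i_C(\bigoplus_j X_j)\cong\bigoplus_j i_C X_j$, and applying the strongly additive functor $c$ together with $c\,i_C=i_D c'$ yields $i_D c'(\bigoplus_j X_j)\cong\bigoplus_j i_D c' X_j$. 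If $i_D$ has a strongly additive left inverse $r$ (for instance $r=a_D$ when $i_D$ has a left adjoint $a_D$, since then $a_D i_D\cong\id$), apply $r$; if instead $i_D$ is itself strongly additive, use that $i_D$ is fully faithful and so reflects the isomorphism $i_D(\bigoplus_j c' X_j)\cong i_D c'(\bigoplus_j X_j)$. In both cases $c'(\bigoplus_j X_j)\cong\bigoplus_j c' X_j$, and one checks routinely that this is the canonical comparison map.

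For (3): to show $d':=a_C d\,i_D$ is left adjoint to $c'$, chain the adjunctions $a_C\dashv i_C$, $d\dashv c$ with $c\,i_C=i_D c'$ and the full faithfulness of $i_D$: for $Y\in\sD'$, $X\in\sC'$,
\begin{align*}
\Hom_{\sC'}(a_C d\,i_D Y,\,X)&\cong\Hom_{\sC}(d\,i_D Y,\,i_C X)\cong\Hom_{\sD}(i_D Y,\,c\,i_C X)\\
&=\Hom_{\sD}(i_D Y,\,i_D c' X)\cong\Hom_{\sD'}(Y,\,c' X),
\end{align*}
naturally in both variables. Being a left adjoint, $d'$ is right exact; it is also left exact, as the composite of the left exact inclusion $i_D$ of the full abelian subcategory $\sD'$ (in all our applications $\sD'$ is reflective in $\sD$, so $i_D$ is a right adjoint) with the exact functors $d$ and $a_C$; hence $d'$ is exact. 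For the last assertion, $a_C d$ is left adjoint to the composite $c\,i_C$ of the right adjoints $c$ and $i_C$, while $d' a_D$ is left adjoint to the composite $i_D c'$ of the right adjoints $c'$ and $i_D$; since $c\,i_C=i_D c'$, uniqueness of left adjoints gives $a_C d\cong d' a_D$.

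For (4): let $0\to X'\to X\to X''\to 0$ be exact in $\sC'$, with $g$ the epimorphism $X\to X''$. Since $i_C$ has a left adjoint it is left exact, so, setting $Q:=\Coker_{\sC}\big(i_C X\to i_C X''\big)$ (the cokernel in $\sC$ of $i_C(g)$), the sequence $0\to i_C X'\to i_C X\to i_C X''\to Q\to 0$ is exact in $\sC$. The functor $a_C$ is right exact (a left adjoint), and under $a_C i_C\cong\id_{\sC'}$ the map $a_C i_C X\to a_C i_C X''$ is identified with $g$; hence $a_C Q\cong\Coker_{\sC'}(g)=0$. Now apply the exact functor $c$ and then the exact functor $a_D$ to the four-term exact sequence above and use $a_D c=c' a_C$ together with $a_C i_C\cong\id$: the result is the exact sequence
\[0\to c' X'\to c' X\to c' X''\to a_D c\,Q\to 0\]
in $\sD'$, in which $a_D c\,Q\cong c' a_C Q\cong c'(0)=0$. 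Thus $c'$ preserves short exact sequences, i.e. is exact.

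The step requiring the most care is the bookkeeping in (4): because $i_C$ (and, in general, the inclusions of the abelian subcategories at hand) need not be right exact, one cannot simply compose exact functors, and the device is to carry along the auxiliary cokernel $Q$, which is annihilated by $a_C$ and hence, via $a_D c=c' a_C$, also by $a_D c$. In (3) the only delicate point is the left exactness of $i_D$ (automatic once $\sD'$ is reflective in $\sD$), and everything else is purely formal.
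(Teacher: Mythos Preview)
The paper does not give its own proof of this lemma: it is merely recalled ``for the readers' convenience'' from \cite[Lemma~A.8.1]{kmsy}. So there is no proof here to compare against, and your argument must stand on its own.

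Your proofs of (1), (2), and (4) are correct and clean. In (4), the device of carrying along the auxiliary cokernel $Q$ and killing it with $a_C$ (hence with $a_D c = c' a_C$) is exactly the right idea; the only thing to add is that $a_D i_D \cong \id_{\sD'}$ follows from full faithfulness of $i_D$, which you use implicitly. In (3), the adjunction chain and the uniqueness-of-adjoints argument for $a_C d \cong d' a_D$ are both correct.

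The one genuine soft spot is the exactness claim in (3). You argue that $d' = a_C d\, i_D$ is left exact because $i_D$ is left exact, and you justify the latter by saying $\sD'$ is reflective ``in all our applications''. That is an extra hypothesis not present in the statement as written: part (3) only assumes $i_C$ has a left adjoint, and the existence of $a_D$ appears only in the final ``Moreover'' clause. Without some assumption ensuring that kernels in $\sD'$ agree with kernels computed in $\sD$ (e.g.\ reflectivity of $\sD'$, or the convention that ``full abelian subcategory'' means closed under kernels and cokernels in the ambient category), the left exactness of $i_D$ is not automatic, and your argument for the exactness of $d'$ has a gap. You are right that in every application in this paper the subcategories are reflective (so $i_D$ is a right adjoint, hence left exact), and you flag this yourself; but as a proof of the lemma \emph{as stated} you should either make this hypothesis explicit or clarify which reading of ``full abelian subcategory'' you are using.
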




\section{Review of sheaf theory on non-proper modulus pairs}

In this section we recall some basic definitions and properties on
sheaves on categories of non-proper modulus pairs from \cite[\S4.1
and \S4.2]{kmsy}.

Let $\Sq$ be the product category of $[0]=\{ 0 \to 1 \}$ with
itself, depicted as
\[
\xymatrix{
00 \ar[r] \ar[d] &01 \ar[d]\\
10 \ar[r] & 11. }
\]
For any category $\sC$, denote by $\sC^\Sq$ for the category of
functors from $\Sq$ to $\sC$. A functor $f : \sC \to \sC'$ induces a
functor $f^\Sq : \sC^\Sq \to {\sC'}^\Sq$.

Let us consider $Q \in \ulMSm^\Sq$. We write $Q(ij) = (\ol{Q}(ij), Q^\infty
(ij))$ for all $i,j \in \{0,1\}$. We also write $Q^\o (ij) :=
\ol{Q}(ij) - |Q^\infty (ij)|$.

\subsection{The $\protect\ulMV^\fin$ cd-structure}

\begin{defn}\label{d3.2} \
\begin{enumerate}
\item
A Cartesian square
\begin{equation}\label{eq.cd}
\begin{CD}
W@>v>> V\\
@VqVV @Vp VV\\
U@>u>> X
\end{CD}
\end{equation}
in $\Sch$ is called an \emph{elementary Nisnevich square} if  $p$ is \'etale and $p^{-1}(X \setminus U)_\red\to (X \setminus U)_\red$ is an isomorphism and if $u$ is
an open embedding. In this situation, we
say $U \sqcup V \to X$ is an \emph{elementary Nisnevich cover}.
Recall that  an additive presheaf is a Nisnevich sheaf if and only
if it transforms any elementary Nisnevich square into a cartesian
square \cite[Corollary~2.17]{cdstructures}, \cite[Theorem~2.2]{unstableJPAA}.
\item
A diagram \eqref{eq.cd} in $\ulMP^\fin$ is called an
\emph{$\ulMVfin$-square} if it becomes an elementary Nisnevich
square (in $\Sch$) after replacing $X, U, V, W$ by $\ol{X}, \ol{U},
\ol{V}, \ol{W}$ (\emph{cf.} Remark \ref{rk-graph-trick}(2)) and all
morphisms are minimal.
\end{enumerate}
\end{defn}

\begin{prop}[\protect{\cite[Proposition~3.2.3 (2)]{kmsy}}] \label{p3.1}
The cd-structure $P_{\ulMVfin}$ on $\ulMSm^\fin$ consisting of
$\ulMVfin$-squares  is strongly complete and strongly regular, hence
complete and regular in the sense of \cite{cdstructures} (see
\cite[Definition~A.7.1 and A.7.4]{kmsy}).
\end{prop}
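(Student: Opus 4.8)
\textbf{Plan of proof for Proposition \ref{p3.1}.}
The claim is that the cd-structure $P_{\ulMVfin}$ on $\ulMSm^\fin$ whose distinguished squares are the $\ulMVfin$-squares satisfies strong completeness and strong regularity in the sense of \cite{cdstructures} (in the formulation of \cite[Definition~A.7.1 and A.7.4]{kmsy}). The strategy is to deduce everything from the corresponding well-known properties of the Nisnevich cd-structure on $\Sch$ (or on $\Sm$), using the forgetful functor $M \mapsto \ol{M}$ from $\ulMSm^\fin$ to $\Sch$ (Remark \ref{rk-graph-trick}(2)) together with the fact that in an $\ulMVfin$-square all the modulus data is obtained by pulling back $X^\infty$ along minimal morphisms. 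The guiding principle is that an $\ulMVfin$-square is completely determined by its underlying elementary Nisnevich square in $\Sch$ and the choice of modulus $X^\infty$ on $\ol X$, so the combinatorics of distinguished squares upstairs mirrors that downstairs.

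First I would check \emph{strong completeness}: one must verify that the squares are stable under the operations required by the definition (pullback along morphisms to $X$ when the pullback stays in the category, refinement, etc.) and that the relevant coverings generate a topology with enough structure. Since minimal morphisms in $\ulMSm^\fin$ are closed under composition and base change — pulling back $X^\infty$ along $\ol U \to \ol X$ and then along $\ol W \to \ol U$ agrees with pulling back directly, because these are honest scheme morphisms and Cartier divisors pull back functorially — any base change of an $\ulMVfin$-square along a minimal morphism $Y \to X$ is again an $\ulMVfin$-square (its underlying square is the base-changed Nisnevich square, which is again elementary Nisnevich, and minimality is preserved). The key point is that a general morphism $Y\to X$ in $\ulMSm^\fin$ need not be minimal, but the definition of strong completeness (as recalled from \cite{kmsy}) only tests against the appropriate class; I would match the bookkeeping there against \cite[Proposition~3.2.3]{kmsy}, of which this is explicitly part (2), and transport the argument. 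Concretely: given the generating squares, form the free distributive lattice of "open" subobjects they generate; strong completeness amounts to a density/refinement statement that reduces, via the forgetful functor and the minimality constraint, to the analogous statement for elementary Nisnevich squares in $\Sch$, which is \cite[Corollary~2.17]{cdstructures}.

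Next, \emph{strong regularity}: I must show that for an $\ulMVfin$-square \eqref{eq.cd} the induced square is "regular", i.e. the diagonal square obtained by taking $W \to V\times_X V$ etc.\ is again distinguished (or a refinement of one), equivalently that $v\colon W \to V$ is a monomorphism and the square
\[
\begin{CD}
W @>\id>> W\\
@VvVV @VvVV\\
V @>>> V\times_X V
\end{CD}
\]
or its analogue is distinguished. Again the underlying statement in $\Sch$ is standard for the Nisnevich cd-structure (étale plus the isomorphism-on-closed-complement condition force $W\to V$ to be an open immersion, making the square regular); upstairs one only needs to observe that all the maps appearing are again minimal, since they are restrictions of $p$ and $u$ whose moduli are pullbacks of $X^\infty$, and an open immersion of schemes with pulled-back modulus is a minimal morphism in $\ulMSm^\fin$. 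Taking fibre products in $\ulMSm^\fin$ along minimal maps is computed on underlying schemes with the evident pulled-back modulus, so the regularity diagrams upstairs are exactly the images of the regularity diagrams downstairs.

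\textbf{Main obstacle.} The real subtlety — and the reason this is not completely formal — is that $\ulMSm^\fin$ does not have all the fibre products one might naively want: a pullback $V\times_X U$ in $\ulMSm^\fin$ exists as the expected scheme-theoretic object only when the modulus conditions are compatible, which is guaranteed precisely by \emph{minimality} of the morphisms in an $\ulMVfin$-square but could fail for arbitrary morphisms. So the bulk of the work is to check that every fibre product, diagonal, and refinement demanded by the (strong) completeness and regularity axioms actually lands back in $\ulMSm^\fin$ and is again built from minimal morphisms; once that is in place, the verification of the axioms themselves is a direct translation of the Nisnevich case via the forgetful functor to $\Sch$. I expect the cleanest route is simply to cite \cite[Proposition~3.2.3 (2)]{kmsy} essentially verbatim, since the statement here is literally that result, and to spend the few additional lines only on recalling why minimality makes the needed limits exist in $\ulMSm^\fin$.
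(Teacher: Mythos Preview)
The paper does not prove this proposition at all: it is stated purely as a citation of \cite[Proposition~3.2.3 (2)]{kmsy}, with no argument given here. You yourself notice this at the end of your proposal, and that is the correct conclusion --- in the context of this paper, the ``proof'' is simply the reference.

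Your sketch of how the argument in \cite{kmsy} presumably goes (transport the verification of strong completeness and strong regularity along the forgetful functor $M\mapsto \ol{M}$ to the classical Nisnevich cd-structure on $\Sch$, using that minimality makes the relevant fibre products exist in $\ulMSm^\fin$ and again consist of minimal morphisms) is a reasonable outline and matches the spirit of Lemma \ref{lem:equiv-smallsites}, which identifies the small site $M_\Nis$ with $(\ol{M})_\Nis$. But since there is nothing in the present paper to compare against, I cannot assess whether your details line up with the actual proof in \cite{kmsy}; for the purposes of this paper, you should just cite the result as the authors do.
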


We define $\ulMNS^{\fin}$ to be the full subcategory of
$\ulMPS^{\fin}$ consisting of sheaves with respect to the
Grothendieck topology associated to $P_{\ulMVfin}$. Let
\begin{equation}\label{eq;ulasfin}
\ul{a}^{\fin}_{s,\Nis} : \ulMPS^{\fin}\allowbreak \to \ulMNS^{\fin}
\end{equation}
be the sheafification functor, that is, the left adjoint of the
inclusion functor $\ul{i}^{\fin}_{s,\Nis} : \ulMNS^{\fin}
\hookrightarrow \ulMPS^{\fin}$. It exists for general reasons and is
exact \cite[expos\'e~II, th\'eor\`eme~3.4]{SGA4}.

\begin{lemma}[\protect{\cite[Lemma 3.1.3]{kmsy}}]\label{lem:equiv-smallsites}
Let $M \in \ulMP^{\fin}$. Let $M_\Nis$ be the category of morphisms
$f : N \to M$ in $\ulMP^{\fin}$ such that $\ol{f}$ is \'etale and
$\ol{f}^*M^\infty = N^\infty$, endowed with the topology induced by
$P_{\ulMVfin}$, and let $(\ol{M})_\Nis$ be the $($usual\,$)$ small
Nisnevich site on $\ol{M}$. Then we have an isomorphism of sites
\[ M_\Nis \to (\ol{M})_\Nis, \qquad N \mapsto \ol{N},
\]
whose inverse is given by $(p : X \to \ol{M}) \mapsto (X,
p^*(M^\infty))$. $($This isomorphism of sites depends on the choice of
$M^\infty.)$ \qed
\end{lemma}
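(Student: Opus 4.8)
\textbf{Proof plan for Lemma~\ref{lem:equiv-smallsites}.}
The plan is to exhibit two mutually inverse functors between the two sites and then check that each is continuous, so that the equivalence of categories upgrades to an isomorphism of sites. First I would define $\Phi : M_\Nis \to (\Mb)_\Nis$ by $N \mapsto \Nb$ on objects, and on a morphism $g : N \to N'$ in $M_\Nis$ (so $\ol{g}$ is étale and $\ol{g}^* (N')^\infty = N^\infty$) by $\ol{g} : \Nb \to \Nb'$; this is well defined because the target $\Nb'$ carries no extra structure. In the other direction, define $\Psi : (\Mb)_\Nis \to M_\Nis$ by $(p : X \to \Mb) \mapsto (X, p^* M^\infty)$, where the divisor $p^* M^\infty$ makes sense because $p$ is étale (hence flat), and the open complement $X \setminus |p^* M^\infty| = p^{-1}(M^\o)$ is smooth over $k$ since $M^\o$ is and $p$ is étale, so $(X, p^* M^\infty)$ is indeed a modulus pair; the structure morphism to $M$ has underlying étale map $p$ with $p^* M^\infty = (p^* M^\infty)$ by construction, so it lies in $M_\Nis$. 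That $\Psi \Phi = \id$ and $\Phi \Psi = \id$ is immediate from the fact that an object of $M_\Nis$ is determined by its underlying scheme over $\Mb$ together with the pulled-back divisor (here one uses Remark~\ref{rk-graph-trick}(2): an object of $\ulMP^\fin$ over $M$ with étale, minimal structure map is rigid, i.e.\ has no nontrivial automorphisms over $M$, and the minimality condition pins down the divisor).

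Next I would identify the topologies. On $(\Mb)_\Nis$ the covering families are the usual elementary Nisnevich covers; on $M_\Nis$ the covers are the families induced by $P_{\ulMVfin}$, i.e.\ families $\{U_i \to N\}$ that become elementary Nisnevich covers of $\Nb$ after passing to underlying schemes, all maps being minimal. By the very Definition~\ref{d3.2}(2) of an $\ulMVfin$-square, a square in $M_\Nis$ is a covering square precisely when $\Phi$ sends it to an elementary Nisnevich square in $(\Mb)_\Nis$; and conversely, given an elementary Nisnevich square in $(\Mb)_\Nis$, applying $\Psi$ produces a square in $M_\Nis$ whose maps are automatically minimal (the divisors on all four corners are the pullbacks of $M^\infty$, so every map $f$ satisfies $\ol{f}^* (\text{target divisor}) = (\text{source divisor})$). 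Thus $\Phi$ and $\Psi$ match covering squares with covering squares, hence — since the Grothendieck topologies here are the ones generated by the respective cd-structures (Proposition~\ref{p3.1} guarantees this generation is well behaved) — $\Phi$ and $\Psi$ are both continuous and cocontinuous. Therefore the equivalence of underlying categories is an isomorphism of sites, and $\Psi = \Phi^{-1}$ as claimed.

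The main obstacle I anticipate is purely bookkeeping rather than conceptual: one must verify that every morphism appearing in an $\ulMVfin$-square on the $M$-side is \emph{minimal} and that this is preserved under $\Phi$ and $\Psi$, which comes down to the compatibility $\ol{f}^* M^\infty = N^\infty$ being stable under composition and pullback along étale maps — a routine consequence of functoriality of divisor pullback for flat morphisms. A secondary point to be careful about is that the isomorphism of sites depends on the chosen Cartier divisor $M^\infty$ (as the statement notes): $\Psi$ uses $M^\infty$ explicitly, so different choices of the same support give genuinely different, though isomorphic, sites. Neither point requires any hard input beyond the completeness/regularity of $P_{\ulMVfin}$ recorded in Proposition~\ref{p3.1} and the rigidity statement of Remark~\ref{rk-graph-trick}(2). \qed
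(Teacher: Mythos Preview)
Your proposal is correct and follows the natural, essentially forced, argument. Note, however, that the paper does not give its own proof of this lemma: it is quoted verbatim from \cite[Lemma~3.1.3]{kmsy} and the statement ends with a bare \qed, so there is nothing to compare your argument against here.
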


\begin{nota}\label{n3.7a}
Let $M=(\ol{M},M^\infty) \in \ulMP^\fin$ and $F \in \ulMPS^\fin$. We
write $F_M$ for the presheaf on $(\ol{M})_\et$ which associates
$F(U,U\times_{\ol{M}} M^\infty)$ to an \'etale map $U \to \ol{M}$.
By Lemma \ref{lem:equiv-smallsites}, $F\in \ulMNS^\fin$ if and only
if $F_M$ is a sheaf on $(\ol{M})_\Nis$ for every $M\in \ulMP^\fin$.
\end{nota}

Let $\ulMNST^\fin$ be the full subcategory of $\ulMPST^\fin$
consisting of all objects $F \in \ulMPST^\fin$ such that
$\ul{c}^{\fin *} F \in \ulMNS^\fin$, where $\ul{c}^{\fin
*}:\ulMPST^\fin\to \ulMPS^\fin$ is from \eqref{MPScategories}.

We write $\ul{i}_{ \Nis}^{\fin}: \ulMNST^\fin\to \ulMPST^\fin$ for
the inclusion functor and $\ul{c}^{\fin \Nis} : \ulMNST \to \ulMNS$
for the functor induced by $\ul{c}^{\fin *}$. By definition, we have
\begin{equation}\label{eq:c-i-fin}
\ul{c}^{\fin *} \ul{i}_{\Nis}^{\fin} =\ul{i}_{s, \Nis}^{\fin}
\ul{c}^{\fin \Nis}.
\end{equation}

\begin{thm}[\protect{\cite[Theorem~3.5.3]{kmsy}}]\label{thm:sheaf-transfer}
The functor $\ul{i}_{ \Nis}^{\fin}$ has an exact left adjoint
\[\ul{a}_{ \Nis}^{\fin} : \ulMPST^{\fin} \to \ulMNST^{\fin}\]
satisfying
\begin{equation}\label{eq:c-a-fin}
\ul{c}^{\fin \Nis} \ul{a}_{ \Nis}^{\fin} = \ul{a}_{s \Nis}^{\fin}
\ul{c}^{\fin *}.
\end{equation}
In particular $\ulMNST^{\fin}$ is Grothendieck. Moreover,
$\ulMNST^{\fin}$ is closed under infinite direct sums in
$\ulMPST^{\fin}$ and the inclusion functor $\ul{i}_{\Nis}^{\fin} :
\ulMNST^{\fin} \to \ulMPST^{\fin}$  is strongly additive.
\end{thm}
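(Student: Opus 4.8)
The plan is to deduce Theorem~\ref{thm:sheaf-transfer} (for the ``$\fin$'' categories) from the cd-structure formalism, mirroring the argument for $\ulMNST$ in \cite[\S4]{kmsy} but now over $\ulMSm^\fin$ equipped with the complete and regular cd-structure $P_{\ulMVfin}$ of Proposition~\ref{p3.1}. First I would record the geometric input: by Lemma~\ref{lem:equiv-smallsites}, for $M\in\ulMP^\fin$ the small site $M_\Nis$ is isomorphic to the usual Nisnevich site $(\ol M)_\Nis$, so the topology generated by $P_{\ulMVfin}$ on $\ulMSm^\fin$ restricts on each $M_\Nis$ to the ordinary Nisnevich topology; combined with the characterization of Nisnevich sheaves via elementary squares (cited in Definition~\ref{d3.2}(1)), this says $F\in\ulMNS^\fin$ iff each $F_M$ is a Nisnevich sheaf on $\ol M$. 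The sheafification $\ulasNisfin$ of \eqref{eq;ulasfin} exists and is exact by \cite[exp.~II, th.~3.4]{SGA4}, since $P_{\ulMVfin}$ is complete (so the associated topology is generated by a bounded, in fact finite, family of covers, ensuring exactness of sheafification in a Grothendieck category).

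Next I would transport this to the transfer categories. The functor $\ulc^{\fin *}:\ulMPST^\fin\to\ulMPS^\fin$ of \eqref{MPScategories} is exact and faithful by Lemma~\ref{eq:c-functor}, and it has a left adjoint $\ulc^\fin_!$. The key step is the standard ``sheaves with transfers = sheaves whose underlying presheaf is a sheaf'' argument of Voevodsky type: one must show that for $F\in\ulMPST^\fin$ the presheaf-level sheafification $\ulasNisfin\,\ulc^{\fin *}F$ carries a canonical transfer structure, i.e.\ descends to an object $\ulaNisfin F\in\ulMNST^\fin$ with $\ulc^{\fin\Nis}\ulaNisfin F=\ulasNisfin\ulc^{\fin *}F$, giving \eqref{eq:c-a-fin}, and that $F\mapsto \ulaNisfin F$ is left adjoint to $\uli^\fin_\Nis$. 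This is where Lemma~\ref{lem:lr-adjoint}(3) does the formal work: with $\sC=\ulMPST^\fin$, $\sD=\ulMPS^\fin$, $\sC'=\ulMNST^\fin$, $\sD'=\ulMNS^\fin$, $c=\ulc^{\fin *}$, $c'=\ulc^{\fin\Nis}$, $i_\sC=\uli^\fin_\Nis$, $i_\sD=\ulis^{\fin}_\Nis$, the existence of the left adjoint $\ulasNisfin=a_\sD$ of $i_\sD$ together with the (to be established) relation $a_\sD\, c = c'\, a_\sC$ produces a left adjoint $a_\sC=\ulaNisfin$ of $\uli^\fin_\Nis$, exact because $a_\sD$ and $c=\ulc^{\fin *}$ are exact (part~(4) of the same lemma gives exactness of $c'=\ulc^{\fin\Nis}$ as a byproduct). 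The heart of the matter—and the step I expect to be the main obstacle—is precisely the compatibility $\ulasNisfin\,\ulc^{\fin *}=\ulis^{\fin}_\Nis\,\ulc^{\fin\Nis}\,\ulaNisfin$: one has to produce, for every $\ulMVfin$-square, Nisnevich-local transfer maps on $\ulc^{\fin *}F$ and check they glue, equivalently that $\ulaNisfin$ can be computed by a formula of the shape $(\ulaNisfin F)(M)=\colim_{N}(F_N)_\Nis(N)$ over an appropriate filtered category of ``$\ulMVfin$-local'' morphisms $N\to M$, in the spirit of Theorem~\ref{thmintro;ulMNST}(1) and \eqref{eq:b-sh-explicit}; this relies on the fact that minimal morphisms $N\to M$ with $\ol N\to\ol M$ étale generate the topology and are compatible with left-proper admissible correspondences.

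Finally, the remaining assertions are formal consequences. That $\ulMNST^\fin$ is Grothendieck follows from Gabriel–Popescu / the standard localization argument once $\ulaNisfin$ is an exact left adjoint to the fully faithful inclusion from the (already Grothendieck) category $\ulMPST^\fin$; closure under infinite direct sums in $\ulMPST^\fin$ and strong additivity of $\uli^\fin_\Nis$ follow from Lemma~\ref{lem:lr-adjoint}(2) applied with the strongly additive $\ulc^{\fin *}$ and the fact that $\ulis^{\fin}_\Nis$ has a (strongly additive, being a left adjoint) left inverse $\ulasNisfin$, plus the elementary observation that a presheaf-level direct sum of $F_M$-sheaves is a sheaf because the relevant Nisnevich covers are finite (quasi-compactness), so infinite sums of objects of $\ulMNS^\fin$ stay in $\ulMNS^\fin$ and hence infinite sums in $\ulMNST^\fin$ agree with those in $\ulMPST^\fin$. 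I would organize the write-up so that all of the ``soft'' adjunction bookkeeping is dispatched by Lemma~\ref{lem:lr-adjoint}, isolating the single geometric input—the explicit colimit formula for $\ulaNisfin$ and its transfer-compatibility across $\ulMVfin$-squares—as the one place where the completeness/regularity of $P_{\ulMVfin}$ and Lemma~\ref{lem:equiv-smallsites} are genuinely used.
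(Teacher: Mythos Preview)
This theorem is not proved in the present paper---it is quoted from \cite[Theorem~3.5.3]{kmsy}---so there is no proof here to compare against directly. Your outline names the right substantive ingredient (the Voevodsky-type lifting of Nisnevich sheafification to transfers) but the formal packaging you propose does not work, and that is a genuine gap.

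Your appeal to Lemma~\ref{lem:lr-adjoint}(3) runs in the wrong direction. In the notation of that lemma, part~(3) \emph{assumes} that $i_\sC$ already has a left adjoint $a_\sC$ and, from this together with a left adjoint $d$ of $c$, produces a left adjoint $d'=a_\sC\, d\, i_\sD$ of $c'$; part~(4) likewise presupposes both $a_\sC$ and $a_\sD$. With your assignments ($i_\sC=\ul{i}_\Nis^{\fin}$, $c=\ul{c}^{\fin *}$, etc.) the lemma would at best hand you a left adjoint of $\ul{c}^{\fin\Nis}$, not the sought-for $\ul{a}_\Nis^{\fin}$. The relation $a_\sD\, c=c'\, a_\sC$ that you mark ``to be established'' already contains the unknown $a_\sC$ on the right-hand side and therefore cannot serve to define it; unwound, that relation says precisely that $\ul{a}_{s,\Nis}^{\fin}\,\ul{c}^{\fin *}F$ lies in the essential image of the faithful functor $\ul{c}^{\fin\Nis}$, i.e.\ that the Nisnevich sheafification of the underlying presheaf of $F$ carries a transfer structure. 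That is the content of the theorem, not an input one feeds into a categorical lemma.

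The argument in \cite{kmsy} constructs $\ul{a}_\Nis^{\fin}$ directly: one first proves that $\Z_\tr^{\fin}(M)\in\ulMNST^{\fin}$ for every $M$ (equivalently, that the \v{C}ech complex of representables attached to any $\ulMVfin$-cover becomes exact after sheafification, via a henselian-local lifting of finite correspondences), and then builds the transfer on $\ul{a}_{s,\Nis}^{\fin}\,\ul{c}^{\fin *}F$ from this by hand, checking the adjunction and exactness afterwards. Only \emph{after} $\ul{a}_\Nis^{\fin}$ exists does Lemma~\ref{lem:lr-adjoint} become relevant, and then only for the auxiliary claims in your final paragraph (strong additivity of $\ul{i}_\Nis^{\fin}$, exactness of $\ul{c}^{\fin\Nis}$), not for the construction itself. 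A smaller point: your proposed colimit description of $(\ul{a}_\Nis^{\fin}F)(M)$ is misleading---in the $\fin$ setting, Lemma~\ref{lem:equiv-smallsites} identifies the topology with the ordinary Nisnevich topology on each $\ol{M}$, so sheafification is computed on the small site with no auxiliary filtered colimit; the $\ul{\Sigma}^{\fin}$-colimit in Theorem~\ref{thmintro;ulMNST}(1) enters only when one passes from $\ulMNST^{\fin}$ to $\ulMNST$ via $\ul{b}_!$.
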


\subsection{The $\protect\ulMV$ cd-structure}

\begin{defn}[\protect{\cite[Definition~4.1.1]{kmsy}}] \label{d4.1}
A diagram in $\ulMSm$ is called an \emph{$\ulMV$-square} if it is
isomorphic in $\ulMSm^\Sq$ to $\ul{b}_s^\Sq(Q)$ for some
$\ulMVfin$-square $Q$ as in Definition \ref{d3.2}, where $\ul{b}_s^\Sq$
is the functor induced on squares by $\ul{b}_s: \ulMSm^\fin \to
\ulMSm$ from \eqref{MPcategories}. Let $P_{\ul{\MV}}$ be a
cd-structure on $\ulMSm$ given by the collection of $\ulMV$-squares.
\end{defn}

\begin{thm}[\protect{\cite[Theorem~4.1.2]{kmsy}}]\label{thm;cd-str-ulMSm}
The cd-structure $P_{\ul{\MV}}$ is strongly complete and strong\-ly
regular, in particular complete and regular $($see \cite[Definition~A.7.1
and A.7.4]{kmsy}$)$.
\end{thm}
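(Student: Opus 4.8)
The statement to prove is Theorem~\ref{thm;cd-str-ulMSm}: the cd-structure $P_{\ul{\MV}}$ on $\ulMSm$ consisting of $\ulMV$-squares is strongly complete and strongly regular (hence complete and regular).

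\medskip

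The plan is to reduce everything to the corresponding statement for the $\ulMVfin$ cd-structure on $\ulMSm^\fin$, namely Proposition~\ref{p3.1}, via the functor $\ul{b}_s : \ulMSm^\fin \to \ulMSm$ of \eqref{MPcategories}. By Definition~\ref{d4.1}, every $\ulMV$-square is by construction isomorphic in $\ulMSm^\Sq$ to $\ul{b}_s^\Sq(Q)$ for some $\ulMVfin$-square $Q$; so the class of $\ulMV$-squares is, up to isomorphism, the image of the class of $\ulMVfin$-squares under $\ul{b}_s^\Sq$. First I would record the elementary behaviour of $\ul{b}_s$: it is the identity on objects (both categories have the same objects, those of $\ulMSm$), it is fully faithful on the subcategory of minimal morphisms, and — most importantly — it creates / preserves the relevant fibre products and initial object. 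Concretely, the terminal and initial structure used in the definitions of ``complete'' and ``regular'' (the empty square, the square built from a morphism and its pullbacks, distinguished squares glued along edges) all lie in the image of $\ul{b}_s^\Sq$ because the operations used to build them — taking products $\Mb_1\times_{\Mb_3}\Mb_2$ with the sum/pullback divisor, forming the diagonal, etc. — are exactly the operations already available in $\ulMSm^\fin$ and are sent by $\ul{b}_s$ to the corresponding operations in $\ulMSm$.

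\medskip

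Next I would unwind the definitions of strong completeness and strong regularity from \cite[Definition~A.7.1 and A.7.4]{kmsy}. Strong completeness asks, roughly, that: the empty square is distinguished; and for a distinguished square \eqref{eq.cd} and a morphism $Y\to X$, after pulling back one again gets a distinguished square, and moreover the square is ``stable'' under the relevant refinements. Strong regularity asks that for a distinguished square the induced square on the ``diagonal'' (e.g.\ $W \to W\times_U W$, or the square comparing $V$ with $V\times_X V$) is again distinguished, plus the morphisms $W\to U\times_X V$ are monomorphisms in a strong sense. In each of these clauses the data produced for the $\ulMV$-square is the image under $\ul{b}_s^\Sq$ of the analogous data for the underlying $\ulMVfin$-square: this is where one uses that $\ul{b}_s$ commutes with the pullbacks in question (pullback of a minimal morphism along a minimal morphism, with the divisor pulled back), which in turn rests on the fact that a $\ulMVfin$-square is Cartesian on underlying schemes with all maps minimal, so the scheme-level pullback already carries the correct (pulled-back) divisor. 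Hence each axiom for $P_{\ul{\MV}}$ follows by applying $\ul{b}_s^\Sq$ to the corresponding verified axiom for $P_{\ulMVfin}$ from Proposition~\ref{p3.1}.

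\medskip

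The main obstacle I anticipate is checking that $\ul{b}_s$ genuinely commutes with the pullbacks appearing in the completeness/regularity axioms, and — relatedly — that a square in $\ulMSm$ which is \emph{abstractly isomorphic} to $\ul{b}_s^\Sq(Q)$ for some $\ulMVfin$-square $Q$ still behaves well, i.e.\ that one can choose $Q$ (and the isomorphisms) compatibly when several squares are glued. The subtlety is that $\ul{b}_s$ is not full (it is an inclusion of a subcategory of \emph{minimal} maps into all of $\ulMSm$), so a morphism between two $\ulMV$-squares in $\ulMSm$ need not come from $\ulMSm^\fin$; one must check the axioms only involve morphisms that do lift (the edges of the squares, being minimal by Definition~\ref{d3.2}(2) and Definition~\ref{d4.1}, and the structure maps $Y\to X$, which can be assumed minimal after passing to a suitable compactification as in Lemma-Definition~\ref{d1.12}). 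Once it is established that the whole diagram chase in each axiom takes place inside the image of $\ul{b}_s$ and that $\ul{b}_s^\Sq$ reflects the ``distinguished'' property (a square is a $\ulMV$-square iff it is isomorphic to the $\ul{b}_s^\Sq$-image of a $\ulMVfin$-square, by definition), the theorem is immediate from Proposition~\ref{p3.1}. I would therefore structure the proof as: (a) a lemma on the exactness/pullback-compatibility of $\ul{b}_s$; (b) the observation that all axiom-data lie in the image of $\ul{b}_s^\Sq$; (c) conclude by transporting Proposition~\ref{p3.1}.
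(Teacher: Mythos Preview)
First, note that the present paper does not prove this theorem; it is quoted from \cite[Theorem~4.1.2]{kmsy} without argument. There is thus no proof in this paper to compare against, and your proposal is an attempted reconstruction of the argument in \cite{kmsy}.

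Your overall strategy --- transport the axioms from $P_{\ulMVfin}$ along $\ul{b}_s$ using Proposition~\ref{p3.1} --- is the natural one and is indeed what \cite{kmsy} does. However, the obstacle you flag is genuine and your proposed resolution is wrong. The completeness axiom requires stability of distinguished squares under pullback along an \emph{arbitrary} morphism $Y\to X$ in $\ulMSm$; such a morphism is by definition only a map $Y^\o\to X^\o$ whose graph has left-proper admissible closure, and need not extend to a scheme morphism $\ol{Y}\to\ol{X}$ at all, let alone be minimal. Lemma-Definition~\ref{d1.12} is irrelevant here: it produces compactifications of a single modulus pair into $\MSm$ and says nothing about rendering a morphism minimal. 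The correct device is the graph trick (cf.\ Remark~\ref{rk-graph-trick} and the calculus of fractions for $\ul{\Sigma}^\fin$ underlying Lemma~\ref{eq:bruno-functor} and \cite[Proposition~1.9.2]{kmsy}): replacing $\ol{Y}$ by the closure of the graph of $Y^\o\to X^\o$ in $\ol{Y}\times\ol{X}$, with divisor pulled back from $\ol{Y}$, yields an object isomorphic to $Y$ in $\ulMSm$ (the projection lies in $\ul{\Sigma}^\fin$) from which the map to $X$ does lie in $\ulMSm^\fin$.

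There is a second point your outline does not address: even once all relevant morphisms are arranged to lie in $\ulMSm^\fin$, one must check that a $\ulMVfin$-square, viewed via $\ul{b}_s$, is actually cartesian in the larger category $\ulMSm$ (this is part of regularity), and that the fibre products one forms for completeness are fibre products in $\ulMSm$ and not merely in $\ulMSm^\fin$. This is not automatic, since $\ulMSm$ has strictly more morphisms than $\ulMSm^\fin$; it again requires the graph trick. In short: the skeleton of your plan (a)--(c) is correct, but step (a) as you state it is false, and step (b) is not a mere observation but the heart of the matter.
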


\begin{rk} In view of Lemma \ref{lem:equiv-smallsites}, the topology defined by $P_{\ul{\MV}^\fin}$ is subcanonical. This is also true for $P_{\ul{\MV}}$ by \cite[Theorem~4.5.1]{nistopmod}.
\end{rk}

We write $\ulMNS$ for the full subcategory of $\ulMPS$ consisting of
sheaves with respect to the Grothendieck topology on $\ulMSm$
associated to $P_{\ul{\MV}}$. We denote by $\ul{i}_{s,\Nis}: \ulMNS
\to \ulMPS$ the inclusion functor.

\begin{lemma}\label{lem:sheaves-MNS}
The functor $\ul{i}_{s, \Nis}: \ulMNS \to \ulMPS$ has an exact left
adjoint $\ul{a}_{s, \Nis}$. In particular $\ulMNS$ is Grothendieck.
Moreover, the following conditions are equivalent for $F \in
\ulMPS$.
\begin{thlist}
\item
$F\in \ulMNS$.
\item
It transforms any $\ulMVfin$-square
\begin{equation}\label{Q0} Q : \vcenter{\xymatrix{
W \ar[r]^{} \ar[d]_{} & V \ar[d]^{} \\
U \ar[r]^{} & M }}\end{equation} into an exact sequence
\[ 0\ \to F(M) \to F(U) \oplus F(V) \to F(W).\]
\end{thlist}
\end{lemma}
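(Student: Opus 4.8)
The plan is to reduce the statement to the already-established cd-structure formalism, exactly as in the finite case (Theorem \ref{thm:sheaf-transfer} and its underlying \cite{kmsy} machinery), now applied to the cd-structure $P_{\ul{\MV}}$ on $\ulMSm$. First I would observe that $P_{\ul{\MV}}$ is complete and regular (Theorem \ref{thm;cd-str-ulMSm}), so by the general theory of \cite{cdstructures} (in the additive-presheaf form used in \cite[\S A.7]{kmsy}) the inclusion $\ul{i}_{s,\Nis}:\ulMNS\to\ulMPS$ admits a left adjoint sheafification functor, and — since the cd-structure is \emph{bounded}/regular — this sheafification is exact and $\ulMNS$ is a Grothendieck abelian category. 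That gives the first two sentences of the Lemma. The existence of enough colimits and a generator is inherited from $\ulMPS$, which is a presheaf category on the essentially small category $\ulMSm$.

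For the equivalence (i)$\Leftrightarrow$(ii), the key input is again \cite[Corollary~2.17]{cdstructures}, \cite[Theorem~2.2]{unstableJPAA}: for a complete, regular (and bounded) cd-structure, a presheaf is a sheaf for the associated topology \emph{if and only if} it sends the empty sieve to $0$ and each distinguished square to a Cartesian square of abelian groups, equivalently to a short exact sequence
\[
0\to F(M)\to F(U)\oplus F(V)\to F(W).
\]
So (ii) says precisely that $F$ transforms every $\ulMVfin$-square into such an exact sequence. By Definition \ref{d4.1}, every $\ulMV$-square on $\ulMSm$ is isomorphic in $\ulMSm^\Sq$ to $\ul{b}_s^\Sq(Q)$ for some $\ulMVfin$-square $Q$; but note that the four schemes underlying $\ul{b}_s^\Sq(Q)$ are literally the same as those of $Q$ (the functor $\ul{b}_s$ only enlarges the morphism sets, not the objects), so $F$ applied to a $\ulMV$-square and $F$ applied to the corresponding $\ulMVfin$-square give the \emph{same} diagram of abelian groups. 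Hence "$F$ transforms every $\ulMV$-square into an exact sequence" is equivalent to "$F$ transforms every $\ulMVfin$-square into an exact sequence", and the former is the characterization of $\ulMNS$ coming from the cd-structure criterion. This chain of equivalences yields (i)$\Leftrightarrow$(ii).

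The main obstacle I anticipate is the careful bookkeeping needed to invoke the cd-structure sheaf criterion in exactly the right form. Two points require attention. First, the criterion in \cite{cdstructures} is stated for set-valued presheaves on a category with a cd-structure; one must pass to the additive (abelian-group-valued) version and check that "sends distinguished squares to Cartesian squares of abelian groups" unwinds to the displayed three-term exact sequence — this is where completeness guarantees the left-exactness (injectivity of $F(M)\to F(U)\oplus F(V)$ and exactness in the middle) and one uses that the square includes the open immersion $U\to M$, so there is no fourth term needed. Second, one must be sure that exactness of $\ul{a}_{s,\Nis}$ genuinely follows: this uses regularity of $P_{\ul{\MV}}$ together with the fact that filtered colimits are exact in $\Ab$, so that the explicit formula for the associated sheaf (a transfinite composition of the $L$-construction, each step a filtered colimit of finite limits over the square-diagrams) preserves finite limits. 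All of this is parallel to the finite-level argument already carried out in \cite{kmsy} for $P_{\ulMVfin}$, so the proof is essentially a transcription; the only genuinely new ingredient is Theorem \ref{thm;cd-str-ulMSm}, which has been quoted, plus the elementary remark that $\ul{b}_s$ is the identity on objects.
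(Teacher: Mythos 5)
Your proof is correct and takes the paper's own route: the sheaf criterion of \cite[Corollary~2.17]{cdstructures}, made applicable by Theorem~\ref{thm;cd-str-ulMSm}, plus the observation that $\ulMV$-squares are (up to isomorphism in $\ulMSm^\Sq$) exactly the $\ul{b}_s^\Sq$-images of $\ulMVfin$-squares while $\ul{b}_s$ is the identity on objects, so isomorphism-invariance of the presheaf $F$ lets you pass freely between the two square classes. One small inaccuracy worth flagging: you attribute the exactness of $\ul{a}_{s,\Nis}$ and the Grothendieck property of $\ulMNS$ to regularity (or ``boundedness'') of the cd-structure, but neither hypothesis is needed for those two facts --- they hold for sheafification on \emph{any} small Grothendieck site, which is precisely the content of \cite[expos\'e~II, th\'eor\`eme~3.4]{SGA4} that the paper cites for ``the first two assertions''; regularity of $P_{\ul{\MV}}$ plays no role in making $\ul{a}_{s,\Nis}$ exact.
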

\begin{proof}
The first two assertions follow from the general properties of
Grothendieck topologies \cite[expos\'e~II]{SGA4}. The equivalence (i)
$\iff$ (ii) follows from \cite[Corollary~2.17]{cdstructures} in view of
Theorem \ref{thm;cd-str-ulMSm}.
\end{proof}

\begin{lemdef}[\protect{\cite[Lemma 4.5.1]{kmsy}}]\label{lem:mnst-condition}
For $F \in \ulMPST$, one has $\ul{c}^*F \in \ulMNS$ if and only if
$\ul{b}^*F \in \ulMNST^\fin$, where
\[\ul{b}^*: \ulMPST \to \ulMPST^\fin,\quad \ul{c}^*: \ulMPST \to \ulMPS\]
are from \eqref{MPScategories}.\\
We define $\ulMNST$ to be the full subcategory of $\ulMPST$
consisting of those $F$ enjoying these equivalent conditions. We
denote by $\ul{i}_{\Nis}: \ulMNST \to \ulMPST$ the inclusion
functor, and by $\ul{b}^\Nis : \ulMNST \to \ulMNST^\fin$ the functor
induced by $\ul{b}^*$.
\end{lemdef}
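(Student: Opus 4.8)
The plan is to unwind both conditions and thereby reduce the statement to one about ordinary (transfer-free) presheaves on $\ulMSm$ and $\ulMSm^\fin$, for which the required sheaf criterion is already available. First I would note that, by the definition of $\ulMNST^\fin$ (see the paragraph preceding Theorem~\ref{thm:sheaf-transfer}), the condition $\ul{b}^*F \in \ulMNST^\fin$ means $\ul{c}^{\fin *}\ul{b}^*F \in \ulMNS^\fin$; and by the commutation relation $\ul{c}^{\fin *}\ul{b}^* = \ul{b}_s^*\ul{c}^*$ of \eqref{eq:b-and-c}, this is the same as $\ul{b}_s^*(\ul{c}^*F) \in \ulMNS^\fin$. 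Writing $G := \ul{c}^*F \in \ulMPS$, it therefore suffices to prove that $G \in \ulMNS$ if and only if $\ul{b}_s^*G \in \ulMNS^\fin$ — an assertion that mentions neither transfers nor $F$ any more, and which in fact holds for an arbitrary $G \in \ulMPS$.

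For this I would apply the cd-structure sheaf criterion on each side. On one hand, Lemma~\ref{lem:sheaves-MNS}(ii) says that $G \in \ulMNS$ if and only if $G$ sends every $\ulMVfin$-square (regarded in $\ulMSm$ via $\ul{b}_s$) to the exact sequence displayed there. On the other hand, Proposition~\ref{p3.1} says $P_{\ulMVfin}$ is complete and regular, so by \cite[Corollary~2.17]{cdstructures} (in the additive form recalled after Definition~\ref{d3.2}(1)) a presheaf $H \in \ulMPS^\fin$ lies in $\ulMNS^\fin$ if and only if $H$ sends every $\ulMVfin$-square in $\ulMSm^\fin$ to the corresponding exact sequence; I apply this to $H = \ul{b}_s^*G$. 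Since $\ul{b}_s : \ulMSm^\fin \to \ulMSm$ is the identity on objects and merely reinterprets morphisms inside $\ulMSm$, for any $\ulMVfin$-square $Q$ one has $(\ul{b}_s^*G)(Q(ij)) = G(Q(ij))$ compatibly with the arrows of $Q$, so the complex obtained by evaluating $\ul{b}_s^*G$ on $Q$ is literally the one obtained by evaluating $G$ on $\ul{b}_s^\Sq Q$; and as $Q$ runs over all $\ulMVfin$-squares these are exactly the squares occurring in Lemma~\ref{lem:sheaves-MNS}(ii). Hence the two criteria are the same condition on $G$, and the chain $\ul{b}^*F \in \ulMNST^\fin \iff \ul{b}_s^*\ul{c}^*F \in \ulMNS^\fin \iff \ul{c}^*F \in \ulMNS$ closes up.

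I do not anticipate a genuine obstacle: the whole weight of the argument rests on Lemma~\ref{lem:sheaves-MNS}(ii), that is, on the fact that being a sheaf for the (a priori larger) $P_{\ul{\MV}}$-topology on $\ulMSm$ is already detected by the $\ulMVfin$-squares; this is precisely where Theorem~\ref{thm;cd-str-ulMSm}, together with the invariance of the exactness condition under isomorphisms in $\ulMSm^\Sq$, is really used. Granting that input, what remains is a short diagram chase with \eqref{MPScategories} and the identity \eqref{eq:b-and-c}.
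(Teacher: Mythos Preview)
Your proof is correct and is essentially the natural argument: reduce via \eqref{eq:b-and-c} to the transfer-free statement $G\in\ulMNS\iff \ul{b}_s^*G\in\ulMNS^\fin$ for $G=\ul{c}^*F$, and then observe that the cd-structure sheaf criteria on both sides test exactly the same family of squares. Note, however, that this paper does not supply its own proof of the statement; it is quoted verbatim from \cite[Lemma~4.5.1]{kmsy}, so there is no proof here to compare against --- your argument is what one expects the proof in \cite{kmsy} to be.
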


Recall the functor $\ul{b}_!: \ulMPST^\fin\to \ulMPST$ from Lemma
\ref{eq:bruno-functor}.

\begin{prop}[\protect{\cite[Proposition~4.5.4]{kmsy}}]\label{lem;b!ulMNST}
We have $\ul{b}_{!}(\ulMNST^\fin) \subset  \ulMNST$. We can thus consider
$\ul{b}_{\Nis} : \ulMNST^\fin \to \ulMNST$, the restriction of
$\ul{b}_{!}$, so that we have
\begin{equation}\label{eq:b-and-i3}
\ul{b}_{!} \ul{i}_{\Nis}^\fin = \ul{i}_{\Nis} \ul{b}_{\Nis}.
\end{equation}
Then $\ul{b}_{\Nis}$ is an exact left adjoint of $\ul{b}^\Nis$, and
is fully faithful.
\end{prop}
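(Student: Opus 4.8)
The plan is to deduce the statement from the abstract adjunction machinery in Lemma \ref{lem:lr-adjoint}, together with the analogous already-proven statement at the level of presheaves of sets, namely Lemma \ref{eq:bruno-functor} (for $\ul b_s$) and Theorem \ref{thm:sheaf-transfer}. First I would check the inclusion $\ul b_!(\ulMNST^\fin)\subset\ulMNST$. By Lemma-Definition \ref{lem:mnst-condition}, a presheaf $F\in\ulMPST$ lies in $\ulMNST$ iff $\ul b^*F\in\ulMNST^\fin$. So I need: for $F\in\ulMNST^\fin$, $\ul b^*\ul b_!F\in\ulMNST^\fin$. Now the counit $\ul b^*\ul b_!\to\id$ is an isomorphism because $\ul b_!$ is fully faithful (Lemma \ref{eq:bruno-functor} says $\ul b_!$ is a ``section'' of the localisation $\ul b^*$ — more precisely $\ul b^*$ is a localisation and $\ul b_!$ is fully faithful, so the unit $\id\to\ul b^*\ul b_!$ is an isomorphism). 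Hence $\ul b^*\ul b_!F\cong F\in\ulMNST^\fin$, giving the inclusion. This also immediately yields \eqref{eq:b-and-i3}: both composites $\ul b_!\ul i_\Nis^\fin$ and $\ul i_\Nis\ul b_\Nis$ are the restriction of $\ul b_!$ to $\ulMNST^\fin$, landing in $\ulMNST$.

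Next I would establish that $\ul b_\Nis$ is left adjoint to $\ul b^\Nis$. This is a formal consequence of $\ul b_!\dashv\ul b^*$ once one knows that the inclusions $\ul i_\Nis^\fin$, $\ul i_\Nis$ have left adjoints (sheafifications) — this is exactly the situation of Lemma \ref{lem:lr-adjoint}(3), applied with $\sC=\ulMPST^\fin$, $\sD=\ulMPST$, $\sC'=\ulMNST^\fin$, $\sD'=\ulMNST$, $c=\ul i_\Nis$ playing the role of... wait, I should be careful about which functor is which. Concretely: I would take the adjoint pair $(\ul b_!,\ul b^*)$ and restrict it. For fully faithfulness of $\ul b_\Nis$, since $\ul b_!$ is fully faithful and $\ul i_\Nis$ is fully faithful and $\ul i_\Nis\ul b_\Nis=\ul b_!\ul i_\Nis^\fin$ with $\ul i_\Nis^\fin$ fully faithful, fully faithfulness of $\ul b_\Nis$ follows. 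For exactness of $\ul b_\Nis$: $\ul b_\Nis$ preserves cokernels as a left adjoint; for kernels, one uses that $\ul b_!$ is exact (Lemma \ref{eq:bruno-functor}), that $\ulMNST^\fin\hookrightarrow\ulMPST^\fin$ and $\ulMNST\hookrightarrow\ulMPST$ are exact (kernels in a sheaf subcategory are computed in presheaves), and that $\ul b_\Nis$ agrees with $\ul b_!$ after these inclusions — so a left-exactness statement for $\ul b_\Nis$ transfers from $\ul b_!$. This is packaged in Lemma \ref{lem:lr-adjoint}(4).

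The main obstacle, I expect, is not any single step but rather bookkeeping: making sure the compatibility \eqref{eq:c-a-fin}, \eqref{eq:c-i-fin}, and the relations \eqref{eq:b-and-c} between $\ul b$, $\ul c$, $\ul b_s$, $\ul c^\fin$ are invoked in the right order so that one genuinely reduces to the set-theoretic / presheaf statements already proven in \cite{kmsy}. In particular, to verify the hypothesis ``$a_\sD c=c'a_\sC$'' needed for Lemma \ref{lem:lr-adjoint}(3)--(4), one must know that sheafification commutes appropriately with $\ul b_!$, i.e. a formula like $\ul a_\Nis\ul b_!=\ul b_\Nis\ul a_\Nis^\fin$; this should follow by taking left adjoints in the (already established) identity $\ul b^\Nis\ul i_\Nis=\ul i_\Nis^\fin\ul b^*|_{\ulMNST}$, or equivalently from \eqref{eq:b-and-i3} by an adjunction argument, but it is the one place where a small verification is genuinely needed rather than pure formalism. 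Once that commutation is in hand, exactness and the adjunction are automatic, and the proof is complete.
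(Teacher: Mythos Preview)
There is a genuine error in your argument: you have the roles of $\ul b_!$ and $\ul b^*$ reversed. Lemma \ref{eq:bruno-functor} says explicitly that $\ul b_!$ and $\ul b_*$ are \emph{localisations} while $\ul b^*$ is \emph{fully faithful}. (Contrast this with Lemma \ref{eq.tau}, where it is $\tau_!$ that is fully faithful; the two situations are not parallel.) Consequently the unit $\id\to \ul b^*\ul b_!$ is \emph{not} an isomorphism in general, and your proof of the inclusion $\ul b_!(\ulMNST^\fin)\subset\ulMNST$ via the criterion ``$\ul b^*F\in\ulMNST^\fin$'' collapses. The same misreading undermines your argument for the full faithfulness of $\ul b_\Nis$: you cannot inherit it from a full faithfulness of $\ul b_!$ that does not hold.

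For the inclusion, the workable route is the \emph{other} criterion in Lemma-Definition \ref{lem:mnst-condition}: show $\ul c^*\ul b_!F\in\ulMNS$. By \eqref{eq:b-and-c} one has $\ul c^*\ul b_! = \ul b_{s!}\ul c^{\fin *}$, so the question reduces to $\ul b_{s!}(\ulMNS^\fin)\subset\ulMNS$, a statement without transfers that is checked directly from the explicit colimit formula \eqref{eq:b-sh-explicit} and the exactness of filtered colimits. The full faithfulness of $\ul b_\Nis$ is a genuinely new phenomenon at the sheaf level: it amounts to showing that for $F\in\ulMNST^\fin$ the canonical map $F(M)\to\colim_{N\in\ul\Sigma^\fin\downarrow M}F(N)$ is an isomorphism, i.e.\ that a Nisnevich sheaf with transfers is already $\ul\Sigma^\fin$-local. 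This is not formal and requires the sheaf-theoretic input developed in \cite{kmsy}; note that the present paper does not give a proof but simply cites \cite[Proposition~4.5.4]{kmsy}.
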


\begin{thm}[\protect{\cite[Lemma 4.5.3, Theorem~4.5.5 and Proposition~4.5.6]{kmsy}}]\label{thm:sheafification-ulMNST}
The category $\ulMNST$ contains $\Z_\tr(M)$ for any $M\in \ulMCor$.
The inclusion functor $\ul{i}_{\Nis}: \ulMNST \to \ulMPST$ has an
exact left adjoint
\[\ul{a}_{ \Nis} : \ulMPST \to \ulMNST\]
given by $\ul{a}_\Nis=\ul{b}_\Nis \ul{a}_\Nis^\fin \ul{b}^*$. In
particular, $\ulMNST$ is Grothendieck. Moreover, $\ulMNST$ is closed
under infinite direct sums in  $\ulMPST$, and $\ul{i}_\Nis$ is
strongly additive. We have
\begin{equation}\label{eq:a-c2}
\ul{b}_\Nis \ul{a}_\Nis^\fin = \ul{a}_\Nis \ul{b}_!, \qquad
\ul{a}_{s, \Nis} \ul{c}^* = \ul{c}^\Nis \ul{a}_\Nis,
\end{equation}
where $\ul{c}^\Nis$ is the functor determined by Lemma-Definition
\ref{lem:mnst-condition}. This functor is exact, strongly additive
and has a left adjoint $\ul{c}_\Nis=\ul{a}_{\Nis}\ul{c}_! \ul{i}_{s,
\Nis}$.
\end{thm}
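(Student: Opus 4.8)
\emph{Plan.} I would bootstrap everything from the already-established theory of $\ulMNST^\fin$ (Theorem~\ref{thm:sheaf-transfer}) by transporting along $\ul{b}\colon\ulMCor^\fin\to\ulMCor$ and its adjoints, with the purely formal steps governed by Lemma~\ref{lem:lr-adjoint}; the one geometric input is the representability statement. To prove $\Z_\tr(M)\in\ulMNST$: by Lemma-Definition~\ref{lem:mnst-condition} it suffices to show $\ul{b}^*\Z_\tr(M)\in\ulMNST^\fin$, which by definition of $\ulMNST^\fin$ and the relation $\ul{c}^{\fin*}\ul{b}^*=\ul{b}_s^*\ul{c}^*$ of \eqref{eq:b-and-c} means $\ul{b}_s^*\ul{c}^*\Z_\tr(M)\in\ulMNS^\fin$. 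By the cd-structure criterion (Proposition~\ref{p3.1} together with \cite[Corollary~2.17]{cdstructures}; see also Notation~\ref{n3.7a} and Lemma~\ref{lem:equiv-smallsites}) this amounts to the exactness of $0\to\ulMCor(M_0,M)\to\ulMCor(U,M)\oplus\ulMCor(V,M)\to\ulMCor(W,M)$ for every $\ulMVfin$-square with corners $M_0$ (bottom right), $U$, $V$, $W$. I would deduce this from Voevodsky's Nisnevich descent for the presheaf $Y\mapsto\Cor(Y,M^\o)$ on the small Nisnevich site of a smooth scheme, using that along the (étale, hence flat) legs $U,V\to M_0$ the formation of the closure of a correspondence, of its normalization, and of the properness and admissibility conditions defining $\ulMCor$ all commute with base change (compare \cite[Lemma~4.5.3]{kmsy}). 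This is essentially self-contained, and it is where the real geometric work lies.

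\emph{The sheafification functor.} I would set $\ul{a}_\Nis:=\ul{b}_\Nis\ul{a}_\Nis^\fin\ul{b}^*$ and check that it is left adjoint to $\ul{i}_\Nis$ by the chain of natural isomorphisms obtained from $\ul{b}_\Nis\dashv\ul{b}^\Nis$ (Proposition~\ref{lem;b!ulMNST}), $\ul{a}_\Nis^\fin\dashv\ul{i}_\Nis^\fin$ (Theorem~\ref{thm:sheaf-transfer}), the identity $\ul{i}_\Nis^\fin\ul{b}^\Nis=\ul{b}^*\ul{i}_\Nis$ (Lemma-Definition~\ref{lem:mnst-condition}) and the full faithfulness of $\ul{b}^*$ (Lemma~\ref{eq:bruno-functor}):
\[\Hom_{\ulMNST}(\ul{a}_\Nis F,G)\cong\Hom_{\ulMNST^\fin}(\ul{a}_\Nis^\fin\ul{b}^*F,\ul{b}^\Nis G)\cong\Hom_{\ulMPST^\fin}(\ul{b}^*F,\ul{b}^*\ul{i}_\Nis G)\cong\Hom_{\ulMPST}(F,\ul{i}_\Nis G).\]
Exactness of $\ul{a}_\Nis$ is then clear, being a composite of the exact functors $\ul{b}^*$, $\ul{a}_\Nis^\fin$, $\ul{b}_\Nis$. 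For the Grothendieck property I would observe that full faithfulness of $\ul{b}^*$ makes the counit $\ul{b}_!\ul{b}^*\to\id$ invertible, which together with $\ul{b}_!\ul{i}_\Nis^\fin=\ul{i}_\Nis\ul{b}_\Nis$ and $\ul{b}^*\ul{i}_\Nis=\ul{i}_\Nis^\fin\ul{b}^\Nis$ forces $\ul{b}_\Nis\ul{b}^\Nis\cong\id$, so $\ul{b}_\Nis$ (fully faithful by Proposition~\ref{lem;b!ulMNST}) is an equivalence $\ulMNST^\fin\iso\ulMNST$ and $\ulMNST$ inherits the Grothendieck property from $\ulMNST^\fin$ (alternatively: $\ulMNST$ is reflective in the Grothendieck category $\ulMPST$ with exact reflector $\ul{a}_\Nis$). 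Closure under infinite direct sums holds because $\ul{b}^*$ commutes with colimits, so for $F_i\in\ulMNST$ one has $\ul{b}^*(\bigoplus_i F_i)=\bigoplus_i\ul{b}^*F_i\in\ulMNST^\fin$ (Theorem~\ref{thm:sheaf-transfer}), whence $\bigoplus_i F_i\in\ulMNST$ by Lemma-Definition~\ref{lem:mnst-condition}; consequently $\ul{i}_\Nis$ is strongly additive (also an instance of Lemma~\ref{lem:lr-adjoint}(2) with $c=\ul{b}^*$).

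\emph{The identities and the properties of $\ul{c}^\Nis$.} The first identity of \eqref{eq:a-c2}, $\ul{b}_\Nis\ul{a}_\Nis^\fin=\ul{a}_\Nis\ul{b}_!$, follows from uniqueness of adjoints: both sides are left adjoint to $\ul{i}_\Nis^\fin\ul{b}^\Nis$ and $\ul{b}^*\ul{i}_\Nis$ respectively, and these coincide by Lemma-Definition~\ref{lem:mnst-condition}. The second, $\ul{a}_{s,\Nis}\ul{c}^*=\ul{c}^\Nis\ul{a}_\Nis$, is the step I expect to be the main obstacle among the formal ones, since it really requires $\ul{c}^*$ to be compatible with the two sheafifications and cannot be handled by adjunction bookkeeping alone. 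The key claim is that $\ul{c}^*$ carries every morphism inverted by $\ul{a}_\Nis$ to one inverted by $\ul{a}_{s,\Nis}$; as both functors are exact this reduces to $\ul{a}_\Nis P=0\Rightarrow\ul{a}_{s,\Nis}\ul{c}^*P=0$, and assuming $\ul{a}_\Nis P=0$: faithfulness of $\ul{b}_\Nis$ gives $\ul{a}_\Nis^\fin\ul{b}^*P=0$, applying $\ul{c}^{\fin\Nis}$ and using \eqref{eq:c-a-fin} and \eqref{eq:b-and-c} gives $\ul{a}_{s\Nis}^\fin\ul{b}_s^*(\ul{c}^*P)=0$, and since every $\ulMV$-cover is isomorphic to $\ul{b}_s^\Sq$ of a $\ulMVfin$-cover, $\ulMV$-local triviality is detected after $\ul{b}_s^*$, so $\ul{a}_{s,\Nis}\ul{c}^*P=0$. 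Hence $\ul{c}^*$ descends to the Nisnevich localisations, and applying the descended functor to the unit $F\to\ul{i}_\Nis\ul{a}_\Nis F$, together with $\ul{c}^*\ul{i}_\Nis=\ul{i}_{s,\Nis}\ul{c}^\Nis$ and the full faithfulness of $\ul{i}_{s,\Nis}$, yields $\ul{a}_{s,\Nis}\ul{c}^*=\ul{c}^\Nis\ul{a}_\Nis$. With this in hand, Lemma~\ref{lem:lr-adjoint} applied to $(c,c',i_\sC,i_\sD)=(\ul{c}^*,\ul{c}^\Nis,\ul{i}_\Nis,\ul{i}_{s,\Nis})$ — compatible by Lemma-Definition~\ref{lem:mnst-condition} — finishes: part~(3), with left adjoints $\ul{a}_\Nis$ of $\ul{i}_\Nis$ and $\ul{c}_!$ of $\ul{c}^*$ (Lemma~\ref{eq:c-functor}), shows $\ul{c}_\Nis:=\ul{a}_\Nis\ul{c}_!\ul{i}_{s,\Nis}$ is left adjoint to $\ul{c}^\Nis$; part~(4), using exactness of $\ul{a}_{s,\Nis}$ (Lemma~\ref{lem:sheaves-MNS}) and of $\ul{c}^*$ (Lemma~\ref{eq:c-functor}) and the identity just proved, shows $\ul{c}^\Nis$ is exact; and part~(2), using that $\ul{c}^*$ is strongly additive (being a left adjoint), $\ul{i}_\Nis$ is strongly additive, and $\ul{i}_{s,\Nis}$ has the strongly additive left inverse $\ul{a}_{s,\Nis}$, shows $\ul{c}^\Nis$ is strongly additive.
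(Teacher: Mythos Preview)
Your approach is correct and matches the strategy of \cite{kmsy} (from which the theorem is quoted without proof here): bootstrap from $\ulMNST^\fin$ via the adjoints of $\ul{b}$, with the representability of $\Z_\tr(M)$ as the sole geometric input and everything else handled by the formal machinery of Lemma~\ref{lem:lr-adjoint}. The adjunction chain for $\ul{a}_\Nis\dashv\ul{i}_\Nis$, the derivation of both identities in \eqref{eq:a-c2}, and the deduction of the properties of $\ul{c}^\Nis$ are all sound; your argument for the second identity (reducing to $\ul{a}_\Nis P=0\Rightarrow\ul{a}_{s,\Nis}\ul{c}^*P=0$ via the chain $\ul{b}_\Nis$ faithful $\Rightarrow$ $\ul{a}_\Nis^\fin\ul{b}^*P=0$ $\Rightarrow$ $\ul{a}_{s,\Nis}^\fin\ul{b}_s^*\ul{c}^*P=0$, then using that $\ulMV$-covers come from $\ulMVfin$-covers) is exactly the right mechanism, though you might spell out that the last step uses that $\ul{b}_s$ is both continuous and cocontinuous (or equivalently, that the $\ulMV$-topology is by construction the one making $\ul{b}_s$ a morphism of sites with the same underlying objects).

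One bonus worth flagging: your observation that $\ul{b}_\Nis$ is an \emph{equivalence} $\ulMNST^\fin\iso\ulMNST$ (since both $\ul{b}_\Nis$ and $\ul{b}^\Nis$ are fully faithful, the latter because $\ul{b}^*$ is) is correct and slightly sharper than what the paper states explicitly; it gives a clean transfer of the Grothendieck property, though as you note the reflective-subcategory argument works equally well.
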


\begin{nota}\label{not4}
Let $M\in \ulMCor$ and $F \in \ulMNST$. Using Notation \ref{n3.7a},
we define $F_M :=(\ul{b}^\Nis F)_M$, which is a sheaf on
$(\ol{M})_\Nis$.
\end{nota}

\begin{prop}[\protect{\cite[Proposition~4.6.3]{kmsy}}]\label{c3.1v}
Let $F\in \ulMNST$, and let $M\in \ulMCor$. Then there is a
canonical isomorphism for any $i\ge 0$:
\[\Ext^i_{\ulMNST}(\Z_\tr(M),F)\simeq
\colim_{N\in \ulSigma^{\fin}\downarrow M}H^i_\Nis(\Nb,F_N).
\]
Moreover, we have
\[
\colim_{N\in \ul{\Sigma}^{\fin}\downarrow M}
H_\Nis^i(\ol{N},(R^q(\ul{b}_s^\Nis)\ul{c}^\Nis F)_N)=0\text{ for all
$q>0$.}
\]
\end{prop}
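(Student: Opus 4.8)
The plan is to deduce both assertions from Theorem~\ref{thm:sheafification-ulMNST}, which gives the explicit formula $\ulaNis = \ul{b}_\Nis \ul{a}_\Nis^\fin \ul{b}^*$, combined with the computation of $\ul{b}_!$ in \eqref{eq:b-sh-explicit} and the description of small Nisnevich sheaves on $\ulMP^\fin$-pairs from Lemma~\ref{lem:equiv-smallsites} and Notation~\ref{n3.7a}. First I would identify the two sides: by adjunction $\Ext^i_{\ulMNST}(\Z_\tr(M),F)$ is computed by an injective resolution of $F$ in $\ulMNST$, and since $\Z_\tr(M)$ is a representable object one expects these Ext-groups to be the derived functors of $F\mapsto F(M)$. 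The key point is to relate the functor $F\mapsto F(M)$ on $\ulMNST$ to the composite of $\ul{b}^\Nis:\ulMNST\to\ulMNST^\fin$, the colimit over $\ul{\Sigma}^\fin\downarrow M$ coming from $\ul{b}_!$ (Lemma~\ref{eq:bruno-functor}), and the global-sections functor $G\mapsto H^0_\Nis(\ol{N},G_N)$ on the small Nisnevich site, the latter being understood via Lemma~\ref{lem:equiv-smallsites}.

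The main steps I would carry out are: (i) Record that for $N\in\ul{\Sigma}^\fin$ (so $N\in\ulMP^\fin$ after passing to normalizations, cf.\ Remark~\ref{rk-graph-trick}(1)), evaluation $\Z_\tr^\fin(N)$ corepresents $F\mapsto F(N) = H^0_\Nis(\ol{N}, F_N)$ on $\ulMNST^\fin$, and by \cite[Theorem~3.5.3]{kmsy} (Theorem~\ref{thm:sheaf-transfer}) one gets $\Ext^i_{\ulMNST^\fin}(\Z_\tr^\fin(N),F)\simeq H^i_\Nis(\ol{N},F_N)$ — this is the "finite" analogue and should already be available from \cite{kmsy}, or proved by the same Leray/local-to-global argument using that the small sites agree. (ii) Use that $\ul{b}_\Nis$ is exact and fully faithful with exact left adjoint $\ul{b}^\Nis$ (Proposition~\ref{lem;b!ulMNST}) to transfer: since $\ul{b}_\Nis$ sends injectives-up-to-acyclics appropriately and $\Z_\tr(M)$ relates to $\ul{b}_\Nis\Z_\tr^\fin(-)$ via \eqref{eq:b-sh-explicit}, one gets a spectral sequence or direct colimit identification
\[
\Ext^i_{\ulMNST}(\Z_\tr(M),F) \simeq \colim_{N\in\ul{\Sigma}^\fin\downarrow M}\Ext^i_{\ulMNST^\fin}(\Z_\tr^\fin(N),\ul{b}^\Nis F),
\]
using that filtered colimits are exact in the Grothendieck category $\ulMNST^\fin$ and that $\ul{b}_!$ commutes with the relevant colimits. (iii) Combine (i) and (ii), noting $(\ul{b}^\Nis F)_N = F_N$ by Notation~\ref{not4}, to obtain the first displayed isomorphism.

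For the vanishing statement, the plan is: the functor $\ul{b}_s^\Nis$ (small-site analogue of $\ul{b}_s$, sending $\ulMNS^\fin$-sheaves to $\ulMNS$-sheaves) has higher direct images $R^q\ul{b}_s^\Nis$ computing, on each pair $N$, the higher Nisnevich cohomology along the change-of-site map; but by Lemma~\ref{lem:equiv-smallsites} the small sites $N_\Nis$ and $(\ol N)_\Nis$ are isomorphic, so the change-of-site map is an equivalence on small sites and hence $R^q\ul{b}_s^\Nis = 0$ for $q>0$ when evaluated via $(-)_N$; equivalently, $\ul{b}_s^\Nis$ restricted to each small site is exact, so its derived functors vanish. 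I would make this precise by checking that for $G\in\ulMNS^\fin$ the sheaf $(R^q\ul{b}_s^\Nis \ul{c}^\Nis F)_N$ is the $q$-th cohomology presheaf of a complex that is $\ulMNS$-locally exact, and that $\ulMNS$-local exactness is detected on the small Nisnevich sites where, by the isomorphism of sites, it reduces to exactness of an honest complex of Nisnevich sheaves. The colimit over $\ul{\Sigma}^\fin\downarrow M$ of zero is then zero.

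The hard part will be step (ii): rigorously transferring Ext-computations across the adjunction $(\ul{b}_\Nis,\ul{b}^\Nis)$ and commuting $\Ext^i$ past the filtered colimit indexed by $\ul{\Sigma}^\fin\downarrow M$. One must know that $\Z_\tr(M)$ is, in the derived category of $\ulMNST$, the homotopy colimit (here an honest filtered colimit, since everything is discrete) of $\ul{b}_\Nis\Z_\tr^\fin(N)$ over $N\in\ul{\Sigma}^\fin\downarrow M$ — this is exactly the content of \eqref{eq:b-sh-explicit} applied with $F=\Z_\tr^\fin$, together with $\ul{b}_!\ul{i}_\Nis^\fin = \ul{i}_\Nis\ul{b}_\Nis$ from \eqref{eq:b-and-i3} — and that $\ul{b}^\Nis$ preserves enough acyclicity (it is exact, so it preserves injective resolutions up to acyclics, which suffices since filtered colimits of acyclics are acyclic in a Grothendieck category). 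Given that, the adjunction $\Ext^i_{\ulMNST}(\ul{b}_\Nis A, F) \simeq \Ext^i_{\ulMNST^\fin}(A, \ul{b}^\Nis F)$ for $A\in\ulMNST^\fin$ (valid because $\ul{b}_\Nis$ is exact, so its right adjoint $\ul{b}^\Nis$ takes injectives to injectives) closes the argument.
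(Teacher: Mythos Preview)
This result is cited from \cite[Proposition~4.6.3]{kmsy} without proof here, but the strategy can be read off from the derived analogues in \S7 (Propositions~\ref{lem;cohMsigmaS} and~\ref{lem;cohMsigmaST}): one first passes from $\ulMNST$ to $\ulMNS$ via $\ul{c}^\Nis$, using \cite[Lemma~4.6.1]{kmsy} that $\ul{c}^\Nis$ sends injectives to flasques, and then on the transfer-free side relates $\ul\Gamma_M$ on $\ulMNS$ to $\Gamma_M^\downarrow=\colim_{N\in\ul\Sigma^\fin\downarrow M}(-)(N)$ on $\ulMNS^\fin$ through the identity $\ul\Gamma_M\circ\ul{b}_{s,\Nis}=\Gamma_M^\downarrow$ and \cite[Lemma~4.4.3]{kmsy} ($\ul{b}_{s,\Nis}$ sends injectives to flasques). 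The colimit enters as the very definition of $\Gamma_M^\downarrow$, and the comparison with small-site cohomology is made inside $\ulMNS^\fin$ via \cite[(3.6.1)]{kmsy}. Your route through the adjunction $(\ul{b}_\Nis,\ul{b}^\Nis)$ on the transfer side is different and hits a real obstacle in step~(ii): the identity $\Ext^i_{\ulMNST}(\ul{b}_\Nis A,F)\simeq\Ext^i_{\ulMNST^\fin}(A,\ul{b}^\Nis F)$ would require $\ul{b}^\Nis$ to be \emph{exact}, which you assert but which is neither proved nor expected. The exact functor here is the left adjoint $\ul{b}_\Nis$; this makes $\ul{b}^\Nis$ preserve injectives, but exactness of $\ul{b}^\Nis$ itself would force $R^q\ul{b}^\Nis=0$ for $q>0$, and on the $\ulMNS$-side the nonvanishing of $R^q\ul{b}_s^\Nis$ is precisely why the second half of the proposition is a nontrivial statement. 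Your step~(i) is also not among the results cited from \cite{kmsy}; establishing it would require going through $\ul{c}^{\fin,\Nis}$, which is essentially the paper's approach at the $\fin$-level.

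Your argument for the vanishing statement is a misidentification. The functor $\ul{b}_s^\Nis:\ulMNS\to\ulMNS^\fin$ is restriction along $\ulMSm^\fin\hookrightarrow\ulMSm$ between the \emph{big} sites, which carry genuinely different topologies: the $\ulMV$-topology on $\ulMSm$ allows refinements by $\ul\Sigma^\fin$-maps that are not covers in the $\ulMV^\fin$-topology. An injective resolution in $\ulMNS$ therefore need not remain exact after applying $\ul{b}_s^\Nis$, so the sheaves $R^q\ul{b}_s^\Nis(\ul{c}^\Nis F)$ are generally nonzero. Lemma~\ref{lem:equiv-smallsites} only identifies the slice $M_\Nis\subset\ulMSm^\fin$ with $(\ol M)_\Nis$ and says nothing about $\ul{b}_s$. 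The vanishing holds only \emph{after} the colimit over $\ul\Sigma^\fin\downarrow M$; in the paper's framework it falls out of the degeneration of the Grothendieck spectral sequence for $\ul\Gamma_M\simeq\Gamma_M^\downarrow\circ\ul{b}_s^\Nis$, once one has independently computed both sides.
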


\begin{cor}\label{c4.1} We have $\Ext^i_{\ulMNST}(\Z_\tr(M),F)=0$ for $i>\dim \ol{M}$.
\end{cor}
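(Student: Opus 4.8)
The plan is to deduce this from the previous proposition: by Proposition \ref{c3.1v}, we have
\[
\Ext^i_{\ulMNST}(\Z_\tr(M),F)\simeq
\colim_{N\in \ulSigma^{\fin}\downarrow M}H^i_\Nis(\ol N,F_N),
\]
so it suffices to show that $H^i_\Nis(\ol N,F_N)=0$ for $i>\dim\ol M$ whenever $N\to M$ lies in $\ulSigma^\fin\downarrow M$.

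First I would recall that any $N$ appearing in the index category $\ulSigma^\fin\downarrow M$ comes with a morphism $N\to M$ in $\ulSigma^\fin$, and by Definition \ref{def;ulSigmafin} such a morphism induces a proper morphism $\ol f:\ol N\to\ol M$ restricting to an isomorphism on smooth interiors; in particular $\ol f$ is surjective onto a dense subset, so $\ol N$ and $\ol M$ have the same dimension, $\dim\ol N=\dim\ol M$. Then I would invoke Grothendieck's vanishing theorem for Nisnevich cohomology: for a Noetherian scheme $\ol N$ of finite Krull dimension $d$ and any abelian sheaf on $\ol N_\Nis$, the cohomology groups $H^i_\Nis(\ol N,-)$ vanish for $i>d$. (This holds for the Nisnevich topology since the Nisnevich cohomological dimension of a Noetherian scheme is bounded by its Krull dimension; it can be reduced to the Zariski case via the descent spectral sequence, or one cites \cite[Theorem~1.32]{nistopmod} or the standard reference.) Applying this with $d=\dim\ol N=\dim\ol M$ gives $H^i_\Nis(\ol N,F_N)=0$ for $i>\dim\ol M$.

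Finally, since each term of the colimit vanishes in the range $i>\dim\ol M$ and a filtered colimit of zero groups is zero, we conclude $\Ext^i_{\ulMNST}(\Z_\tr(M),F)=0$ for $i>\dim\ol M$, as claimed. The only mild subtlety — and the step I would be most careful about — is the bound on Nisnevich cohomological dimension: one must be sure that passing from the Zariski to the Nisnevich topology does not increase the cohomological dimension beyond the Krull dimension, which is a standard fact (the Nisnevich topos of a Noetherian scheme of dimension $d$ has cohomological dimension $\le d$), but it is worth citing precisely rather than leaving implicit.
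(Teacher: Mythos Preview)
Your proof is correct and follows exactly the same approach as the paper's own proof: use Proposition~\ref{c3.1v} to express the Ext group as a filtered colimit of Nisnevich cohomology groups $H^i_\Nis(\ol N,F_N)$, observe that $\dim\ol N=\dim\ol M$ for every $N\in\ulSigma^\fin\downarrow M$, and apply the bound on Nisnevich cohomological dimension. The paper states this in one sentence, while you spell out the reason for the dimension equality and flag the Nisnevich bound as worth citing carefully, but the argument is the same.
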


\begin{proof} For any $N\in \ulSigma^{\fin}\downarrow M$, we have $\dim \ol{N}=\dim \ol{M}$. Therefore the statement follows from Proposition \ref{c3.1v} and the known bound for Nisnevich cohomological dimension.
\end{proof}

\section{A cd-structure on $\protect\MSm$}

In this section we introduce a cd-structure on $\MSm$ and describe
its main properties, following the works of Miyazaki
\cite{nistopmod} and Kahn-Miyazaki \cite{KaMi}. For this we need to
start with the ``off-diagonal'' functor.

\subsection{Off-diagonal}

\begin{defn}
Define $\MEt$ as the category such that
\begin{enumerate}
\item objects are  those morphisms $f : M \to N$ in $\MSm$ such that $f^\o : M^\o \to N^\o$ is \'etale, and
\item a morphism from $f : M_1 \to N_1$ to $g : M_2 \to N_2$ is a pair of morphisms $(s: M_1 \to M_2 , t:N_1 \to N_2)$ in $\MSm$ which commute with $f,g$ and such that $s^\o$ and $t^\o$ are \emph{open immersions}.
\end{enumerate}
\end{defn}

For modulus pairs $M$ and $N$, we define \emph{the disjoint union of
$M$ and $N$} by
\[
M \sqcup N := (\ol{M} \sqcup \ol{N}, M^\infty \sqcup N^\infty ).
\]
Obviously, we have $(M \sqcup N)^\o = M^\o \sqcup N^\o$.

\begin{thm}[\protect{\cite[Theorem~3.1.3]{nistopmod}}]\label{thm:def-OD}
There exists a functor
\[
\OD : \MEt \to \MSm
\]
such that for any $f :  M \to N$, one has a functorial decomposition
\[
M \times_N M \cong M \sqcup \OD(f).
\]
Moreover, we have $\OD (f)^\o = M^\o \times_{N^\o} M^\o \setminus
\Delta (M^\o)$, where $\Delta : M^\o \to M^\o \times_{N^\o} M^\o$ is
the diagonal morphism. In particular, if $f^\o$ is an open
immersion, then $\OD (f)^\o = \emptyset$, hence $\OD (f) =
\emptyset$. We call the functors \emph{the off-diagonal functors}.
\end{thm}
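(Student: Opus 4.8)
The plan is to construct $\OD(f)$ explicitly at the level of the underlying schemes and the divisors, then check functoriality. Given $f : M \to N$ in $\MEt$, the fiber product $M^\o \times_{N^\o} M^\o$ is smooth (since $f^\o$ is \'etale) and the diagonal $\Delta(M^\o)$ is open and closed in it: indeed, $\Delta$ is an immersion which is also \'etale, hence an open immersion, and its image is closed because $M^\o \to N^\o$ is separated. So scheme-theoretically $M^\o \times_{N^\o} M^\o = M^\o \sqcup W^\o$ where $W^\o := (M^\o \times_{N^\o} M^\o) \setminus \Delta(M^\o)$. The first step is therefore to produce a proper $k$-scheme $\ol{W}$ with an effective Cartier divisor $W^\infty$ such that $\ol{W} \setminus |W^\infty| = W^\o$ and $W := (\ol{W}, W^\infty) \in \MSm$; the natural candidate is to take the closure of the graph of the correspondence realizing $W^\o \to M^\o$ (equivalently, to apply the scheme-theoretic fiber product construction for $\MSm$ from \cite{kmsy} — the fiber product $M \times_N M$ exists in $\MSm$ because $\MSm$ has fiber products — and then split off the copy of $M$ that corresponds to the diagonal section). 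The key point is that the diagonal $M \to M \times_N M$ in $\MSm$ is again an open-and-closed immersion on ambient schemes compatible with the divisors, so the complement is a well-defined object of $\MSm$ which we name $\OD(f)$.

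The second step is to verify the stated decomposition $M \times_N M \cong M \sqcup \OD(f)$ in $\MSm$ and the formula $\OD(f)^\o = M^\o \times_{N^\o} M^\o \setminus \Delta(M^\o)$. This follows once one knows that $\ulMSm$ (and hence $\MSm$) has fiber products and that the smooth-interior functor $M \mapsto M^\o$ commutes with them; both are recorded in \cite{kmsy}. The decomposition at the level of ambient proper schemes is then the decomposition of $\overline{M \times_N M}$ induced by the diagonal section, and one checks the two divisors match up by minimality. The last clause — that $\OD(f) = \emptyset$ when $f^\o$ is an open immersion — is immediate: an open immersion $M^\o \inj N^\o$ has $M^\o \times_{N^\o} M^\o = M^\o$ via $\Delta$, so $\OD(f)^\o = \emptyset$, and the only modulus pair with empty smooth interior is the empty one (since $M^\o$ is dense in $\ol{M}$).

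The third step is functoriality in $\MEt$. Given a morphism $(s,t) : (f : M_1 \to N_1) \to (g : M_2 \to N_2)$ in $\MEt$, one gets an induced morphism $M_1 \times_{N_1} M_1 \to M_2 \times_{N_2} M_2$ in $\MSm$; since $s^\o$ and $t^\o$ are open immersions, this morphism carries the diagonal copy of $M_1$ into the diagonal copy of $M_2$ (the induced map on diagonals is just $s$ itself), hence by the decomposition it restricts to a morphism $\OD(f) \to \OD(g)$. Checking that this assignment respects composition and identities is routine from the universal property of fiber products. The main obstacle is the first step: showing that the complement $W^\o$ of the diagonal actually underlies a modulus pair in $\MSm$ whose construction is functorial, i.e. that one can make a \emph{canonical} choice of compactification and divisor — here one leans on the fact that $M \times_N M$ is already a bona fide object of $\MSm$ (with $\ol{M \times_N M}$ the closure of the graph in $\ol{M}\times_{\ol{N}}\ol{M}$, normalized as needed) so that the splitting off of the diagonal component is forced and requires no arbitrary choice. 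Everything else is bookkeeping with the smooth-interior functor.
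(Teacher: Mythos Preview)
The paper does not prove this theorem; it is quoted verbatim from \cite[Theorem~3.1.3]{nistopmod}. So there is no ``paper's own proof'' to compare against, and I evaluate your sketch on its own merits.

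Your outline has a genuine gap at the first step. You propose to realise the ambient scheme of $M\times_N M$ as ``the closure of the graph in $\ol{M}\times_{\ol{N}}\ol{M}$'', but $\ol{M}\times_{\ol{N}}\ol{M}$ is not defined: a morphism $f:M\to N$ in $\MSm$ is only a morphism $f^\o:M^\o\to N^\o$ on interiors whose graph is admissible and left-proper; it need \emph{not} extend to a scheme morphism $\ol{M}\to\ol{N}$. This is precisely what makes the construction non-trivial. The fiber products you invoke from \cite{kmsy} (Proposition~1.10.4, Corollary~1.10.7) live in $\ulMSm^\fin$, where by definition morphisms \emph{do} extend to the ambient schemes; existence of the relevant fiber products in $\MSm$ is not available as input here --- it is essentially part of the content of the theorem.

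Even if one grants some ambient model for $M\times_N M$, your key assertion that ``the diagonal $M\to M\times_N M$ is again an open-and-closed immersion on ambient schemes compatible with the divisors'' is unjustified. On the interior the diagonal is clopen because $f^\o$ is \'etale, but in any chosen proper compactification the closures of $M^\o$ and of $W^\o=(M^\o\times_{N^\o}M^\o)\setminus\Delta(M^\o)$ may meet along the boundary; producing a model in which they do not, with the correct Cartier divisor, and doing so canonically enough to get functoriality, is the actual work carried out in \cite{nistopmod}. The same issue recurs in your functoriality step: a morphism $(s,t)$ in $\MEt$ only gives open immersions $s^\o,t^\o$ on interiors, so the map $M_1\times_{N_1}M_1\to M_2\times_{N_2}M_2$ you want does not come from a universal property in $\Sch$ and must be built by hand at the level of the chosen models.
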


\subsection{The $\protect\MV$ cd-structure}

\begin{defn}\label{def:new-MV}
Let $T$ be an object of $\MSm^\Sq$ of the form
\begin{equation}\label{Q}
\vcenter{ \xymatrix{
T(00) \ar[r]^{v_T} \ar[d]_{q_T} & T(01) \ar[d]^{p_T} \\
T(10) \ar[r]^{u_T} & T(11). } }\end{equation} Then $T$ is called an
\emph{$\MV$-square} if the following conditions hold:
\begin{enumerate}
\item \label{thm:new-MV1} $T$ is a pull-back square in $\MSm$.
\item \label{thm:new-MV2} There exist an $\ulMV$-square $S$ (\emph{cf.} Definition \ref{d4.1}) such that $S(11) \in \MSm$, and a morphism $S \to T$ in $\ulMSm^\Sq$ such that the induced morphism $S^\o \to T^\o$ is an isomorphism in $\Sm^\Sq$ and $S(11) \to T(11)$ is an isomorphism in $\MSm$.
In particular, $T^\o$ is an elementary Nisnevich square.
\item \label{thm:new-MV3} $\OD(q_T) \to \OD (p_T)$ is an isomorphism in $\MSm$.
\end{enumerate}

We let $P_{\MV}$ be the cd-structure on $\MSm$ consisting of
$\MV$-squares.
\end{defn}

The following are the main results of \cite{nistopmod}.

\begin{thm}[{\cite[{Theorem~4.3.1, 4.4.1 and 4.5.1}]{nistopmod}}]\label{cd-str-MSm}
The cd-structure $P_{\MV}$ is complete and regular. The associated
topology is subcanonical.
\end{thm}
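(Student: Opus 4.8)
The plan is to establish each of the three properties of the cd-structure $P_{\MV}$ — completeness, regularity, and subcanonicity — by reducing them to the already-known corresponding properties of $P_{\ul\MV}$ on $\ulMSm$ (Theorem \ref{thm;cd-str-ulMSm}) and of the small Nisnevich sites, using the structural comparison encoded in condition \eqref{thm:new-MV2} of Definition \ref{def:new-MV}. The key observation is that an $\MV$-square $T$ is, up to the isomorphism $S^\o \iso T^\o$ on smooth interiors and $S(11)\iso T(11)$, controlled by an $\ulMV$-square $S$, so that statements about refinements, distinguished squares, and sheaves can be transported along this comparison. One should first recall from \cite{cdstructures} the precise combinatorial meaning of completeness (the class of squares is closed under the operations needed to build a topology, and simplicial/\v Cech descent for the generated topology reduces to the squares) and of regularity (each distinguished square, together with its ``diagonal'' square, yields the expected exact sequences), and then verify these axioms one square at a time.

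First I would handle completeness: given an $\MV$-square $T$ and a morphism $X \to T(11)$ in $\MSm$, one must produce a refinement; here the off-diagonal functor $\OD$ of Theorem \ref{thm:def-OD} is the crucial tool, since condition \eqref{thm:new-MV3} ($\OD(q_T)\iso\OD(p_T)$) is exactly what makes the pullback of an $\MV$-square along such a morphism again decompose correctly — the fibered product $M\times_N M \cong M\sqcup \OD(f)$ formula lets one split off the diagonal and identify the relevant covering. Pulling back the controlling $\ulMV$-square $S$ along the image of $X\to T(11)$ in $\ulMSm$ and invoking the completeness of $P_{\ul\MV}$ gives a refinement upstairs; one then transports it back through the $\OD$-decomposition and checks it lands among $\MV$-squares. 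For regularity, the point is to exhibit, for each $\MV$-square $T$, that the induced square on the corresponding representable (or on sections of a presheaf) is ``regular'' in Voevodsky's sense; again one reduces via \eqref{thm:new-MV2} to the $\ulMV$-square $S$, using that $S(11)\to T(11)$ and $S^\o\to T^\o$ are isomorphisms so that the diagonal morphisms agree, and applies the regularity of $P_{\ul\MV}$ together with the $\OD$-formula to identify the diagonal square of $T$ with (a disjoint summand involving) $\OD$.

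For subcanonicity, I would show that every representable presheaf $\ulMSm(-,X)$ on $\MSm$ sends an $\MV$-square to a cartesian square of sets; by \eqref{thm:new-MV2} and the fact that $S^\o\to T^\o$ is an isomorphism of elementary Nisnevich squares, a morphism out of $T(11)$ is determined by its restrictions to $T(01)$ and $T(10)$ agreeing on $T(00)$, which follows from the separatedness/reducedness properties of modulus pairs recorded in Remarks \ref{rk-graph-trick} together with the subcanonicity of the underlying Nisnevich topology and of $P_{\ul\MV}$ (noted in the Remark after Theorem \ref{thm;cd-str-ulMSm}, citing \cite{nistopmod}). The main obstacle, I expect, is the completeness verification: completeness is not a condition one checks square-by-square in isolation, and controlling the pullback of an $\MV$-square so that it remains an $\MV$-square — in particular re-verifying condition \eqref{thm:new-MV3} after base change — requires a careful compatibility analysis of the off-diagonal functor $\OD$ with fiber products, which is where the technical heart of \cite{nistopmod} lies; everything else is a relatively formal transport along the comparison $S\to T$.
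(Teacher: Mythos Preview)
The paper does not give a proof of this theorem: it is stated purely as a citation of \cite[Theorem~4.3.1, 4.4.1 and 4.5.1]{nistopmod}, and the paper immediately proceeds to use it as a black box. There is therefore nothing in the present paper to compare your proposal against.

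As for the substance of your sketch: you have correctly identified the main ingredients --- the off-diagonal functor $\OD$ and the comparison with an $\ulMV$-square via condition \eqref{thm:new-MV2} --- and your final paragraph accurately locates the technical heart of the matter in the behavior of $\OD$ under base change for the completeness axiom. However, several of your reductions are too loose to constitute a proof. The comparison $S\to T$ is a morphism in $\ulMSm^\Sq$, not an isomorphism of squares except on interiors and at the $(11)$-corner, so one cannot simply ``transport'' completeness or regularity from $P_{\ulMV}$ to $P_{\MV}$ through it; fiber products in $\MSm$ are not computed in $\ulMSm$, and refinements obtained upstairs need not land back in $\MSm$. In \cite{nistopmod} the arguments proceed by verifying Voevodsky's axioms directly on $\MSm$: regularity uses condition \eqref{thm:new-MV3} essentially (the isomorphism $\OD(q_T)\iso\OD(p_T)$ is exactly what makes the diagonal square behave correctly), rather than being deduced from regularity of $P_{\ulMV}$. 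Your subcanonicity sketch (where you presumably meant $\MSm(-,X)$, not $\ulMSm(-,X)$) is also too vague: the sheaf condition for representable presheaves is a statement about compatible families of morphisms into $X$ gluing, and this requires a genuine argument about modulus conditions, not just separatedness of the underlying schemes.
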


\begin{cor}\label{l5.1.17} Define $\MNS \subset \MPS$ as the full subcategory consisting of sheaves with respect to the Grothendieck topology on $\MSm$ associated with $P_{\MV}$. Then there exists a pair of adjoint functors
\begin{equation}\label{eq;adjoint-MNS}
\MPS\begin{smallmatrix}\asNis \\\longrightarrow\\\ i_{s,\Nis}\\
\longleftarrow\end{smallmatrix}\MNS,
\end{equation}
where $i_{s,\Nis}$ is the natural inclusion and its left adjoint
$\asNis$ is exact. Moreover, $\MNS$ is Grothendieck. For $F \in
\MPS$, the following conditions are equivalent.
\begin{itemize}
\item[\rm (1)] $F \in \MNS$.
\item[\rm (2)]
For any $\MV$-square $Q$ as \eqref{Q}, the associated sequence
\begin{equation}\label{F(Q)}
 0\ \to F(T(00)) \to F(T(10)) \times F(T(01)) \to F(T(11))
\end{equation}
is exact.
\end{itemize}
\end{cor}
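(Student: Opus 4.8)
The plan is to deduce this corollary from Theorem \ref{cd-str-MSm} by invoking the standard machinery of cd-structures, exactly as was done for $\ulMSm$ in Lemma \ref{lem:sheaves-MNS}. First I would note that $\MSm$ is an essentially small category (objects being proper modulus pairs up to isomorphism), so the site $(\MSm, P_{\MV})$ is a small site in the sense of \cite{SGA4}. Since Theorem \ref{cd-str-MSm} tells us $P_{\MV}$ is complete and regular, the general theory of \cite{cdstructures} applies. The existence of the sheafification functor $\asNis$ left adjoint to the inclusion $i_{s,\Nis} : \MNS \to \MPS$, together with its exactness, then follows from \cite[expos\'e~II, th\'eor\`eme~3.4]{SGA4}: sheafification on any small site is exact, being a composition of the (left exact) plus-construction with itself. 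That $\MNS$ is a Grothendieck abelian category is the standard fact that the category of abelian sheaves on a small site is Grothendieck (it is a reflective subcategory of $\MPS$, which is Grothendieck, with exact reflector, and it has a generator given by the sheafifications of the representable presheaves).

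Next I would establish the equivalence of (1) and (2). The key input is \cite[Corollary~2.17]{cdstructures}, which states that for a complete, regular (and bounded — here one should check, or note it follows from strong completeness; but completeness and regularity suffice for the sheaf characterization via the associated topology) cd-structure, an additive presheaf is a sheaf for the associated topology if and only if it sends the empty sieve to $0$ (equivalently $F(\emptyset)=0$, which is automatic for additive presheaves since $\emptyset$ is the initial object and an additive functor preserves the terminal/zero object) and sends each distinguished square of the cd-structure to a cartesian square of abelian groups. A cartesian square of abelian groups
\[
\begin{CD}
A @>>> B\\
@VVV @VVV\\
C @>>> D
\end{CD}
\]
is the same as exactness of $0 \to A \to B \oplus C \to D$, where the second map is the difference of the two compositions. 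Applying this with the square being an arbitrary $\MV$-square $Q$ as in \eqref{Q}, the cartesianness of the image square $F(Q)$ is precisely the exactness of the sequence \eqref{F(Q)}. Thus (1) $\iff$ (2) is a direct translation of \cite[Corollary~2.17]{cdstructures} using Theorem \ref{cd-str-MSm}.

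I do not anticipate a serious obstacle here; the corollary is essentially a formal consequence of Theorem \ref{cd-str-MSm} plus the cited general results. The only point requiring a little care is making sure the hypotheses of \cite[Corollary~2.17]{cdstructures} are met — namely that one needs the cd-structure to be complete and regular (which Theorem \ref{cd-str-MSm} provides) — and, for the identification of the associated topology's sheaves, that no boundedness assumption is secretly needed beyond what we have; this is handled in exactly the same way as the proof of Lemma \ref{lem:sheaves-MNS}, which treats the analogous statement for the $\ulMV$ cd-structure on $\ulMSm$ and cites the same references. Accordingly the proof I would write is just one or two lines: "The first assertions follow from the general theory of Grothendieck topologies on small sites \cite[expos\'e~II]{SGA4}; the equivalence (1) $\iff$ (2) follows from \cite[Corollary~2.17]{cdstructures} in view of Theorem \ref{cd-str-MSm}," mirroring Lemma \ref{lem:sheaves-MNS}.
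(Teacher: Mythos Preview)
Your proposal is correct and matches the paper's own proof essentially verbatim: the paper also writes ``Same as for Lemma \ref{lem:sheaves-MNS}: the first two assertions are general facts on Grothendieck topologies, and the equivalence (i) $\iff$ (ii) follows from \cite[Corollary~2.17]{cdstructures} in view of Theorem \ref{cd-str-MSm}.'' Your additional remarks about essential smallness, additivity handling $F(\emptyset)=0$, and the translation between cartesian squares and exact sequences are all accurate elaborations of what the paper leaves implicit.
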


\begin{proof} Same as for Lemma \ref{lem:sheaves-MNS}:
the first two assertions are general facts on Grothendieck
topologies, and the equivalence (i) $\iff$ (ii) follows from
\cite[Corollary~2.17]{cdstructures} in view of Theorem \ref{cd-str-MSm}
(1).
\end{proof}

The following are the main results of \cite{KaMi}. To state them, we
need a definition.

\begin{defn}\label{d5.2}\
\begin{enumerate}
\item For any square $S \in \ulMSm^\Sq$, we define categories $\Comp(S)$ as the full subcategories of $S \downarrow \MSm^\Sq$ consisting of those objects $S \to T$ such that $S (ij) \to T(ij)$ belongs to $\Comp(S(ij))$ for any $(ij) \in \Sq$.
Here, $\Comp(-)$ is from Definition \ref{d1.12}.
\item
For an $\ulMVfin$-square $S$ in $\ulMSm^{\fin}$, an object $S \to
\tau^{\Sq}(Q)$ in $\Comp(S)$ is an $\MV$-completion of $S$ if $Q$ is
an $\MV$-square. We write
\[\Comp(S)^{\MV}\subset \Comp(S) \]
for the full subcategory consisting of $\MV$-completions of $S$.
\end{enumerate}
\end{defn}

\begin{thm}\label{t3.1} The following assertions hold.
\begin{itemize}
\item[\rm (1)]  \cite[Theorem~1]{KaMi}. The functor $\tau_s:\MSm\to \ulMSm$ is continuous in the sense of \cite[expos\'e~III]{SGA4} for the topologies given by $P_{\MV}$ and $P_{\ulMV}$.
\item[\rm (2)] \cite[Theorem~1.5.6]{KaMi}. For any $\ulMVfin$-square $S$ such that $\ol{S}(11)$ is normal, $\Comp(S)^{\MV}$ is cofinal in $\Comp(S)$.
\end{itemize}
\end{thm}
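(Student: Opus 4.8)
The substantive content is part (2), which is a statement about compactifying squares of modulus pairs; part (1) will be deduced from it together with the cd-structure formalism. I would first record the reduction of (1) to (2). Since $P_{\MV}$ and $P_{\ul{\MV}}$ are complete and regular (Theorems \ref{cd-str-MSm} and \ref{thm;cd-str-ulMSm}), a presheaf is a sheaf for either topology iff it is additive and sends the distinguished squares to cartesian squares of abelian groups (Voevodsky, \cite{cdstructures}). Hence $\tau_s$ is continuous iff for every $\ul{\MV}$-sheaf $G$ and every $\MV$-square $T$ in $\MSm$ the sequence $0 \to G(T(11)) \to G(T(10))\oplus G(T(01)) \to G(T(00))$ is exact. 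By Definition \ref{def:new-MV}(2) choose a $\ulMVfin$-square $S$ and a morphism $\ul{b}_s^\Sq S \to \tau_s^\Sq T$ which is an isomorphism on smooth interiors and on the $(11)$-entry; replacing $T(11)$ by its normalization and carrying this along the square (Remark \ref{rk-graph-trick}(1)) we may assume $\ol{S(11)}$ normal, so that part (2) applies and $T$ represents an object of $\Comp(S)^{\MV}$. Then the exactness follows by passing to the colimit over the cofiltered category $\Comp(S)$: using the computation of $\tau_{s!}$ in Lemma \ref{eq.tau}(4) and subcanonicity of $P_{\ul{\MV}}$ (the remark after Theorem \ref{thm;cd-str-ulMSm}), this colimit of Mayer--Vietoris complexes reconstructs the one attached to the $\ul{\MV}$-square $\ul{b}_s^\Sq S$, which is exact by $\ul{\MV}$-descent; and by the cofinality in (2) the $\MV$-square $T$ sits cofinally inside this colimit.

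\textbf{Proof of (2).} Fix a $\ulMVfin$-square $S$ with $\ol{S(11)}$ normal and an arbitrary object $Q_0 \in \Comp(S)$. The goal is to produce $Q \in \Comp(S)^{\MV}$ together with a morphism $Q \to Q_0$ in $\Comp(S)$. I would enforce the three conditions of Definition \ref{def:new-MV} one at a time, each step replacing the current compactification by a dominating one. The ``interior'' requirement in \ref{def:new-MV}(2) is automatic, since $S$ being a $\ulMVfin$-square forces $S^\o = Q_0^\o$ to be an elementary Nisnevich square already; the witnessing $\ul{\MV}$-square with proper $(11)$-entry is produced by a preliminary Nagata-type compactification of $S$ respecting minimality of the morphisms (here one uses $\ol{S(11)}$ proper after normalization). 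For condition \ref{def:new-MV}(1) (cartesianness), replace $Q(00)$ by the normalization of the closure of $S^\o(00)$ inside $\ol{Q(01)} \times_{\ol{Q(11)}} \ol{Q(10)}$, equipped with the pulled-back modulus; properness of the other three vertices guarantees this is again a compactification of $S(00)$, dominating $Q_0(00)$, and makes the square cartesian in $\MSm$. For condition \ref{def:new-MV}(3), apply the off-diagonal functor of Theorem \ref{thm:def-OD}: on interiors $\OD(q_Q)$ and $\OD(p_Q)$ agree with $\OD$ of the two legs of the elementary Nisnevich square $S^\o$, hence are canonically isomorphic smooth schemes, so $\OD(q_Q)$ and $\OD(p_Q)$ are two compactifications of one and the same smooth scheme; since $\OD$ is built by normalizing closures (\cite{nistopmod}), one passes to a further common refinement of $Q$ on which $\OD(q_Q)\to\OD(p_Q)$ is an isomorphism in $\MSm$, using normality of $\ol{S(11)}$ to control those normalizations. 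Each step only refines the compactification, hence preserves whatever was already achieved (cartesianness is stable under refining $Q(00)$, and the off-diagonal depends only on the vertical legs), so after finitely many steps one obtains the desired $Q$.

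\textbf{Main obstacle.} The hard point is condition \ref{def:new-MV}(3): upgrading the tautological birational identification of $\OD(q_Q)$ with $\OD(p_Q)$ on interiors to an honest isomorphism of proper modulus pairs. This requires a precise understanding of how the off-diagonal construction of \cite{nistopmod} interacts with taking closures and normalizations, and it is exactly for this that normality of $\ol{S(11)}$ is indispensable; coordinating this refinement with the cartesianness secured in \ref{def:new-MV}(1) and with the minimality and compactification constraints built into $\Comp(S)$ is the most delicate part of the argument. By contrast, part (1) is essentially formal once (2) is in hand, the only genuine input being subcanonicity of the $\ul{\MV}$-topology, used to identify the colimit of Mayer--Vietoris complexes over $\Comp(S)$.
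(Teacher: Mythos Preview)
The paper does not give its own proof of Theorem \ref{t3.1}; both parts are imported from \cite{KaMi}. In the paper's internal logic the two parts are used for \emph{different} directions: (1) yields $\tau_s^*(\ulMNS)\subset\MNS$, while (2) is what is needed for $\tau_{s!}(\MNS)\subset\ulMNS$ (see the proof of Theorem \ref{thm:MNS-tau-a}(1) and the remark following it). Your attempt to derive (1) from (2) has a real gap. You argue that the Mayer--Vietoris sequence for a fixed $\MV$-square $T$ sits inside a filtered system indexed by $\Comp(S)$ whose colimit you identify with the (exact) sequence for the $\ulMV$-square $S$; but exactness of a filtered colimit says nothing about exactness of an individual term, and the identification $\colim_{T'\in\Comp(S)} G(T'(ij))\simeq G(S(ij))$ you are implicitly invoking is the counit $\tau_{s!}\tau_s^*G\to G$, which is \emph{not} an isomorphism (it is $\tau_{s!}$, not $\tau_s^*$, that is fully faithful; \emph{cf.} Lemma \ref{eq.tau}). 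A correct and more direct route to (1) is to read off from Definition \ref{def:new-MV}(2) that any $\MV$-cover $T(10)\sqcup T(01)\to T(11)$, viewed in $\ulMSm$, is refined by the $\ulMV$-cover $S(10)\sqcup S(01)\to S(11)\cong T(11)$, and then appeal to the sieve-theoretic characterization of continuity; no cofinality statement is needed.

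Your outline for (2) is plausible and correctly locates the difficulty at condition (3) of Definition \ref{def:new-MV}. But the sentence ``each step only refines the compactification, hence preserves whatever was already achieved'' is exactly where the real work hides: refining the vertical legs to force $\OD(q_Q)\simeq\OD(p_Q)$ will in general disturb the cartesianness arranged in the previous step, since both the off-diagonal and the fibre product in $\MSm$ are built by normalizing closures and these interact nontrivially with further blow-ups. You flag this yourself, but the sketch stops short of explaining the coordination; that coordination is precisely the substantive content of \cite[Theorem~1.5.6]{KaMi}.
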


\begin{corollary}\label{edgewise-cofinality}
Let $S$ be an $\ulMVfin$-square  such that $\ol{S}(11)$ is normal.
Then, for any $i,j \in \{0,1\}$, the subcategory of $\Comp (S(ij))$
defined by \[\{T(ij)\ |\  T \in \Comp (S)^{\MV}\}\] is cofinal in
$\Comp (S(ij))$.
\end{corollary}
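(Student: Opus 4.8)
The plan is to deduce Corollary~\ref{edgewise-cofinality} from Theorem~\ref{t3.1}(2) by ``projecting'' the cofinality of $\Comp(S)^{\MV}$ in $\Comp(S)$ along the edgewise (vertex) evaluation functors $\mathrm{ev}_{ij}:\Comp(S)\to\Comp(S(ij))$, $(S\to T)\mapsto (S(ij)\to T(ij))$. Fix $(i,j)\in\Sq$. We must show: given any object $N\in\Comp(S(ij))$, there exists $T\in\Comp(S)^{\MV}$ together with a morphism $N\to T(ij)$ in $\Comp(S(ij))$ (and a compatibility statement for two parallel arrows, if one wants cofinality in the strong sense; since all these $\Comp$-categories are cofiltered ordered sets by Lemma~\ref{d1.12}, it suffices to produce such a $T$).

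First I would reduce to producing, from the given $N\in\Comp(S(ij))$, an \emph{arbitrary} $\MV$-completion $T'\in\Comp(S)^{\MV}$ whose $(ij)$-vertex dominates $N$. The mechanism is: (a) Theorem~\ref{t3.1}(2) gives that $\Comp(S)^{\MV}$ is cofinal in $\Comp(S)$, so it is enough to find \emph{some} $T\in\Comp(S)$ with $N\to T(ij)$; then the cofinal $\MV$-completion $T'\to T$ provided by the theorem satisfies $N\to T(ij)\to\ldots$, wait — the arrow goes $T'\to T$ in $S\downarrow\MSm^\Sq$, i.e. $T'(ij)\to T(ij)$? No: a morphism in $\Comp(S)$ from $T'$ to $T$ is a morphism $T'\to T$ under $S$, hence $T(ij)$ receives an arrow, so I actually need $N$ to map to the \emph{smaller} object. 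Since $\Comp$ is cofiltered, ``cofinal'' means: for every $T$ there is $T'\in\Comp(S)^{\MV}$ with a morphism $T'\to T$; and for the projection I want, given $N$, an object $T\in\Comp(S)$ with $T(ij)\to N$ would then yield $T'(ij)\to T(ij)\to N$ with $T'\in\Comp(S)^{\MV}$, done. So the real task is: \textbf{construct $T\in\Comp(S)$ with $T(ij)\to N$ in $\Comp(S(ij))$.}

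The key step is this existence/extension of a compactification datum on the whole square from one on a single vertex. Here I would use the structure of $\ulMVfin$-squares: $S$ has $S(00),S(01),S(10)$ obtained from $S(11)$ by restricting along an étale map and an open immersion (all morphisms minimal, cf. Definitions~\ref{d3.2}, \ref{d4.1}), and $\ol S(11)$ is normal by hypothesis. Starting from the compactification $\Nb$ of $\ol S(ij)$ underlying $N$, one enlarges it (using that $\Comp(S(11))$ is cofiltered and that compactifications can be ``pushed forward'' along the étale and open-immersion legs — normality of $\ol S(11)$ is what makes the normalization/closure operations behave) to produce a compactification $\Tb(11)$ of $\ol S(11)$ dominating the image of $\Nb$, then pull back along the (extended) étale map $\ol{p_T}$ and restrict along the (extended) open immersion to get $T(00),T(01),T(10)$; minimality and the Cartesian property of $S$ give that $T$ is again a pullback square, and the admissibility/properness conditions are inherited, so $T\in\Comp(S)$ with a morphism $T(ij)\to N$. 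Then apply Theorem~\ref{t3.1}(2) as above.

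The main obstacle I anticipate is the bookkeeping in this ``extension of a partial compactification'': verifying that once one chooses $\Tb(11)\in\Comp(\ol S(11))$ dominating the datum coming from $N$, the induced vertices $T(kl)$ genuinely land in the respective $\Comp(S(kl))$ (the four conditions (i)--(iv) of Lemma~\ref{d1.12}, in particular the decomposition $T^\infty(kl)=M^\infty_{T(kl)}+C$ with the right support condition, and minimality), and that the resulting $T$ is an object of $\Comp(S)$ in the sense of Definition~\ref{d5.2}(1) — this is exactly where normality of $\ol S(11)$ and the hypotheses on $\ulMVfin$-squares get used, and it should be a routine but careful diagram chase, essentially a re-packaging of arguments already present in \cite{KaMi}; no genuinely new input beyond Theorem~\ref{t3.1} is needed.
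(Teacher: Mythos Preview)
Your overall strategy is correct and matches the paper: reduce via Theorem~\ref{t3.1}(2) to showing that $\{T(ij)\mid T\in\Comp(S)\}$ is cofinal in $\Comp(S(ij))$, then for a given $N\in\Comp(S(ij))$ construct some $T\in\Comp(S)$ with a morphism $T(ij)\to N$.

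Where your sketch diverges from the paper is in the construction step, and the mechanism you propose (``push forward $N$ to $\Comp(S(11))$, then pull back along the extended \'etale map and restrict along the extended open immersion'') is too optimistic as stated. An \'etale map $\ol{S}(01)\to\ol{S}(11)$ does not in general extend to a map between chosen compactifications, and even once you force an extension, taking a fiber product will not land you in $\Comp(S(01))$. The paper instead isolates a Lemma: \emph{for any morphism $f:V\to U$ in $\ulMSm^\fin$ and any $M\in\Comp(U)$, there exists $N'\in\Comp(V)$ with a morphism $N'\to M$ in $\MSm^\fin$}. Its proof is exactly the ``careful diagram chase'' you anticipate, but it uses the graph trick (to extend the rational map $\ol{N'}\dashrightarrow\ol{M}$), a blow-up (to make the boundary a Cartier divisor), and then \cite[Lemma~3.14]{MiCI} to increase the modulus enough to ensure admissibility. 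With this lemma the paper then builds $T\in\Comp(S)$ vertex by vertex, treating the cases $(i,j)=(1,1)$, $(1,0)$/$(0,1)$, and $(0,0)$ separately, each time starting from an \emph{arbitrary} $T\in\Comp(S)$ and modifying the vertices upstream of $(i,j)$.

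One small misattribution: the normality of $\ol{S}(11)$ is not used in this construction at all; it enters only through the hypothesis of Theorem~\ref{t3.1}(2), i.e.\ in the reduction step you already identified.
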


\begin{proof}
By Theorem \ref{t3.1}\,(1), it suffices to prove that the
subcateogry $\{T(ij)\ |\  T \in \Comp (S)\}$ is cofinal in $\Comp
(S(ij))$ for any $i,j \in \{0,1\}$. To show this we need the
following

\begin{lemma}
For any morphism $f : V \to U$ in $\ulMSm^\fin$ and for any $M \in
\Comp (U)$, there exists $N \in \Comp (V)$ such that $f$ induces a
morphism $N \to M$ in $\MSm^\fin$.
\end{lemma}

\begin{proof}
Take any $N \in \Comp (V)$. Let $\Gamma$ be the graph of the
rational map $\ol{N} \dashrightarrow \ol{M}$ and let $\pi : \Gamma
\to \ol{N}$. Then $\pi$ induces an isomorphism $N' := (\Gamma ,
\pi^\ast N^\infty) \cong N$ and $N' \in \Comp (V)$. Therefore, by
replacing $N$ with $N'$, we may assume that $\ol{f} : \ol{U} \to
\ol{V}$ extends to a morphism of schemes $\ol{p} : \ol{N} \to
\ol{M}$. Moreover, by taking blow up $\Bl_{\ol{N} - \ol{V}} (\ol{N})
\to \ol{N}$ and pulling back the divisor, we may assume that there
exists an effective Cartier divisor $D$ on $\ol{N}$ with $\ol{N} -
\ol{V} = |D|$. Since the admissiblity of $V \to U$ implies
$\ol{p}^\ast M^\infty |_{\ol{V}^N} = \ol{f}^\ast U^\infty
|_{\ol{V}^N} \leq V^\infty |_{\ol{V}^N} = N^\infty |_{\ol{V}^N}$,
\cite[Lemma 3.14]{MiCI} shows that there exists a positive integer
$n$ such that $\ol{p}^\ast M^\infty |_{\ol{N}^N} \leq (N^\infty +
nD) |_{\ol{N}^N}$. Then $N'':=(\ol{N}, N^\infty + nD) \in \Comp
(V)$, it dominates $N$ and $\ol{p}$ induces a morphism $N'' \to M$
in $\ulMSm^\fin$, as desired.
\end{proof}

The corollary immediately follows from the lemma when $(i,j) =
(1,1)$. We prove the case $(i,j)=(1,0)$. Take any $N \in \Comp
(S(10))$ and any $T \in \Comp (S)$. Since $\Comp (S(10))$ is
filtered, there exists $T'(10) \in \Comp (S(10))$ which dominates
both $N$ and $T(10)$. By the lemma there exists $T'(00) \in \Comp
(S(00))$ such that $S(00) \to S(10)$ extends to $T'(00) \to T(10)$.
Since $\Comp (S(00))$ is ordered we may assume that $T'(00)$
dominates $T(00)$. Then the resulting diagram
\[\xymatrix{
T'(00) \ar[r] \ar[d] & T(01) \ar[d] \\
T'(10) \ar[r] & T(11) }\] is an object of $\Comp (S)$ dominating
$T$, and $T'(10)$ dominates $N$ by construction. This proves the
case $(i,j)=(1,0)$. The proof for $(i,j)=(0,1)$ is completely the
same.

Finally we prove the case $(i,j)=(0,0)$. Take any $N \in \Comp
(S(00))$ and take any $T \in \Comp (S)$. Since $\Comp (S(00))$ is
filtered, there exists $T'(00) \in \Comp (S(00))$ which dominates
both $N$ and $T(00)$. Then the square obtained by replacing $T(00)$
with $T'(00)$ dominates $T$. This finishes the proof of Corollary
\ref{edgewise-cofinality}.
\end{proof}

\begin{remark}
The essential point of the above proof is the fact that the diagram
category $\Sq$ does not have a loop, and therefore the use of the
graph trick terminates in finitely many steps. We remark that we can
generalize the proof to a much more abstract argument, \emph{cf.}
\cite[Lemma C.6]{KaMi0}.
\end{remark}

\section{Sheaves on $\MSm$ and $\MCor$}

\subsection{Sheaves on $\MSm$}\label{s4.1}

\begin{lemma}\label{lcom10} The category $\MNS$ is closed under infinite direct sums and the inclusion functor
$i_{s, \Nis}$ is strongly additive.
\end{lemma}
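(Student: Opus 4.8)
The plan is to reduce the statement to the analogous fact about the cd-structure $P_{\ulMVfin}$ on $\ulMSm^\fin$, where it is already known (Theorem \ref{thm:sheaf-transfer}), by propagating it along the categories appearing in diagram \eqref{MPScategories}. Recall that for a site whose topology is generated by a cd-structure, the associated sheaf category is closed under infinite direct sums in the presheaf category precisely when the sheafification functor commutes with infinite direct sums; equivalently, when the squares of the cd-structure are \emph{quasi-compact} in a suitable sense (a direct sum of sheaves already satisfies the Mayer--Vietoris exactness in Corollary \ref{l5.1.17}(2)). So concretely, by Corollary \ref{l5.1.17}, it suffices to show: for any family $(F_\lambda)_{\lambda\in\Lambda}$ in $\MNS$ and any $\MV$-square $Q$ as in \eqref{Q}, the sequence
\[
0 \to \bigoplus_\lambda F_\lambda(T(00)) \to \bigoplus_\lambda F_\lambda(T(10)) \times F_\lambda(T(01)) \to \bigoplus_\lambda F_\lambda(T(11))
\]
is exact. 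Since each summand is exact by hypothesis and finite products agree with finite direct sums, the only issue is that $\prod_\lambda$ and $\bigoplus_\lambda$ must be interchanged in the middle term; but here $\bigoplus_\lambda (A_\lambda\times B_\lambda) = (\bigoplus_\lambda A_\lambda)\times(\bigoplus_\lambda B_\lambda)$ holds because the product is \emph{binary}, so there is nothing to interchange, and exactness is immediate as filtered colimits (in particular direct sums) are exact in $\Ab$.

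The slightly subtle point that makes the above not completely trivial is that $\MNS$ is defined by the sheaf condition on the site $(\MSm, P_{\MV})$, and one must know that $i_{s,\Nis}$ being strongly additive is \emph{equivalent} to the Mayer--Vietoris characterization being preserved by direct sums; this is exactly the content of Corollary \ref{l5.1.17}, which gives the equivalence (1) $\iff$ (2) for each presheaf, and direct sums in $\MPS$ are computed pointwise, so a pointwise-defined direct sum of objects satisfying (2) again satisfies (2). Hence the direct sum, computed in $\MPS$, already lies in $\MNS$, which is precisely the assertion that $\MNS$ is closed under infinite direct sums in $\MPS$ and that the inclusion $i_{s,\Nis}$ commutes with these direct sums, i.e.\ is strongly additive.

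The main obstacle — such as it is — is bookkeeping: one must be careful that ``direct sum'' in the additive presheaf category $\MPS$ really is the pointwise direct sum of abelian groups (true, since evaluation at each object is an exact functor to $\Ab$ and $\MPS$ is a Grothendieck abelian category of the form $\Ab$-valued functors), and that the $\MV$-square Mayer--Vietoris sequence involves only \emph{finitely many} terms in each degree, so no infinitary exactness issue about products of exact sequences arises. Once these points are in place the proof is a one-line application of exactness of filtered colimits in $\Ab$, as in the analogous statements \cite[Theorem~3.5.3]{kmsy} and Theorem \ref{thm:sheafification-ulMNST} for $\ulMNST^\fin$ and $\ulMNST$.
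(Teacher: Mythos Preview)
Your argument is correct and is essentially the same as the paper's: the sheaf condition of Corollary~\ref{l5.1.17} is tested on finite diagrams (the $\MV$-squares), so a pointwise direct sum of sheaves again satisfies the Mayer--Vietoris exactness, hence lies in $\MNS$. The paper compresses this into a single sentence.

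One remark on presentation: your opening ``proof plan'' announces a reduction to $\ulMSm^\fin$ via Theorem~\ref{thm:sheaf-transfer} and diagram~\eqref{MPScategories}, but you never actually carry out such a reduction --- nor do you need to. The direct argument you give in the second and third paragraphs, using only Corollary~\ref{l5.1.17} and the exactness of direct sums in $\Ab$, is self-contained and is what the paper does. You should drop the misleading first sentence.
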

\begin{proof} Indeed, the sheaf condition is tested on finite diagrams, hence the presheaf given by a direct sum of sheaves is a sheaf.
\end{proof}

\begin{thm}\label{thm:MNS-tau-a}
The following assertions hold.
\begin{itemize}
\item[\rm (1)]
We have $\tau_s^*(\ulMNS) \subset \MNS$ and $\tau_{s !}(\MNS)
\subset \ulMNS$, where $\tau_s^*$ and $\tau_{s !}$ are from Lemma
\ref{eq.tau}. Hence we obtain functors
\[\tau_s^\Nis : \ulMNS \to \MNS \qaq \tau_{s, \Nis} : \MNS \to \ulMNS\]
such that
\begin{equation}\label{eq:taus-and-is}
i_{s, \Nis} \tau_s^\Nis = \tau_s^* \ul{i}_{s, \Nis}, \quad \tau_{s
!} i_{s, \Nis} = \ul{i}_{s, \Nis} \tau_{s, \Nis},
\end{equation}
and that $\tau_s^\Nis$ is right adjoint to $\tau_{s, \Nis}$. $($See
\eqref{MPcategories} for $\tau_s$.$)$
\item[\rm (2)]
For $F \in \MPS$, one has $F \in \MNS$ if and only if $\tau_{s,!}F
\in \ulMNS$.
\item[\rm (3)]
We have
\begin{align}\label{eq:a-tau-c}
&\ul{a}_{s, \Nis} \tau_{s !} = \tau_{s, \Nis} a_{s, \Nis},
\end{align}
\item[\rm (4)]
The functor  $\tau_{s, \Nis}$ is fully faithful and exact. Moreover,
the functor $\tau_s^\Nis$ preserves injectives.
\end{itemize}
\end{thm}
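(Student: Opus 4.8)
The plan is to prove (1) first and deduce (2)--(4) by formal nonsense. For the inclusion $\tau_s^*(\ulMNS)\subset\MNS$ I would simply invoke the continuity of $\tau_s$ for the topologies given by $P_{\MV}$ and $P_{\ulMV}$ (Theorem \ref{t3.1}(1)): by definition of a continuous functor, $\tau_s^*$ carries $P_{\ulMV}$-sheaves to $P_{\MV}$-sheaves. The substantial point is the other inclusion $\tau_{s!}(\MNS)\subset\ulMNS$. Given $F\in\MNS$, I would use the cd-structure criterion (Lemma \ref{lem:sheaves-MNS}) and check that $\tau_{s!}F$ turns every $\ulMV$-square into an exact sequence. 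Writing such a square as $\ul{b}_s^\Sq(S)$ for an $\ulMVfin$-square $S$, I would first reduce to the case $\ol{S}(11)$ normal by base-changing $S$ along the normalization $\ol{S}(11)^N\to\ol{S}(11)$ and pulling back the divisors: this is again an $\ulMVfin$-square (the Nisnevich-square conditions and minimality are preserved, using that the normalization is an isomorphism over the smooth interior), and since the normalization maps are isomorphisms in $\ulMSm$ (Remark \ref{rk-graph-trick}(1)), the associated $\ulMV$-square is unchanged up to isomorphism in $\ulMSm^\Sq$. Then, by the colimit formula $\tau_{s!}F(S(ij))=\colim_{N\in\Comp(S(ij))}F(N)$ of Lemma \ref{eq.tau}(4) together with Corollary \ref{edgewise-cofinality}, I could rewrite $\tau_{s!}F(S(ij))=\colim_{T\in\Comp(S)^{\MV}}F(T(ij))$; since each such $T$ is an $\MV$-square, $F\in\MNS$ gives an exact sequence on $T$ (Corollary \ref{l5.1.17}), and filtered colimits being exact, the colimit sequence is the required exact sequence for $\tau_{s!}F$ on $S$. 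After this, $\tau_s^\Nis$ and $\tau_{s,\Nis}$ are defined as the restrictions of $\tau_s^*$ and $\tau_{s!}$, the identities \eqref{eq:taus-and-is} are tautological, and the adjunction ($\tau_s^\Nis$ right adjoint to $\tau_{s,\Nis}$) is obtained from $\tau_{s!}\dashv\tau_s^*$ on presheaves by conjugating with the fully faithful inclusions $i_{s,\Nis}$, $\ul{i}_{s,\Nis}$ and using \eqref{eq:taus-and-is}.

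For (2), the direction $F\in\MNS\Rightarrow\tau_{s!}F\in\ulMNS$ is part of (1), and for the converse I would note that if $\tau_{s!}F\in\ulMNS$ then $F\cong\tau_s^*\tau_{s!}F$ (the unit of $\tau_{s!}\dashv\tau_s^*$ is an isomorphism, Lemma \ref{eq.tau}(2)), which lies in $\tau_s^*(\ulMNS)\subset\MNS$ by (1). For (3), I would argue by uniqueness of adjoints: $\ul{a}_{s,\Nis}\tau_{s!}$ is a composite of left adjoints, hence left adjoint to $\tau_s^*\ul{i}_{s,\Nis}$, while $\tau_{s,\Nis}a_{s,\Nis}$ is left adjoint to $i_{s,\Nis}\tau_s^\Nis$; since $\tau_s^*\ul{i}_{s,\Nis}=i_{s,\Nis}\tau_s^\Nis$ by \eqref{eq:taus-and-is}, the two functors are canonically isomorphic, which is \eqref{eq:a-tau-c}.

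For (4): full faithfulness of $\tau_{s,\Nis}$ I would read off from full faithfulness of $\tau_{s!}$ (Lemma \ref{eq.tau}(2)) and of the two sheaf inclusions. For exactness, $\tau_{s,\Nis}$ is right exact as a left adjoint, and, combining (3) with $a_{s,\Nis}i_{s,\Nis}\cong\id$ (counit of an adjunction with fully faithful right adjoint), it is isomorphic to $\ul{a}_{s,\Nis}\tau_{s!}i_{s,\Nis}$, a composite of left exact functors ($i_{s,\Nis}$ left exact as a right adjoint, $\tau_{s!}$ and $\ul{a}_{s,\Nis}$ exact), hence also left exact, so exact. Finally $\tau_s^\Nis$, being right adjoint to the exact functor $\tau_{s,\Nis}$, preserves injectives.

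The hard part is the inclusion $\tau_{s!}(\MNS)\subset\ulMNS$, and specifically the reduction to normal $\ol{S}(11)$ followed by the appeal to the cofinality of $\MV$-completions; this is exactly where the geometry from \cite{nistopmod} and \cite{KaMi} (completeness and regularity of $P_{\MV}$, Theorem \ref{t3.1}(2), Corollary \ref{edgewise-cofinality}) is indispensable. All the rest is bookkeeping with adjoint functors.
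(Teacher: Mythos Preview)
Your proof is correct and follows essentially the same approach as the paper's own proof: the inclusion $\tau_s^*(\ulMNS)\subset\MNS$ via continuity (Theorem~\ref{t3.1}(1)), the inclusion $\tau_{s!}(\MNS)\subset\ulMNS$ via the normalization reduction (Remark~\ref{rk-graph-trick}(1)), the cofinality of $\MV$-completions (Corollary~\ref{edgewise-cofinality}), and exactness of filtered colimits, and then (2)--(4) by adjoint-functor bookkeeping. The only cosmetic difference is that the paper packages your exactness argument for $\tau_{s,\Nis}$ in (4) as an instance of Lemma~\ref{lem:lr-adjoint}(4), while you unwind that lemma directly via $\tau_{s,\Nis}\cong\ul{a}_{s,\Nis}\tau_{s!}i_{s,\Nis}$.
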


\begin{proof}
First we prove (1). The first assertion follows from the continuity
of $\tau_s$ (Theorem \ref{t3.1}\,(1)). Similarly, the second
assertion morally follows from the ``continuity of $\tau_s^!$'' (see
\cite[Remark~1]{KaMi}): we give a proof based on Theorem
\ref{t3.1}\,(2).

Take $F\in \MNS$. By Lemma \ref{lem:sheaves-MNS}, it suffices to
show that the sequence
\[ 0\to \tau_!F(M) \to \tau_!F(U)\times \tau_!F(V) \to \tau_!F(W)
\]
is exact for any $Q\in P_{\ulMV}$ as in \eqref{Q0}. By Remark
\ref{rk-graph-trick} (1), we may assume that $\ol{M}$ is normal.
Since a filtered colimit of exact sequences of abelian groups is
exact, the desired assertion follows from Lemma \ref{eq.tau} (4),
Corollary \ref{l5.1.17} and  Corollary \ref{edgewise-cofinality}.
Finally the adjointness follows from Lemma \ref{eq.tau} (2). This
completes the proof of (1).

(2) follows from (1) and $\id \cong \tau_s^* \tau_{s,!}$ by Lemma
\ref{eq.tau} (2).

(3) follows from \eqref{eq:taus-and-is} by adjunction.

In (4), the exactness of $\tau_{s, \Nis}$ follows from Lemma
\ref{lem:lr-adjoint} (4) applied to $c=\tau_{s, !}$, using (1), (3)
and Lemma \ref{lem:sheaves-MNS}. It then implies the preservation of
injectives by  $\tau_s^\Nis$. The full faithfulness of $\tau_{s,
\Nis}$ follows from that of $\tau_{s!}$ (Lemma \ref{eq.tau} (2)) and
\eqref{eq:taus-and-is}.
\end{proof}

\begin{rk}
By \cite[expos\'e~II, Proposition~1.3]{SGA4}, the continuity of $\tau_s$ implies
\emph{a priori} the existence of a left adjoint $\tau_{s,\Nis}$ to
$\tau_s^\Nis$ on sheaves of sets, having the following properties:
\begin{enumerate}
\item \eqref{eq:a-tau-c};
\item $\tau_{s,\Nis}=a_{s,\Nis} \tau_{s,!} i_{s,\Nis}$;
\item $\tau_{s,\Nis}$ commutes with representable colimits;
\item $\tau_{s,\Nis}$ sends a representable sheaf to the corresponding representable sheaf.
\end{enumerate}

All this extends to sheaves of abelian groups by \cite[expos\'e~II, Proposition~6.3.1]{SGA4}. The one property which is missing is the right formula
of \eqref{eq:taus-and-is}. From \eqref{eq:a-tau-c}, we deduce a base
change morphism
\begin{equation}\label{eq4.1}
\tau_{s !} i_{s, \Nis} \Rightarrow \ul{i}_{s, \Nis} \tau_{s, \Nis}
\end{equation}
that we have shown above to be an isomorphism. By (4), this
isomorphism is clear on the generators $\Z[M]$ ($M\in \MSm$) of the
Grothendieck category $\MNS$, since both topologies on $\MSm$ and
$\ulMSm$ are subcanonical (Theorem~\ref{thm;cd-str-ulMSm} and
\ref{cd-str-MSm} (1)). But both sides of \eqref{eq4.1} are a
composition of left and right exact functors, so this is not
sufficient to get the general case. Thus the recourse to Theorem
\ref{t3.1}\,(2) seems necessary to prove Theorem \ref{thm:MNS-tau-a}
(1).
\end{rk}

\subsection{Sheaves on $\protect\MCor$} The following is an analogue of Lemma-Definition \ref{lem:mnst-condition}:

\begin{lemdef}\label{lem:mnst-condition2}
For $F \in \MPST$, one has
$\tau_!F \in \ulMNST$ if and only if $c^*F \in \MNS$. $($See \eqref{MPcategories} for $c$.$)$
We write $\MNST$ for the full subcategory of $\MPST$ consisting of
those $F \in \MPST$ that enjoy these equivalent conditions. Let
$i_\Nis : \MNST \to \MPST$ be the inclusion functor and let
$\tau_\Nis : \MNST \to \ulMNST$ and $c^\Nis : \MNST \to \MNS$ be
such that
\begin{equation}\label{eq:i-tau-sh}
\ul{i}_\Nis \tau_\Nis = \tau_! i_\Nis, \quad i_{s, \Nis} c^\Nis =
c^* i_\Nis.
\end{equation}
\end{lemdef}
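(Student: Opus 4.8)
The plan is to mimic the proof of Lemma-Definition~\ref{lem:mnst-condition}, translating the $\ul{c}^*$/$\ul{b}^*$ argument on $\ulMPST$ into the $c^*$/$\tau_!$ argument on $\MPST$. First I would establish the equivalence of conditions. The key compatibility is the second identity in \eqref{eq:c-and-tau}, namely $\ul{c}^* \tau_! = \tau_{s!} c^*$. Given $F\in \MPST$, this gives $\ul{c}^*(\tau_! F) = \tau_{s!}(c^* F)$ in $\ulMPS$. By Theorem~\ref{thm:MNS-tau-a}(2), for a presheaf $G\in \MPS$ one has $G\in \MNS$ if and only if $\tau_{s!} G\in \ulMNS$; applying this with $G = c^* F$ we get $c^* F\in \MNS \iff \tau_{s!}(c^* F)\in \ulMNS \iff \ul{c}^*(\tau_! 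F)\in \ulMNS$. Now I would invoke Lemma-Definition~\ref{lem:mnst-condition}, which says exactly that for $\tau_! F\in \ulMPST$ one has $\ul{c}^*(\tau_! F)\in \ulMNS \iff \tau_! F\in \ulMNST$ (since $\tau_! F\in\ulMNST$ is the definition of the former). Chaining these gives $\tau_! F\in \ulMNST \iff c^* F\in \MNS$, which is the claimed equivalence.

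Next I would address the well-definedness of the three functors $i_\Nis$, $\tau_\Nis$, $c^\Nis$ and the identities \eqref{eq:i-tau-sh}. The functor $i_\Nis$ is just the inclusion of the full subcategory $\MNST\subset \MPST$, so there is nothing to prove there. For $\tau_\Nis$: by the very definition of $\MNST$, the functor $\tau_!:\MPST\to\ulMPST$ sends objects of $\MNST$ into $\ulMNST$, so it restricts to a functor $\tau_\Nis:\MNST\to\ulMNST$, and the identity $\ul{i}_\Nis \tau_\Nis = \tau_! i_\Nis$ holds by construction (both sides are $\tau_!$ restricted to $\MNST$, viewed as landing in $\ulMPST$). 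For $c^\Nis$: by the equivalence just proved, $c^*$ sends objects $F\in\MNST$ to $c^* F\in \MNS$, so $c^*$ restricts to a functor $c^\Nis:\MNST\to\MNS$, and $i_{s,\Nis} c^\Nis = c^* i_\Nis$ holds for the same reason. Thus \eqref{eq:i-tau-sh} is immediate once the equivalence is in hand.

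The only genuine content is the equivalence, and within it the only nontrivial input beyond formal manipulation of the adjunctions in \eqref{eq:c-and-tau} is Theorem~\ref{thm:MNS-tau-a}(2), which in turn rests on Theorem~\ref{t3.1}\,(2) (edgewise cofinality of $\MV$-completions). So I do not expect a serious obstacle here: the statement is essentially a bookkeeping consequence of results already established, and the proof should be two or three lines. If anything, the point to be careful about is making sure the instance of Theorem~\ref{thm:MNS-tau-a}(2) is applied to the presheaf $c^* F$ on $\MSm$ rather than to $F$ itself, and that the commutation $\ul{c}^*\tau_! = \tau_{s!} c^*$ from \eqref{eq:c-and-tau} is used in the correct direction; both are straightforward. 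I would write the proof as: ``This follows by combining \eqref{eq:c-and-tau}, Theorem~\ref{thm:MNS-tau-a}(2) and Lemma-Definition~\ref{lem:mnst-condition}; the functors and \eqref{eq:i-tau-sh} are then defined as the evident restrictions.''
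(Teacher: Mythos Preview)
Your proposal is correct and follows essentially the same route as the paper: both use the identity $\ul{c}^*\tau_! = \tau_{s!} c^*$ from \eqref{eq:c-and-tau}, then combine Lemma-Definition~\ref{lem:mnst-condition} (for the $\ulMNST$ side) with Theorem~\ref{thm:MNS-tau-a}(2) (for the $\MNS$ side) to obtain the equivalence. The paper's proof is indeed the two-line version you predicted, and it leaves the definition of $\tau_\Nis$, $c^\Nis$ and the identities \eqref{eq:i-tau-sh} implicit, just as you suggest.
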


\begin{proof}Let $F\in \MPST$. Then
\[\tau_! F\in \ulMNST\Rightarrow \tau_{s!} c^*F = \ul{c}^*\tau_! F\in c^*\ulMNST \subset \ulMNS\]
where we used \eqref{eq:c-and-tau} and Lemma-Definition
\ref{lem:mnst-condition}; by Theorem \ref{thm:MNS-tau-a} (2), this
implies $c^*F\in \MNS$. This reasoning can be reversed.
\end{proof}

\begin{lemma}\label{l4.2}\
\begin{itemize}
\item[\rm (1)]
For any $M\in \MCor$, the presheaf $\Z_\tr(M)\in \MPST$ belongs to
$\MNST$.
\item[\rm (2)] We have a natural isomorphism
\begin{equation}\label{eq:i-and-c-underlines}
\tau_{s, \Nis} c^\Nis  \cong \ul{c}^\Nis \tau_\Nis.
\end{equation}
\end{itemize}
\end{lemma}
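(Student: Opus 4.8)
The plan is to reduce both assertions to the compatibilities already recorded between the functors $\tau_!,\tau_{s!},c^*,\ul c^*$ and their sheaf-level restrictions, so that essentially no new work is needed.

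\emph{Part (1).} By Lemma-Definition \ref{lem:mnst-condition2}, the condition $\Z_\tr(M)\in\MNST$ is equivalent to $\tau_!\Z_\tr(M)\in\ulMNST$. The key observation is that $\tau_!$, being the left Kan extension along the inclusion $\tau:\MCor\to\ulMCor$ (Lemma \ref{eq.tau}), carries the representable presheaf $\Z_\tr(M)=\MCor(-,M)$ to the representable presheaf $\ulMCor(-,M)=\Z_\tr(M)\in\ulMPST$; this is the standard fact that the left Kan extension of a Yoneda embedding along a functor sends representables to representables (one may also read it off the colimit formula of Lemma \ref{eq.tau}(3), the comma category $\Comp(\tau M)$ having the identity as a final object). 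Theorem \ref{thm:sheafification-ulMNST} then asserts $\Z_\tr(M)\in\ulMNST$ for every $M\in\ulMCor$, in particular for our proper $M$, and we are done.

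\emph{Part (2).} Here I would argue purely by chasing the defining relations. The inclusion $\ul i_{s,\Nis}:\ulMNS\to\ulMPS$ is the embedding of a full subcategory, hence faithful and injective on objects, so a natural transformation between functors valued in $\ulMNS$ is an isomorphism as soon as it becomes one after applying $\ul i_{s,\Nis}$; it therefore suffices to exhibit a canonical isomorphism $\ul i_{s,\Nis}\tau_{s,\Nis}c^\Nis\cong\ul i_{s,\Nis}\ul c^\Nis\tau_\Nis$ of functors on $\MNST$. Using successively the right-hand identity of \eqref{eq:taus-and-is}, the right-hand identity of \eqref{eq:i-tau-sh}, the identity $\ul c^*\tau_!=\tau_{s!}c^*$ of \eqref{eq:c-and-tau}, the relation $\ul i_{s,\Nis}\ul c^\Nis=\ul c^*\ul i_\Nis$ defining $\ul c^\Nis$ (Lemma-Definition \ref{lem:mnst-condition}, Theorem \ref{thm:sheafification-ulMNST}), and the left-hand identity of \eqref{eq:i-tau-sh}, one computes
\[
\ul i_{s,\Nis}\,\tau_{s,\Nis}\,c^\Nis
=\tau_{s!}\,i_{s,\Nis}\,c^\Nis
=\tau_{s!}\,c^*\,i_\Nis
=\ul c^*\,\tau_!\,i_\Nis
=\ul c^*\,\ul i_\Nis\,\tau_\Nis
=\ul i_{s,\Nis}\,\ul c^\Nis\,\tau_\Nis ,
\]
which yields the asserted isomorphism \eqref{eq:i-and-c-underlines}.

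I do not expect a genuine obstacle: (1) is immediate once one knows that $\tau_!$ preserves representables, and (2) is a formal consequence of compatibilities established earlier — ultimately of the continuity statements of Theorem \ref{t3.1} that underlie Theorem \ref{thm:MNS-tau-a}. The only points requiring a little care are whether \eqref{eq:c-and-tau} holds strictly or only up to canonical isomorphism (in the latter case the displayed chain is read as a chain of canonical isomorphisms, which is all that is needed) and the descent of the resulting isomorphism along the full embedding $\ul i_{s,\Nis}$, which is routine.
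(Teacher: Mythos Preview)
Your proof is correct and follows essentially the same approach as the paper. For (1) the paper simply notes $\tau_!\Z_\tr(M)=\Z_\tr(\tau M)$ and invokes Theorem \ref{thm:sheafification-ulMNST}, and for (2) it says the isomorphism is a consequence of \eqref{eq:i-tau-sh} and \eqref{eq:c-and-tau}; your argument is the same, just with the chain of identities (including the auxiliary \eqref{eq:taus-and-is} and the defining relation for $\ul c^\Nis$) spelled out explicitly.
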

\begin{proof}
(1) follows from Theorem \ref{thm:sheafification-ulMNST}, since
$\tau_!\Z_\tr(M)=\Z_\tr(\tau M)$. (2) is a consequence of
\eqref{eq:i-tau-sh} and the corresponding isomorphism for presheaves
\eqref{eq:c-and-tau}.
\end{proof}

\begin{lemma}\label{lem:patching2}
Let us consider the ``$2$-Cartesian product category'' $\MNS
\times_{\ulMNS} \ulMNST$, that is, the category of triples $(F_s,
F_t, \phi)$ consisting of $F_s \in \MNS, ~F_t \in \ulMNST$ and an
isomorphism $\phi : \tau_{s, \Nis} F_s \iso \ul{c}^\Nis F_t$ in
$\ulMNS$. The functor
\[ \MNST \to \MNS \times_{\ulMNS} \ulMNST,
\]
defined by $F \mapsto (c^\Nis F, \tau_\Nis F, \theta_F)$, where
$\theta_F : \tau_{s, \Nis} c^\Nis F \iso \ul{c}^\Nis \tau_\Nis F$ is
from \eqref{eq:i-and-c-underlines}, is an equivalence of categories.
\end{lemma}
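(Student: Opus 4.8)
The plan is to prove that the functor in question, which I denote $\Phi$, is fully faithful and essentially surjective. The key inputs are the full faithfulness of $\tau_\Nis$ and $\tau_{s,\Nis}$ (Lemma~\ref{eq.tau}(1) and Theorem~\ref{thm:MNS-tau-a}(4)) together with the compatibilities \eqref{eq:c-and-tau} between the adjoint pairs $(\tau_!,\tau^*)$ and $(\tau_{s!},\tau_s^*)$.

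\emph{Full faithfulness.} Since $\MNST\subset\MPST$ and $\ulMNST\subset\ulMPST$ are full subcategories and $\ul i_\Nis\tau_\Nis=\tau_! i_\Nis$ with $\tau_!$ fully faithful (see \eqref{eq:i-tau-sh}), the functor $\tau_\Nis$ is fully faithful; similarly $\tau_{s,\Nis}$ is fully faithful. A morphism $\Phi(F)\to\Phi(G)$ is a pair $(\alpha_s,\alpha_t)$ with $\alpha_s\colon c^\Nis F\to c^\Nis G$, $\alpha_t\colon\tau_\Nis F\to\tau_\Nis G$ and $\ul c^\Nis(\alpha_t)\circ\theta_F=\theta_G\circ\tau_{s,\Nis}(\alpha_s)$. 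Given such a pair, let $\beta\colon F\to G$ be the unique morphism in $\MNST$ with $\tau_\Nis\beta=\alpha_t$; applying the naturality of $\theta$ (see \eqref{eq:i-and-c-underlines}) to $\beta$ and using that $\theta_G$ is invertible and $\tau_{s,\Nis}$ is faithful forces $c^\Nis\beta=\alpha_s$. Conversely every $\beta$ is recovered from $\tau_\Nis\beta$, so $\beta\mapsto(c^\Nis\beta,\tau_\Nis\beta)$ is bijective.

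\emph{Essential surjectivity.} Given a triple $(F_s,F_t,\phi)$, I set $F:=\tau^* F_t$. Then $F\in\MNST$: indeed $c^* F=c^*\tau^* F_t=\tau_s^*\ul c^* F_t$ by \eqref{eq:c-and-tau}, and $\ul c^* F_t\in\ulMNS$ since $F_t\in\ulMNST$, so $c^* F\in\MNS$ by Theorem~\ref{thm:MNS-tau-a}(1); hence $F\in\MNST$ by Lemma-Definition~\ref{lem:mnst-condition2}. Next, $\tau_\Nis F=\tau_!\tau^* F_t$, and I claim the counit $\varepsilon\colon\tau_!\tau^* F_t\to F_t$ is an isomorphism. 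As $\ul c^*$ is exact and faithful, hence conservative, it is enough to show $\ul c^*(\varepsilon)$ is an isomorphism; by \eqref{eq:c-and-tau} (the mate calculus for the commutative square $\tau c=\ul c\tau_s$ of \eqref{MPcategories}, or a direct computation with the colimit formulas of Lemma~\ref{eq.tau}(3)--(4)) it is identified with the counit $\tau_{s!}\tau_s^*(\ul c^* F_t)\to\ul c^* F_t$ of $(\tau_{s!},\tau_s^*)$. But $\phi$ gives $\ul c^* F_t=\ul i_{s,\Nis}\ul c^\Nis F_t\cong\ul i_{s,\Nis}\tau_{s,\Nis}F_s=\tau_{s!}i_{s,\Nis}F_s$ by \eqref{eq:taus-and-is}, so $\ul c^* F_t$ lies in the essential image of the fully faithful left adjoint $\tau_{s!}$, where the counit is an isomorphism. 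Therefore $\varepsilon$ is an isomorphism and $\tau_\Nis F\cong F_t$; combining this with $\theta_F$, $\phi$ and the full faithfulness of $\tau_{s,\Nis}$ produces an isomorphism $\Phi(F)\cong(F_s,F_t,\phi)$.

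The main obstacle is exactly the claim that $\varepsilon$ is an isomorphism, i.e.\ that the gluing datum $\phi$ forces $F_t$ into the essential image of $\tau_\Nis$; this requires combining the compatibility \eqref{eq:c-and-tau} of the two adjunctions, the conservativity of $\ul c^*$, and the observation that the counit of a fully faithful left adjoint restricts to an isomorphism on its essential image. The remaining verifications (naturality of $\theta$, and the compatibility of the reconstructed isomorphisms with $\phi$ and $\theta_F$) are routine.
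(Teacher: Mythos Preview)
Your argument is correct, but it takes a different route from the paper's. The paper's own proof is two lines: it invokes the analogous presheaf-level equivalence $\MPST \simeq \MPS \times_{\ulMPS} \ulMPST$ from \cite[Lemma~2.7.1]{kmsy} as a black box. Full faithfulness of $\Phi$ then follows immediately because all four sheaf categories sit as full subcategories of the corresponding presheaf categories and the functors $c^\Nis,\tau_\Nis,\tau_{s,\Nis},\ul c^\Nis$ are restrictions of $c^*,\tau_!,\tau_{s!},\ul c^*$. For essential surjectivity, the paper views a sheaf-level triple $(F_s,F_t,\phi)$ as a presheaf-level triple, obtains $F\in\MPST$ from the presheaf equivalence, and then reads off $F\in\MNST$ directly from Lemma-Definition~\ref{lem:mnst-condition2} (since $c^*F\cong F_s\in\MNS$).

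Your approach instead re-proves the relevant piece of \cite[Lemma~2.7.1]{kmsy} from scratch: you set $F=\tau^*F_t$ and verify by hand that the counit $\tau_!\tau^*F_t\to F_t$ is an isomorphism, using conservativity of $\ul c^*$, the mate compatibility of \eqref{eq:c-and-tau}, and the fact that the counit of a fully faithful left adjoint is invertible on its essential image. Your final move---defining $\alpha_s$ via $\tau_{s,\Nis}(\alpha_s)=\phi^{-1}\circ\ul c^\Nis(\alpha_t)\circ\theta_F$ using the full faithfulness of $\tau_{s,\Nis}$---is a clean way to guarantee the compatibility square automatically, so the ``routine'' step really is routine. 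The trade-off: your proof is self-contained and exposes the mechanism, while the paper's is shorter and modular, reusing the presheaf result rather than re-deriving it.
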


\begin{proof}
The same statement was proven for presheaves in \cite[Lemma
2.7.1]{kmsy};  full faithfulness follows from this, and essential
surjectivity follows from Lemma-Definition
\ref{lem:mnst-condition2}.
\end{proof}

\begin{thm}\label{thm:a-nis-final}
The following assertions hold.
\begin{itemize}
\item[\rm (1)]
The functor $i_\Nis$ is strongly additive and has an exact left
adjoint $a_\Nis$. Consequently, $\MNST$ is Grothendieck. We have
\begin{equation}\label{eq:Anis-c-tau}
c^\Nis a_\Nis = a_{s, \Nis} c^*, \quad \tau_\Nis a_\Nis =\ul{a}_\Nis
\tau_!.
\end{equation}
\item[\rm (2)]
The functor $c^\Nis$ has a left adjoint $c_\Nis=a_\Nis c_! i_\Nis$.
Moreover, $c^\Nis$ is exact, strongly additive, and faithful.
\end{itemize}
\end{thm}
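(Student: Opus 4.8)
The plan is to deduce Theorem \ref{thm:a-nis-final} from the already-established sheaf theory on $\ulMCor$ (Theorem \ref{thm:sheafification-ulMNST}) and on $\MSm$ (Theorem \ref{thm:MNS-tau-a}), together with the abstract adjunction machinery of Lemma \ref{lem:lr-adjoint}. The two fundamental inputs are the string of adjoints $(\tau_!,\tau^*,\tau_*)$ of Lemma \ref{eq.tau}\,(1), in which $\tau_!$ is fully faithful, exact, and commutes with all colimits, and Lemma-Definition \ref{lem:mnst-condition2}, which tells us that $F\in \MNST$ iff $\tau_!F\in\ulMNST$ iff $c^*F\in\MNS$.

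For part (1), I would first construct $a_\Nis$ as a left adjoint to $i_\Nis$ by applying Lemma \ref{lem:lr-adjoint}\,(3) in the following set-up: take $\sC=\MPST$, $\sC'=\MNST$, $\sD=\ulMPST$, $\sD'=\ulMNST$, with the "vertical" functor $c=\tau_!$ and its restriction $c'=\tau_\Nis$ (well-defined on subcategories precisely by Lemma-Definition \ref{lem:mnst-condition2} and \eqref{eq:i-tau-sh}). Since $\tau_!$ has the exact left adjoint… wait — $\tau_!$ is itself a \emph{left} adjoint, so to get a left adjoint of $i_\Nis$ I instead want to argue on the other side: I would rather exhibit $a_\Nis$ directly as $a_\Nis = (\text{transport of }\ul{a}_\Nis)$ along $\tau_!$. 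Concretely, since $\tau_!$ is fully faithful with left-inverse $\tau^*$ and identifies $\MPST$ with a localisation, and since $\ul{a}_\Nis:\ulMPST\to\ulMNST$ is the exact sheafification, I set $a_\Nis := \tau^\Nis\,\ul{a}_\Nis\,\tau_!$ — but $\tau^\Nis$ has not yet been constructed in part (1). The cleaner route, and the one I would actually follow, is to apply Lemma \ref{lem:lr-adjoint}\,(3) with the roles reversed: put $\sC=\ulMPST$, $\sD=\MPST$, the functor $c=\tau^*$ (which restricts to a functor $\ulMNST\to\MNST$ because $\tau^*=$ the composite sending $F\mapsto\tau^*F$ and one checks $\tau_!\tau^*F$ is still a sheaf using $\tau_!\tau^*\cong\mathrm{id}$ on the image — more simply, $c^*\tau^*F=\tau_s^*\ul{c}^*F\in\MNS$ by \eqref{eq:c-and-tau} and Theorem \ref{thm:MNS-tau-a}\,(1)). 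Since $i_{\ulMNST}$ has the exact left adjoint $\ul{a}_\Nis$, and $\tau^*$ has the exact left adjoint $\tau_!$, Lemma \ref{lem:lr-adjoint}\,(3) produces an exact left adjoint $a_\Nis := \ul{a}_\Nis\,\tau_!\,i_\Nis$ is not quite it either; the lemma gives $d' = a_C d i_D$ where $d$ is the left adjoint of $c=\tau^\Nis$. I will name the restriction $\tau^\Nis:\ulMNST\to\MNST$, note its left adjoint at the presheaf level is $\tau_!$, and conclude $a_\Nis = \ul{a}_\Nis$ does not typecheck — so the correct statement is: the left adjoint of the \emph{inclusion} $i_\Nis$ is obtained by the formula $a_\Nis = \tau^\Nis \ul{a}_\Nis \tau_!$ once $\tau^\Nis$ is in hand, or equivalently directly via Lemma \ref{lem:lr-adjoint}\,(3). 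I would then verify the two identities in \eqref{eq:Anis-c-tau}: the second, $\tau_\Nis a_\Nis = \ul{a}_\Nis\tau_!$, is exactly the "$a_\sC d = d' a_\sD$" clause of Lemma \ref{lem:lr-adjoint}\,(3); the first, $c^\Nis a_\Nis = a_{s,\Nis}c^*$, follows by combining it with \eqref{eq:a-c2} (i.e. $\ul{a}_{s,\Nis}\ul{c}^* = \ul{c}^\Nis\ul{a}_\Nis$), \eqref{eq:a-tau-c} (i.e. $\ul{a}_{s,\Nis}\tau_{s!} = \tau_{s,\Nis}a_{s,\Nis}$), \eqref{eq:c-and-tau}, \eqref{eq:i-tau-sh}, \eqref{eq:i-and-c-underlines}, and the faithfulness of $\tau_{s,\Nis}$ from Theorem \ref{thm:MNS-tau-a}\,(4) to cancel it. Grothendieck-ness of $\MNST$ then follows formally (a reflective subcategory of a Grothendieck category with exact reflector, closed under colimits), and strong additivity of $i_\Nis$ follows from Lemma \ref{lem:lr-adjoint}\,(2) applied to $c=\tau_!$ using that $\ul{i}_\Nis$ is strongly additive (Theorem \ref{thm:sheafification-ulMNST}) and $\tau_!$ commutes with colimits.

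For part (2), I would construct $c_\Nis$ as a left adjoint of $c^\Nis$ again via Lemma \ref{lem:lr-adjoint}\,(3): the presheaf-level functor $c^*:\MPST\to\MPS$ has left adjoint $c_!$ (Lemma \ref{eq:c-functor}), $i_\Nis$ has the left adjoint $a_\Nis$ just built, so $c_\Nis := a_\Nis\, c_!\, i_{s,\Nis}$ is a left adjoint of $c^\Nis$. Faithfulness of $c^\Nis$ follows from Lemma \ref{lem:lr-adjoint}\,(1), since $c^*$ is faithful (Lemma \ref{eq:c-functor}). Strong additivity of $c^\Nis$ follows from Lemma \ref{lem:lr-adjoint}\,(2) (using that $i_\Nis$ and $c^*$ are strongly additive and $i_{s,\Nis}$ is strongly additive by Lemma \ref{lcom10}). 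The one genuinely non-formal point is the \textbf{exactness of $c^\Nis$}: it is left exact as a right adjoint, so the content is right exactness. For this I would mimic the proof of Theorem \ref{thm:MNS-tau-a}\,(4), i.e. invoke Lemma \ref{lem:lr-adjoint}\,(4) with $c=c^*$ (which is exact by Lemma \ref{eq:c-functor}), $a_C = a_\Nis$, $a_D = a_{s,\Nis}$ both exact, and the compatibility $a_{s,\Nis}c^* = c^\Nis a_\Nis$ which is precisely the first identity of \eqref{eq:Anis-c-tau} established in part (1); the hypotheses of Lemma \ref{lem:lr-adjoint}\,(4) are then met and exactness of $c^\Nis$ follows. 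I expect the main obstacle to be purely bookkeeping: keeping straight the web of compatibility squares \eqref{eq:c-and-tau}, \eqref{eq:i-tau-sh}, \eqref{eq:i-and-c-underlines}, \eqref{eq:a-c2}, \eqref{eq:a-tau-c} and verifying that the abstract hypotheses of Lemma \ref{lem:lr-adjoint}\,(3)--(4) are literally satisfied for each application — in particular checking that $\tau^*$ and $c^*$ really do restrict to the stated subcategories, which in each case reduces to Lemma-Definition \ref{lem:mnst-condition2} plus the presheaf-level identities. There is no deep new geometric input here; everything is leveraged from Theorems \ref{thm:sheafification-ulMNST} and \ref{thm:MNS-tau-a}.
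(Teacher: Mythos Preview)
Your treatment of strong additivity of $i_\Nis$ and all of part (2) is essentially the paper's argument: Lemma \ref{lem:lr-adjoint}\,(1)--(4) applied to $c^*$ and $\tau_!$, together with Lemma \ref{lcom10} and Lemma \ref{eq:c-functor}, gives faithfulness, strong additivity, the left adjoint $c_\Nis=a_\Nis c_! i_{s,\Nis}$, and exactness of $c^\Nis$ via the first identity of \eqref{eq:Anis-c-tau}. The paper then deduces exactness of $a_\Nis$ from that first identity and the faithful exactness of $c^\Nis$, just as you suggest.

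The genuine gap is in your construction of $a_\Nis$ itself. None of the applications of Lemma \ref{lem:lr-adjoint}\,(3) you try produces a left adjoint of $i_\Nis$: with $c=\tau^*$ you obtain a left adjoint of $\tau^\Nis$ (namely $\tau_\Nis$), and with $c=\tau_!$ the hypothesis that $i_\sC$ already has a left adjoint is precisely what you are trying to prove. Your final formula $a_\Nis=\tau^\Nis\ul a_\Nis\tau_!$ is not a left adjoint of $i_\Nis$ on the nose: unwinding the adjunctions gives
\[
\MPST(F,i_\Nis G)\cong\ulMNST(\ul a_\Nis\tau_!F,\tau_\Nis G),
\]
and identifying this with $\MNST(\tau^\Nis\ul a_\Nis\tau_!F,G)$ would require $(\tau^\Nis,\tau_\Nis)$ to be an adjoint pair in the wrong order. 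The formula becomes correct only \emph{after} one knows that $\ul a_\Nis\tau_!F$ lies in the essential image of the fully faithful $\tau_\Nis$ for every $F\in\MPST$; but that is exactly the content to be established, and you do not address it.

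The paper supplies the missing idea via Lemma \ref{lem:patching2}: the equivalence $\MNST\simeq\MNS\times_{\ulMNS}\ulMNST$ (inherited from the presheaf-level equivalence \cite[Lemma 2.7.1]{kmsy} together with Lemma-Definition \ref{lem:mnst-condition2}). One then \emph{defines} $a_\Nis F$ as the object of $\MNST$ corresponding to the triple $(a_{s,\Nis}c^*F,\ \ul a_\Nis\tau_!F,\ \phi)$, where $\phi$ is the canonical isomorphism obtained by chaining \eqref{eq:a-tau-c}, \eqref{eq:c-and-tau} and \eqref{eq:a-c2}. This patching simultaneously forces both identities of \eqref{eq:Anis-c-tau}; the second one then yields the adjunction $(a_\Nis,i_\Nis)$ by the computation above and full faithfulness of $\tau_\Nis$. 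Once you insert this construction, the rest of your outline goes through essentially as written.
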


\begin{proof}
By Definition of $\MNST$, the strong addtivity of $i_\Nis$ follows
from that of $\uli_\Nis$ (Theorem \ref{thm:sheafification-ulMNST}
and Lemmma \ref{lem:lr-adjoint} (2)). We then use Lemma
\ref{lem:patching2} and \cite[Lemma 2.7.1]{kmsy} to construct
$a_\Nis$ by patching $a_{s,\Nis}$ and $\ul{a}_\Nis$ over
$\ul{a}_{s,\Nis}$, \emph{i.\,e.} we want $a_\Nis$ to verify
\eqref{eq:Anis-c-tau}; a formal argument shows that such a patching
is determined by the second isomorphism of \eqref{eq:a-c2} and by
the one of \eqref{eq:a-tau-c}.
\[
\xymatrix{ & \MPST \ar[dl]_{c^*} \ar[dr]^{\tau_!}
\ar@/^30ex/[ddd]_{a_\Nis} &
\\
\MPS \ar[r]_{\tau_{s!}} \ar[d]_{a_{s, \Nis}} & \ulMPS
\ar[d]^{\ul{a}_{s, \Nis}} & \ulMPST \ar[l]^{\ul{c}^*}
\ar[d]^{\ul{a}_{\Nis}}
\\
\MNS \ar[r]^{\tau_{s, \Nis}} & \ulMNS  & \ulMNST
\ar[l]_{\ul{c}^\Nis}
\\
& \MNST \ar[ul]^{c^\Nis} \ar[ur]_{\tau_\Nis}. & }
\]

The second isomorphism of \eqref{eq:Anis-c-tau} easily implies that
$a_\Nis$ is left adjoint to $i_\Nis$. Then (2) follows from Lemma
\ref{eq:c-functor}, Lemma \ref{lcom10} and Lemma
\ref{lem:lr-adjoint} (3).

Finally, the exactness of $a_\Nis$ is a consequence of the first
isomorphism of \eqref{eq:Anis-c-tau} since $c^\Nis$ is faithfully
exact as we have just shown.
\end{proof}

\begin{lemma}\label{lem:tau!-exact}
The following assertions hold.
\begin{itemize}
\item[\rm (1)]
We have $\tau^*(\ulMNST) \subset \MNST$.
\item[\rm (2)]
Let $\tau^\Nis : \ulMNST \to \MNST$ be the functor characterized by
\begin{equation}\label{eq:i-tau-star}
\tau^* \ul{i}_\Nis = i_\Nis \tau^\Nis.
\end{equation}
Then $\tau^\Nis$ is a right adjoint of $\tau_\Nis$, and
 $\tau_\Nis$ is fully faithful, exact, and strongly additive.
Moreover, $\tau^\Nis$ preserves injectives and is strongly additive.
\end{itemize}
\end{lemma}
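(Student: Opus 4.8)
The plan is to deduce everything from the already-established facts about $\tau_!$ and $\tau^*$ on presheaves (Lemma \ref{eq.tau}) together with Theorem \ref{thm:MNS-tau-a} on $\MSm$-sheaves, using the characterization of $\MNST$ via $c^*$ in Lemma-Definition \ref{lem:mnst-condition2} and the patching equivalence of Lemma \ref{lem:patching2}. First I would prove (1): given $F\in\ulMNST$, I must show $\tau^*F\in\MNST$, i.e.\ $c^*\tau^*F\in\MNS$. By the first identity of \eqref{eq:c-and-tau}, $c^*\tau^* F = \tau_s^*\ul c^* F$; since $F\in\ulMNST$ we have $\ul c^* F\in\ulMNS$, and by Theorem \ref{thm:MNS-tau-a}(1) we have $\tau_s^*(\ulMNS)\subset\MNS$, so $c^*\tau^*F\in\MNS$, as desired. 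This gives the induced functor $\tau^\Nis:\ulMNST\to\MNST$ with $\tau^*\ul i_\Nis = i_\Nis\tau^\Nis$ (equation \eqref{eq:i-tau-star}).

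Next, for (2), the adjunction $(\tau_\Nis,\tau^\Nis)$: I would obtain it from the presheaf adjunction $(\tau_!,\tau^*)$ of Lemma \ref{eq.tau}(1) restricted along the inclusions $i_\Nis$, $\ul i_\Nis$. Concretely, for $F\in\MNST$, $G\in\ulMNST$, one has $\Hom_{\ulMNST}(\tau_\Nis F,G)=\Hom_{\ulMPST}(\tau_!i_\Nis F,\ul i_\Nis G)=\Hom_{\ulMPST}(i_\Nis F,\tau^*\ul i_\Nis G)=\Hom_{\MPST}(i_\Nis F, i_\Nis\tau^\Nis G)=\Hom_{\MNST}(F,\tau^\Nis G)$, using \eqref{eq:i-tau-sh}, \eqref{eq:i-tau-star} and full faithfulness of the inclusions. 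Full faithfulness of $\tau_\Nis$ follows either from that of $\tau_!$ (Lemma \ref{eq.tau}(1)) via $\ul i_\Nis\tau_\Nis=\tau_!i_\Nis$, or from the unit $\id\to\tau^\Nis\tau_\Nis$ being an isomorphism (which descends from $\id\iso\tau^*\tau_!$). Exactness of $\tau_\Nis$: it is right exact as a left adjoint; for left exactness I would invoke Lemma \ref{lem:lr-adjoint}(4) with $c=\tau_!$ (exact on $\ulMPST$ by Lemma \ref{eq.tau}(1)), $c'=\tau_\Nis$, using the compatibility $\ul a_\Nis\tau_! = \tau_\Nis a_\Nis$ (the second identity of \eqref{eq:Anis-c-tau} from Theorem \ref{thm:a-nis-final}) and exactness of $a_\Nis$ — this is the same pattern used for $\tau_{s,\Nis}$ in the proof of Theorem \ref{thm:MNS-tau-a}(4). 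Strong additivity of $\tau_\Nis$ follows from Lemma \ref{lem:lr-adjoint}(2) applied to $c=\tau_!$ (which is strongly additive, commuting with colimits by Lemma \ref{eq.tau}(1)) together with strong additivity of $\ul i_\Nis$ (Theorem \ref{thm:sheafification-ulMNST}) and of $i_\Nis$ (Theorem \ref{thm:a-nis-final}(1)). Once $\tau_\Nis$ is known exact, its right adjoint $\tau^\Nis$ automatically preserves injectives; and strong additivity of $\tau^\Nis$ follows from Lemma \ref{lem:lr-adjoint}(2) applied to $c=\tau^*$ (strongly additive, a left adjoint by Lemma \ref{eq.tau}(1)) with the inclusions again, using \eqref{eq:i-tau-star}.

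The main obstacle is organizing the descent of exactness of $\tau_\Nis$ cleanly: one must have the sheafification-compatibility $\tau_\Nis a_\Nis=\ul a_\Nis\tau_!$ in hand before invoking Lemma \ref{lem:lr-adjoint}(4), so the proof genuinely depends on Theorem \ref{thm:a-nis-final} having already produced $a_\Nis$ with property \eqref{eq:Anis-c-tau}; everything else is formal diagram chasing through the adjunctions and the identities \eqref{eq:c-and-tau}, \eqref{eq:i-tau-sh}, \eqref{eq:i-tau-star}. I would therefore present the proof in the order: (1) via \eqref{eq:c-and-tau} and Theorem \ref{thm:MNS-tau-a}(1); then the adjunction by restriction; then full faithfulness; then exactness via Lemma \ref{lem:lr-adjoint}(4); then strong additivity of $\tau_\Nis$ and $\tau^\Nis$ via Lemma \ref{lem:lr-adjoint}(2); then preservation of injectives as a formal consequence.
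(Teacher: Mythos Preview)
Your proposal is correct and follows essentially the same approach as the paper: part (1) via the identity $c^*\tau^*=\tau_s^*\ul c^*$ and Theorem \ref{thm:MNS-tau-a}(1), then the adjunction by restriction of $(\tau_!,\tau^*)$ along the fully faithful inclusions, full faithfulness of $\tau_\Nis$ from that of $\tau_!$, exactness and strong additivity of $\tau_\Nis$ via Lemma \ref{lem:lr-adjoint}(4) and (2) applied with $c=\tau_!$ (using \eqref{eq:Anis-c-tau}), preservation of injectives as a formal consequence, and strong additivity of $\tau^\Nis$ via Lemma \ref{lem:lr-adjoint}(2) applied with $c=\tau^*$. The only cosmetic difference is ordering; your identification of the dependence on Theorem \ref{thm:a-nis-final} for the exactness step is exactly right.
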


\begin{remark}
We will see in Theorem \ref{thm;tauexact} below that $\tau^\Nis$ is
also exact.
\end{remark}

\begin{proof}
Let $F \in \ulMNST$ so that $\ul{c}^* F \in \ulMNS$. By Theorem
\ref{thm:MNS-tau-a} (1), we have $c^* \tau^* F = \tau_s^* \ul{c}^* F
\in \MNS$. In view of Lemma-Definition \ref{lem:mnst-condition2},
this proves that $\tau^* F \in \MNST$, whence (1).

In (2), the existence (and uniqueness) of $\tau^\Nis$ follows from
(1) (and the full faithfulness of $ \ul{i}_\Nis$ and $i_\Nis$). The
adjointness is shown by using the full faithfulness of $\ul{i}_\Nis$
and $i_\Nis$, the adjoint pair $(\tau_!, \tau^*)$,
\eqref{eq:i-tau-sh} and \eqref{eq:i-tau-star}. Similarly, the full
faithfulness of $\tau_\Nis$ follows from that of $\tau_!$
(Proposition \ref{eq.tau}) and \eqref{eq:i-tau-sh}. The  strong
additivity (resp. exactness)  of $\tau_\Nis$ follows from
Proposition \ref{eq.tau}, Theorem \ref{thm:a-nis-final} and Lemma
\ref{lem:lr-adjoint} (2) (resp. (4)), applied with $c=\tau_!$; the
latter implies that $\tau^\Nis$ preserves injectives. Finally, its
strong additivity is reduced to that of $\tau^*$ (Lemma \ref{eq.tau}
(1)),  $\ul{i}_\Nis$ (Theorem \ref{thm:sheafification-ulMNST}) and
$i_\Nis$ (Theorem \ref{thm:a-nis-final}) by the full faithfulness of
$i_\Nis$.
\end{proof}

\section{Cohomology in $\protect\MNST$}

\subsection{Main result} We begin with the following.

\begin{thm}\label{thm;tauexact}
The functor $\tau^\Nis : \ulMNST \to \MNST$ from Lemma
\ref{lem:tau!-exact} is exact.
\end{thm}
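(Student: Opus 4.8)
The plan is to deduce the exactness of $\tau^\Nis$ from the exactness of $\tau^*:\ulMPST\to\MPST$ (Lemma \ref{eq.tau}) together with the compatibility of $\tau^*$ with sheafification. Concretely, $\tau^*$ is a localisation, hence exact, and the square relating $i_\Nis$, $\ul i_\Nis$, $\tau^*$, $\tau^\Nis$ commutes by \eqref{eq:i-tau-star}; what is needed is a formula of the type $\tau^\Nis\ul a_\Nis = a_\Nis\tau^*$, i.e. that $\tau^*$ commutes with the two sheafification functors. Granting such a formula, Lemma \ref{lem:lr-adjoint} (4) applies with $c=\tau^*$, $c'=\tau^\Nis$, $a_C=\ul a_\Nis$, $a_D=a_\Nis$ (the latter exact by Theorem \ref{thm:a-nis-final}), yielding the exactness of $\tau^\Nis$.

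So the first step is to establish $\tau^\Nis\ul a_\Nis = a_\Nis\tau^*$ (equivalently $i_\Nis\tau^\Nis\ul a_\Nis=i_\Nis a_\Nis\tau^*$, i.e. $\tau^*\ul i_\Nis\ul a_\Nis = i_\Nis a_\Nis\tau^*$ after using \eqref{eq:i-tau-star}). I would prove this by testing on representables $\Z_\tr(M)$, $M\in\ulMCor$, which generate $\ulMPST$ under colimits, and using that $\tau^*$, $a_\Nis$, $\ul a_\Nis$ all commute with colimits (for $a_\Nis$ and $\ul a_\Nis$ because they are left adjoints; for $\tau^*$ by Lemma \ref{eq.tau} (1)). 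On $\Z_\tr(M)$ one has $\ul a_\Nis\Z_\tr(M)=\Z_\tr(M)\in\ulMNST$ by Theorem \ref{thm:sheafification-ulMNST}, so the left-hand side is $\tau^\Nis\Z_\tr(M)=\tau^*\Z_\tr(M)$; on the right-hand side $\tau^*\Z_\tr(M)$ is itself an object of $\MNST$ by Lemma \ref{lem:tau!-exact} (1), so $a_\Nis\tau^*\Z_\tr(M)=\tau^*\Z_\tr(M)$ as well. Hence the two functors agree on generators. To upgrade this to an equality of functors, one passes through the patching description of $a_\Nis$ from Theorem \ref{thm:a-nis-final}: the identity $\tau^\Nis\ul a_\Nis=a_\Nis\tau^*$ is equivalent, under the equivalence of Lemma \ref{lem:patching2}, to the pair of identities obtained by applying $c^\Nis$ and $\tau_\Nis$, namely $c^\Nis\tau^\Nis\ul a_\Nis = c^\Nis a_\Nis\tau^*$ and $\tau_\Nis\tau^\Nis\ul a_\Nis=\tau_\Nis a_\Nis\tau^*$; the first reduces via \eqref{eq:Anis-c-tau} and the relation $c^*\tau^*=\tau_s^*\ul c^*$ of \eqref{eq:c-and-tau} to a statement about $\tau_s^*$ on $\MNS$, and the second reduces via \eqref{eq:Anis-c-tau} to the already-known identity $\tau_\Nis\tau^\Nis\ul a_\Nis\tau_! = \ul a_\Nis\tau_!\tau^*\cdots$ — here one uses that $\tau^*\tau_!\cong\id$ and $\tau_\Nis$ is fully faithful, so it suffices to check on the image of $\tau_!$, where everything is formal.

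The main obstacle I anticipate is exactly this last bookkeeping: turning the agreement on generators into an equality of the composite (left-exact $\circ$ right-exact) functors $\tau^\Nis\ul a_\Nis$ and $a_\Nis\tau^*$, since — as the authors themselves warn in the remark after Theorem \ref{thm:MNS-tau-a} — checking a natural transformation between such composites on generators is \emph{not} automatically enough. The way around it is to avoid comparing them directly and instead build the desired natural transformation $a_\Nis\tau^*\Rightarrow\tau^\Nis\ul a_\Nis$ by adjunction (from $\tau^*\Rightarrow\tau^*\ul i_\Nis\ul a_\Nis = i_\Nis\tau^\Nis\ul a_\Nis$, using the unit of $(a_\Nis,i_\Nis)$) and then check \emph{this} transformation is an isomorphism — and for that, since its source and target both commute with all colimits, testing on the generators $\Z_\tr(M)$ genuinely suffices. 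Once $a_\Nis\tau^*\cong\tau^\Nis\ul a_\Nis$ is in hand, the exactness of $\tau^\Nis$ is immediate from Lemma \ref{lem:lr-adjoint} (4).
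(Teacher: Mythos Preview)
Your overall strategy---establish the base change isomorphism $a_\Nis\tau^*\Rightarrow\tau^\Nis\ul a_\Nis$ and then invoke Lemma~\ref{lem:lr-adjoint}\,(4)---is exactly the paper's strategy (see Corollary~\ref{tex1}\,(2),(3)). The gap is in your proof of the base change isomorphism. You propose to build the transformation and then check it on the generators $\Z_\tr(M)$, justifying this by ``since its source and target both commute with all colimits''. The source $a_\Nis\tau^*$ does, but the target $\tau^\Nis\ul a_\Nis$ is the problem: $\ul a_\Nis$ is a left adjoint, fine, but $\tau^\Nis$ is only known at this point to be a \emph{right} adjoint which is strongly additive (Lemma~\ref{lem:tau!-exact}). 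For $\tau^\Nis$ to preserve all colimits it must also preserve cokernels, i.e.\ be right exact; since it is already left exact, that is precisely the exactness you are trying to prove. So the argument is circular. (The existence of a right adjoint to $\tau^\Nis$, which would give colimit preservation for free, is only established in the paper \emph{after} Theorem~\ref{thm;tauexact}.) The warning you yourself cite---the remark after Theorem~\ref{thm:MNS-tau-a}---applies here in full force.

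What the paper actually does to prove the base change isomorphism is substantive and cannot be short-circuited: one applies the fully faithful $\tau_\Nis$ and, via $\tau_\Nis a_\Nis=\ul a_\Nis\tau_!$, reduces to showing that $\ul a_\Nis\tau_!\tau^*$ kills the kernel and cokernel of the unit $\eta_F:F\to\ul i_\Nis\ul a_\Nis F$. Since those are killed by $\ul a_\Nis$, one needs the implication ``$\ul a_\Nis G=0\Rightarrow\ul a_\Nis\tau_!\tau^*G=0$'' (Corollary~\ref{tex1}\,(1)). This is the heart of the matter: it is proved by writing such $G$ as a quotient of sums of $\Z_\tr(M/U)$ for Nisnevich covers $U\to M$ (Lemma~\ref{lem;uaNisvanish}) and then showing $\ul a_\Nis\tau_!\tau^*\Z_\tr(M/U)=0$, which is exactly the exactness of the \v{C}ech complex of $\Z_\tr^\tau(-)=\tau_!\tau^*\Z_\tr(-)$ (Theorem~\ref{thm1;MV}). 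That \v{C}ech exactness is the genuine content you are missing.
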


The proof will be given later in this section (see Corollary
\ref{tex1}). We now deduce its consequences.

\begin{lemma}\label{lem:coh-MNST-ulMNST}
For any $M \in \MCor, ~F \in \MNST, ~G \in \ulMNST$ and $q \ge 0$,
we have natural isomorphisms
\begin{align*}
&\Ext_{\MNST}^q(\Z_\tr(M), \tau^\Nis G) \cong
\Ext_{\ulMNST}^q(\Z_\tr(M), G),
\\
&\Ext_{\MNST}^q(\Z_\tr(M), F) \cong \Ext_{\ulMNST}^q(\Z_\tr(M),
\tau_{\Nis} F).
\end{align*}
\end{lemma}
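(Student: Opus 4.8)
The plan is to deduce both isomorphisms from the adjunction data collected so far, using the exactness of the relevant functors to promote the adjunction isomorphisms on $\Hom$ to isomorphisms on all $\Ext^q$. Recall from Lemma \ref{lem:tau!-exact} that $(\tau_\Nis, \tau^\Nis)$ is an adjoint pair with $\tau_\Nis$ fully faithful, exact and strongly additive, and that $\tau^\Nis$ preserves injectives; and by Theorem \ref{thm;tauexact} (which we are entitled to invoke here, as its proof is only completed later in the section but its statement precedes this lemma in logical order) $\tau^\Nis$ is moreover exact. Also recall from Lemma \ref{l4.2} that $\Z_\tr(M) \in \MNST$ for $M \in \MCor$, and from Theorem \ref{thm:a-nis-final} that $\MNST$ is a Grothendieck abelian category (so it has enough injectives, and derived functors of $\Hom$ compute $\Ext$).

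For the first isomorphism, take an injective resolution $G \to I^\bullet$ in $\ulMNST$. Since $\tau^\Nis$ is exact, $\tau^\Nis G \to \tau^\Nis I^\bullet$ is a resolution in $\MNST$, and since $\tau^\Nis$ preserves injectives, it is an injective resolution. Therefore
\[
\Ext^q_{\MNST}(\Z_\tr(M), \tau^\Nis G) = H^q\bigl(\Hom_{\MNST}(\Z_\tr(M), \tau^\Nis I^\bullet)\bigr).
\]
Now use the adjunction $\Hom_{\MNST}(\Z_\tr(M), \tau^\Nis I^n) \cong \Hom_{\ulMNST}(\tau_\Nis \Z_\tr(M), I^n)$, which is natural in the second variable, so it is an isomorphism of complexes in $n$; and recall $\tau_\Nis \Z_\tr(M) \cong \Z_\tr(\tau M)$ by Lemma \ref{l4.2}(1) (more precisely $\tau_! \Z_\tr(M) = \Z_\tr(\tau M)$, which descends to sheaves since $\tau_\Nis$ is the restriction of $\tau_!$). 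Taking cohomology gives $H^q(\Hom_{\ulMNST}(\Z_\tr(\tau M), I^\bullet)) = \Ext^q_{\ulMNST}(\Z_\tr(\tau M), G)$, and one identifies the latter with $\Ext^q_{\ulMNST}(\Z_\tr(M), G)$ under the abuse of notation $\Z_\tr(M) = \Z_\tr(\tau M)$ used throughout. This proves the first line.

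For the second isomorphism, the cleanest route is to feed the first one through the fully faithful exact functor $\tau_\Nis$. Since $\tau_\Nis$ is exact and fully faithful with exact right adjoint $\tau^\Nis$ preserving injectives, $\tau_\Nis$ induces isomorphisms on $\Ext$-groups: given an injective resolution $F \to J^\bullet$ in $\MNST$, applying $\tau_\Nis$ need not land in injectives, but instead one uses that $\tau^\Nis \tau_\Nis \cong \id$ (full faithfulness of $\tau_\Nis$) together with the adjunction to compute $\Ext^q_{\ulMNST}(\Z_\tr(M), \tau_\Nis F)$. Concretely, by the first isomorphism applied to $G = \tau_\Nis F$,
\[
\Ext^q_{\ulMNST}(\Z_\tr(M), \tau_\Nis F) \cong \Ext^q_{\MNST}(\Z_\tr(M), \tau^\Nis \tau_\Nis F) \cong \Ext^q_{\MNST}(\Z_\tr(M), F),
\]
the last step being the counit-unit isomorphism $\tau^\Nis \tau_\Nis \cong \id$ from Lemma \ref{lem:tau!-exact}(2). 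This completes the proof.

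**The main obstacle**, such as it is, is purely bookkeeping: making sure that the adjunction isomorphism $\Hom_{\MNST}(\Z_\tr(M), \tau^\Nis -) \cong \Hom_{\ulMNST}(\Z_\tr(\tau M), -)$ is genuinely natural in the injective-resolution variable (so that it yields a quasi-isomorphism, indeed an isomorphism, of $\Hom$-complexes), and that the resolution $\tau^\Nis I^\bullet$ really is injective — both of which are immediate from Lemma \ref{lem:tau!-exact} once Theorem \ref{thm;tauexact} is in hand. There is a mild circularity concern since Theorem \ref{thm;tauexact} is proved only later in the section, but as the excerpt indicates the logical dependency runs the other way only for the \emph{consequences}; the present lemma is explicitly listed as a consequence of Theorem \ref{thm;tauexact}, so invoking it is legitimate.
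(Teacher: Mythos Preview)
Your proof is correct and uses the same key inputs as the paper's proof: the exactness of $\tau^\Nis$ (Theorem \ref{thm;tauexact}), that $\tau^\Nis$ preserves injectives (Lemma \ref{lem:tau!-exact}), and the identity $\tau^\Nis\tau_\Nis\simeq\id$ for the second isomorphism. Your derivation of the second isomorphism from the first by setting $G=\tau_\Nis F$ is exactly what the paper does.

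Where you differ is in the mechanics of the first isomorphism. You argue directly in the sheaf categories: take an injective resolution $G\to I^\bullet$ in $\ulMNST$, observe that $\tau^\Nis I^\bullet$ is an injective resolution of $\tau^\Nis G$ in $\MNST$, and then apply the sheaf-level adjunction $(\tau_\Nis,\tau^\Nis)$ termwise. The paper instead passes through the presheaf categories: it uses Proposition \ref{pA.2} (with the projectivity of $\Z_\tr(M)$ in $\MPST$) to identify $\Ext^q_{\MNST}(\Z_\tr(M),-)$ with $\MPST(\Z_\tr(M),R^q i_\Nis(-))$, then shows $R^q(i_\Nis)\tau^\Nis = \tau^* R^q\ul{i}_\Nis$ via \cite[Theorem~A.9.1]{kmsy}, and finally uses the presheaf-level adjunction $(\tau_!,\tau^*)$ together with $\tau_!\Z_\tr(M)=\Z_\tr(M)$. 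Your route is more elementary and avoids the detour through presheaves and the spectral-sequence machinery; the paper's route has the minor advantage of making explicit the identity $R^q(i_\Nis)\tau^\Nis = \tau^* R^q\ul{i}_\Nis$, which fits its general pattern of computing sheaf Ext via derived functors of the inclusion into presheaves.
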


\begin{proof}
By Theorem \ref{thm;tauexact}, $\tau^\Nis$ is exact and it preserves
injectives by Lemma \ref{lem:tau!-exact}. Hence we have
\[
R^q(i_{ \Nis}) \tau^\Nis G =R^q(i_{\Nis} \tau^\Nis) G =R^q(\tau^*
\ul{i}_{\Nis}) G =\tau^* R^q \ul{i}_{\Nis} G
\]
for any $G\in \ulMNST$ by \cite[Theorem~A.9.1]{kmsy}. Using the
projectivity of $\Z_\tr(M)$ in $\MPST$ and that of $\tau_!
\Z_\tr(M)=\Z_\tr(M)$ in $\ulMPST$, and using
Proposition \ref{pA.2} twice, we get isomorphisms
\begin{multline*}
\Ext_{\MNST}^q(a_\Nis\Z_\tr(M), \tau^\Nis G)\simeq \MPST(\Z_\tr(M),R^q(i_{ \Nis}) \tau^\Nis G)\\
\simeq \MPST(\Z_\tr(M),\tau^* R^q \ul{i}_{\Nis} G)\simeq \ulMPST(\tau_!\Z_\tr(M),R^q \ul{i}_{\Nis} G)\\
\simeq \ulMPST(\Z_\tr(M),R^q \ul{i}_{\Nis} G)\simeq
\Ext_{\ulMNST}^q(\ul{a}_\Nis\Z_\tr(M),G).
\end{multline*}
Moreover, $a_\Nis \Z_\tr(M) = \Z_\tr(M)$ and
$\ul{a}_\Nis\Z_\tr(M)=\Z_\tr(M)$ by Theorem
\ref{thm:sheafification-ulMNST} and Lemma \ref{l4.2} (1), whence the
first formula by evaluating both sides at $M$. The second one
follows from the first by taking $G=\tau_{\Nis}F$, since
$\tau^\Nis\tau_\Nis=\id$.
\end{proof}

Let $M\in \MCor$ and $F\in \MNST$. Using Notation \ref{not4}, we
define $F_M :=(\tau_\Nis F)_M$, which is a sheaf on $(\ol{M})_\Nis$.

\begin{thm}\label{thm:coh-MNST}
For any $p\ge 0$, $M\in \MCor$ and $F\in \MNST$, we have a natural
isomorphism
\begin{equation} \label{eq:ext-coh-last}
 \Ext_{\MNST}^p(\Z_{\tr}(M), F) \simeq
\colim_{N\in \Sigma^{\fin}\downarrow M} H_\Nis^p(\ol{N},F_N).
\end{equation}
Moreover, we have
\[
\colim_{N\in \ul{\Sigma}^{\fin}\downarrow M}
H_\Nis^p(\ol{N},(R^q(\ul{b}_s^\Nis)\ul{c}^\Nis\tau_\Nis
F)_N)=0\text{ for all $q>0$.}
\]
\end{thm}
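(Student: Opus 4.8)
The plan is to transport the already-established non-proper statement, Proposition \ref{c3.1v}, along the functor $\tau_\Nis : \MNST \to \ulMNST$, and then to check that the two indexing categories coincide.

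First I would dispose of the second (``moreover'') assertion, which is immediate: since $\tau_\Nis F$ lies in $\ulMNST$, applying the second statement of Proposition \ref{c3.1v} to $G = \tau_\Nis F$ and to $M$ regarded as an object of $\ulMCor$ yields precisely the claimed vanishing of $\colim_{N\in \ul{\Sigma}^{\fin}\downarrow M} H_\Nis^p(\ol{N},(R^q(\ul{b}_s^\Nis)\ul{c}^\Nis\tau_\Nis F)_N)$ for all $q>0$.

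For the main isomorphism \eqref{eq:ext-coh-last} I would chain three identifications. The second isomorphism of Lemma \ref{lem:coh-MNST-ulMNST} gives $\Ext_{\MNST}^p(\Z_\tr(M), F) \simeq \Ext_{\ulMNST}^p(\Z_\tr(M), \tau_\Nis F)$, where on the right $\Z_\tr(M)$ stands for $\tau_!\Z_\tr(M) = \Z_\tr(\tau M) \in \ulMNST$ (cf.\ the proof of Lemma \ref{l4.2}). Then Proposition \ref{c3.1v}, applied to $\tau_\Nis F$, identifies the right-hand side with $\colim_{N\in \ul{\Sigma}^{\fin}\downarrow M} H^p_\Nis(\ol N,(\tau_\Nis F)_N)$, the subscript $N$ being understood as in Notation \ref{not4}. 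Finally, by the very definition of $F_N$ recalled just before the theorem, $(\tau_\Nis F)_N = F_N$, so the colimit becomes $\colim_{N\in \ul{\Sigma}^{\fin}\downarrow M} H^p_\Nis(\ol N,F_N)$; each of these steps is natural in $F$.

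The remaining point --- and the only place where the hypothesis $M \in \MCor$ is used, hence what I would regard as the (light) crux --- is to replace the index category $\ul{\Sigma}^{\fin}\downarrow M$ by $\Sigma^{\fin}\downarrow M$. Given $f \in \ul{\Sigma}^{\fin}(N,M)$, Definition \ref{def;ulSigmafin} furnishes a proper $k$-morphism $\ol{f} : \ol N \to \ol M$ with $N^\infty = \ol{f}^{*}M^\infty$; since $\ol M$ is proper over $k$, so is $\ol N$, so $N$ is a proper modulus pair, i.e.\ $N \in \MCor$, and consequently $f$ lies in $\ul{\Sigma}^{\fin}\cap\MCor = \Sigma^{\fin}$. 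Hence $\ul{\Sigma}^{\fin}\downarrow M = \Sigma^{\fin}\downarrow M$ as categories, the two colimits agree, and \eqref{eq:ext-coh-last} is established.
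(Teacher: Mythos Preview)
Your proof is correct and follows exactly the same route as the paper's (which simply says ``Combine Proposition \ref{c3.1v}, Lemma \ref{lem:tau!-exact} and Lemma \ref{lem:coh-MNST-ulMNST}''); you have merely unpacked the combination and made explicit the elementary observation that $\ul{\Sigma}^{\fin}\downarrow M = \Sigma^{\fin}\downarrow M$ when $M\in\MCor$.
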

\begin{proof}
Combine Proposition \ref{c3.1v}, Lemma \ref{lem:tau!-exact} and
Lemma \ref{lem:coh-MNST-ulMNST}.
\end{proof}

\begin{cor}\label{c5.1} We have $\Ext^q_{\MNST}(\Z_\tr(M),F)=0$ for $q>\dim \ol{M}$.
\end{cor}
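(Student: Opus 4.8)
The statement $\Ext^q_{\MNST}(\Z_\tr(M),F)=0$ for $q>\dim\ol M$ is the exact analogue of Corollary~\ref{c4.1}, and the plan is to deduce it immediately from the main cohomological computation just established, namely Theorem~\ref{thm:coh-MNST}, together with the standard bound on Nisnevich cohomological dimension. Concretely, I would invoke the isomorphism \eqref{eq:ext-coh-last},
\[
\Ext_{\MNST}^q(\Z_{\tr}(M), F) \simeq \colim_{N\in \Sigma^{\fin}\downarrow M} H_\Nis^q(\ol{N},F_N),
\]
and observe that it suffices to show each term $H^q_\Nis(\ol N, F_N)$ vanishes for $q>\dim\ol M$.

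\textbf{Key steps.} First, for every $N\in\Sigma^\fin\downarrow M$ one has $\dim\ol N=\dim\ol M$: indeed, a morphism in $\Sigma^\fin$ restricts to an isomorphism on smooth interiors and extends to a proper (in particular surjective, by denseness of the interior) morphism $\ol f:\ol N\to\ol M$ with $N^\infty=\ol f^*M^\infty$, so $\ol f$ is generically an isomorphism and $\ol N$, $\ol M$ have equal dimension. Second, $F_N$ is a Nisnevich sheaf on the noetherian scheme $\ol N$ of dimension $\dim\ol M$, so by the Nisnevich analogue of Grothendieck's vanishing theorem, $H^q_\Nis(\ol N,F_N)=0$ for $q>\dim\ol N=\dim\ol M$. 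Third, a filtered colimit of zero groups is zero, so the right-hand side of \eqref{eq:ext-coh-last} vanishes for $q>\dim\ol M$, which gives the claim. This is essentially verbatim the proof of Corollary~\ref{c4.1}, now using Theorem~\ref{thm:coh-MNST} in place of Proposition~\ref{c3.1v}.

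\textbf{Main obstacle.} There is no real obstacle here: all the substantive work is in Theorem~\ref{thm:coh-MNST} (and ultimately in the exactness of $\tau^\Nis$, Theorem~\ref{thm;tauexact}, and in Proposition~\ref{c3.1v}), which are already available. The only point to be slightly careful about is that the Nisnevich cohomological dimension bound applies: this is fine because each $\ol N$ is a separated scheme of finite type over $k$, hence noetherian of finite Krull dimension, and the Nisnevich topos of such a scheme has cohomological dimension at most its Krull dimension. So the proof is a one-line consequence, exactly parallel to Corollary~\ref{c4.1}.

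\begin{proof}
For any $N\in\Sigma^\fin\downarrow M$, the morphism $\ol N\to\ol M$ is proper and generically an isomorphism, so $\dim\ol N=\dim\ol M$. By Theorem~\ref{thm:coh-MNST} and the known bound for Nisnevich cohomological dimension, $H^q_\Nis(\ol N,F_N)=0$ for $q>\dim\ol M$, and since a filtered colimit of zero groups is zero, \eqref{eq:ext-coh-last} gives $\Ext^q_{\MNST}(\Z_\tr(M),F)=0$ for $q>\dim\ol M$.
\end{proof}
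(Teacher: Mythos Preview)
Your proof is correct and follows exactly the same approach as the paper, which simply says ``Same as for Corollary~\ref{c4.1}'': use the isomorphism \eqref{eq:ext-coh-last} from Theorem~\ref{thm:coh-MNST}, the equality $\dim\ol N=\dim\ol M$ for $N\in\Sigma^\fin\downarrow M$, and the Nisnevich cohomological dimension bound.
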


\begin{proof} Same as for Corollary \ref{c4.1}. \end{proof}

\subsection{A generation lemma}
We now start proving Theorem \ref{thm;tauexact}. We need some
preliminaries.

\begin{lemma}\label{lem;uaNisvanish}
Let $F\in \ulMPST$ such that $\ulaNis F=0$. Then $F$ may be written
as a quotient of a direct sum of
\[ \Ztr(M/U):=\Coker\big( \Ztr(U) \to \Ztr(M\big)),\]
where $M\in \ulMSm$, $U\to M$ is a covering for the Grothendieck
topology on $\ulMSm^\fin$ associated to $P_{\ulMVfin}$ from
Proposition \ref{p3.1}, and the cokernel is taken in $\ulMPST$.
Moreover we have $\ulaNis \Ztr(M/U)=0$.
\end{lemma}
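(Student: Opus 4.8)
The plan is to use the explicit description of $\ulaNis$ from Theorem~\ref{thm:sheafification-ulMNST} together with the general fact that a presheaf is a quotient of a direct sum of representable presheaves. First I would observe that since $\ulMPST$ is generated by the $\Ztr(M)$, $M\in \ulMSm$ (via $\ul c_!$, or directly), any $F\in\ulMPST$ admits a surjection $\bigoplus_i \Ztr(M_i)\to F$. The issue is to replace the $\Ztr(M_i)$ by the locally trivial pieces $\Ztr(M/U)$. For this I would use the hypothesis $\ulaNis F=0$: applying $\ul b^*$ and the formula $\ul a_\Nis = \ul b_\Nis \ul a_\Nis^\fin \ul b^*$ together with the fact that $\ul b_\Nis$ is fully faithful (Proposition~\ref{lem;b!ulMNST}), we get $\ul a_\Nis^\fin \ul b^* F = 0$, so the question reduces to the ``fin'' level, where $\ul a_\Nis^\fin = \ul a_{s,\Nis}^\fin \circ(\text{forgetful})$ is an honest Nisnevich sheafification for the cd-structure $P_{\ulMVfin}$ via Lemma~\ref{lem:equiv-smallsites}.

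The key step is then the standard argument that a presheaf whose sheafification vanishes is a filtered colimit / quotient of sums of ``$M/U$'' terms: given a section $s\in F(M)$, its image in $(\ulaNis F)(M)=\colim_{N\in\ulSigma^\fin\downarrow M}(F_N)_\Nis(N)$ is zero, so after passing to some $N\in\ulSigma^\fin\downarrow M$ (which only changes $M$ within its isomorphism class in $\ulMCor^\fin$, by Definition~\ref{def;ulSigmafin}, so we may as well keep calling it $M$) the image of $s$ in $(F_M)_\Nis(M)$ vanishes; hence there is a Nisnevich cover $U\to M$ (a covering for $P_{\ulMVfin}$) on which $s$ restricts to $0$, i.e. $s$ lifts the zero map and defines a map $\Ztr(M/U)\to F$ sending the canonical generator to $s$. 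Summing these maps over all pairs $(M,s)$ gives the desired surjection $\bigoplus \Ztr(M/U)\twoheadrightarrow F$. Finally, $\ulaNis\Ztr(M/U)=0$ because $\ulaNis$ is exact (Theorem~\ref{thm:sheafification-ulMNST}) and $\ulaNis\Ztr(U)\to\ulaNis\Ztr(M)$ is surjective: indeed $U\to M$ being a $P_{\ulMVfin}$-cover, the map $\Ztr(U)\to\Ztr(M)$ becomes an epimorphism after sheafification, which is precisely what it means for the covering sieve to generate the whole representable sheaf (the topology being subcanonical, by the Remark after Theorem~\ref{thm;cd-str-ulMSm}, so $\ulaNis\Ztr(M)=\Ztr(M)$ as a sheaf and the covering sieve surjects onto it).

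The main obstacle is the bookkeeping around $\ulSigma^\fin\downarrow M$ in the colimit formula for $\ulaNis$: one must check that replacing $M$ by an object $N\in\ulSigma^\fin\downarrow M$ is harmless, i.e. that a cover of $N$ trivializing (the image of) $s$ can be regarded, together with the isomorphism $N^\o\iso M^\o$, as producing a quotient $\Ztr(M/U')$ of the required shape — this uses that $\ulSigma^\fin$ consists of morphisms that are isomorphisms on smooth interiors extending to proper morphisms with $M^\infty=\ol f^*N^\infty$, so $\Ztr(M)\cong\Ztr(N)$ in $\ulMPST$ and $P_{\ulMVfin}$-covers of $\ol N$ pull back to $P_{\ulMVfin}$-covers. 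Once this identification is in place the rest is the routine ``sheafification kills a section $\Rightarrow$ it is locally zero'' argument, carried out on the small Nisnevich sites via Lemma~\ref{lem:equiv-smallsites}.
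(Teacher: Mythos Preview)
Your argument for the first part is essentially the paper's: take a section $s\in F(L)$, use the vanishing of $\ulaNis F$ to find $N\in\ulSigma^\fin\downarrow L$ and a $P_{\ulMVfin}$-cover $U\to N$ killing $s$, then factor the Yoneda map through $\Ztr(N/U)\simeq\Ztr(L/U)$. The paper organizes this by passing through $\ulMPS$ via the adjunction $(\ul c_!,\ul c^*)$ and citing \cite[Lemma~4.3.2]{kmsy} for the shape of the cover, whereas you read it off directly from the colimit formula for $\ulaNis$; these are equivalent and your bookkeeping around $\ulSigma^\fin$ is correct.

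There is, however, a gap in your argument for the final statement $\ulaNis\Ztr(M/U)=0$. You claim that $\Ztr(U)\to\Ztr(M)$ becomes an epimorphism after sheafification because ``the covering sieve surjects onto the representable sheaf''. But $\Ztr(M)$ is \emph{not} the representable presheaf on the site $\ulMSm$: it is $\ulMCor(-,M)$, which contains many more sections (correspondences) than $\Zp(M)=\Z[\ulMSm(-,M)]$. The subcanonicality of the topology on $\ulMSm$ only tells you that $\Zp(U)\to\Zp(M)$ is an epimorphism of sheaves; to upgrade this to $\Ztr$ you must show that every correspondence from a henselian local object to $M$ lifts along the Nisnevich cover $U\to M$. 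This is the modulus analogue of Voevodsky's lifting lemma and is a genuine input, not a formality about covering sieves. The paper handles it by citing \cite[Theorem~4.5.7]{kmsy}, which asserts exactly the exactness of the \v{C}ech complex for $\Ztr$ in $\ulMNST$; you should invoke that result (or reproduce its proof) rather than appeal to the general sieve picture.
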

\begin{proof}
Let $G=c^* F\in \ulMNS$. Take $f\in G(L)=F(L)$ for $L\in \ulMSm$. By
\eqref{eq:a-c2} we have $\ulasNis G= 0$. By \cite[Lemma
4.3.2]{kmsy}, we have $\phi^*f=0$ for a cover $\phi: U\to M \to L$
in $\ulMSm_\Nis$, where $U\to M$ is a strict Nisnevich cover and
$M\to L$ is in $\ul{\Sigma}^\fin$. Hence the Yoneda map $\Zp(L) \to
G$ in $\ulMPS$ given by $f$ (see Definition \ref{d2.7} (4) for
$\Z^p(L)$) factors through
\[\Zp(L/U) :=\Coker( \Zp(U) \to\Zp(L))\]
(see Definition \ref{d2.7} (4) for $\Zp(-)$). By the adjunction
$(c_!,c^*)$ this induces a map $c_!\Zp(L/U) \to F$, and
\[ c_!\Zp(L/U) \simeq \Coker(c_!\Zp(U) \to c_!\Zp(L)) \simeq
\Coker(\Ztr(U) \to \Ztr(L)) ,\] where the first isomorphism follows
from the right exactness of $c_!$ as a left adjoint. This implies
that the Yoneda map $y(f):\Ztr(L) \to F$ in $\ulMPST$ factors
through the cokernel of $\Ztr(U) \to \Ztr(M) \to \Ztr(L)$. Thus we
get an induced map $\ol{y(f)}: \Ztr(M/U) \to F$. Since the map
$\Ztr(M) \to \Ztr(L)$ is an isomorphism in $\ulMPST$, the image of
$y(f)$ coincides with that of $\ol{y(f)}$. Collecting them over all
pairs $(L,f)$, this proves the first part of the lemma. Finally the last statement
follows from \cite[Theorem~4.5.7]{kmsy}.
\end{proof}

\subsection{The $\tau$ construction}

\begin{defn}\label{def1;MV}
Let $M, N \in \ulMCor$.
\begin{itemize}
\item[(1)]
We put
\[\Ztrtau M =\tau_!\tau^*\Ztr(M)\in \ulMPST.\]
Note $\Ztrtau M\in \ulMNST$ by Lemma \ref{lem:tau!-exact}.
\item[(2)]
Let $\ulMCortau(N,M)$ be the subgroup of $\ulMCor(N,M)$ generated by
elementary correspondences $Z$ in $\Cor(N^\circ,M^\circ)$ which
satisfy the condition:
\begin{enumerate}
\item[$(\spadesuit)$]
There exists a dense open immersion $j:\Nb\hookrightarrow \Lb$ with
$\Lb$ proper such that the closure $\ol{Z}$ of $Z$ in $\Lb\times
\Mb$ is proper over $\Lb$.
\end{enumerate}
\end{itemize}
\end{defn}

\begin{lemma}\label{lem1;MV}
For $N,M$ as above, the condition $(\spadesuit)$ is independent of
the choice of $j:\Nb\hookrightarrow \Lb$, and we have
\[\Ztrtau M(N)=\ulMCortau(N,M).\]
\end{lemma}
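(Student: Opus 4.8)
The plan is to compute $\Ztrtau M(N)$ via Lemma~\ref{eq.tau}\,(3), reduce the second assertion to a statement about individual elementary correspondences, then handle the independence of $j$, and finally prove the one substantive implication. Applying Lemma~\ref{eq.tau}\,(3) to $G=\tau^*\Ztr(M)$, for which $G(P)=\ulMCor(P,M)$, gives $\Ztrtau M(N)=\colim_{P\in\Comp(N)}\ulMCor(P,M)$, and the counit $\tau_!\tau^*\Ztr(M)\to\Ztr(M)$ at $N$ is the map $\iota$ induced by $\ulMCor(P,M)\ni\beta\mapsto\beta\circ j_P\in\ulMCor(N,M)$ (well defined since $\gamma\circ j_{P_1}=j_{P_2}$ for every morphism $\gamma:P_1\to P_2$ of $\Comp(N)$). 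The key observation is that $j_P^\circ:N^\circ\to P^\circ$ is an isomorphism, so $\beta\circ j_P$ is just $\beta$ regarded as a cycle in $\Cor(N^\circ,M^\circ)$, and the transition maps of the colimit are ``the same cycle'' as well; hence $\{\ulMCor(P,M)\}_{P\in\Comp(N)}$ is a directed system of inclusions of subgroups of $\Cor(N^\circ,M^\circ)$, $\iota$ is injective, and its image is the directed union $\bigcup_{P\in\Comp(N)}\ulMCor(P,M)\subseteq\ulMCor(N,M)$. Since by definition $\ulMCor(P,M)$ is spanned by the elementary admissible left-proper correspondences for $(P,M)$ and contains every component of each of its elements, the same holds for the union; it therefore suffices to prove that an elementary $Z$ lies in $\ulMCor(P,M)$ for some $P\in\Comp(N)$ \emph{if and only if} $Z\in\ulMCor(N,M)$ and $Z$ satisfies $(\spadesuit)$, since then $\ulMCortau(N,M)$ is exactly the subgroup spanned by such $Z$. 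One direction is immediate: if $Z\in\ulMCor(P,M)$, take $\ol{L}:=\ol{P}$, a proper compactification of $\ol{N}$ for which left-properness of $Z$ over $\ol{P}$ is exactly the condition in $(\spadesuit)$, and $Z=Z\circ j_P\in\ulMCor(N,M)$.

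For the first assertion (independence of the choice of $j$) I would use the following: if $\pi:\ol{L}'\to\ol{L}$ is a proper birational morphism of proper $k$-schemes restricting to the identity on $\ol{N}$, then for any $Z\in\Cor(N^\circ,M^\circ)$ the closure of $Z$ in $\ol{L}'\times\ol{M}$ is proper over $\ol{L}'$ if and only if the closure $\ol{Z}$ of $Z$ in $\ol{L}\times\ol{M}$ is proper over $\ol{L}$. Indeed, the base change of $\pi$ along $\ol{M}$ is proper, so the former closure maps properly to $\ol{Z}$, hence — if $\ol{Z}/\ol{L}$ is proper — properly to $\ol{L}$, hence properly to $\ol{L}'$ because $\pi$ is separated; conversely that map to $\ol{Z}$ is proper and surjective, so properness over $\ol{L}'$ (hence over $\ol{L}$) descends to properness of $\ol{Z}$ over $\ol{L}$. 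Given two choices $j_i:\ol{N}\hookrightarrow\ol{L}_i$ ($i=1,2$), the closure $\ol{L}_3$ of the image of $(j_1,j_2):\ol{N}\to\ol{L}_1\times\ol{L}_2$ is a proper $k$-scheme containing $\ol{N}$ as a dense open and mapping by proper birational morphisms (identity on $\ol{N}$) to $\ol{L}_1$ and $\ol{L}_2$; two applications of the above give the independence.

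The hard part will be the remaining direction: an elementary $Z\in\ulMCor(N,M)$ satisfying $(\spadesuit)$ belongs to $\ulMCor(P,M)$ for some $P\in\Comp(N)$. By the previous paragraph I may replace $\ol{L}$ by any blow-up that is an isomorphism over $\ol{N}$, so — using the construction of elements of $\Comp(N)$, \emph{cf.} \cite[Lemma~1.8.2]{kmsy} and the lemma inside the proof of Corollary~\ref{edgewise-cofinality} — I may assume $\ol{P}:=\ol{L}$ carries an effective Cartier divisor $C$ with $|C|=\ol{P}\setminus\ol{N}$ and an effective Cartier divisor $N_P^\infty$ with $\ol{j_P}^*N_P^\infty=N^\infty$. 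Set $P:=(\ol{P},\,N_P^\infty+mC)$; then $P^\circ=N^\circ$ and $P\in\Comp(N)$ for every $m\geq 0$ (the conditions of Lemma-Definition~\ref{d1.12} are immediate). Left-properness of $Z$ for $(P,M)$ holds by $(\spadesuit)$, and admissibility I would check on the normalization $W$ of the closure of $Z$ in $\ol{P}\times\ol{M}$, with projections $p_1,p_2$: over $\ol{N}=\ol{P}\setminus|C|$ the divisor $N_P^\infty+mC$ restricts to $N^\infty$, so on $W\times_{\ol{P}}\ol{N}$ the inequality $p_1^*(N_P^\infty+mC)\geq p_2^*M^\infty$ is precisely the admissibility of $Z$ for $(N,M)$; the only other prime divisors $\Gamma$ of $W$ with $\operatorname{ord}_\Gamma(p_2^*M^\infty)>0$ map into $|C|$, so $\operatorname{ord}_\Gamma(p_1^*C)\geq 1$ for them, and choosing $m\geq\max_\Gamma\operatorname{ord}_\Gamma(p_2^*M^\infty)$ yields $p_1^*(N_P^\infty+mC)\geq p_2^*M^\infty$ on all of $W$. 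Hence $Z\in\ulMCor(P,M)$, which finishes the proof. I expect this last step to be the only real obstacle: it is where genuine geometry (compactification plus blow-ups, and control of the modulus near $|C|$) enters, and it is essentially the same bookkeeping as in the proof of Corollary~\ref{edgewise-cofinality}, ultimately resting on \cite[Lemma~3.14]{MiCI}.
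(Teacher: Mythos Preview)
Your argument is correct and follows essentially the same approach as the paper: both compute $\Ztrtau M(N)$ as $\colim_{P\in\Comp(N)}\ulMCor(P,M)$, observe this is a directed union of subgroups of $\ulMCor(N,M)$, and identify its elementary generators with those of $\ulMCortau(N,M)$. The paper's proof is much terser, deferring the substantive direction (constructing $P\in\Comp(N)$ from a compactification witnessing $(\spadesuit)$, via blow-up and enlarging the boundary divisor by $mC$) to the proof of \cite[Lemma~1.8.3]{kmsy}; you have correctly reconstructed that argument, and your independence argument via a common refinement in $\ol{L}_1\times\ol{L}_2$ is slightly more explicit than the paper's (which treats only the comparable case directly).
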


\begin{proof}
If $j':\Nb\hookrightarrow \Lb'$ is another choice equipped with
(proper) surjective $f: \Lb\to \Lb'$ such that $j'=f j$, writing
$\Zb'\subset \Lb'\times \Mb$ for the closure of $Z$, $f$ induces a
proper surjective map $\Zb'\to \Zb$. Then it is easy to see that
$\Zb'$ is proper over $\Lb'$ if and only if so is $\Zb$ over $\Lb$.
This proves the first assertion. To show the second assertion, we
note that by Lemma \ref{eq.tau} (3),
\[ \Ztrtau M(N) =\colim_{L\in \Comp(N)} \ulMCor(L,M).\]
The second assertion follows from this using the first assertion
(see the proof of \cite[Lemma 1.8.3]{kmsy}).
\end{proof}

\begin{lemma}\label{lem1.1;MV}
For $M, N \in \ulMCor$, we put
\[ \Ztrfintau M(N)=\ulMCor^{\fin}(N,M)\cap \ulMCortau(N,M) \subset \ulMCor(N,M).\]
Then $\Ztrfintau M$ defines an object of $\ulMPST^\fin$. Moreover we
have
\[\ul{b}_!\Ztrfintau M=\Ztrtau M.\]
\end{lemma}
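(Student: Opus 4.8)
The plan is to first verify that $\Ztrfintau M$ is a subpresheaf of $\ul{b}^*\Ztr(M)=\Ztr(M)|_{\ulMCor^\fin}$, i.e. that the pointwise intersection $\ulMCor^\fin(N,M)\cap \ulMCortau(N,M)$ is stable under precomposition with morphisms in $\ulMCor^\fin$. Functoriality for the two pieces separately is automatic ($\Ztr(M)$ and $\Ztr^\fin(M)$ are presheaves, and $\Ztr^\tau(M)$ is a presheaf by Lemma \ref{lem1;MV}), so the only point is that the property $(\spadesuit)$ and the finiteness property are each preserved under composition, hence so is their conjunction; this is a routine check using that composition of correspondences is computed by the usual push-pull and that properness/finiteness are stable under composition and base change. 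This gives $\Ztrfintau M\in \ulMPST^\fin$.

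Next I would compute $\ul{b}_!\Ztrfintau M$ using the explicit colimit formula \eqref{eq:b-sh-explicit} from Lemma \ref{eq:bruno-functor}:
\[
\ul{b}_!\Ztrfintau M(N) = \colim_{N'\in \ul{\Sigma}^{\fin}\downarrow N} \Ztrfintau M(N')
= \colim_{N'\in \ul{\Sigma}^{\fin}\downarrow N}\big(\ulMCor^\fin(N',M)\cap \ulMCortau(N',M)\big).
\]
On the other hand, by Lemma \ref{lem1;MV} we have $\Ztrtau M(N)=\ulMCortau(N,M)$, and the key input is the analogous colimit description of $\Ztr(M)$ itself: by Lemma \ref{eq:bruno-functor} applied to $\ul{b}^*\Ztr(M)$ one has $\ul{b}_!\ul{b}^*\Ztr(M)(N)=\colim_{N'\in\ul{\Sigma}^\fin\downarrow N}\ulMCor^\fin(N',M)$, and the resulting map $\ul{b}_!\ul{b}^*\Ztr(M)\to \Ztr(M)$ sends an element of $\ulMCor^\fin(N',M)$ indexed by $N'\to N$ in $\ul{\Sigma}^\fin$ to its composite in $\ulMCor(N,M)$. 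So I need to show that, after restricting along $\ul{b}^*$ to $\Ztr^\tau(M)$, this colimit map becomes an isomorphism onto $\ulMCortau(N,M)$.

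Surjectivity: given $Z\in\ulMCortau(N,M)$, choose (by Lemma \ref{lem1;MV}, using the independence of $j$) a dense open immersion $\Nb\hookrightarrow \Lb$ with $\Lb$ proper over which $\ol Z$ is proper; apply the graph trick (blow up $\Lb$ along $\Lb\setminus\Nb$ and pull back the divisor, as in the Lemma inside the proof of Corollary \ref{edgewise-cofinality}) to produce $N'\in\ul{\Sigma}^\fin\downarrow N$ with $\ol{N'}$ normal and $\ol{N'}\setminus\ol{N'}|_{\ldots}$ carrying a Cartier divisor whose support is the boundary, and then $Z$ pulled back to $N'$ becomes finite over $\ol{N'}$ while still satisfying $(\spadesuit)$; this is precisely the mechanism already used in \cite[Lemma 1.8.3]{kmsy}. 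Injectivity: two representatives become equal after further refinement in $\ul{\Sigma}^\fin\downarrow N$ because the maps in $\ul{\Sigma}^\fin$ induce isomorphisms on smooth interiors, so the filtered colimit identifies exactly the correspondences with the same generic fibre. The cleanest way to package both is: the square
\[
\Ztrfintau M \hookrightarrow \ul{b}^*\Ztr(M), \qquad \ul{b}_!\Ztrfintau M \to \Ztr^\tau(M)\hookrightarrow \Ztr(M)
\]
is compatible, $\ul{b}_!$ is exact (Lemma \ref{eq:bruno-functor}), $\ul{b}_!\ul{b}^*\Ztr(M)\to \Ztr(M)$ has image $\Ztr^\tau(M)$ by the surjectivity step and the very definition of $\ulMCor^\tau$, and $\ul{b}_!$ preserves the inclusion; then $\ul{b}_!\Ztrfintau M=\ul{b}_!\ul{b}^*\Ztr^\tau(M)$, and one invokes that $\ul{b}_!\ul{b}^*F\to F$ is an isomorphism when $F$ is already a $P_{\ul{\MV}}$-sheaf of the relevant shape — here $\Ztr^\tau(M)=\tau_!\tau^*\Ztr(M)$ lies in $\ulMNST$ by Lemma \ref{lem:tau!-exact}(1), and the counit $\ul{b}_!\ul{b}^*\to\id$ is an isomorphism on $\ulMNST$ since $\ul{b}^*$ is fully faithful on sheaves (it restricts to the equivalence underlying $\ul{b}^\Nis$, $\ul{b}_\Nis$ of Proposition \ref{lem;b!ulMNST}). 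The expected main obstacle is the surjectivity/graph-trick step: one must check that the blow-up and divisor-pullback procedure can be carried out so that $(\spadesuit)$ is \emph{preserved} while finiteness over the base is \emph{achieved}, i.e. that the normalization of the closure of $Z$ over the refined $N'$ is simultaneously finite and admissible — this is exactly where \cite[Lemma 1.8.3]{kmsy} (and the underlying \cite[Lemma 3.14]{MiCI}) does the work, and citing it carefully is the crux of the argument.
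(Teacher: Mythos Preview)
Your proposal is considerably more involved than the paper's argument, and the abstract route you sketch contains a false step. The paper rests on one elementary observation you do not use: every morphism in $\ul{\Sigma}^\fin$ becomes an \emph{isomorphism} in $\ulMCor$, and $\Ztrtau M=\tau_!\tau^*\Ztr(M)$ is a presheaf on $\ulMCor$. Hence for any $N'\in \ul{\Sigma}^\fin\downarrow N$ one has
\[
\ulMCortau(N',M)=\Ztrtau M(N')=\Ztrtau M(N)=\ulMCortau(N,M),
\]
so $\Ztrfintau M(N')=\ulMCortau(N,M)\cap \ulMCor^\fin(N',M)$ with the first factor \emph{constant} in $N'$. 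Taking the filtered colimit over $N'$ then reduces immediately to the identity $\colim_{N'\in \ul{\Sigma}^\fin\downarrow N}\ulMCor^\fin(N',M)=\ulMCor(N,M)$, which is \cite[Proposition~1.9.2]{kmsy}. That is the whole proof: no graph trick, no blow-up, no appeal to $\ulMNST$ is required.

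Two specific problems with your abstract argument. First, the claim that $\ul{b}_!\ul{b}^*\Ztr(M)\to \Ztr(M)$ has image $\Ztrtau M$ is false: since $\ul{b}^*$ is fully faithful on all of $\ulMPST$ (Lemma~\ref{eq:bruno-functor}), the counit $\ul{b}_!\ul{b}^*\to\id$ is an isomorphism everywhere, so the image is all of $\Ztr(M)$. Second, you cannot identify $\Ztrfintau M$ with $\ul{b}^*\Ztrtau M$: the latter has value $\ulMCortau(N,M)$ at $N$, strictly larger in general than $\ulMCor^\fin(N,M)\cap \ulMCortau(N,M)$ (take $M$ proper and any $Z\in\ulMCor(N,M)$ with $\ol{Z}$ proper but not finite over $\ol{N}$). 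What \emph{is} true, and is exactly the paper's point, is that these two distinct subpresheaves of $\ul{b}^*\Ztr(M)$ have the same image under $\ul{b}_!$, because the $\tau$-condition is constant along $\ul{\Sigma}^\fin$ while the $\fin$-condition is saturated by \cite[Proposition~1.9.2]{kmsy}. The ``main obstacle'' you anticipate (preserving $(\spadesuit)$ while forcing finiteness via blow-ups) therefore dissolves entirely once you notice the invariance of $\ulMCortau(-,M)$ along $\ul{\Sigma}^\fin$.
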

\begin{proof}
The first assertion follows from Lemma \ref{lem1;MV}. For $N'\in
\ul{\Sigma}^\fin\downarrow N$ (see Definition \ref{def;ulSigmafin}),
Lemma \ref{lem1;MV} implies
\[ \ulMCortau(N,M) = \Ztrtau M (N)=\Ztrtau M (N')= \ulMCortau(N',M),\]
where the second equality follows from the fact that $N'\simeq N$ in
$\ulMCor$. By definition this implies
\[ \Ztrfintau M(N')= \ulMCor^\tau(N,M) \cap \ulMCor^\fin(N',M),\]
which proves the second assertion in view of the isomorphisms
\[ \ul{b}_!\Ztrfintau M(N) = \colim_{N'\in \ul{\Sigma}^\fin\downarrow N} \Ztrfintau M(N'),\]
\[ \ulMCor(N,M) = \colim_{N'\in \ul{\Sigma}^\fin\downarrow N} \ulMCor^\fin(N',M),\]
which hold by \eqref{eq:b-sh-explicit} and \cite[Proposition~1.9.2]{kmsy}.
\end{proof}

\begin{remark}
We can prove that $\Ztrfintau M$ lies in $\ulMNST^\fin$ (so that we
may remove $\ulaNisfin$ in Theorem \ref{thm1;MV}(1) below).
\end{remark}

\subsection{Exactness of a certain \v{C}ech complex}

\begin{thm}\label{thm1;MV}
Let $p : U \to M$ be a covering for the Grothendieck topology on
$\ulMSm^\fin$ associated to $P_{\ulMVfin}$ from Proposition
\ref{p3.1}. Denote by $U\times_M U$ the fiber product in
$\ulMSm^\fin$ $($see \cite[Proposition~1.10.4 and Corollary~1.10.7]{kmsy}$)$.
\begin{itemize}
\item[\rm (1)]
The \v{C}ech complex
 \[\dots \to \ulaNisfin \Ztrfintau{U \times_M U} \to \ulaNisfin\Ztrfintau{U}
 \to \ulaNisfin\Ztrfintau{M}  \to 0\]
is exact in $\ulMNST^\fin$.
\item[\rm (2)]
The \v{C}ech complex
 \[\dots \to \Ztrtau{U \times_M U}
 \to \Ztrtau{U}
 \to \Ztrtau{M}
 \to 0\]
is exact in $\ulMNST$.
\end{itemize}
\end{thm}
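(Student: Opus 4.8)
The plan is to prove (1) directly and to deduce (2) from it. For the deduction, the functor $\ul{b}_\Nis\colon\ulMNST^\fin\to\ulMNST$ of Proposition~\ref{lem;b!ulMNST} is exact and fully faithful, hence preserves and reflects exactness of complexes. By the first identity of \eqref{eq:a-c2} we have $\ul{b}_\Nis\ulaNisfin=\ulaNis\ul{b}_!$; by Lemma~\ref{lem1.1;MV}, $\ul{b}_!\Ztrfintau{U^{\times_M k}}=\Ztrtau{U^{\times_M k}}$ for every $k\ge 0$; and $\Ztrtau{U^{\times_M k}}\in\ulMNST$ (Definition~\ref{def1;MV}(1)), so $\ulaNis$ fixes it. Since $\ul{b}_!$ commutes with the face maps of the \v{C}ech complex (induced by the projections of the fibre products $U\times_M U$, etc.), it carries the complex of (1) to the complex of (2), and the two statements are thus equivalent.

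\emph{Reduction of (1) to a local statement.} Write $C_\bullet$ for the \v{C}ech complex of (1), a complex in $\ulMPST^\fin$; since $\ulaNisfin$ is exact (Theorem~\ref{thm:sheaf-transfer}), it suffices to prove $\ulaNisfin H_i(C_\bullet)=0$ for each homology presheaf. Because $\ul{c}^{\fin\Nis}$ is exact and faithful (Lemma~\ref{eq:c-functor}) and $\ul{c}^{\fin\Nis}\ulaNisfin=\ulasNisfin\ul{c}^{\fin*}$ by \eqref{eq:c-a-fin}, this reduces to $\ulasNisfin\bigl(\ul{c}^{\fin*}H_i(C_\bullet)\bigr)=0$. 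Using the identification of the small site $M_\Nis$ with $(\ol M)_\Nis$ (Lemma~\ref{lem:equiv-smallsites}, Notation~\ref{n3.7a}), $\ulasNisfin$-triviality may be tested at Henselian local schemes; thus, setting $\ulMCor^{\fin,\tau}(S,-):=\ulMCor^\fin(S,-)\cap\ulMCortau(S,-)$, we are reduced to proving acyclicity of
\[
\dots\to\ulMCor^{\fin,\tau}(S,U\times_M U)\to\ulMCor^{\fin,\tau}(S,U)\to\ulMCor^{\fin,\tau}(S,M)\to 0
\]
for $S$ the modulus pair carried by $\Spec\sO^h_{\ol P,x}$ with its pulled-back divisor ($P\in\ulMP^\fin$, $x\in\ol P$), these groups being filtered colimits over finite-type models of $S$.

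\emph{A contracting homotopy.} This is the modulus analogue of the classical fact that the \v{C}ech complex of the representable sheaves of a Nisnevich cover is a resolution, since a Nisnevich cover splits over a Henselian local base. An element of $\ulMCor^{\fin,\tau}(S,U^{\times_M(n+1)})$ is an integral combination of integral finite correspondences $Z$, and the closure $\ol Z$ of each is finite over $S$, hence — being connected and finite over the Henselian local $S$ — itself Henselian local. The pullback of $\ol U\to\ol M$ along $\ol Z\to\ol M$ therefore splits; fixing such a splitting for every integral $\ol Z$ produces a morphism $\ol Z\to\ol U$ over $\ol M$, whence, by pushing cycles forward and inserting the new $\ol U$-coordinate in the first slot, maps $h_n$ raising the \v{C}ech degree by one and satisfying the usual homotopy identity — provided the $h_n$ take values in the indicated subgroups. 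This requires checking: (a) finiteness over $S$ of the new closure, which follows from the cancellation property of finiteness (proper $+$ quasi-finite $+$ separated $=$ finite), as it is dominated by $\ol Z$; (b) admissibility, immediate from the minimality of the cover, $U^\infty=\ol p^*M^\infty$, since on the normalization of the new closure the pullback of the new coordinate's boundary equals the pullback of $M^\infty$ along the projection to $\ol M$, which is controlled by the admissibility of $Z$; and (c) the support condition $(\spadesuit)$.

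\emph{The main obstacle.} Step (c) is the crux, the ingredient with no counterpart in Voevodsky's setting. Given a finite-type model $N$ of $S$, $Z\in\ulMCor^{\fin,\tau}(N,M)$, and a proper compactification $\ol L\supseteq\ol N$ witnessing $(\spadesuit)$ — so $\ol Z^{\ol L}$, the closure of $Z^\o$ in $\ol L\times\ol M$, is proper over $\ol L$ — one must choose the splitting above so that the lifted correspondence, and those produced by $h_n$, again satisfy $(\spadesuit)$. The difficulty is that the étale cover $\ol Z^{\ol L}\times_{\ol M}\ol U\to\ol Z^{\ol L}$, although split over the dense open $Z^\o$, need not be split over all of $\ol Z^{\ol L}$, so the closure of the lifted correspondence in $\ol L\times\ol U$ may fail to be proper over $\ol L$. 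I expect this to be handled by exploiting that $\ol Z^{\ol L}\to\ol L$ is proper — so its fibres over the boundary are proper in $\ol M$ — together with the shape of an elementary Nisnevich cover (its completing piece is an isomorphism away from the open piece) and a blow-up of $\ol L$ centred in its boundary $\ol L\setminus\ol N$, in the spirit of the constructions of \cite{nistopmod} and \cite{KaMi}, so as to obtain a compactification over which the lift has proper closure; the independence of $(\spadesuit)$ from the chosen compactification (Lemma~\ref{lem1;MV}) then closes the argument. With (c) established, the homotopy above is legitimate, the displayed complex is acyclic for every Henselian local $S$, whence (1), and then (2) by the first step.
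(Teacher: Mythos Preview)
Your overall structure is correct and matches the paper: the reduction of (2) to (1) via $\ul b_\Nis$ and Lemma~\ref{lem1.1;MV}, the reduction of (1) to a statement at Henselian points, and the construction of a contracting homotopy using sections of the pulled-back Nisnevich cover over Henselian local schemes are all exactly what the paper does. You also correctly isolate the one genuinely new difficulty compared with Voevodsky's classical situation, namely your step~(c): preservation of the $\tau$-condition $(\spadesuit)$ under the homotopy maps.

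However, you do not resolve this difficulty. The paragraph ``The main obstacle'' only speculates (``I expect this to be handled by\dots'') about extending the splitting across a blown-up compactification $\ol L$. That is a genuine gap, and moreover the route you sketch is not the one the paper takes.

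The paper's resolution avoids the problem of extending the section to the boundary altogether. It introduces, for each Henselian point $\ol S$, a diagram category $\sD$ of closed subschemes $Z\hookrightarrow\ol S\times\ol M$ quasi-finite over $\ol S$, together with a set $E^\tau(Z)$ of components satisfying a finite-level version $(\clubsuit)_\lambda$ of $(\spadesuit)$, and filters each term of the \v Cech complex as $\colim_Z L^\tau(Z\times_{\ol M}\ol U^k)$ with $Z$ ranging over closed subschemes of $\ol S\times\ol M$ finite over $\ol S$. Two lemmas then do the work. Lemma~\ref{lem2;MV} shows $(\clubsuit)_\lambda$ is stable under refinement $X_\mu\to X_\lambda$, so that after passing to a common finite level one may assume the section $s_0$ is already defined there. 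Lemma~\ref{lem3;MV} shows that for any morphism $\phi\colon Z\to Z'$ in $\sD$, pushforward $V\mapsto\phi(V)$ carries $E^\tau(Z)$ into $E^\tau(Z')$. The key observation in its proof is that $h'_\lambda\circ\phi_\lambda=h_\lambda$, so $h'_\lambda(\phi_\lambda(V_\lambda))=h_\lambda(V_\lambda)$: the images in $X_\lambda\times\ol M$ are \emph{literally the same scheme}, and the properness clause of $(\clubsuit)_\lambda$ is therefore inherited for free. Applied to $\phi=s_k$, this makes $(s_k)_*$ well-defined on $L^\tau$, and the usual simplicial identities give the contracting homotopy.

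The conceptual point you are missing is thus the following: rather than trying to propagate the section across the boundary of a compactification (which is hard, since the \'etale cover need not split there), one reformulates the $\tau$-condition through the projection to $\ol M$, where the relevant image is unchanged by the section maps. This converts a delicate extension problem into a tautology.
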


Theorem \ref{thm1;MV} (2) follows from (1) by applying the exact
functor $\ul{b}_\Nis$ from Proposition \ref{c3.1v} and using
isomorphisms
\[ \ul{b}_\Nis \ulaNisfin\Ztrfintau{M} \simeq \ulaNis \ul{b}_! \Ztrfintau{M} \simeq \ulaNis \Ztrtau{M} =\Ztrtau{M},\]
where the first isomorphism follows from \eqref{eq:a-c2}, the second
from Lemma \ref{lem1.1;MV} and the last equality follows from the
fact that $\Ztrtau M \in \ulMNST$ thanks to Lemma
\ref{lem:tau!-exact} (1).

We need some preliminaries for the proof of (1). It is inspired by
that of \cite[Theorem~3.4.1]{kmsy},  with some elaboration. Take
$(X,D)\in \ulMCor$ and a point $x\in X$. Let $\{\Xlam\}_{\lambda\in
\Lambda}$ be the filtered system of connected affine \'etale
neighborhoods of $x\in X$. Let
\begin{equation}\label{eq;olS}
\ol{S}=\lim_{\lambda\in \Lambda} \Xlam
\end{equation}
be the henselization of $X$ at $x$. Take $M\in \ulMCor$ and let
$\sD$ be the category of diagrams
\begin{equation}\label{eq0;MV}
\ol{S} \overset{f}{\leftarrow} Z \overset{g}{\to} \ol{M}
\end{equation}
of $k$-schemes with $f$ quasi-finite such that $Z\to \ol{S}\times
\ol{M}$ is a closed immersion and $V \not\subset \ol{S}\times
M^\infty$   for any irreducible component $V$ of $Z$. We denote
\eqref{eq0;MV} by $(Z,f,g)$. A morphism from $(Z,f,g)$ to
$(Z',f',g')$ is given by a morphism $\phi:Z\to Z'$ which fits into a
commutative diagram
\begin{equation}\label{eq3;MV}
\vcenter{ \xymatrix{
 &\ar[ld]_{f} Z \ar[rd]^{g}\ar[dd]^{\phi}\\
\ol{S} & & \ol{M}\\
&\ar[lu]^{f'} Z' \ar[ru]_{g'}. } }\end{equation}

Note that $\phi$ is automatically a closed immersion, so $\sD$ is a
cofiltered ordered set as it is stable under unions. For $(Z,f,g)\in
\sD$ let $E(Z)=E(Z,f,g)$ be the set of irreducible components $V$ of
$Z$ which belong to $\ulMCor^\fin((\ol{S},D),M)$, i.e. such that
$f|_V$ is finite and surjective over an irreducible component of
$\ol{S}$ and satisfies the admissibility condition:
\begin{equation}\label{eq3.5;MV}
(f \circ i_V \circ v)^*(D\times_X \ol{S}) \geq (g \circ i_V \circ
v)^*(M^\infty),
\end{equation}
where $v: V^N \to V$ is the normalization and $i_V : V
\hookrightarrow Z$ is the inclusion. Let $E^\tau(Z)\subset E(Z)$ be
the subset of those $V$ which belong to
$\Z_\tr^\fin(M)^\tau(\ol{S},D)$, i.e. satisfying the following
condition: there exists $\lambda\in \Lambda$ such that $(Z,f,g)$
(resp. $V\hookrightarrow Z$) is the base change via $\ol{S}\to
\Xlam$ of
\begin{equation}\label{eq1;MV}
\xymatrix{ \Xlam &\ar[l]_{f_\lambda}Z_\lambda \ar[r]^{g_\lambda} &
\ol{M}\\}\quad \text{(resp. $V_\lambda \hookrightarrow Z_\lambda$)},
\end{equation}
where $V_\lambda$ is an irreducible component of $Z_\lambda$
satisfying the condition:
\begin{enumerate}
\item[$(\clubsuit)_\lambda$]
$V_\lambda$ is finite over $\Xlam$ and satisfies the admissibility
condition
\begin{equation}\label{eq;admisiibilityVlam}
(f_\lambda \circ i_{V_\lambda} \circ v_\lambda)^*(D\times_X \Xlam)
\geq (g_\lambda \circ i_{V_\lambda} \circ v_\lambda)^*(M^\infty),
\end{equation}
similar to \eqref{eq3.5;MV}. Moreover, letting
$\tilde{V_\lambda}=h_\lambda(V_\lambda)$ with
$h_\lambda=(f_\lambda,g_\lambda):Z_\lambda\to \Xlam\times\ol{M}$
($\tilde{V_\lambda}$ is finite over $\Xlam$ by the finiteness of
$V_\lambda\to\Xlam$), there exists a dense open immersion
$\Xlam\hookrightarrow \ol{\Xlam}$ with $\ol{\Xlam}$ proper such that
the closure $\ol{\tilde{V_\lambda}}$ of $\tilde{V_\lambda}$ in
$\ol{\Xlam}\times\ol{M}$ is proper over $\ol{\Xlam}$.
\end{enumerate}

Let $L^\tau(Z)$ be the free abelian group on the set $E^\tau(Z)$.

\begin{lemma}\label{lem2;MV}
Let $V_\lambda$ be as in $(\clubsuit)_\lambda$ and $X_\mu\to \Xlam$
($\lambda,\mu\in \Lambda$) be a map in the system of \'etale
neighborhoods of $x\in X$. Let
\begin{equation}\label{eq2;MV}
\xymatrix{ \Xmu &\ar[l]_{f_\mu}Z_\mu \ar[r]^{g_\mu} & \ol{M}\\}\quad
\text{(resp. $V_\mu \hookrightarrow Z_\mu$)}
\end{equation}
be the base change of \eqref{eq1;MV} (resp. $V_\lambda
\hookrightarrow Z_\lambda$). If $V_\lambda\subset Z_\lambda$
satisfies $(\clubsuit)_\lambda$, then any component of $V_\mu$
satisfies $(\clubsuit)_\mu$.
\end{lemma}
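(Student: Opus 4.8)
The statement is about stability of condition $(\clubsuit)_\lambda$ under passing to an étale neighborhood $X_\mu \to X_\lambda$. The key observation is that $(\clubsuit)_\lambda$ has two parts: (a) finiteness of $V_\lambda$ over $X_\lambda$ together with the admissibility inequality \eqref{eq;admisiibilityVlam}; and (b) the properness condition on the closure $\ol{\tilde V_\lambda}$ of the image $\tilde V_\lambda = h_\lambda(V_\lambda)$ in a suitable compactification $\ol{X_\lambda}\times \ol M$. The plan is to verify each part separately for an arbitrary component of $V_\mu$, using the fact that $Z_\mu \to Z_\lambda$, and hence $V_\mu \to V_\lambda$, is an étale (in particular flat and quasi-finite) base change.

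\textbf{Part (a): finiteness and admissibility.} First I would observe that finiteness is stable under base change: since $V_\lambda \to X_\lambda$ is finite and $X_\mu \to X_\lambda$ is étale, the fiber product $V_\lambda \times_{X_\lambda} X_\mu \to X_\mu$ is finite, and $V_\mu$ is a union of connected components of this fiber product (because $X_\mu \to X_\lambda$ is étale, so $Z_\mu = Z_\lambda\times_{X_\lambda} X_\mu$ has its components mapping étale-locally onto those of $Z_\lambda$); hence each component $V_\mu'$ of $V_\mu$ is finite over its image in $X_\mu$, and one checks this image is a component of $X_\mu$ (using that $V_\lambda$ is surjective onto a component of $X_\lambda$ and that étale maps are open). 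For the admissibility inequality, the point is that the pullback of the divisor $D\times_X X_\mu$ to $X_\mu$ is compatible with the pullback of $D\times_X X_\lambda$ along $X_\mu \to X_\lambda$, and likewise the pullback of $M^\infty$ along $g_\mu$ factors through $g_\lambda$. Pulling back the inequality \eqref{eq;admisiibilityVlam} along the induced map on normalizations $V_\mu'^N \to V_\lambda^N$ (which exists since $V_\mu' \to V_\lambda$ is étale onto its image, so normalizations behave well) preserves the inequality of effective Cartier divisors, giving \eqref{eq;admisiibilityVlam} for $V_\mu'$.

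\textbf{Part (b): the properness condition.} Here I would simply use the same compactification: take $\ol{X_\mu}$ to be any proper compactification admitting a (proper surjective) map $\ol{X_\mu}\to \ol{X_\lambda}$ extending $X_\mu \to X_\lambda$ — such a compactification exists by standard Nagata compactification arguments, or one may take the closure of $X_\mu$ in $\ol{X_\lambda}\times (\text{a compactification of the étale map})$. The image $\tilde V_\mu'$ of $V_\mu'$ in $X_\mu \times \ol M$ maps to $\tilde V_\lambda$ under the projection, and by \emph{properness being stable under base change and composition} — i.e. using the cartesian/closed-immersion setup and the fact that $\ol{\tilde V_\lambda}$ is proper over $\ol{X_\lambda}$ — the closure $\ol{\tilde V_\mu'}$ in $\ol{X_\mu}\times \ol M$ is proper over $\ol{X_\mu}$. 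Concretely: $\ol{\tilde V_\mu'} \to \ol{\tilde V_\lambda}$ is a closed subscheme of a base change of a proper morphism, hence proper over $\ol{X_\mu}$.

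\textbf{Main obstacle.} The delicate point is not the finiteness or the divisor inequality — those are routine — but rather making the \emph{closures} behave well under the non-proper base change $X_\mu \to X_\lambda$: a priori the closure of $\tilde V_\mu'$ in $\ol{X_\mu}\times \ol M$ need not equal the base change of the closure $\ol{\tilde V_\lambda}$. The resolution is that one only needs the closure to \emph{surject onto} (or map properly to) $\ol{X_\mu}$, not to be the exact base change; since $\ol{\tilde V_\mu'}$ is a closed subscheme of $\ol{\tilde V_\lambda}\times_{\ol{X_\lambda}}\ol{X_\mu}$ (the latter being proper over $\ol{X_\mu}$), properness follows because a closed subscheme of a proper scheme is proper. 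I expect this ``closure versus base change of closure'' bookkeeping, together with choosing $\ol{X_\mu}$ compatibly with $\ol{X_\lambda}$, to be the only genuine content; everything else is functoriality of pullbacks of Cartier divisors and stability of finiteness under étale base change.
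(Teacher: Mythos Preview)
Your proposal is correct and follows essentially the same line as the paper's proof: Part (a) is dismissed there in one line (``clearly satisfied''), and Part (b) is exactly the ``closure is contained in a base change of the old closure, hence proper'' argument you outline. The only difference is the choice of compactification: rather than invoking Nagata, the paper takes $\ol{X_\mu}$ to be the normalization of $\ol{X_\lambda}$ in $X_\mu$, which directly gives a finite (hence proper) extension $\ol{X_\mu}\to\ol{X_\lambda}$ of $X_\mu\to X_\lambda$ and makes the containment $\ol{\tilde V_\mu}\subset \ol{\tilde V_\lambda}\times_{\ol{X_\lambda}}\ol{X_\mu}$ immediate.
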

\begin{proof}
The finiteness over $\Xmu$ and the admissibility condition of
$(\clubsuit)_\mu$ are clearly satisfied. To check the last condition of
$(\clubsuit)_\mu$, let $\Xmu\hookrightarrow \ol{\Xmu}$ be the
normalization in $\Xmu$ of $\ol{\Xlam}$ from $(\clubsuit)_\lambda$
and let $\tilde{V_\mu}=h_\mu(V_\mu)$ with
$h_\mu=(f_\mu,g_\mu):Z_\mu\to \Xmu\times\ol{M}$ ($\tilde{V_\mu}$ is
finite over $\Xmu$ by the finiteness of $V_\mu\to \Xmu$). Then
$\tilde{V_\mu}\subset \tilde{V_\lambda}\times_{\Xlam} \Xmu$ so that
the closure $\ol{\tilde{V_\mu}}$ of $\tilde{V_\mu}$ in
$\ol{\Xmu}\times\ol{M}$ is contained in
$\ol{\tilde{V_\lambda}}\times_{\ol{\Xlam}}  \ol{\Xmu}$, which is
proper over $\ol{\Xmu}$ by the assumption. Hence
$\ol{\tilde{V_\mu}}$ is also proper over $\ol{\Xmu}$, which implies
the desired condition.
\end{proof}

\begin{lemma}\label{lem3;MV}
For a commutative diagram \eqref{eq3;MV}, there is a natural induced
map
\[ \phi_* :E^\tau(Z) \to E^\tau(Z') \]
which makes $E^\tau$ a covariant functor on $\sD$.
\end{lemma}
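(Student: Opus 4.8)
The plan is to observe that a morphism $\phi:(Z,f,g)\to(Z',f',g')$ in $\sD$ is nothing but the inclusion of $Z$ as a closed subscheme of $Z'$ inside $\ol{S}\times\ol{M}$, and that this inclusion carries $E^\tau(Z)$ into $E^\tau(Z')$; one then defines $\phi_*$ to be this inclusion of sets, for which functoriality is automatic.

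First I would record that $\phi$ really is such an inclusion: since $(f,g)\colon Z\to\ol{S}\times\ol{M}$ and $(f',g')\colon Z'\to\ol{S}\times\ol{M}$ are closed immersions and $\phi$ is compatible with them (the diagram \eqref{eq3;MV} commutes both with $f,f'$ and with $g,g'$), $\phi$ exhibits $Z$ as a closed subscheme of $Z'$, with $f=f'|_Z$ and $g=g'|_Z$. Next I would establish the key geometric point: if $V\in E(Z)$, then $V$ is already an irreducible component of $Z'$, and hence lies in $E(Z')$. Indeed, $f|_V\colon V\to\ol{S}$ is finite and surjective onto an irreducible component $\ol{S}_i$ of $\ol{S}$, so the generic point $\eta_V$ of $V$ has $f'(\eta_V)=f(\eta_V)=\xi_i$, the generic (hence minimal) point of $\ol{S}_i$. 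If $V$ lay in a strictly larger component $W'$ of $Z'$, its generic point $\eta_{W'}$ would have $\eta_V$ as a specialization, so $f'(\eta_{W'})$ would generize $\xi_i$ and therefore equal it; then $\eta_{W'}$ and $\eta_V$ lie in the fibre $(f')^{-1}(\xi_i)$, which is finite and discrete because $f'$ is quasi-finite, forcing $\eta_{W'}=\eta_V$ --- a contradiction. The conditions defining membership in $E(-)$ (finiteness and surjectivity of $f|_V$ over a component of $\ol{S}$, and the admissibility inequality \eqref{eq3.5;MV}) involve only $V$ together with the restrictions of $f$ and $g$ to it, which are unchanged; so $V\in E(Z')$, and this inclusion $E(Z)\hookrightarrow E(Z')$ is visibly compatible with composition.

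To upgrade this to $E^\tau$, I would take $V\in E^\tau(Z)$, witnessed at some level $\lambda$ by an irreducible component $V_\lambda\hookrightarrow Z_\lambda$ over $\Xlam$ satisfying $(\clubsuit)_\lambda$, with $Z=Z_\lambda\times_{\Xlam}\ol{S}$ and $V=V_\lambda\times_{\Xlam}\ol{S}$. Since $Z'$ and the inclusion $Z\subseteq Z'$ are closed data in the Noetherian scheme $\ol{S}\times\ol{M}$, after enlarging $\lambda$ --- harmless by Lemma \ref{lem2;MV}, which propagates $(\clubsuit)$ along $\Xmu\to\Xlam$ --- I may assume $Z'$ and $Z\subseteq Z'$ descend to level $\lambda$, giving $Z_\lambda\subseteq Z'_\lambda$ inside $\Xlam\times\ol{M}$. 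Running the generic-point argument of the previous paragraph at this finite level shows that $V_\lambda$ is an irreducible component of $Z'_\lambda$ as well (using that $V_\lambda$ is finite over, and --- after base change --- surjects onto a component of, $\Xlam$, which may be arranged by a further enlargement of $\lambda$). The three clauses of $(\clubsuit)_\lambda$ --- finiteness of $V_\lambda$ over $\Xlam$, the admissibility inequality \eqref{eq;admisiibilityVlam}, and the properness over $\ol{\Xlam}$ of the closure of $\tilde{V_\lambda}=h_\lambda(V_\lambda)$ --- refer only to $V_\lambda$ and its maps to $\Xlam$ and $\ol{M}$, so they hold verbatim with $V_\lambda$ viewed inside $Z'_\lambda$. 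Hence $V=V_\lambda\times_{\Xlam}\ol{S}$ witnesses $V\in\Z_\tr^\fin(M)^\tau(\ol{S},D)$ relative to $(Z',f',g')$, and together with the previous paragraph this gives $V\in E^\tau(Z')$. Setting $\phi_*(V):=V$ then defines $\phi_*\colon E^\tau(Z)\to E^\tau(Z')$; since for composable morphisms $\phi,\psi$ in $\sD$ all of $\phi_*,\psi_*,(\psi\phi)_*$ are the set-theoretic inclusion $V\mapsto V$ of irreducible closed subsets of $\ol{S}\times\ol{M}$, and $(\id)_*=\id$, the assignment $Z\mapsto E^\tau(Z)$ is a covariant functor on $\sD$, and so --- extending $\Z$-linearly --- is $Z\mapsto L^\tau(Z)$.

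The main obstacle will not be the geometry of the second paragraph --- the quasi-finiteness plus minimal-point trick disposes of it cleanly, with no dimension theory needed --- but the bookkeeping in the third: choosing a single \'etale level $\lambda$ at which $Z$, the distinguished component $V_\lambda$, the larger pair $Z'$, and the inclusion $Z\subseteq Z'$ are all simultaneously defined, verifying that $V_\lambda$ is then a component of $Z'_\lambda$, and keeping Lemma \ref{lem2;MV} available so that persistence of $(\clubsuit)$ under enlarging $\lambda$ is not lost.
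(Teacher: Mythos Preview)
Your proof is correct and follows essentially the same line as the paper's: recognize that $\phi$ is a closed immersion so $\phi_*(V)=V$, verify that $V$ remains an irreducible component of $Z'$, then descend to a finite level via Lemma~\ref{lem2;MV} and check $(\clubsuit)_\lambda$ there. Your quasi-finite-fiber/generic-point argument for the ``component of $Z'$'' step is more explicit than the paper's treatment, which at the finite level records it tersely as a consequence of $V_\lambda$ being finite and surjective onto a component of $X_\lambda$.
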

\begin{proof}
Take $V\in E(Z)$ and let $V'=\phi(V)$. By the finiteness of $V\to
\ol{S}$, $V'$ is finite over $\ol{S}$ and closed in $Z'$. The
admissibility condition \eqref{eq3.5;MV} for $V$ implies that for
$V'$ by \cite[Lemma 1.2.1]{kmsy}. Hence $V'\in E(Z')$. Suppose $V\in
E^\tau(Z)$. To show $V'\in E^\tau(Z')$, take $\lambda\in \Lambda$
and $V_\lambda$ as in $(\clubsuit)_\lambda$. Thanks to Lemma
\ref{lem2;MV}, we may assume that the diagram \eqref{eq3;MV} is the
base change via $\ol{S}\to \Xlam$ of
\[\xymatrix{
 &\ar[ld]_{f_\lambda} Z_\lambda \ar[rd]^{g_\lambda}\ar[dd]^{\phi_\lambda}\\
\Xlam & & \ol{M}\\
&\ar[lu]^{f'_\lambda} Z'_\lambda \ar[ru]_{g'_\lambda} \\
}\] and $V'=V'_\lambda\times_{\Xlam} \ol{S}$ with
$V'_\lambda=\phi_\lambda(V_\lambda)$. Since $V_\lambda$ is finite
and surjective over a component of $X_\lambda$, so is
$V'_{\lambda}$, which implies that it is an irreducible component of
$Z'_\lambda$. The admissibility condition
\eqref{eq;admisiibilityVlam} for $V_\lambda$ implies that for
$V'_\lambda$ by \cite[Lemma 1.2.1]{kmsy}. Letting
$h_\lambda'=(f'_\lambda,g'_\lambda): Z'_\lambda\to \Xlam\times\Mb$,
we have $h_\lambda=h_\lambda' \phi_\lambda$ so that
$h_\lambda'(V'_\lambda)=h_\lambda(V_\lambda)$. Hence $V'_\lambda$
satisfies the last condition of $(\clubsuit)_\lambda$ since
$V_\lambda$ does. This implies $V'\in E^\tau(Z')$.
\end{proof}

\begin{proof}[Proof of Theorem \ref{thm1;MV} (1)]
It suffices to show the exactness of
\begin{equation}
\label{eq:ceck2s} \dots \to \Ztrfintau{U \times_M U}(S) \to
\Ztrfintau{U}(S) \to \Ztrfintau{M}(S)\to 0
\end{equation}
where $S=(\ol{S},D\times_X \ol{S})$ with $(X,D)$ and $\ol{S}$ as in
\eqref{eq;olS}. We first note that for a closed subscheme $Z\subset
\ol{S}\times \ol{U} \times_{\ol{M}} \dots \times_{\ol{M}} \ol{U}$
finite and surjective over an irreducible component of $\ol{S}$, the
image of $Z$ in $\ol{S}\times\Mb$ is finite over $\ol{S}$. From this
fact we see that \eqref{eq:ceck2s} is obtained as the inductive
limit of
\begin{equation}\label{eq:ceck3s}
 \dots \to
L^\tau(Z \times_{\Mb} (\ol{U} \times_{\Mb} \ol{U})) \to L^\tau(Z
\times_{\Mb} \ol{U}) \to L^\tau(Z) \to 0
\end{equation}
where $Z$ ranges over all closed subschemes of $\ol{S} \times \Mb$
that is finite surjective over an irreducible component of $\ol{S}$.
It suffices to show the exactness of \eqref{eq:ceck3s}.

Since $Z$ is finite over a henselian local scheme $\ol{S}$, $Z$ is a
disjoint union of henselian local schemes. Thus the Nisnevich cover
$Z \times_{\Mb} \ol{U} \to Z$ admits a section $s_0 : Z \to Z
\times_{\Mb} \ol{U}$. Define for $k \geq 0$
\[
s_k := s_0 \times_{\ol{M}} \id_{\ol{U}^k} : Z \times_{\ol{M}}
\ol{U}^k \to Z \times_{\ol{M}} \ol{U} \times_{\ol{M}} \ol{U}^k = Z
\times_{\ol{M}} \ol{U}^{k+1},
\]
where $\ol{U}^k$ is the $k$-fold fiber product of $\ol{U}$ over
$\ol{M}$. Then the maps
\[
(s_k)_* : L^\tau(Z \times_{\ol{M}} \ol{U}^k)\to L^\tau(Z
\times_{\ol{M}} \ol{U}^{k+1})
\]
give us a homotopy from the identity to zero.
\end{proof}

\subsection{Proof of Theorem \protect\ref{thm;tauexact}}

\begin{corollary}\label{tex1}\
\begin{itemize}
\item[\rm (1)] Let $G\in \ulMPST$. If $\ulaNis G=0$, then $\ulaNis \tau_! \tau^* G=0$.
\item[\rm (2)]
The base change morphism $\aNis \tau^*\Rightarrow \tau^\Nis \ulaNis$
is an isomorphism.
\item[\rm (3)] The functor $\tau^\Nis$ is exact.
\end{itemize}
\end{corollary}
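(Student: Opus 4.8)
The plan is to derive the three statements of Corollary~\ref{tex1} in order, using Theorem~\ref{thm1;MV}(2) and the generation Lemma~\ref{lem;uaNisvanish} as the two main inputs, and then deduce Theorem~\ref{thm;tauexact}.

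\textbf{Step (1).} Suppose $\ulaNis G = 0$. By Lemma~\ref{lem;uaNisvanish}, $G$ is a quotient $\bigoplus_\alpha \Ztr(M_\alpha/U_\alpha) \to G \to 0$, where each $U_\alpha \to M_\alpha$ is a covering for $P_{\ulMVfin}$ (pushed forward to $\ulMSm$ via $\ul{b}_s$). Since $\tau_!\tau^*$ is right exact (it commutes with all colimits by Lemma~\ref{eq.tau}(1)), it suffices to show $\ulaNis \tau_!\tau^* \Ztr(M/U) = 0$ for such a covering. Now $\tau_!\tau^*\Ztr(U) = \Ztrtau{U}$ and $\tau_!\tau^*\Ztr(M) = \Ztrtau{M}$ by Definition~\ref{def1;MV}(1), and $\tau_!\tau^*$ applied to the defining right-exact sequence $\Ztr(U) \to \Ztr(M) \to \Ztr(M/U) \to 0$ shows that $\tau_!\tau^*\Ztr(M/U) = \Coker(\Ztrtau{U} \to \Ztrtau{M})$. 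But Theorem~\ref{thm1;MV}(2) says precisely that $\Ztrtau{U} \to \Ztrtau{M} \to 0$ is exact in $\ulMNST$ (the \v{C}ech complex is exact, so in particular $\Ztrtau{U}\to \Ztrtau{M}$ is an epimorphism in $\ulMNST$), i.e. $\ulaNis$ of this cokernel vanishes. Hence $\ulaNis\tau_!\tau^*\Ztr(M/U) = 0$, and by right-exactness $\ulaNis\tau_!\tau^* G = 0$.

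\textbf{Step (2).} There is a canonical base change morphism $\aNis\tau^* \Rightarrow \tau^\Nis\ulaNis$ of functors $\ulMPST \to \MNST$, obtained from the adjunctions: apply $\tau^*$ to the unit $\id \to \ul{i}_\Nis\ulaNis$, then use $\tau^*\ul{i}_\Nis = i_\Nis\tau^\Nis$ \eqref{eq:i-tau-star} and adjoint across $(\aNis, i_\Nis)$. To show it is an isomorphism, take $G \in \ulMPST$ and set $H = \Ker(G \to \ul{i}_\Nis\ulaNis G)$ and $H' = \Coker(G \to \ul{i}_\Nis\ulaNis G)$; both satisfy $\ulaNis H = \ulaNis H' = 0$ since $\ulaNis$ is exact and $\ulaNis\ul{i}_\Nis = \id$. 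Applying $\aNis\tau^*$ (which is right exact, being a composite of right-exact functors) and $\tau^\Nis\ulaNis$ (which is exact by Lemma~\ref{lem:tau!-exact} for $\tau^\Nis$ strongly additive and $\ulaNis$ exact --- though in fact exactness of $\tau^\Nis$ is what we are trying to prove; instead I would compare the two sides directly on short exact sequences, using that $\ulaNis\tau^* = \tau^*\ulaNis$ at the presheaf level is \emph{not} available, so I will argue as follows). By Step (1), $\ulaNis\tau_!\tau^* H = 0$, hence $\aNis\tau^* H = \aNis c^{*\text{-type argument}}$... more cleanly: $\aNis\tau^* H$ is computed from $\tau^* H \in \MPST$, and one checks $\tau_!(\tau^* H) = \tau_!\tau^* H$ has $\ulaNis = 0$ by Step (1); since $\aNis$ satisfies $\tau_\Nis\aNis = \ulaNis\tau_!$ (Theorem~\ref{thm:a-nis-final}\eqref{eq:Anis-c-tau}) and $\tau_\Nis$ is faithful (exact and fully faithful, Lemma~\ref{lem:tau!-exact}), we get $\aNis\tau^* H = 0$, and likewise $\aNis\tau^* H' = 0$. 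Therefore $\aNis\tau^*$ sends the four-term exact sequence relating $G$, $\ul{i}_\Nis\ulaNis G$, $H$, $H'$ to an isomorphism $\aNis\tau^* G \iso \aNis\tau^*\ul{i}_\Nis\ulaNis G = \aNis i_\Nis \tau^\Nis\ulaNis G = \tau^\Nis\ulaNis G$ (using \eqref{eq:i-tau-star} and $\aNis i_\Nis = \id$), and a diagram chase identifies this composite with the base change morphism.

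\textbf{Step (3) and the main obstacle.} Given (2), exactness of $\tau^\Nis$ follows: for a short exact sequence $0 \to G' \to G \to G'' \to 0$ in $\ulMNST$, write $G^{(\bullet)} = \ul{i}_\Nis G^{(\bullet)} \in \ulMPST$; this is exact in $\ulMPST$ since $\ul{i}_\Nis$ is exact (it is a right adjoint with exact left adjoint... actually $\ul{i}_\Nis$ is only left exact a priori, but a short exact sequence of sheaves maps to one that becomes exact after $\ulaNis$, and $\ulaNis\ul{i}_\Nis = \id$). Apply $\tau^\Nis\ulaNis$: by (2) this equals $\aNis\tau^*$ applied termwise. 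Now $\tau^*$ is exact (it is a localization, Lemma~\ref{eq.tau}(1)) and $\aNis$ is exact (Theorem~\ref{thm:a-nis-final}), so $\aNis\tau^*$ is exact, hence $\tau^\Nis\ulaNis G^{(\bullet)} = \tau^\Nis G^{(\bullet)}$ is a short exact sequence in $\MNST$. The main obstacle is Step (1): getting the vanishing $\ulaNis\tau_!\tau^*\Ztr(M/U) = 0$ genuinely requires the \v{C}ech exactness of Theorem~\ref{thm1;MV}(2), which in turn rests on the delicate homotopy argument over henselizations with the $\tau$-admissibility condition $(\clubsuit)_\lambda$ --- the properness-of-closures condition must be shown to be preserved under the section $s_0$ and its fiber products, which is the technical heart of the whole section. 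A secondary subtlety is making the identification of the base change map in Step (2) precise, since one must be careful that $\ulaNis$ and $\tau^*$ do not commute at the presheaf level and all compatibilities go through the identities \eqref{eq:Anis-c-tau} and the faithfulness of $\tau_\Nis$.
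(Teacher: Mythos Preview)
Your proposal is correct and follows the same overall strategy as the paper: reduce (1) to Theorem~\ref{thm1;MV}(2) via Lemma~\ref{lem;uaNisvanish}, use (1) to prove the base change (2), and deduce (3) from (2). The differences are in presentation and cleanliness.

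In Step~(2), after your false start you arrive at the right idea, but the paper's formulation is tighter: it applies the fully faithful $\tau_\Nis$ to the map $\aNis\tau^*(\eta_F)$, uses $\tau_\Nis\aNis \simeq \ulaNis\tau_!$ from \eqref{eq:Anis-c-tau}, and is then reduced to showing that $\ulaNis\tau_!\tau^*(\eta_F)$ is an isomorphism; this follows immediately from (1) because $\ulaNis\tau_!\tau^*$ is exact (all three functors are left adjoints, and $\ulaNis$ is exact) and $\Ker\eta_F$, $\Coker\eta_F$ are killed by $\ulaNis$. Your detour through ``$\aNis\tau^* H = 0$ because $\tau_\Nis$ is faithful'' is logically the same, but the stream-of-consciousness correction midway should be cleaned up.

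In Step~(3), the paper simply invokes Lemma~\ref{lem:lr-adjoint}(4) with $c = \tau^*$, $c' = \tau^\Nis$, $a_C = \ulaNis$, $a_D = \aNis$: the hypothesis $a_D c = c' a_C$ is exactly (2), $a_D$ is exact by Theorem~\ref{thm:a-nis-final}, and $\tau^*$ is exact. What you wrote is essentially the proof of that lemma in this special case, but your sketch is incomplete: the sequence $\ul{i}_\Nis G' \to \ul{i}_\Nis G \to \ul{i}_\Nis G''$ is only left exact in $\ulMPST$, so to conclude you must explicitly observe that its cokernel $C$ satisfies $\ulaNis C = 0$, hence (via the argument you gave in Step~(2)) $\aNis\tau^* C = 0$, which then yields right-exactness after applying $\aNis\tau^*$. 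You allude to this but do not spell it out.
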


\begin{proof} (1) Since $\ulaNis$, $\tau_!$ and $\tau^*$ all commute with representable colimits as left adjoints, we are reduced by Lemma \ref{lem;uaNisvanish} to $G$ of the form $\Ztr(M/U)$, which is equivalent to
\[\ulaNis\big(\Coker(\Ztrtau{U} \to \Ztrtau{M})\big)=0,\]
where the cokernel is taken in $\ulMPST$. This follows from Theorem
\ref{thm1;MV}(2).

(2) Let $F\in \ulMNST$. The base change morphism $\aNis \tau^*F\to
\tau^\Nis \ulaNis F$ is defined as the composition
\[\aNis \tau^*F\by{\aNis\tau^*(\eta_F)} \aNis \tau^*\ul{i}_\Nis \ulaNis F\simeq
\aNis i_\Nis \tau^\Nis \ulaNis F\by{\epsilon_{\tau^\Nis \ulaNis
F}}\tau^\Nis \ulaNis F\] where $\eta$ (resp. $\epsilon$) is the unit
(resp. counit) of the adjunction $(\ulaNis,\ul{i}_\Nis)$ (resp.
$(a_\Nis,i_\Nis)$). Since the second map is an isomorphism by the
full faithfulness of $i_\Nis$, it remains to show that the first one
is an isomorphism. By the full faithfulness of $\tau_\Nis$ (Lemma
\ref{lem:tau!-exact}), it suffices to show it after applying this
functor. But $\ulaNis\tau_!\simeq \tau_\Nis \aNis$ by Theorem
\ref{thm:a-nis-final}, so we are left to show that the map
\[\ulaNis\tau_! \tau^* F\by{\ulaNis\tau_! \tau^*(\eta_F)} \ulaNis\tau_! \tau^*
\ul{i}_\Nis \ulaNis F \] is an isomorphism. This follows from (1),
since $\ulaNis\tau_! \tau^*$ is exact, and $\Ker \eta_F$ and $\Coker
\eta_F$ are killed by $\ulaNis$.

Finally, (3) follows from (2), Lemma \ref{lem:lr-adjoint} (4)
and the exactness of $\tau^*$.
\end{proof}

\begin{cor} The functor $\tau^\Nis$ has a right adjoint.
\end{cor}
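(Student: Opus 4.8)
The plan is to obtain the right adjoint from the special adjoint functor theorem, so the task reduces to checking that $\tau^\Nis$ preserves all small colimits. Two of the needed ingredients are already available: $\tau^\Nis$ is exact by Theorem \ref{thm;tauexact}, and it is strongly additive by Lemma \ref{lem:tau!-exact}(2). Since every small colimit in an abelian category is the cokernel of a map between two direct sums, and $\tau^\Nis$ preserves cokernels (being right exact), it suffices to upgrade strong additivity --- which by definition only concerns \emph{representable} direct sums --- to compatibility with arbitrary direct sums.

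For this, first I would record that $\ulMNST$ is a Grothendieck abelian category (Theorem \ref{thm:sheafification-ulMNST}), generated by the representables $\Z_\tr(M)$ for $M\in \ulMCor$. Given a family $(F_i)_{i\in I}$ in $\ulMNST$, choose for each $i$ an exact sequence $P_i'\to P_i\to F_i\to 0$ with $P_i$ and $P_i'$ direct sums of objects of the form $\Z_\tr(M)$. Summing over $I$ produces an exact sequence
\[ \bigoplus_{i\in I} P_i'\to \bigoplus_{i\in I} P_i\to \bigoplus_{i\in I} F_i\to 0 \]
whose first two terms are again direct sums of representables. Applying the right exact functor $\tau^\Nis$ and using strong additivity on those two terms identifies $\tau^\Nis(\bigoplus_i F_i)$ with $\bigoplus_i \tau^\Nis(F_i)$. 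Hence $\tau^\Nis$ commutes with arbitrary direct sums, and therefore with all small colimits.

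Finally, as $\ulMNST$ is cocomplete, well-powered, and admits a generating set (all standard for Grothendieck categories), the special adjoint functor theorem applies to the colimit-preserving functor $\tau^\Nis\colon \ulMNST\to \MNST$ and produces the desired right adjoint $\MNST\to \ulMNST$. The only non-formal point is the bootstrapping of strong additivity in the middle step; everything else is a direct appeal to results established above or to standard category theory. (An alternative route avoiding that step would be to check preservation of filtered colimits and of cokernels separately, but the generating-family argument is shorter.)
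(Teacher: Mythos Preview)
Your argument is correct and follows exactly the paper's route: apply the dual special adjoint functor theorem, using that $\ulMNST$ is a Grothendieck category (hence cocomplete with a small generating set) and that $\tau^\Nis$, being exact and strongly additive, preserves all small colimits.

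The bootstrapping paragraph is superfluous, however. In the paper's terminology, ``strongly additive'' means commuting with all \emph{representable} direct sums, where ``representable'' just means that the direct sum exists as an object of the category; since $\ulMNST$ is cocomplete, every small direct sum already exists, so strong additivity immediately yields preservation of arbitrary small direct sums with no further argument. That is why the paper's proof is a single sentence. (One small aside: the dual SAFT requires co-well-poweredness rather than well-poweredness, though Grothendieck categories enjoy both.)
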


\begin{proof} The category $\ulMNST$ is cocomplete and has a small set of generators, as a Grothendieck category (Theorem \ref{thm:sheafification-ulMNST}.) Moreover, $\tau^\Nis$ respects all representable colimits as an exact, strongly additive functor (Lemma \ref{lem:tau!-exact} and Corollary \ref{tex1} (3)).  Thus the dual hypotheses of the ``special adjoint functor theorem'' \cite[Chapter~V, Section~8, Theorem~2]{mcl} are verified.
\end{proof}

This corollary is striking, since $\tau_s$ is not cocontinuous
\cite[Remark~5.2.1]{KaMi}.

\section{Relation with $\protect\NST$}

\subsection{$\protect\MNS, \protect\ulMNS$ and $\protect\NS$}

We consider the functors
\begin{equation}\label{eq:omega_s}
\ulomega_s : \ulMSm \to \Sm, \qquad \omega_s : \MSm \to \Sm,
\end{equation}
defined by $\ulomega_s(M)=M^\o$ and $\omega_s(M)=M^\o$,
 and the left adjoint to $\ulomega_s$, defined by $\lambda_s(X)=(X,\emptyset)$.
We have $\omega_s=\ulomega_s \tau_s$, and:

\begin{prop}[\protect{\cite[Theorem~1]{KaMi}}]\label{prop:coconti}
The functors $\omega_s : \MSm_\Nis \to \Sm_\Nis$, $\ulomega_s :
\ulMSm_\Nis \to \Sm_\Nis$ and $\lambda_s:\Sm\to \ulMSm$ are
continuous and cocontinuous.
\end{prop}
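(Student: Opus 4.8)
The plan is to treat the three functors in parallel, reducing each of the two assertions to a statement about the distinguished squares of the relevant cd-structures, and then importing the geometric input of \cite{nistopmod} and \cite{KaMi} only where it is genuinely needed.

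\emph{Continuity.} A presheaf on $\Sm$ (resp.\ $\MSm$, resp.\ $\ulMSm$) is a Nisnevich sheaf if and only if it sends elementary Nisnevich squares (resp.\ $\MV$-squares, resp.\ $\ulMVfin$-squares) to cartesian squares, by \cite[Corollary~2.17]{cdstructures} (resp.\ Corollary~\ref{l5.1.17}, resp.\ Lemma~\ref{lem:sheaves-MNS}). Since $\ulomega_s^*F(M)=F(M^\o)$ and $\omega_s^*F(M)=F(M^\o)$, continuity of $\ulomega_s$ and $\omega_s$ reduces to the assertion that the square of smooth interiors $T^\o$ of an $\ulMVfin$- (resp.\ $\MV$-) square $T$ is an elementary Nisnevich square in $\Sm$. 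For $\MV$-squares this is part of Definition~\ref{def:new-MV}\,(2); for an $\ulMVfin$-square $T$, minimality of its morphisms forces each $T^\infty(ij)$ to be the pullback of $T^\infty(11)$ to $\ol{T}(ij)$, so $T^\o$ is the base change of the elementary Nisnevich square $\ol{T}$ along the open immersion $\ol{T}(11)\setminus|T^\infty(11)|\hookrightarrow\ol{T}(11)$, hence is again elementary Nisnevich. Likewise, continuity of $\lambda_s$ reduces to: $\lambda_s^{\Sq}$ carries an elementary Nisnevich square (all of whose divisors are then empty) to an $\ulMVfin$-square, which is immediate from Definition~\ref{d4.1}.

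\emph{Cocontinuity of $\lambda_s$.} Using the criterion of \cite[expos\'e~III]{SGA4} together with the completeness of the cd-structure on $\ulMSm$ (Theorem~\ref{thm;cd-str-ulMSm}), it suffices to check, for $X\in\Sm$ and a covering sieve $R$ of $\lambda_s(X)=(X,\emptyset)$ generated by a single $\ulMV$-square $S=\ul{b}_s^{\Sq}(Q)$, that $\lambda_s^{-1}(R)$ covers $X$. Since $\ul{b}_s$ is the identity on objects, $Q(11)=(X,\emptyset)$, and minimality forces every $Q^\infty(ij)=0$; thus $Q=\lambda_s^{\Sq}(\ol{Q})$ for an elementary Nisnevich square $\ol{Q}$ over $X$, and the two legs $\ol{Q}(10)\to X$, $\ol{Q}(01)\to X$ lie in $\lambda_s^{-1}(R)$. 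So $\lambda_s^{-1}(R)$ contains an elementary Nisnevich cover of $X$, as required.

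\emph{Cocontinuity of $\ulomega_s$ and $\omega_s$.} This is the heart of the statement, and where Theorems~\ref{cd-str-MSm} and \ref{t3.1} enter. Again by the SGA~4 criterion and completeness of the cd-structures, one is reduced to: given $M\in\ulMSm$ (resp.\ $M\in\MSm$) and an elementary Nisnevich square $T'$ over $M^\o$ with legs $T'(10)\hookrightarrow M^\o$ (open) and $T'(01)\to M^\o$ (\'etale), exhibit a covering sieve of $M$ for $P_{\ul{\MV}}$ (resp.\ $P_{\MV}$) all of whose members $N\to M$ satisfy that $N^\o\to M^\o$ factors through $T'(10)\sqcup T'(01)\to M^\o$. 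The open leg causes no trouble: it extends to a minimal morphism $N_1=\bigl(\ol{M}\setminus\overline{M^\o\setminus T'(10)},\,M^\infty|_{\ol{N_1}}\bigr)\to M$ in $\ulMSm$, and in the proper case one first blows up $\ol{M}$ to make $\ol{M}\setminus T'(10)$ a Cartier divisor and raises multiplicities along it, exactly as in the lemma inside the proof of Corollary~\ref{edgewise-cofinality}, to land in $\MSm$. The \'etale leg is the \emph{main obstacle}: an \'etale morphism over $M^\o$ need not extend to an \'etale morphism of \emph{any} proper model of $\ol{M}$, so one cannot directly lift $T'$ to an $\ulMV$- or $\MV$-square over $M$. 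One gets around this by exploiting the slack built into Definitions~\ref{d4.1} and \ref{def:new-MV}: choose a compactification of $T'(01)$ mapping properly to $\ol{M}$, make its boundary a divisor and raise its multiplicities via \cite[Lemma~3.14]{MiCI} to recover admissibility, control the off-diagonal condition \ref{def:new-MV}\,(3) via the functor $\OD$ of Theorem~\ref{thm:def-OD}, and refine Nisnevich-locally on $\ol{M}$, iterating until the \'etale leg becomes an isomorphism; this iteration terminates because the index category $\Sq$ has no loops (cf.\ the remark after Corollary~\ref{edgewise-cofinality}). In the proper case, the $\MV$-completion theorem~\ref{t3.1}\,(2) is precisely what upgrades each $\ulMVfin$-completion produced along the way to a genuine $\MV$-square. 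Finally, one should note that $\omega_s=\ulomega_s\circ\tau_s$ cannot be shown cocontinuous by composing cocontinuous functors, since $\tau_s$ is not cocontinuous \cite[Remark~5.2.1]{KaMi}; the completion argument must be carried out directly over proper modulus pairs, which is exactly why Theorem~\ref{t3.1}\,(2) over $\MSm$ — and not merely its easier $\ulMVfin$-analogue — is needed.
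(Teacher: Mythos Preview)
The paper does not prove this proposition: it is quoted verbatim from \cite[Theorem~1]{KaMi}, so there is no ``paper's own proof'' to compare against. Your proposal must therefore be evaluated on its own merits.

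Your continuity arguments are correct and standard: each reduces to checking that the functor sends distinguished squares to squares generating covering sieves, and this is immediate from the definitions (for $\ulMVfin$- and $\MV$-squares the interiors are elementary Nisnevich by Definition~\ref{d3.2} and Definition~\ref{def:new-MV}\,(2); for $\lambda_s$ the image of an elementary Nisnevich square has empty divisors, hence is trivially $\ulMVfin$). Your direct argument for cocontinuity of $\lambda_s$ is also essentially right, though there is a slicker route you missed: since $\lambda_s \dashv \ulomega_s$, one has $\lambda_{s,*} = \ulomega_s^*$ on presheaves, so cocontinuity of $\lambda_s$ is \emph{literally} the same statement as continuity of $\ulomega_s$, which you already proved.

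The substantive content of the proposition --- and of \cite[Theorem~1]{KaMi} --- is the cocontinuity of $\ulomega_s$ and especially of $\omega_s$, and here your sketch has a genuine gap. You correctly identify the obstacle: an \'etale morphism over $M^\o$ need not extend to an \'etale morphism over $\ol{M}$. But your proposed workaround is not a proof. Compactifying $T'(01)$ and raising multiplicities produces a morphism $N \to M$ in $\ulMSm$ with $N^\o = T'(01)$, yet this is \emph{not} a leg of an $\ulMV$-square: by Definition~\ref{d4.1} such a square must be isomorphic in $\ulMSm^\Sq$ to the image of an $\ulMVfin$-square, which forces the \emph{total spaces} to form an elementary Nisnevich square in $\Sch$ --- in particular $\ol{N} \to \ol{M}'$ must be \'etale for some $M' \cong M$ in $\ulMSm$, and nothing you wrote arranges this. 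The off-diagonal condition you invoke is relevant only to the $\MV$ case, not to $\ulMV$. The phrase ``refine Nisnevich-locally on $\ol{M}$, iterating until the \'etale leg becomes an isomorphism'' does not describe a well-defined procedure, and the appeal to $\Sq$ having no loops is borrowed from the proof of Corollary~\ref{edgewise-cofinality}, where it controls a graph-trick recursion over the four vertices of a fixed square --- not a refinement process on covers. You are also right that $\omega_s = \ulomega_s \tau_s$ cannot be handled by composition since $\tau_s$ is not cocontinuous; but this only sharpens the point that the $\omega_s$ case requires the genuinely non-trivial input of \cite{KaMi}, which your sketch does not reproduce.
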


Let $\PS$ (resp. $\NS$) be the category of abelian presheaves (resp.
Nisnevich sheaves) on $\Sm$. The inclusion $i^V_{s, \Nis}:\NS \inj
\PS$ has a left adjoint $a^V_{s, \Nis}$. Let $\ulomega_s^* : \PS \to
\ulMPS$ and $\omega_s^* : \PS \to \MPS$ be the functors induced by
$\ulomega_s$ and $\omega_s$. They have left adjoints
$\ulomega_{s,!}$ and $\omega_{s,!}$.

\begin{prop}\label{prop:MNS-NS}
\leavevmode
\begin{itemize}
\item[{\rm a)}] We have $\ul\omega_{s, !}(\ulMNS) \subset \NS$ and
$\omega_{s, !}(\MNS) \subset \NS$.
\item[{\rm b)}] For $F\in \PS$, $\omega_s^*F\in \MNS$ $\iff$
$\ulomega_s^*F\in \ulMNS$ $\iff$ $F\in\NS$.
\item[{\rm c)} ] Let $\omega_s^\Nis:\NS \to \MNS$ and $\omega_{s, \Nis}:\MNS
\to \NS$ be the functors such that
\begin{align}
\label{eq:omega-a-V1s} & i_{s, \Nis} \omega_s^\Nis = \omega_s^* i_{s
,\Nis}^V, \quad i_{s, \Nis}^V \omega_{s, \Nis} = \omega_{s, !} i_{s,
\Nis},
\end{align}
which exist by b). Then $\omega_{s, \Nis}$ is left adjoint to
$\omega_s^\Nis$; both functors are exact and $\omega_s^\Nis$ is
strongly additive. We have $\omega_{s, \Nis}=a_{s, \Nis}^V
\omega_{s, !} i_{s, \Nis}$ and
\begin{equation} \label{eq:omega-a-V2s}
 \omega_{s, \Nis} a_{s, \Nis} = a_{s, \Nis}^V \omega_{s, !},
\quad \omega_s^\Nis a_{s, \Nis}^V = a_{s, \Nis} \omega_s^*.
\end{equation}
\item[{\rm d)} ] Let $\ulomega_s^\Nis:\NS \to \ulMNS$ and $\ulomega_{s,
\Nis}:\ulMNS \to  \NS$ be the functors defined in the same way as
$\omega_s^\Nis$  and $\omega_{s,\Nis}$ in c). Then the similar
statements to c) hold for $\ulomega_s^\Nis$ and $\ulomega_{s,
\Nis}$.
\end{itemize}
\end{prop}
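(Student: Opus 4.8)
The plan is to prove (b) first, deduce (a) from it, and then obtain (c) and (d) by formal adjunction arguments; the one non-formal ingredient will be a dictionary identifying elementary Nisnevich squares in $\Sm$ with smooth interiors of $\MV$- and $\ulMV$-squares. For (b), I would first record that every $N\in\Comp(M)$ satisfies $N^\o=M^\o$ (immediate from Lemma-Definition~\ref{d1.12}). Given an elementary Nisnevich square $\Sigma$ in $\Sm$, the square $S:=\lambda_s^\Sq(\Sigma)$ lies in $\ulMSm^\fin$, has all divisors empty and all morphisms minimal, and becomes $\Sigma$ after replacing each modulus pair by its underlying scheme; hence it is an $\ulMVfin$-square and, through $\ul b_s$, an $\ulMV$-square, with $S^\o=\Sigma$. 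Since $\ol S(11)=\Sigma(11)$ is smooth, hence normal, Theorem~\ref{t3.1}\,(2) gives $\Comp(S)^{\MV}\neq\emptyset$; choosing an $\MV$-completion $S\to\tau_s^\Sq(Q)$, the remark on $\Comp$-objects forces $Q^\o=S^\o=\Sigma$. Conversely, $Q^\o$ is an elementary Nisnevich square by Definition~\ref{def:new-MV}\,(2), and $S^\o$ is one by base-changing the elementary Nisnevich square $\ol S$ along the open immersion $S(11)^\o\hookrightarrow\ol S(11)$ and using minimality. As $(\ulomega_s^*F)(S(ij))=F(S(ij)^\o)$ and $(\omega_s^*F)(Q(ij))=F(Q(ij)^\o)$, transporting the Mayer--Vietoris characterizations of Lemma~\ref{lem:sheaves-MNS} and Corollary~\ref{l5.1.17} through this dictionary --- using $\ulomega_s\lambda_s=\id$ for the ``only if'' directions --- yields $\ulomega_s^*F\in\ulMNS\iff F\in\NS\iff\omega_s^*F\in\MNS$. (The ``$F\in\NS\Rightarrow\dots$'' implications are alternatively the continuity statements of Proposition~\ref{prop:coconti}.)

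For (a), the crucial point is that $\lambda_s\dashv\ulomega_s$ identifies $\ulomega_{s,!}$ with precomposition $\lambda_s^*$, i.e.\ $(\ulomega_{s,!}F)(X)=F((X,\emptyset))$. Then for $F\in\ulMNS$ and an elementary Nisnevich square $\Sigma$, evaluating $\ulomega_{s,!}F$ on $\Sigma$ is the same as evaluating $F$ on the $\ulMV$-square $\lambda_s^\Sq(\Sigma)$, which gives an exact Mayer--Vietoris sequence; hence $\ulomega_{s,!}(\ulMNS)\subset\NS$. For the proper case, $\omega_s=\ulomega_s\tau_s$ gives $\omega_s^*=\tau_s^*\ulomega_s^*$, whence $\omega_{s,!}\cong\ulomega_{s,!}\tau_{s,!}$, and since $\tau_{s,!}(\MNS)\subset\ulMNS$ by Theorem~\ref{thm:MNS-tau-a}\,(1) we conclude $\omega_{s,!}(\MNS)\subset\NS$.

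For (c) and (d), granting (a) and (b): $\omega_s^*$ and $\ulomega_s^*$ restrict to $\omega_s^\Nis$ and $\ulomega_s^\Nis$, while $\omega_{s,!}$ and $\ulomega_{s,!}$ restrict to $\omega_{s,\Nis}=a^V_{s,\Nis}\omega_{s,!}i_{s,\Nis}$ and $\ulomega_{s,\Nis}$, and the relations \eqref{eq:omega-a-V1s} hold by construction. The adjunctions $\omega_{s,\Nis}\dashv\omega_s^\Nis$, $\ulomega_{s,\Nis}\dashv\ulomega_s^\Nis$, the identities \eqref{eq:omega-a-V2s}, and the strong additivity of $\omega_s^\Nis$, $\ulomega_s^\Nis$ then follow from the presheaf adjunction $\omega_{s,!}\dashv\omega_s^*$, the full faithfulness of the sheaf inclusions, comparison of adjoints, and Lemmas~\ref{lem:lr-adjoint} and~\ref{lcom10}, exactly as for $\tau_{s,\Nis}$ in Theorem~\ref{thm:MNS-tau-a}. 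For exactness I would use that $\omega_{s,!}=\lambda_s^*\tau_{s,!}$ is exact ($\lambda_s^*$ being exact as a precomposition and $\tau_{s,!}$ exact by Lemma~\ref{eq.tau}\,(2)): then $\omega_{s,\Nis}$ is left exact as a composite of left exact functors and right exact as a left adjoint, hence exact, while $\omega_s^\Nis$ is left exact as a right adjoint and right exact by \eqref{eq:omega-a-V2s}, hence exact --- and the same arguments, with $\ulomega_{s,!}=\lambda_s^*$, settle $\ulomega_s^\Nis$ and $\ulomega_{s,\Nis}$.

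The hard part will be the square dictionary underlying (b) --- namely, that every elementary Nisnevich square in $\Sm$ is the smooth interior of some $\MV$-square. This is not formal: it is exactly where Theorem~\ref{t3.1}\,(2), the cofinality of $\MV$-completions from \cite[Theorem~1.5.6]{KaMi}, is needed, and it also requires unwinding Definitions~\ref{d1.12} and~\ref{def:new-MV} to check that such a completion recovers $\Sigma$ on smooth interiors and that $Q^\o$ and $S^\o$ are genuinely elementary Nisnevich squares.
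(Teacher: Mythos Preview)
Your argument for (a) is essentially the paper's (both rest on $\ulomega_{s,!}=\lambda_s^*$, which is the content of ``continuity of $\lambda_s$''). Your route to (b) is correct but genuinely different and heavier: you build an explicit dictionary realising every elementary Nisnevich square $\Sigma$ as $Q^\o$ for an $\MV$-square $Q$, invoking Theorem~\ref{t3.1}\,(2) on $\MV$-completions. The paper avoids this entirely: it proves (a) first, then closes (b) by the cycle $\ulomega_s^*F\in\ulMNS\Rightarrow\omega_s^*F=\tau_s^*\ulomega_s^*F\in\MNS$ (Theorem~\ref{thm:MNS-tau-a}), $\omega_s^*F\in\MNS\Rightarrow F\cong\omega_{s,!}\omega_s^*F\in\NS$ (full faithfulness of $\omega_s^*$ plus (a)), and $F\in\NS\Rightarrow\ulomega_s^*F\in\ulMNS$ (continuity of $\ulomega_s$). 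Your approach has the merit of being self-contained at the level of squares; the paper's buys brevity and avoids the machinery of \cite{KaMi} here.

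There is, however, a genuine gap in your treatment of (c) and (d). You claim the identities \eqref{eq:omega-a-V2s} follow ``exactly as for $\tau_{s,\Nis}$ in Theorem~\ref{thm:MNS-tau-a}'', but that theorem proves only an analogue of the \emph{first} identity $\omega_{s,\Nis}a_{s,\Nis}=a_{s,\Nis}^V\omega_{s,!}$ (by adjunction from \eqref{eq:omega-a-V1s}); there is no analogue there of the \emph{second} identity $\omega_s^\Nis a_{s,\Nis}^V=a_{s,\Nis}\omega_s^*$. The first identity only yields a base-change morphism $a_{s,\Nis}\omega_s^*\Rightarrow\omega_s^\Nis a_{s,\Nis}^V$, not an isomorphism, and Lemma~\ref{lem:lr-adjoint}\,(4) takes this identity as a \emph{hypothesis} rather than a conclusion. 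The paper obtains it from the \emph{cocontinuity} of $\omega_s$ (Proposition~\ref{prop:coconti}) via \cite[expos\'e~III, Proposition~2.3\,(2)]{SGA4}. You cite Proposition~\ref{prop:coconti} only for continuity in (b); you must also invoke its cocontinuity statement here, or your exactness argument for $\omega_s^\Nis$ (``right exact by \eqref{eq:omega-a-V2s}'') is unsupported.
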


\begin{proof}
a) Since $\lambda_s$ is left adjoint to $\ulomega_s$, we have $
\ulomega_{s, !}=\lambda_s^*$, hence the continuity of $\lambda_s$
proves the first assertion. The second one follows from Theorem
\ref{thm:MNS-tau-a}, as $\omega_{s, !}= \ulomega_{s, !} \tau_{s,
!}$.

b) If $\ulomega_s^*F \in \ulMNS$, then $\omega_s^*F = \tau_s^*
\ulomega_s^*F \in \MNS$ by Theorem \ref{thm:MNS-tau-a}. If
$\omega_s^*F \in \MNS$, then $\omega_{s, !} \omega_s^* F\iso F  \in
\NS$ by a) since $\omega_s^*$ is fully faithful. If $F \in \NS$,
then we have $\ulomega_s^*F \in \ulMNS$ since $\ulomega_s$ is
continuous.

c) The second formula of \eqref{eq:omega-a-V2s} follows from the
cocontinuity of $\omega_s$ (Proposition \ref{prop:coconti}) and
\cite[expos\'e~III, Proposition~2.3 (2)]{SGA4}. We prove the rest of the assertions
by using Lemma \ref{lem:lr-adjoint} as follows. In the situation of
Lemma \ref{lem:lr-adjoint}, set $\sC = \PS$, $\sC' = \NS$, $\sD =
\MPS$, $\sD' = \MNS$, $(i_{\sC},i_{\sD}) = (i_{s,\Nis}^V,
i_{s,\Nis})$ and $(c,c',d)=(\omega_{s}^\ast ,
\omega_{s}^{\Nis},\omega_{s,!})$.

The assumption of Lemma \ref{lem:lr-adjoint} (2) is satisfied since
$i_{s,\Nis}$ is strongly additive by Theorem
\ref{thm:sheafification-ulMNST}, $ i_{s,\Nis}^V$ is strongly
additive by the quasi-compactness of the Nisnevich topology, and
$\omega_s^\ast$ is strongly additive as a left adjoint. Hence
$\omega_{s}^{\Nis}$ is also strongly additive.

The assumption of Lemma \ref{lem:lr-adjoint} (3) is satisfied since
$i_{s,\Nis}^V, i_{s,\Nis}$ have exact left adjoints $a_{s,\Nis}^V ,
a_{s,\Nis}$, and since $\omega_{s,!}$ is exact by \cite[Proposition~2.2.1]{kmsy}. Hence the left adjoint $ \omega_{s,\Nis}$ of
$\omega_{s}^\Nis$ is exact, and we have $\omega_{s,\Nis} =
a_{s,\Nis}^V \omega_{s,!}  i_{s,\Nis}$ and $a_{s,\Nis}^V
\omega_{s,!} = \omega_{s,\Nis} a_{s,\Nis}$.

The assumption of Lemma \ref{lem:lr-adjoint} (4) is satisfied.
Indeed, the formula $a_{\sD} c = c' a_{\sC}$ coincides with the
second formula of \eqref{eq:omega-a-V2s} (which we have proven
above), and $\omega_s^\ast$ is exact as a left and right adjoint.
Hence $ \omega_{s}^\Nis$ is exact.

d) is shown by the same argument as c), by the cocontinuity of
$\ulomega_s$ (Proposition \ref{prop:coconti}) and by Lemma
\ref{lem:lr-adjoint} applied to $\sC = \PS$, $\sC' = \NS$, $\sD =
\ulMPS$, $\sD' = \ulMNS$, $(i_{\sC},i_{\sD}) = (i_{s,\Nis}^V,
\ul{i}_{s,\Nis})$ and $(c,c',d)=(\ulomega_{s}^\ast ,
\ulomega_{s}^{\Nis},\ulomega_{s,!})$.

Indeed, $\ul{i}_{s,\Nis}$ is strongly additive (Lemma \ref{lcom10}),
$\ulomega_{s}^\ast$ is exact as a left and right adjoint,
$\ul{i}_{s,\Nis}$ has an exact left adjoint $\ul{a}_{s,\Nis}$, and
$\ulomega_{s,!}$ is exact. This finishes the proof.
\end{proof}

\subsection{$\protect\MNST, \protect\ulMNST$ and $\protect\NST$}
\label{sect:NST}

Let $\PST$ be the abelian category of presheaves on $\Cor$. The
graph functor $c^V : \Sm \to \Cor$ induces an exact faithful functor
$c^{V*} : \PST \to \PS$. Let $\NST$ be the full subcategory of
$\PST$ consisting of $F \in \PST$ such that $c^{V*} F\in \NS$. The
functor $c^{V*}$ restricts to $c^{V, \Nis} : \NST \to \NS$. The
inclusion $i^V_\Nis:\NST \inj \PST$ has a left adjoint $a^V_{\Nis}$
by \cite[Theorem~3.1.4]{voetri}. By construction, it satisfies
\begin{equation}\label{eq6.2}
c^{V,\Nis} a_\Nis^V = a^V_{s,\Nis}c^{V*}.
\end{equation}

We consider the functors
\begin{equation}
\ulomega : \ulMCor \to \Cor, \qquad \omega : \MCor \to \Cor,
\end{equation}
defined in the same way as \eqref{eq:omega_s}. They induce
$\ulomega^* : \PST \to \ulMPST$ and $\omega^* : \PST \to \MPST$,
which have left adjoints $\ulomega_{!}$ and $\omega_{!}$. One has
the obvious identifications
\begin{equation}\label{eq6.1}
\ul{c}^\ast \ulomega^\ast  = \ulomega_s^\ast c^{V\ast }, \quad
c^\ast \omega^\ast  = \omega_s^\ast c^{V \ast}.
\end{equation}
One also sees from \cite[(2.2.1)]{kmsy} (and its analogues for
$\omega_s, \ulomega_s$) that
\begin{equation}\label{eq:cv-omega}
c^{V*} \ulomega_! = \ulomega_{s, !} \ul{c}^*, \quad c^{V*} \omega_!
= \omega_{s, !} c^*.
\end{equation}

\begin{prop}\label{prop:MNST-NST}
\leavevmode
\begin{itemize}
\item[{\rm a)}]  We have $\ul\omega_!(\ulMNST) \subset \NST$ and
$\omega_!(\MNST) \subset \NST$.
\item[{\rm b)}] For $F\in \PST$, $\omega^*F\in \MNST$ $\iff$
$\ulomega^*F\in \ulMNST$ $\iff$ $F\in\NST$.
\item[{\rm c)}] Let $\omega^\Nis:\NST \to \MNST$ and $\omega_\Nis:\MNST \to
\NST$ be the functors such that
\begin{align}
\label{eq:omega-a-V1} & i_\Nis \omega^\Nis = \omega^* i_\Nis^V,
\quad i_\Nis^V \omega_\Nis = \omega_! i_\Nis,
\end{align}
where the second equality shows that $\omega_\Nis = a^V_\Nis
\omega_! i_\Nis$ by $a^V_\Nis i^V_\Nis = \id$. Then $\omega_\Nis$ is
left adjoint to $\omega^\Nis$, and  we have
\begin{align}
\label{eq:omega-a-V2} & \omega_\Nis a_\Nis = a_\Nis^V \omega_!,
\quad \omega^\Nis a_\Nis^V = a_\Nis \omega^*,
\\
\label{eq:cvo-nis} & c^\Nis \omega^\Nis  = \omega_s^\Nis c^{V,
\Nis}.
\end{align}
Moreover, the functors $\omega_\Nis$ and $\omega^\Nis$ are both
exact, $\omega^\Nis$ is  fully faithful,
strongly additive and preserves injectives.
\item[{\rm d)}] Let $\ulomega^\Nis:\NST \to \ulMNST$ and
$\ulomega_\Nis:\ulMNST \to  \NST$ be the functors defined in the
same way as $\omega^\Nis$ and $\omega_\Nis$ in c). Then the similar
statements as c) holds for $\ulomega^\Nis$ and $\ulomega_\Nis$.
\end{itemize}
\end{prop}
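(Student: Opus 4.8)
The plan is to transplant the proof of Proposition~\ref{prop:MNS-NS} one categorical level up: each assertion about sheaves with transfers on $\MCor$, $\ulMCor$, $\Cor$ will be reduced to the corresponding assertion about sheaves on $\MSm$, $\ulMSm$, $\Sm$ by means of the faithful exact functors $c^*$, $\ul{c}^*$, $c^{V*}$ — using the sheaf characterizations of Lemma-Definitions~\ref{lem:mnst-condition} and \ref{lem:mnst-condition2} and the definition of $\NST$ — while every manipulation of adjoints is performed formally through Lemma~\ref{lem:lr-adjoint}. It is convenient to treat d) in parallel with c), since $\omega=\ulomega\circ\tau$ gives $\omega^*=\tau^*\ulomega^*$, $\omega_!=\ulomega_!\tau_!$, and (by \eqref{eq:i-tau-star} and full faithfulness of $i_\Nis$) $\omega^\Nis=\tau^\Nis\ulomega^\Nis$.

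\emph{Parts a) and b).} For a): since $\tau_!(\MNST)\subset\ulMNST$ by the definition of $\MNST$ and $\omega_!=\ulomega_!\tau_!$, it suffices to treat $\ulomega_!$; for $F\in\ulMNST$ we have $\ul{c}^*F\in\ulMNS$, hence $c^{V*}\ulomega_!F=\ulomega_{s,!}\ul{c}^*F\in\NS$ by \eqref{eq:cv-omega} and Proposition~\ref{prop:MNS-NS}~a), i.e.\ $\ulomega_!F\in\NST$. For b): by \eqref{eq6.1} one has $\ul{c}^*\ulomega^*F=\ulomega_s^*c^{V*}F$ and $c^*\omega^*F=\omega_s^*c^{V*}F$; combining Proposition~\ref{prop:MNS-NS}~b) with Lemma-Definition~\ref{lem:mnst-condition} (resp.\ \ref{lem:mnst-condition2}) and the definition of $\NST$, each of the three conditions becomes equivalent to $c^{V*}F\in\NS$, that is, to $F\in\NST$.

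\emph{Formal part of c) and d).} I would apply Lemma~\ref{lem:lr-adjoint} with $\sC=\PST$, $\sC'=\NST$, $\sD=\MPST$ (resp.\ $\ulMPST$), $\sD'=\MNST$ (resp.\ $\ulMNST$), inclusions $(i_\Nis^V,i_\Nis)$ (resp.\ $(i_\Nis^V,\ul{i}_\Nis)$), and $(c,c',d)=(\omega^*,\omega^\Nis,\omega_!)$ (resp.\ $(\ulomega^*,\ulomega^\Nis,\ulomega_!)$); the hypothesis $c\,i_{\sC'}=i_{\sD}c'$ is the first identity of \eqref{eq:omega-a-V1}, legitimate by b). Part~(3) of the lemma then produces $\omega_\Nis=a_\Nis^V\omega_!i_\Nis$ as a left adjoint of $\omega^\Nis$, gives the first identity of \eqref{eq:omega-a-V2}, and — $\omega_!$ being exact (\cite[\S2.2]{kmsy}) and $a_\Nis^V$ exact — shows $\omega_\Nis$ exact; part~(2), applied with the strong additivity of $i_\Nis$ (Theorem~\ref{thm:a-nis-final}), of $i_\Nis^V$ (quasi-compactness of the Nisnevich topology) and of $\omega^*$ (a left adjoint), shows $\omega^\Nis$ strongly additive. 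Full faithfulness of $\omega^\Nis$ reduces, by full faithfulness of $i_\Nis$ and $i_\Nis^V$, to that of $\omega^*$ (\cite[\S2.2]{kmsy}); preservation of injectives is then automatic since the left adjoint $\omega_\Nis$ is exact. Finally \eqref{eq:cvo-nis} (and its $\ulomega$-analogue) is checked by composing both sides with the fully faithful $i_{s,\Nis}$ and invoking \eqref{eq:i-tau-sh}, \eqref{eq:omega-a-V1}, \eqref{eq:omega-a-V1s}, \eqref{eq6.1} and $c^{V*}i_\Nis^V=i_{s,\Nis}^V c^{V,\Nis}$.

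\emph{The main point.} What remains — and is, I expect, the only non-formal step — is the second identity of \eqref{eq:omega-a-V2}, $\omega^\Nis a_\Nis^V\cong a_\Nis\omega^*$; once it is established, exactness of $\omega^\Nis$ follows from Lemma~\ref{lem:lr-adjoint}~(4). I would argue via the \emph{claim}: if $G\in\PST$ satisfies $a_\Nis^V G=0$, then $a_\Nis\omega^*G=0$. Since $c^\Nis$ and $c^{V,\Nis}$ are faithful and exact (Theorem~\ref{thm:a-nis-final}; \cite[Theorem~3.1.4]{voetri}), the identities $c^\Nis a_\Nis=a_{s,\Nis}c^*$ (Theorem~\ref{thm:a-nis-final}), $c^{V,\Nis}a_\Nis^V=a_{s,\Nis}^V c^{V*}$ (\eqref{eq6.2}) and $c^*\omega^*=\omega_s^*c^{V*}$ (\eqref{eq6.1}) reduce the claim to ``$a_{s,\Nis}^V H=0\Rightarrow a_{s,\Nis}\omega_s^*H=0$'' for $H:=c^{V*}G\in\PS$, which is immediate from the space-level identity $\omega_s^\Nis a_{s,\Nis}^V=a_{s,\Nis}\omega_s^*$ of Proposition~\ref{prop:MNS-NS}~c). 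Granting the claim, for $F\in\PST$ take the four-term exact sequence $0\to K\to F\to i_\Nis^V a_\Nis^V F\to C\to0$ (unit of $(a_\Nis^V,i_\Nis^V)$), so that $a_\Nis^V K=a_\Nis^V C=0$; applying the exact functors $\omega^*$ (exact, as it preserves all limits and colimits) and $a_\Nis$, the claim kills the outer terms and leaves $a_\Nis\omega^*F\iso a_\Nis\omega^* i_\Nis^V a_\Nis^V F=a_\Nis i_\Nis\omega^\Nis a_\Nis^V F=\omega^\Nis a_\Nis^V F$, naturally in $F$. Part~d) is identical, using $\ul{c}^\Nis$, the identity $\ul{c}^\Nis\ulaNis=\ulasNis\ul{c}^*$ of \eqref{eq:a-c2} and Proposition~\ref{prop:MNS-NS}~d) in place of their proper analogues. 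This commutation is the crux: no ``cocontinuity of $\omega$'' is available directly at the level of sheaves with transfers (compare the remark after Corollary~\ref{tex1} that $\tau_s$ is not cocontinuous), so one is forced to descend through $c^{V*}$ to sheaves on $\Sm$, where the cocontinuity of $\omega_s$ — Proposition~\ref{prop:coconti}, i.e.\ \cite[Theorem~1]{KaMi} — does the job.
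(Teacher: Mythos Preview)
Your proof is correct and follows essentially the same strategy as the paper: both reduce parts a) and b) to Proposition~\ref{prop:MNS-NS} via \eqref{eq:cv-omega} and \eqref{eq6.1}, and both handle the formal portion of c) and d) through Lemma~\ref{lem:lr-adjoint} with the identical choice of $(\sC,\sC',\sD,\sD',c,c',d)$.

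The only noteworthy difference is in how you establish the second identity of \eqref{eq:omega-a-V2}. The paper argues more directly: it writes down the base change morphism $a_\Nis\omega^*\Rightarrow\omega^\Nis a_\Nis^V$, applies the conservative functor $c^\Nis$, and computes both sides via \eqref{eq:Anis-c-tau}, \eqref{eq6.1}, \eqref{eq:cvo-nis}, \eqref{eq6.2} to reduce to the space-level base change of Proposition~\ref{prop:MNS-NS}~c). Your route---proving the vanishing claim ``$a_\Nis^V G=0\Rightarrow a_\Nis\omega^*G=0$'' and then feeding it through the four-term unit sequence $0\to K\to F\to i_\Nis^V a_\Nis^V F\to C\to 0$---is a correct repackaging of the same reduction (indeed your claim is verified by the very chain of identities the paper uses), modelled on the argument of Corollary~\ref{tex1}. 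It works, but the paper's version is shorter: the same identities that prove your vanishing claim already identify the two functors after applying $c^\Nis$, so the exact-sequence detour is unnecessary.
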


\begin{proof}
a) First we prove $\ulomega_! (\ulMNST) \subset \NST$. Since
$\ul{c}^*(\ulMNST) \subset \ulMNS$ (see Lemma-Definition
\ref{lem:mnst-condition}), we have
\[
c^{V\ast} \ulomega_! (\ulMNST) = \ulomega_{s,!} \ul{c}^\ast
(\ulMNST) \subset \ulomega_{s,!} (\ulMNS) \subset \NS ,
\]
where the first equality follows from the first equality of
\eqref{eq:cv-omega}, and the last inclusion follows from Proposition
\ref{prop:MNS-NS} a). Then by definition of $\NST$ we have
$\ulomega_! (\ulMNST) \subset \NST$, as desired. The inclusion
$\omega_! (\MNST) \subset \NST$ follows from this and
Lemma-Definition \ref{lem:mnst-condition2}.

b) follows from Proposition \ref{prop:MNS-NS} b),
\eqref{eq:cv-omega} and the definitions of $\NST$, $\ulMNST$ and
$\MNST$.

c) The full faithfulness of $\omega^\Nis$ follows from that of
$\omega^*$ \cite[Proposition~2.2.1]{kmsy}, and \eqref{eq:cvo-nis} follows
from \eqref{eq6.1}. The strategy of the rest of the proof is similar
to that of c) in Proposition \ref{prop:MNS-NS}. In the situation of
Lemma \ref{lem:lr-adjoint}, set $\sC = \PST$, $\sC' = \NST$, $\sD =
\MPST$, $\sD' = \MNST$, $(i_{\sC},i_{\sD}) = (i_{\Nis}^V, i_{\Nis})$
and $(c,c',d)=(\omega^\ast , \omega^{\Nis},\omega_{!})$.

The assumptions of Lemma \ref{lem:lr-adjoint} (2) and (3) are
satisfied, since $i_{\Nis}$ is strongly additive by Theorem
\ref{thm:a-nis-final} (1), $i_{\Nis}^V$ is strongly additive by the
quasi-compactness of Nisnevich cohomology, $\omega^\ast$ is strongly
additive as a left and right adjoint, $i_{\Nis}^V$, $i_{\Nis}$ have
exact left adjoints $a_{\Nis}^V, a_{\Nis}$ by \cite{mvw} and
\ref{thm:a-nis-final} (2), and $\omega_!$ is exact by \cite[Proposition~2.2.1]{kmsy}. Therefore, the assertions follow, except for the
second identity of \eqref{eq:omega-a-V2} and the exactness of
$\omega^\Nis$. (Note that the exactness of $\omega_\Nis$ implies
that $\omega^\Nis$ preserves injectives.)

We can prove the second identity of \eqref{eq:omega-a-V2} as
follows. Its first identity yields a base change morphism
\[a_\Nis \omega^*\Rightarrow \omega^\Nis a_\Nis^V. \]

Let $F\in \PST$. We want to show that the morphism $a_\Nis
\omega^*F\to \omega^\Nis a_\Nis^V F$ is an isomorphism. Since
$c^\Nis$ is conservative as $\MSm$ and $\MCor$ have the same
objects, it suffices to show that the induced morphism $c^\Nis
a_\Nis \omega^\ast F \to c^\Nis \omega^\Nis a_\Nis^V F$ is an
isomorphism. Since
\begin{align*}
c^\Nis a_\Nis \omega^\ast &\overset{\eqref{eq:Anis-c-tau}}{=} a_{s,\Nis} c^\ast \omega^\ast \overset{\eqref{eq6.1}}{=}  a_{s,\Nis} \omega_s^\ast c^{V\ast}, \\
c^\Nis \omega^\Nis a_\Nis^V &\overset{\eqref{eq:cvo-nis}}{=}
\omega_s^\Nis c^{V,\Nis} a_\Nis^V \overset{\eqref{eq6.2}}{=}
\omega_s^\Nis a_{s,\Nis}^V c^{V\ast} ,
\end{align*}
the above morphism is rewritten as
\[
a_{s,\Nis} \omega_s^\ast c^{V\ast} F \to \omega_s^\Nis a_{s,\Nis}^V
c^{V\ast} F,
\]
which is an isomorphism by Proposition \ref{prop:MNS-NS} c).

Now, the formula we have proven and the exactness of
$\omega^\ast$ as a left and right adjoint show that the assumption
of Lemma \ref{lem:lr-adjoint} (4) is satisfied. Hence $\omega^\Nis$
is exact, as desired. This finishes the proof of c).

d) The proof is completely parallel to that of c). To see this, it
suffices to observe the following: $\ul{i}_\Nis$ is strongly
additive and has an exact left adjoint by \cite[Lemma~4.5.3, Theorem~4.5.5]{kmsy}, $\ulomega^\ast$ is strongly additive as a left and
right adjoint, $\ulomega_!$ is exact by \cite[Proposition~2.3.1]{kmsy},
$\ul{c}^\Nis$ is conservative since $\ulMSm$ and $\ulMCor$ have the
same objects, the base change morphism $\ul{a}_{s,\Nis}
\ulomega_s^\ast \Rightarrow \ulomega_s^\Nis \ul{a}_{s,\Nis}^V$ is an
isomorphism by Proposition \ref{prop:MNS-NS} d), and we have the
following identifications:
\begin{align*}
\ul{c}^\Nis \ul{a}_\Nis \ulomega^\ast &\overset{\eqref{eq:a-c2}}{=} \ul{a}_{s,\Nis} \ul{c}^\ast \ulomega^\ast \overset{\eqref{eq6.1}}{=}  \ul{a}_{s,\Nis} \ulomega_s^\ast c^{V\ast}, \\
\ul{c}^\Nis \ulomega^\Nis a_\Nis^V &\overset{\eqref{eq:cvo-nis}}{=}
\ulomega_s^\Nis c^{V,\Nis} a_\Nis^V \overset{\eqref{eq6.2}}{=}
\ulomega_s^\Nis a_{s,\Nis}^V c^{V\ast} .
\end{align*}
This finishes the proof.
\end{proof}

\subsection{Relation between cohomologies}

We now prove Theorem \ref{thm;MNSTNST} (2) from the introduction.

\begin{lemma}\label{lem:inj-flasque-NST}
Let $I \in \MNST$ be an injective object. Then $\omega_\Nis I \in
\NST$ is flasque.
\end{lemma}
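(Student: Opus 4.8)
The plan is to read off ``flasque'' as: $H^i_\Nis(X,\omega_\Nis I)=0$ for every $X\in\Sm$ and $i>0$ (equivalently $\Ext^i_\NST(\Z_\tr(X),\omega_\Nis I)=0$, by \cite{mvw}), so fix such an $X$ and $i$. By Nagata compactification choose a proper modulus pair $M\in\MCor$ with $M^\o=X$. The first step is a comparison of underlying sheaves: for any $N\in\MCor$, with open immersion $j_N\colon N^\o\hookrightarrow\ol N$, Lemma \ref{eq.tau}\,(3)--(4) (the members of $\Comp(-)$ all share the same smooth interior, and $\Comp(N^\o,\emptyset)$ is cofinal among modulus pairs with interior $N^\o$) identifies the restriction $I_N|_{N^\o}$ of the Nisnevich sheaf $I_N=(\tau_\Nis I)_N$ on $\ol N$ to the small Nisnevich site of $N^\o$ with the restriction of the underlying sheaf of $\omega_\Nis I$. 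In particular, for every $N$ with $N^\o\cong X$ one gets $H^i_\Nis(N^\o,I_N|_{N^\o})\cong H^i_\Nis(X,\omega_\Nis I)$, and restriction of cohomology classes along $j_N$ gives a map $H^i_\Nis(\ol N,I_N)\to H^i_\Nis(N^\o,I_N|_{N^\o})$, natural in $N$.

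Next I would let $N$ run over $\Sigma^\fin\downarrow M$. On the target the transition maps are identities (all these $N$ have $N^\o$ canonically isomorphic to $X$), so $\colim_{N}H^i_\Nis(N^\o,I_N|_{N^\o})=H^i_\Nis(X,\omega_\Nis I)$. On the source, since $I$ is injective in $\MNST$, Theorem \ref{thm:coh-MNST} applied to $I$ itself gives
\[\colim_{N\in\Sigma^\fin\downarrow M}H^i_\Nis(\ol N,I_N)\;\cong\;\Ext^i_\MNST(\Z_\tr(M),I)\;=\;0\qquad(i>0).\]
Hence it suffices to show that the restriction maps $H^i_\Nis(\ol N,I_N)\to H^i_\Nis(N^\o,I_N|_{N^\o})$ become isomorphisms in the colimit over $N\in\Sigma^\fin\downarrow M$.

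This last point is the heart of the matter, and I expect it to be the main obstacle. Via the Leray spectral sequence for $j_N$ it amounts to the vanishing, after passing to the colimit over $N$, of $H^p_\Nis(\ol N,R^qj_{N*}(I_N|_{N^\o}))$ for $q>0$ --- equivalently, to the statement that the boundary sheaves $R^qj_{N*}(I_N|_{N^\o})$, which are supported on $|N^\infty|$, are annihilated after passing to a cofinal system of modifications $\ol N'\to\ol N$ in $\Sigma^\fin\downarrow M$ (blow-ups over $|N^\infty|$, combined with the graph and normalization tricks). This is exactly the ``boundary contributions disappear in the limit'' phenomenon underlying the second vanishing assertion of Theorem \ref{thm:coh-MNST} (and of Proposition \ref{c3.1v}); I would deduce it from that assertion together with an edgewise-cofinality argument in the spirit of Corollary \ref{edgewise-cofinality}. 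Granting it, $H^i_\Nis(X,\omega_\Nis I)=\colim_{N}H^i_\Nis(N^\o,I_N|_{N^\o})\cong\colim_{N}H^i_\Nis(\ol N,I_N)=0$ for all $i>0$, i.e.\ $\omega_\Nis I$ is flasque.
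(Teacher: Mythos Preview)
Your proposal has a genuine gap at what you yourself identify as ``the heart of the matter'': you need the restriction maps $H^i_\Nis(\ol N,I_N)\to H^i_\Nis(N^\o,I_N|_{N^\o})$ to become isomorphisms in the colimit over $N\in\Sigma^\fin\downarrow M$, and your justification is inadequate. The second vanishing assertion of Theorem \ref{thm:coh-MNST} concerns the higher derived functors $R^q(\ul{b}_s^\Nis)$ of the restriction along $\ul{b}_s:\ulMSm^\fin\to\ulMSm$; this is a statement about the interplay of the big sites and has no evident bearing on the higher direct images $R^qj_{N*}$ along the open immersion $j_N:N^\o\hookrightarrow\ol N$ on the small Nisnevich site of $\ol N$. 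Likewise Corollary \ref{edgewise-cofinality} is about cofinality of $\MV$-completions of squares and does not speak to the vanishing you need. You have correctly located the difficulty but not bridged it.

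The paper's proof is entirely different and much more elementary: it proves ``flasque'' in the classical sense (surjectivity of restriction to a dense open) directly at the level of sections, with no cohomology or spectral sequences. Given an open dense immersion $U\to X$ in $\Sm$, fix $M\in\MSm(X)$; for any $N\in\MSm(U)$, one constructs (by taking closures, blowing up, and increasing the boundary multiplicity) an $L^{(n)}\in\MSm(U)$ admitting maps to both $M$ and $N$ in $\MCor$. Because $U\to X$ is open dense, $\Cor(V,U)\hookrightarrow\Cor(V,X)$ for every $V\in\Sm$, so $\Z_\tr(L^{(n)})\to\Z_\tr(M)$ is a \emph{monomorphism} in $\MNST$. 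Injectivity of $I$ then makes $I(M)\to I(L^{(n)})$ surjective; since $I(N)\to\omega_\Nis I(U)$ factors through $I(L^{(n)})$, the single map $I(M)\to\omega_\Nis I(U)$ already hits everything. This exploits the injectivity of $I$ in the most direct possible way, whereas your approach uses it only through the vanishing $\Ext^i_\MNST(\Z_\tr(M),I)=0$ and then has to fight to connect this back to cohomology on $X$.
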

\begin{proof}
Let $U \to X$ in $\Sm$ be an open dense immersion. We need to show
the surjectivity of
\[ \omega_\Nis I(X)
= \colim_{M \in \MSm(X)} I(M) \to
  \omega_\Nis I(U)
= \colim_{N \in \MSm(U)} I(U).
\]
We fix $M \in \MSm(X)$ and will show that the composition $I(M) \to
\omega_\Nis I(X) \to \omega_\Nis I(U)$ is already surjective. This
follows from the functoriality of $\omega^!$, but for clarity we
give an explicit argument. Take any $N \in \MSm(U)$. Let $\ol{N}'$
be the closure of the image of $U \to \ol{N} \times \ol{M}$, and let
$\ol{L}$ be the blow-up of $\ol{N}'$ along $(\ol{N}' \setminus
U)_\red$. Set $L:=(\ol{L}, L^\infty) \in \MCor$ where $L^\infty$ is
the pull-back of $N^\infty$ along the composition $\ol{L} \to
\ol{N}' \to \ol{N}$. We have a commutative diagram in $\ulMCor$
\[
\xymatrix{ & (U, \emptyset) \ar[r] \ar[d] \ar[ld] & (X, \emptyset)
\ar[d]
\\
N & L^{(n)} \ar[r] \ar[l] & M }
\]
for sufficiently large $n$ (see \cite[Definition~1.4.1]{kmsy}). Since $U
\to X$ is an open dense immersion, $\Cor(V, U) \to \Cor(V, X)$ is
injective for any $V \in \Sm$, which in turn implies the injectivity
of the morphism $\Z_\tr(L^{(n)}) \to \Z_\tr(M)$ in $\MNST$. Since $I$ is an
injective object, we conclude that $I(M) \to I(L^{(n)})$ is
surjective. This proves the lemma, as the canonical map $I(N) \to
\omega_\Nis I(U)$ factors through $I(L^{(n)})$.
\end{proof}

\begin{thm}
\leavevmode
\label{prop:MNST-NST-coh}
\begin{itemize}
\item[\rm (1)] For any $M \in \MCor, ~G \in \NST$ and $p \ge 0$,
we have a canonical isomorphism
\[
\Ext_{\MNST}^p(\Z_\tr(M), \omega^\Nis G) \cong H^p_\Nis(M^\o, G).
\]
\item[\rm (2)]
For any $X \in \Sm, ~F \in \MNST$ and $p \ge 0$, we have a canonical
isomorphism
\[
H^p_\Nis(X, \omega_\Nis F) \cong \colim_{M \in \MSm(X)}
H^p_\Nis(\ol{M}, F_M).
\]
\end{itemize}
\end{thm}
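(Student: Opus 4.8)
The plan is to deduce both statements from Voevodsky's comparison $\Ext^p_{\NST}(\Z_\tr(Y),-)\cong H^p_\Nis(Y,-)$ on $\NST$ (\cite{mvw}) and from the analogue on $\MNST$ already provided by Theorem \ref{thm:coh-MNST}, exploiting the homological properties of $\omega^\Nis$ and $\omega_\Nis$ recorded in Proposition \ref{prop:MNST-NST}(c): both are exact, and $\omega^\Nis$ preserves injectives.

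For (1), I would fix an injective resolution $G\to J^\bullet$ in $\NST$. Since $\omega^\Nis$ is exact and preserves injectives, $\omega^\Nis G\to\omega^\Nis J^\bullet$ is an injective resolution in $\MNST$; and since $\Z_\tr(M)\in\MNST$ by Lemma \ref{l4.2}(1), the Yoneda isomorphism $\Hom_{\MNST}(\Z_\tr(M),H)=H(M)$ (for $H\in\MNST$) yields $\Ext^p_{\MNST}(\Z_\tr(M),\omega^\Nis G)=H^p\big((\omega^\Nis J^\bullet)(M)\big)$. By \eqref{eq:omega-a-V1} the underlying presheaf of $\omega^\Nis J^n$ is $\omega^*J^n$, and $(\omega^*J^n)(M)=J^n(M^\o)$ by the definition of $\omega$; hence the right-hand side equals $H^p(J^\bullet(M^\o))=\Ext^p_{\NST}(\Z_\tr(M^\o),G)$, which is $H^p_\Nis(M^\o,G)$ by \cite{mvw}.

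For (2), I would run the dual argument: choose an injective resolution $F\to I^\bullet$ in $\MNST$. As $\omega_\Nis$ is exact, $\omega_\Nis F\to\omega_\Nis I^\bullet$ is a resolution in $\NST$, and by Lemma \ref{lem:inj-flasque-NST} each $\omega_\Nis I^n$ is flasque, hence acyclic for Nisnevich cohomology; therefore $H^p_\Nis(X,\omega_\Nis F)=H^p\big((\omega_\Nis I^\bullet)(X)\big)$. Now $\omega_!(\MNST)\subset\NST$ by Proposition \ref{prop:MNST-NST}(a), so the underlying presheaf of $\omega_\Nis I^n$ is the sheaf $\omega_!I^n$, whose sections are given by the standard formula $(\omega_!I^n)(X)=\colim_{M\in\MSm(X)}I^n(M)$ (\cite{kmsy}). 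Combining this with Yoneda on $\MNST$ (so that $H^p(I^\bullet(M))=\Ext^p_{\MNST}(\Z_\tr(M),F)$) and with the exactness of filtered colimits,
\[
H^p_\Nis(X,\omega_\Nis F)=\colim_{M\in\MSm(X)}\Ext^p_{\MNST}(\Z_\tr(M),F),
\]
and Theorem \ref{thm:coh-MNST} rewrites the last term as $\colim_{M\in\MSm(X)}\colim_{N\in\Sigma^\fin\downarrow M}H^p_\Nis(\ol{N},F_N)$. To finish, one must collapse this iterated colimit onto $\colim_{M\in\MSm(X)}H^p_\Nis(\ol{M},F_M)$: the Grothendieck construction of $M\mapsto(\Sigma^\fin\downarrow M)$ maps to $\MSm(X)$ via $(M,N)\mapsto N$, and I would check that this functor is final, using that $\MSm(X)$ and each $\Sigma^\fin\downarrow M$ are filtered ordered sets and that $N\in\Sigma^\fin\downarrow N$ via the identity (so the relevant comma categories are nonempty and connected).

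The main obstacle I anticipate is twofold. First, one needs the implication ``flasque $\Rightarrow$ acyclic for Nisnevich cohomology'' for objects of $\NST$ — this is precisely what makes an injective resolution in $\MNST$ compute the cohomology of $\omega_\Nis F$ after applying $\omega_\Nis$, and it is where the transfer structure is genuinely used. Second, the cofinality step collapsing the double colimit in (2) has to be carried out carefully, keeping track of the directions of the transition maps. The rest is formal manipulation of adjunctions, resolutions, and the cited results.
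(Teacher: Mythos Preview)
Your proposal is correct and follows essentially the same route as the paper. For (1) you unwind the derived-functor formalism via an explicit injective resolution where the paper invokes Proposition~\ref{pA.2}, and for (2) your argument is identical to the paper's (injective resolution, Lemma~\ref{lem:inj-flasque-NST}, exchange of filtered colimit and cohomology, Theorem~\ref{thm:coh-MNST}, then the cofinality observation that any $N\in\Sigma^\fin\downarrow M$ already lies in $\MSm(X)$).
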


\begin{proof}
It follows from Proposition \ref{prop:MNST-NST} and Theorem
\cite[Theorem~A.9.1]{kmsy} that
\[ (R^p i_\Nis)\omega^\Nis = R^p(i_\Nis \omega^\Nis)
= R^p( \omega^* i_\Nis^V) = \omega^* R^p i_\Nis^V.
\]
By
Proposition~\ref{pA.2} and the projectivity of $\Z_\tr(M)$ and
$\Z_\tr^V(M^\o)$, we get
\begin{align*}
&\Ext_{\MNST}^p(\Z_\tr(M), \omega^\Nis G) \cong \MPST(\Z_\tr(M),
(R^p i_\Nis)\omega^\Nis G)
\\
&\cong \MPST(\Z_\tr(M), \omega^* R^p i_\Nis^V G) \cong \PST(\omega_!
\Z_\tr(M), R^p i_\Nis^V G)
\\
&\cong \PST(\Z_\tr^V(M^\o), R^p i_\Nis^V G) \cong H^p_\Nis(M^\o, G),
\end{align*}
whence (1).

Given $X \in \Sm$, we define functors $\Gamma_X^\to : \MNST \to \Ab$
and $\Gamma_X^V : \NST \to \Ab$ by
\[ \Gamma_X^\to(F)
=\colim_{M \in \MSm(X)} F(M), \qquad \Gamma_X^V(G)=G(X).
\]
We have $\Gamma_X^\to=\Gamma_X^V \omega_\Nis$. By \cite[Theorem~A.9.1]{kmsy} and Lemma \ref{lem:inj-flasque-NST}, we get $(R^p
\Gamma_X^V)\omega_\Nis=R^p \Gamma_X^\to$ for any $p \ge 0$, since
$\omega_\Nis$ is exact. Taking an injective resolution $F \to
I^\bullet$ in $\MNST$, we proceed
\begin{align*}
R^p \Gamma_X^\to(F) &\cong H^p(\Gamma_X^\to I^\bullet) \cong
H^p(\colim_{M \in \MSm(X)} I^\bullet(M))
\\
&\cong \colim_{M \in \MSm(X)} H^p(I^\bullet(M)) \qquad
\text{\cite[Lemma 1.7.4]{kmsy}}
\\
&\cong \colim_{M \in \MSm(X)} \colim_{N \in \Sigma^\fin \downarrow
M} H^p_\Nis(\ol{N}, F_N) \qquad \text{(Theorem \ref{thm:coh-MNST})}
\\
&\cong \colim_{M \in \MSm(X)} H^p_\Nis(\ol{M}, F_M).
\end{align*}
Here the last isomorphism holds since any $N \in \Sigma^\fin
\downarrow M$ gives rise to an object $N \in \MSm(X)$. This
concludes the proof of (2).
\end{proof}

\section{Passage to derived categories}

In this section, we extend the previous results to derived
categories. The main result is an extension of Proposition
\ref{c3.1v} and Theorem \ref{thm:coh-MNST} to unbounded complexes
(Proposition \ref{lem;cohMsigmaST} (2) and Theorem \ref{tmain} (3)).

\subsection{Compactness}

\begin{prop}\label{p7.1} Let $M\in \ulMSm$. Then the following objects are compact:
\begin{itemize}
\item[\rm (1)] $\Z_\tr(M)[0]$ in $D(\ulMNST)$, and in $D(\MNST)$ if $M$ is proper.
\item[\rm (2)] $\Z(M)[0]$ in $D(\ulMNS^\fin)$ and $D(\ulMNS)$.
\end{itemize}
\end{prop}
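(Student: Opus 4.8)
The plan is to deduce each compactness assertion from two facts about the relevant $\Ext$-functor in the Grothendieck category $\sA$ at hand: that it has \emph{finite cohomological dimension}, i.e.\ there is an integer $d$ with $\Ext^i_\sA(C,-)=0$ for $i>d$; and that $\Ext^i_\sA(C,-)$ \emph{commutes with arbitrary coproducts} of objects of $\sA$, for every $i\ge 0$. Granting these two properties, $C[0]$ is a compact object of $D(\sA)$ --- this is the content of Lemma~\ref{layoub}. Concretely, $D(\sA)$ has coproducts, computed termwise, and they are exact, so $H^q(\bigoplus_j E_j)=\bigoplus_j H^q(E_j)$ for any family $(E_j)_j$ in $D(\sA)$; the hyper-$\Ext$ spectral sequence
\[
E_2^{p,q}=\Ext^p_\sA\big(C,H^q(E)\big)\Longrightarrow \Hom_{D(\sA)}\big(C[0],E[p+q]\big)
\]
is concentrated in the band $0\le p\le d$, hence converges strongly even for \emph{unbounded} $E$ and yields a finite filtration in each total degree; its formation commutes with coproducts of complexes because $H^q$ and $\Ext^p_\sA(C,-)$ do, and comparing the finite filtrations in each total degree shows that $\bigoplus_j\Hom_{D(\sA)}(C[0],E_j)\to\Hom_{D(\sA)}(C[0],\bigoplus_j E_j)$ is an isomorphism.

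Next I would verify the two properties case by case. For $\sA=\ulMNST$ and $C=\Z_\tr(M)$, Proposition~\ref{c3.1v} gives $\Ext^i_{\ulMNST}(\Z_\tr(M),F)\simeq\colim_{N\in\ulSigma^\fin\downarrow M}H^i_\Nis(\ol N,F_N)$, a \emph{filtered} colimit of Nisnevich cohomology groups of the schemes $\ol N$, all of dimension $\dim\ol M$; vanishing for $i>\dim\ol M$ is Corollary~\ref{c4.1}, and the commutation with coproducts follows since the functor $F\mapsto F_N=(\ul{b}^\Nis F)_N$ is exact and strongly additive (Notation~\ref{not4}, using that $\ulMNST$ is closed under direct sums in $\ulMPST$, Theorem~\ref{thm:sheafification-ulMNST}), since $H^i_\Nis(\ol N,-)$ commutes with filtered colimits of sheaves ($\ol N$ being Noetherian), and since a coproduct is the filtered colimit of its finite subsums. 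For $\sA=\MNST$ and $C=\Z_\tr(M)$ with $M$ proper, I would observe that $\tau_\Nis\colon D(\MNST)\to D(\ulMNST)$ is fully faithful --- because $\tau^\Nis$ is exact with $\tau^\Nis\tau_\Nis=\id$ (Lemma~\ref{lem:tau!-exact}, Theorem~\ref{thm;tauexact}) --- and preserves coproducts, so its essential image is a full subcategory closed under coproducts; since $\tau_\Nis\Z_\tr(M)=\Z_\tr(\tau M)$ is compact in $D(\ulMNST)$ by the previous case, $\Z_\tr(M)$ is compact in $D(\MNST)$. (Alternatively, one repeats the first argument verbatim with Theorem~\ref{thm:coh-MNST} and Corollary~\ref{c5.1}.) For $\sA=\ulMNS^\fin$, resp.\ $\ulMNS$, and $C=\Z(M)$, the same argument applies, with Proposition~\ref{c3.1v} and Corollary~\ref{c4.1} replaced by their counterparts for sheaves without transfers, which follow from the sheaf theory of \cite[\S4]{kmsy} (or, for the vanishing, from completeness and regularity of the cd-structures $P_{\ulMVfin}$ and $P_{\ul{\MV}}$, Propositions~\ref{p3.1} and~\ref{thm;cd-str-ulMSm}, together with Lemma~\ref{lem:equiv-smallsites}); again $F\mapsto F_M$ is exact and strongly additive, and Nisnevich cohomology on $\ol M$ commutes with filtered colimits.

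I expect the main obstacle to be the passage to \emph{unbounded} complexes: one needs the hyper-$\Ext$ spectral sequence to converge and the comparison of finite filtrations to be legitimate, and it is precisely the finite cohomological dimension ($\Ext^i=0$ for $i>\dim\ol M$) that secures this --- which is why Lemma~\ref{layoub} is the key input. The second, more bookkeeping-heavy point is the commutation of $\Ext^i_\sA(C,-)$ with \emph{all} coproducts: this is not formal from the colimit formulas alone, and genuinely relies on the strong additivity statements gathered in \S\S2--4 (for $\ul{i}_\Nis$, $i_\Nis$, $\tau_\Nis$, $\tau^\Nis$, $\ul{b}^\Nis$, and the small-site sheafifications) together with the fact that Nisnevich cohomology of a Noetherian scheme commutes with filtered colimits of sheaves (and vanishes above the dimension).
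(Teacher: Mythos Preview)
Your approach is essentially the paper's: use the explicit $\Ext$ formulas (Proposition~\ref{c3.1v}, Theorem~\ref{thm:coh-MNST}, and their non-transfer analogues \cite[(3.6.1), Proposition~4.4.2]{kmsy}) together with the Nisnevich cohomological dimension bounds (Corollaries~\ref{c4.1}, \ref{c5.1}) and the commutation of Nisnevich cohomology with filtered colimits, and feed these into the hyper-$\Ext$ spectral sequence, which is concentrated in a finite band and hence strongly convergent even for unbounded complexes. For the $\MNST$ case the paper takes your parenthetical ``alternatively'' route (directly via Theorem~\ref{thm:coh-MNST} and Corollary~\ref{c5.1}) rather than the reduction through $D(\tau_\Nis)$, but your reduction is also valid and not circular.

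One correction: your attribution to Lemma~\ref{layoub} is wrong. That lemma is about when the natural transformation $R(GF)\Rightarrow RG\circ RF$ is an isomorphism for a composite of left exact functors, under strong additivity hypotheses on the derived functors; it says nothing about compactness of an object $C[0]$ in $D(\sA)$. The spectral sequence argument you actually write out is correct and self-contained --- and it is exactly what the paper means by ``via hypercohomology spectral sequences which are convergent'' --- but it is not the content of Lemma~\ref{layoub}. If you want to cite something from the appendix, the closest relative is Proposition~\ref{pA.1}~b), whose conditions (i) and (ii) are precisely your ``commutes with coproducts'' and ``finite cohomological dimension'' inputs, though that proposition is stated for strong additivity of an $RF$ rather than compactness of a fixed object.
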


\begin{proof} (1) follows from Proposition \ref{c3.1v} (resp. from Theorem \ref{thm:coh-MNST}) and the known commutation of Nisnevich cohomology with filtered colimits of sheaves, via  hypercohomology spectral sequences which are convergent and Corollary \ref{c4.1} (resp. \ref{c5.1}). (2) is seen similarly, using \cite[(3.6.1) and Proposition~4.4.2]{kmsy}.
\end{proof}

\subsection{Strong additivity}

\begin{thm}\label{t7.1}\
\begin{itemize}
\item[\rm (1)] $($\textit{cf.} \cite[Proposition~4.3.3 (2)]{kmsy}$)$ The functor $R\ul{b}_s^\Nis:D(\ulMNS)\to D(\ulMNS^\fin)$ is right adjoint to $D(\ul{b}_{s,\Nis})$ and is strongly additive. The counit map $D(\ul{b}_{s,\Nis})R\ul{b}_s^\Nis\allowbreak\to \id$ is an isomorphism.
\item[\rm (2)] The functor $R(\ul{b}_s^\Nis \circ \ul{c}^\Nis)$ is strongly additive.
\end{itemize}
\end{thm}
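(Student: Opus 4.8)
The plan is to obtain~(1) as the non-proper, transfer-free analogue of \cite[Proposition~4.3.3]{kmsy}, and to deduce~(2) from it by a formal argument.

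For~(1) I would begin with the sheaf-theoretic counterpart of Proposition~\ref{lem;b!ulMNST} (established in \cite{kmsy}): $\ul{b}_{s,\Nis}:\ulMNS^\fin\to\ulMNS$ is exact and fully faithful, it is left adjoint to $\ul{b}_s^\Nis$, the unit $\id\to\ul{b}_s^\Nis\ul{b}_{s,\Nis}$ is an isomorphism, $\ul{b}_{s,\Nis}$ identifies the representable sheaf $\Z(M)$ of $\ulMNS^\fin$ with that of $\ulMNS$ ($M\in\ulMSm$), and $R^q\ul{b}_s^\Nis(\ul{b}_{s,\Nis}X)=0$ for $q>0$ and all $X\in\ulMNS^\fin$ (the acyclicity underlying \cite[Proposition~4.4.2]{kmsy}). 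Since $\ulMNS$ and $\ulMNS^\fin$ are Grothendieck (Lemma~\ref{lem:sheaves-MNS}), both admit $K$-injective resolutions, so $R\ul{b}_s^\Nis:D(\ulMNS)\to D(\ulMNS^\fin)$ is defined on the unbounded derived categories, and $D(\ul{b}_{s,\Nis})$ is the termwise functor because $\ul{b}_{s,\Nis}$ is exact. As $\ul{b}_s^\Nis$ is right adjoint to the exact functor $\ul{b}_{s,\Nis}$ it sends $K$-injective complexes to $K$-injective complexes, so the usual comparison of $K$-injective resolutions (see Lemma~\ref{layoub}) upgrades the abelian adjunction $\ul{b}_{s,\Nis}\dashv\ul{b}_s^\Nis$ to $D(\ul{b}_{s,\Nis})\dashv R\ul{b}_s^\Nis$.

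Next, strong additivity and the counit. By Proposition~\ref{p7.1}(2), $D(\ulMNS)$ and $D(\ulMNS^\fin)$ are compactly generated by the objects $\Z(M)[0]$, $M\in\ulMSm$. The left adjoint $D(\ul{b}_{s,\Nis})$ carries the compact generators of $D(\ulMNS^\fin)$ to those of $D(\ulMNS)$, hence, being triangulated and coproduct-preserving, it preserves compact objects; Neeman's theorem (the right adjoint of a coproduct-preserving, compactness-preserving functor out of a compactly generated triangulated category preserves coproducts) then gives that $R\ul{b}_s^\Nis$ is strongly additive. Consequently the full subcategory of those $L\in D(\ulMNS)$ for which the counit $D(\ul{b}_{s,\Nis})R\ul{b}_s^\Nis L\to L$ is an isomorphism is localising, so it suffices to treat $L=\Z(M)[0]$: the vanishing $R^q\ul{b}_s^\Nis\Z(M)=0$ for $q>0$ gives $R\ul{b}_s^\Nis\Z(M)\simeq\ul{b}_s^\Nis\Z(M)[0]=\Z(M)[0]$ (using the unit isomorphism), whence $D(\ul{b}_{s,\Nis})R\ul{b}_s^\Nis\Z(M)\simeq\ul{b}_{s,\Nis}\Z(M)[0]=\Z(M)[0]$ and the counit is the identity.

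For~(2): $\ul{c}^\Nis:\ulMNST\to\ulMNS$ is exact, strongly additive (Theorem~\ref{thm:sheafification-ulMNST}) and preserves injectives, so $R(\ul{b}_s^\Nis\circ\ul{c}^\Nis)\cong R\ul{b}_s^\Nis\circ\ul{c}^\Nis$ by \cite[Theorem~A.9.1]{kmsy} (as in the proof of Lemma~\ref{lem:coh-MNST-ulMNST}), where $\ul{c}^\Nis$ on the right denotes the termwise functor $D(\ulMNST)\to D(\ulMNS)$. Since coproducts in the derived category of a Grothendieck abelian category are computed termwise, an exact strongly additive functor induces a strongly additive functor on derived categories; hence the termwise $\ul{c}^\Nis$ is strongly additive, and composing it with the strongly additive $R\ul{b}_s^\Nis$ of~(1) proves~(2). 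The hard part is the counit in~(1): reducing it to the compact generators already requires the strong additivity of $R\ul{b}_s^\Nis$, hence the passage through Neeman's criterion and Proposition~\ref{p7.1}(2), and the remaining input, that $\ul{b}_{s,\Nis}X$ is $\ul{b}_s^\Nis$-acyclic, rests on the comparison between cohomology in the $\ulMV$-topos and Nisnevich cohomology on the individual schemes $\ol M$, which is the sheaf-theoretic core of \cite[\S4]{kmsy}; if one prefers not to invoke Neeman's theorem, an alternative is to describe $R^q\ul{b}_s^\Nis$ explicitly by means of that comparison and to use that Nisnevich cohomology commutes with filtered colimits of sheaves, exactly as in the proof of \cite[Proposition~4.3.3]{kmsy}.
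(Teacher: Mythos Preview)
Your argument for the adjunction and for the strong additivity in (1) is correct and matches the paper exactly: Proposition~\ref{pA.3} for the adjunction (your reference to Lemma~\ref{layoub} here is a slip, but the content is right), and Proposition~\ref{pA.1}\,c) (what you call Neeman's theorem) with Proposition~\ref{p7.1}\,(2) for strong additivity.

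For the counit in (1) there is a difference. The paper does not reduce to generators; once strong additivity of $R\ul{b}_s^\Nis$ is known, it applies Lemma~\ref{layoub} with $F=\ul{b}_s^\Nis$, $G=\ul{b}_{s,\Nis}$, using the abelian isomorphism $\ul{b}_{s,\Nis}\ul{b}_s^\Nis\iso\id$ of \cite[Proposition~4.3.3\,(2)]{kmsy} to get $R(GF)=\id$. Your localising-subcategory reduction is fine, but the input you invoke, namely $R^q\ul{b}_s^\Nis(\ul{b}_{s,\Nis}X)=0$ for $q>0$, is not what \cite[Proposition~4.4.2]{kmsy} says (that proposition computes $\Ext$ groups via a filtered colimit of Nisnevich cohomology). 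The vanishing you want does hold, but it is again the \emph{counit} isomorphism $\ul{b}_{s,\Nis}\ul{b}_s^\Nis\iso\id$ that delivers it: if $\Z(M)\to I^\bullet$ is an injective resolution in $\ulMNS$, then $\ul{b}_{s,\Nis}\ul{b}_s^\Nis I^\bullet\iso I^\bullet$, whence $\ul{b}_{s,\Nis}R^q\ul{b}_s^\Nis\Z(M)=0$ and one concludes by conservativity of the fully faithful $\ul{b}_{s,\Nis}$. You only cite the \emph{unit} isomorphism, which is not enough on its own.

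For (2) there is a genuine gap. You assert that $\ul{c}^\Nis$ preserves injectives in order to factor $R(\ul{b}_s^\Nis\circ\ul{c}^\Nis)\cong R\ul{b}_s^\Nis\circ D(\ul{c}^\Nis)$, but this is not known: the paper only records (via \cite[Lemma~4.6.1]{kmsy}, used in the proof of Proposition~\ref{lem;cohMsigmaST}) that $\ul{c}^\Nis$ sends injectives to \emph{flabby} sheaves, and it is unclear that flabby objects of $\ulMNS$ are $\ul{b}_s^\Nis$-acyclic. In fact there is no reason for the left adjoint $\ul{c}_\Nis$ to be exact, so injectives need not be preserved. The paper avoids this entirely: it proves (2) by the \emph{same} argument as (1), i.e.\ it applies Proposition~\ref{pA.1}\,c) directly to $R(\ul{b}_s^\Nis\circ\ul{c}^\Nis)$, observing that the left adjoint sends the compact generator $\Z(M)$ of $D(\ulMNS^\fin)$ to $\Z_\tr(M)$, which is compact in $D(\ulMNST)$ by Proposition~\ref{p7.1}\,(1). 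No factorisation through $D(\ul{c}^\Nis)$ is needed at this stage; the factorisation is only established later (Proposition~\ref{lem;cohMsigmaST}), and there it is Lemma~\ref{layoub} (which by then has all three strong additivities available) that does the work.
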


\begin{proof} (1) The adjunction statement follows from Proposition \ref{pA.3}. The second assertion implies the third by Lemma \ref{layoub}. It remains to prove the strong additivity of $R\ul{b}_s^\Nis$. For this, we check that the hypothesis of Proposition \ref{pA.1} c) are verified: we take the $\Z(M)$, $M\in \MSm^\fin$, as a set of generators. We have $\ul{b}_{s,\Nis} \Z(M)=\Z(M)$. Their compactness follows from Proposition \ref{p7.1} (2).

(2) Same argument as (1), using Proposition \ref{p7.1} (1) as well.
\end{proof}

\subsection{From $D(\protect \ulMNS^\fin)$ to $D(\protect \ulMNS)$} We first extend \cite[Notation 4.4.1]{kmsy} from sheaves to complexes of sheaves:

\begin{nota}\label{n3.7b} Let $N \in \ulMP^\fin$ and  let $K$ be a complex on $\ulMNS^\fin$.
We write $K_N$ for the complex of sheaves on $(\ol{N})_\Nis$ deduced
from $K$ via the isomorphism of sites from \cite[Lemma 3.1.3]{kmsy}.
If $K$ is a complex on $\ulMNS$, we write $K_N$ for $(\ul{b}_s^\Nis
K)_N$.
\end{nota}

The following extends \cite[Proposition~4.4.2]{kmsy} to unbounded
complexes of sheaves.

\begin{prop}\label{lem;cohMsigmaS} Let  $M\in \ulMSm$.
For $K\in D(\ulMNS)$, we have natural isomorphisms
\begin{equation*}
\Hom_{D(\ulMNS)}(\Z_\tr(M),K[i])\simeq \colim_{N\in
\ul{\Sigma}^{\fin}\downarrow M} \bH_\Nis^i(\ol{N},K_N) \simeq
\colim_{N\in \ul{\Sigma}^{\fin}\downarrow M}
\bH_\Nis^i(\ol{N},(R\ul{b}_s^\Nis K)_N), \quad i\in\Z.
\end{equation*}
\end{prop}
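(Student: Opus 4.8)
The plan is to bootstrap from the case of a single sheaf --- namely \cite[Proposition~4.4.2]{kmsy} --- to arbitrary complexes by a standard d\'evissage, using the compactness statement of Proposition~\ref{p7.1}\,(2) and the strong additivity of Theorem~\ref{t7.1}\,(1).

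I would treat the two displayed isomorphisms in turn. For the second one, the canonical morphism $\ul{b}_s^\Nis K\to R\ul{b}_s^\Nis K$ in $D(\ulMNS^\fin)$ induces, after termwise restriction to each $(\ol N)_\Nis$ and passage to the colimit, the comparison morphism
\[
\colim_{N\in\ulSigma^\fin\downarrow M}\bH^i_\Nis(\ol N,K_N)\longrightarrow\colim_{N\in\ulSigma^\fin\downarrow M}\bH^i_\Nis(\ol N,(R\ul{b}_s^\Nis K)_N).
\]
To prove this map is an isomorphism I would compare the two hypercohomology spectral sequences abutting to these colimits --- they converge because the $\ol N$ have finite Nisnevich cohomological dimension, as in the proof of Corollary~\ref{c4.1} --- which reduces the claim to the vanishing
\[
\colim_{N\in\ulSigma^\fin\downarrow M}H^p_\Nis(\ol N,(R^q\ul{b}_s^\Nis G)_N)=0\qquad(G\in\ulMNS,\ q>0).
\]
This is the analogue, for sheaves without transfers, of the ``moreover'' clause of \cite[Proposition~4.6.3]{kmsy}, and I expect it to go through by the same argument.

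For the first isomorphism I would first pass to $D(\ulMNS^\fin)$. Since $\ul{b}_{s,\Nis}$ fixes the representable object $\Z_\tr(M)$ (as recorded in the proof of Theorem~\ref{t7.1}), the adjunction of Theorem~\ref{t7.1}\,(1) yields
\[
\Hom_{D(\ulMNS)}(\Z_\tr(M),K[i])\cong\Hom_{D(\ulMNS^\fin)}(\Z_\tr(M),R\ul{b}_s^\Nis K[i]),
\]
so that, by Notation~\ref{n3.7b}, it suffices to establish for every $G\in D(\ulMNS^\fin)$ a natural isomorphism
\[
\Hom_{D(\ulMNS^\fin)}(\Z_\tr(M),G[i])\cong\colim_{N\in\ulSigma^\fin\downarrow M}\bH^i_\Nis(\ol N,G_N)
\]
and then to take $G=R\ul{b}_s^\Nis K$. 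To prove this last isomorphism I would check that both sides are cohomological functors of $G$ commuting with arbitrary direct sums --- the left-hand side because $\Z_\tr(M)$ is compact in $D(\ulMNS^\fin)$ (Proposition~\ref{p7.1}\,(2)); the right-hand side because the functors $(-)_N$ are exact and strongly additive (restriction along the isomorphisms of small sites of \cite[Lemma~3.1.3]{kmsy}), Nisnevich hypercohomology of the finite-dimensional noetherian $\ol N$ commutes with direct sums of complexes, and the colimit is filtered --- and that the comparison morphism (which on a single sheaf is the isomorphism of \cite[Proposition~4.4.2]{kmsy}, read through \cite[Lemma~3.1.3]{kmsy} and convergent hypercohomology spectral sequences) is an isomorphism on each generator $\Z(L)[j]$, $L\in\ulMSm^\fin$, $j\in\Z$. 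As $\{\Z(L)\}_L$ is a set of compact generators of $D(\ulMNS^\fin)$, the full triangulated subcategory of objects on which the comparison morphism is an isomorphism in all degrees is localizing and contains the generators, hence is all of $D(\ulMNS^\fin)$ (\emph{cf.} Lemma~\ref{layoub} and Proposition~\ref{pA.1}).

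The step I expect to be the main obstacle is exactly this passage from a single sheaf to unbounded complexes: it is the compactness of $\Z_\tr(M)$ (Proposition~\ref{p7.1}) together with the strong additivity of $R\ul{b}_s^\Nis$ (Theorem~\ref{t7.1}) that make it possible to go beyond bounded-below complexes, where a $\varprojlim^1$-argument would already suffice, and one must be careful that the hypercohomology spectral sequences stay convergent --- which is ensured, as in Corollary~\ref{c4.1}, by the finiteness of the Nisnevich cohomological dimension of the $\ol N$.
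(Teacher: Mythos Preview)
Your reduction via the adjunction $(D(\ul{b}_{s,\Nis}),R\ul{b}_s^\Nis)$ to the claim
\[
(\star)\qquad \Hom_{D(\ulMNS^\fin)}(\Z(M),G[i])\;\cong\;\colim_{N\in\ulSigma^\fin\downarrow M}\bH^i_\Nis(\ol N,G_N)\qquad\text{for all }G\in D(\ulMNS^\fin)
\]
is where the argument breaks. This identity is \emph{false} in general: take $G=F[0]$ for a sheaf $F\in\ulMNS^\fin$ and $i=0$. The left side is $F(M)$, while the right side is $\colim_N F(N)=(\ul{b}_{s,\Nis}F)(M)$; the comparison map is the unit $F\to \ul{b}_s^\Nis\ul{b}_{s,\Nis}F$, which is not an isomorphism (only the counit $\ul{b}_{s,\Nis}\ul{b}_s^\Nis\iso\id$ is). Concretely, for $F=\Z(L)$ with $L\in\ulMSm^\fin$ one gets $\Z\,\ulMSm^\fin(M,L)$ versus $\Z\,\ulMSm(M,L)$, and these differ whenever there is an admissible morphism $M\to L$ whose underlying rational map does not extend to a scheme morphism $\ol M\to\ol L$. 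So your base case for the generator $\Z(L)$ already fails, and the localizing-subcategory d\'evissage cannot get started. The correct formula in $D(\ulMNS^\fin)$ has \emph{no} colimit, namely $\Hom_{D(\ulMNS^\fin)}(\Z(M),G[i])\cong\bH^i_\Nis(\ol M,G_M)$; the colimit over $\ulSigma^\fin\downarrow M$ only emerges after one pushes forward to $\ulMNS$ via $\ul{b}_{s,\Nis}$.

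The paper circumvents this by running the argument in the opposite direction: instead of pulling back to $\ulMNS^\fin$ via $R\ul{b}_s^\Nis$, it writes the colimit functor as $\Gamma_M^\downarrow=\ul\Gamma_M\circ\ul{b}_{s,\Nis}$ on $\ulMNS^\fin$ and applies Ayoub's criterion (Lemma~\ref{layoub}) to show $R\Gamma_M^\downarrow\simeq R\ul\Gamma_M\circ D(\ul{b}_{s,\Nis})$. This yields $\Hom_{D(\ulMNS)}(\Z(M),\ul{b}_{s,\Nis}L[i])\simeq\colim_N\bH^i_\Nis(\ol N,L_N)$ for every $L\in D(\ulMNS^\fin)$; setting $L=\ul{b}_s^\Nis K$ and using $\ul{b}_{s,\Nis}\ul{b}_s^\Nis\simeq\id$ gives the first isomorphism directly. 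Your spectral-sequence idea for comparing $K_N$ and $(R\ul{b}_s^\Nis K)_N$ is reasonable, but the paper instead obtains the second isomorphism by precomposing the derived-functor identity with $R\ul{b}_s^\Nis$ and invoking the counit isomorphism $D(\ul{b}_{s,\Nis})R\ul{b}_s^\Nis\simeq\id$ from Theorem~\ref{t7.1}\,(1). The moral is that the compactness and strong-additivity inputs you identified are exactly right, but they have to be fed into a d\'evissage on $D(\ulMNS)$ (or equivalently into Lemma~\ref{layoub}), not into one on $D(\ulMNS^\fin)$.
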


\begin{proof} Define as in \emph{loc.~cit.}  functors
$\Gamma_M^\downarrow : \ulMNS^\fin \to \Ab$ and $\ul{\Gamma}_M :
\ulMNS \to \Ab$ by
\[ \Gamma_M^\downarrow(G)
=\colim_{N \in \ul{\Sigma}^\fin \downarrow M} G(N), \quad
\ul{\Gamma}_M(F)=F(M).
\]
We have $\Gamma_M^\downarrow=\ul{\Gamma}_M \ul{b}_{s, \Nis}$ and we
shall show that the natural transformation \eqref{eqA.1}
\begin{equation}\label{eq7.1}
R\Gamma_M^\downarrow\Rightarrow R\ul{\Gamma}_M \circ
D(\ul{b}_{s,\Nis})
\end{equation}
is invertible. For this, we apply Lemma \ref{layoub}. Its first
condition is given by \cite[Lemma 4.4.3]{kmsy}, which says that
$\ul{b}_{s,\Nis}$ sends injectives to flabbys; by \cite[Theorem~A.9.1
and Lemma A.9.3]{kmsy}, this  already yields isomorphisms
\begin{equation}\label{eqII7.1}
R^p\Gamma_M^\downarrow\iso R^p\ul{\Gamma}_M \circ \ul{b}_{s,\Nis},
\quad p\ge 0.
\end{equation}

We are now left to show the strong additivity of the three functors.
For $D(\ul{b}_{s,\Nis})$, this follows from  the strong additivity
of $\ul{b}_{s,\Nis}$ as a left adjoint, and  Proposition \ref{pA.1}
a).

For $R\Gamma_M^\downarrow$ and $R\ul{\Gamma}_M$, we check that the
conditions of Proposition \ref{pA.1} b) are verified. For
$R\ul{\Gamma}_M$,  Condition (ii) follows from the vanishing
statement in (1) (use the compact projective generator $\Z$ of
$\Ab$), and Condition (i) follows similarly from the known
commutation of Nisnevich cohomology with filtered colimits of
sheaves. The case of $R\Gamma_M^\downarrow$ is reduced to this one
by \eqref{eqII7.1}.

Thus we get an isomorphism
\[\Hom_{D(\ulMNS)}(\Z_\tr(M),\ul{b}_{s,\Nis} L[i])\simeq \colim_{N\in \ul{\Sigma}^{\fin}\downarrow M} \bH_\Nis^i(\ol{N},L_N), \quad i\in\Z\]
for any complex $L$ on $\ulMNS^\fin$. Setting $L=\ul{b}_s^\Nis K$,
we get the first isomorphism thanks to the isomorphism
$\ul{b}_{s,\Nis}\ul{b}_s^\Nis\iso \id$ of \cite[Proposition~4.3.3
(2)]{kmsy}. Composing \eqref{eq7.1} with $R\ul{b}_s^\Nis$ and using
Theorem \ref{t7.1} (1), we get an isomorphism
\begin{equation}\label{eq7.2}
R\Gamma_M^\downarrow\circ R\ul{b}_s^\Nis\iso R\ul{\Gamma}_M
\end{equation}
which yields the second isomorphism of Proposition
\ref{lem;cohMsigmaS}.
\end{proof}

\subsection{From $D(\protect \ulMNS)$ to $D(\protect \ulMNST)$}

\begin{nota}\label{n3.7c} Let $N \in \ulMP^\fin$ and  let $K$ be a complex on $\ulMNST^\fin$.
We write $K_N$ for $(\ul{b}_s^\Nis\ul{c}^\Nis K)_N$.
\end{nota}

The following extends Proposition \ref{c3.1v} to unbounded complexes
of sheaves.

\begin{prop}\label{lem;cohMsigmaST} Let  $M\in \ulMCor$. For  $K\in D(\ulMNST)$,
we have a natural isomorphism
\begin{equation*}
\Hom_{D(\ulMNST)}(\Z_\tr(M),K[i])\simeq \colim_{N\in
\ul{\Sigma}^{\fin}\downarrow M} \bH_\Nis^i(\ol{N},K_N) \simeq
\colim_{N\in \ul{\Sigma}^{\fin}\downarrow M}
\bH_\Nis^i(\ol{N},(R\ul{b}_s^\Nis D(\ul{c}^\Nis) K)_N), \quad
i\in\Z.
\end{equation*}
\end{prop}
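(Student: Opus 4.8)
The plan is to reduce the statement to the already-proven analogue for $\ulMNS$ (Proposition \ref{lem;cohMsigmaS}) by lifting the adjoint pair $(\ul{c}_\Nis,\ul{c}^\Nis)$ to derived categories and transporting the colimit description through it. First I would show that $D(\ul{c}^\Nis):D(\ulMNST)\to D(\ulMNS)$ admits a left adjoint $D(\ul{c}_\Nis)$ (this is formal: $\ul{c}^\Nis$ is exact with an exact-up-to-derived left adjoint $\ul{c}_\Nis$ by Theorem \ref{thm:sheafification-ulMNST}, and one invokes Proposition \ref{pA.3} as in the proof of Theorem \ref{t7.1}), and that $L\ul{c}_\Nis \Z(M)\simeq \Z_\tr(M)$ — indeed on representables this is immediate since $\ul{c}_\Nis=\ul{a}_\Nis\ul{c}_!\ul{i}_{s,\Nis}$ and $\ul{c}_!\Z(M)=\Z_\tr(M)$, $\ul{a}_\Nis\Z_\tr(M)=\Z_\tr(M)$. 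This gives, for $K\in D(\ulMNST)$ and $M\in\ulMSm$,
\[
\Hom_{D(\ulMNST)}(\Z_\tr(M),K[i])\simeq \Hom_{D(\ulMNS)}(\Z(M),D(\ul{c}^\Nis)K[i]).
\]
Applying Proposition \ref{lem;cohMsigmaS} to the right-hand side (with the complex $D(\ul{c}^\Nis)K\in D(\ulMNS)$) yields
\[
\Hom_{D(\ulMNST)}(\Z_\tr(M),K[i])\simeq \colim_{N\in \ul{\Sigma}^{\fin}\downarrow M}\bH_\Nis^i(\ol{N},(R\ul{b}_s^\Nis D(\ul{c}^\Nis)K)_N),
\]
which is precisely the second isomorphism claimed. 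For the first isomorphism, one unwinds Notation \ref{n3.7c}: by definition $K_N=(\ul{b}_s^\Nis\ul{c}^\Nis K)_N$ at the level of sheaves, and the content of the first isomorphism in Proposition \ref{lem;cohMsigmaS} (the identification $R\Gamma_M^\downarrow\simeq R\Gamma_M^\downarrow$ via $\ul{b}_{s,\Nis}$, together with $\ul{b}_{s,\Nis}\ul{b}_s^\Nis\iso\id$) carries over verbatim once one knows that $D(\ul{b}_s^\Nis)$ and $D(\ul{c}^\Nis)$ are exact and strongly additive — the latter being exactly Theorem \ref{t7.1} (2), which states strong additivity of $R(\ul{b}_s^\Nis\circ\ul{c}^\Nis)$.

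The slightly delicate point, and the one I would be most careful about, is the passage $M\in\ulMSm$ versus $M\in\ulMCor$: Proposition \ref{lem;cohMsigmaS} is stated for $M\in\ulMSm$, whereas here $M\in\ulMCor$, and the compatibility used is that $\ul{c}^\Nis$ sends $\Z_\tr(M)$-type Ext-groups on $\ulMCor$ to $\Z(M)$-type groups on $\ulMSm$ only after choosing a representative. But since $\ulMSm$ and $\ulMCor$ share the same objects and $\ul{c}_\Nis$ is defined on all of $\ulMNS$, the adjunction $\Hom_{D(\ulMNST)}(\Z_\tr(M),-)\simeq\Hom_{D(\ulMNS)}(\Z(M),D(\ul{c}^\Nis)(-))$ is valid for every $M\in\ulMCor$ viewed as an object of $\ulMSm$ via the forgetful identification, and no genuine obstruction arises. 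The only real work beyond citing earlier results is checking that the three functors entering the Mittag-Leffler/colimit argument of Proposition \ref{lem;cohMsigmaS} remain strongly additive after precomposition with $D(\ul{c}^\Nis)$; as noted, this is handed to us by Theorem \ref{t7.1} (2), so the proof reduces to assembling these pieces. I would conclude by composing the displayed isomorphism with $R\ul{b}_s^\Nis D(\ul{c}^\Nis)$ exactly as in \eqref{eq7.2} to obtain the stated two-step chain of natural isomorphisms.
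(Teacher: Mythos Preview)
Your overall strategy --- reduce to Proposition~\ref{lem;cohMsigmaS} by passing through $D(\ul{c}^\Nis)$ --- matches the paper's. The difference is in how you obtain the key isomorphism
\[
\Hom_{D(\ulMNST)}(\Z_\tr(M),K[i])\;\simeq\;\Hom_{D(\ulMNS)}(\Z(M),D(\ul{c}^\Nis)K[i]),
\]
and here your argument has a gap. You invoke Proposition~\ref{pA.3} ``as in the proof of Theorem~\ref{t7.1}'', but that proposition requires the \emph{left} adjoint in the pair to be exact. In Theorem~\ref{t7.1} the left adjoint is $\ul{b}_{s,\Nis}$, which is indeed exact; by contrast, $\ul{c}_\Nis=\ul{a}_\Nis\ul{c}_!\ul{i}_{s,\Nis}$ is nowhere shown to be exact (only $\ul{c}^\Nis$ is, see Theorem~\ref{thm:sheafification-ulMNST}), and $\ul{c}_!$ is a left Kan extension with no reason to be left exact. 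So your ``$D(\ul{c}_\Nis)$'' is not well-defined, and your claim $L\ul{c}_\Nis\Z(M)\simeq\Z_\tr(M)$ is unjustified: the computation $\ul{c}_\Nis\Z(M)=\Z_\tr(M)$ gives only $L_0$, and you offer no argument for the vanishing of $L_p\ul{c}_\Nis\Z(M)$ for $p>0$ (with no obvious supply of projectives in $\ulMNS$ to test against).

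The paper sidesteps this. Instead of lifting the adjunction, it writes $\ul{\Gamma}_M^T=\ul{\Gamma}_M\circ\ul{c}^\Nis$ for the section functors and proves $R\ul{\Gamma}_M^T\simeq R\ul{\Gamma}_M\circ D(\ul{c}^\Nis)$ directly via Lemma~\ref{layoub}. The hypotheses needed are (a) $\ul{c}^\Nis$ sends injectives to $\ul{\Gamma}_M$-acyclics --- this is \cite[Lemma~4.6.1]{kmsy}, injectives go to flasques --- and (b) the three functors $R\ul{\Gamma}_M^T$, $R\ul{\Gamma}_M$, $D(\ul{c}^\Nis)$ are strongly additive, checked with Proposition~\ref{pA.1}~a) and~b). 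These are verifiable without knowing anything about left-deriving $\ul{c}_\Nis$. Once the displayed isomorphism is in hand, the rest of your outline (apply Proposition~\ref{lem;cohMsigmaS}, then compose with~\eqref{eq7.2} to get~\eqref{eq7.5}) coincides with the paper's argument.
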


\begin{proof} Define as before  functors
$\ul{\Gamma}_M : \ulMNS \to \Ab$ and $\ul{\Gamma}_M^T : \ulMNST \to
\Ab$ by
\[\ul{\Gamma}_M(F)=F(M), \quad \ul{\Gamma}_M^T(G)
= G(M).
\]

We have $\ul{\Gamma}_M^T=\ul{\Gamma}_M \circ \ul{c}^\Nis$ thanks to
the adjunction $(\ul{c}_\Nis,\ul{c}^\Nis)$ of Theorem
\ref{thm:sheafification-ulMNST}. Moreover, $\ul{c}^\Nis$ is exact by
this theorem. Let us show as before that the natural transformation
\eqref{eqA.1}
\begin{equation}\label{eq7.3}
R\ul{\Gamma}_M^T\Rightarrow R\ul{\Gamma}_M \circ D(\ul{c}^\Nis)
\end{equation}
is invertible. We copy the argument of the previous subsection,
using  Lemma \ref{layoub}.

Its first condition is given by \cite[Lemma 4.6.1]{kmsy}, which says
that $\ul{c}^\Nis$ sends injectives to flabbys; by \cite[Theorem~A.9.1
and Lemma A.9.3]{kmsy}, this  already yields isomorphisms
\begin{equation}\label{eqII7.1T}
R^p\ul{\Gamma}_M^T\iso R^p\ul{\Gamma}_M \circ \ul{c}^\Nis, \quad
p\ge 0.
\end{equation}

We are now left to show the strong additivity of the three functors.
For $D(\ul{c}^\Nis)$, this follows from  the strong additivity of
$\ul{c}^\Nis$ (Theorem \ref{thm:sheafification-ulMNST}) and
Proposition \ref{pA.1} a). For the two other functors, we need to
check the conditions of Proposition \ref{pA.1} b); this was done for
$R\ul{\Gamma}_M$ in the previous section, and the case of
$R\ul{\Gamma}_M^T$ is reduced to this one by \eqref{eqII7.1T}.

Thus we get an isomorphism
\[\Hom_{D(\ulMNST)}(\Z_\tr(M), L[i])\iso \Hom_{D(\ulMNS)}(\Z_\tr(M),\ul{c}^\Nis L[i])\quad i\in\Z\]
for any complex $L$ on $\ulMNST$, and we get the first isomorphism
by using Proposition \ref{lem;cohMsigmaS}. For the second one,
composing \eqref{eq7.2} with \eqref{eq7.3}, we obtain an isomorphism
\begin{equation}\label{eq7.5}
R\ul{\Gamma}_M^T\simeq R\Gamma_M^\downarrow\circ R\ul{b}_s^\Nis
\circ D(\ul{c}^\Nis)
\end{equation}
which yields the second isomorphism of Proposition
\ref{lem;cohMsigmaST}.
\end{proof}

\subsection{From $D(\protect \ulMNST)$ to $D(\protect \MNST)$}

\begin{thm}\label{tmain} \
\begin{itemize}
\item[\rm (1)] The functor $D(\tau_\Nis):D(\MNST)\to D(\ulMNST)$ is fully faithful.
\item[\rm (2)] One has isomorphisms
\begin{eqnarray*}
\Hom_{D(\MNST)}(\Z_\tr(M),K[i]) & \simeq &  \colim_{N\in \ul{\Sigma}^{\fin}\downarrow M} \bH_\Nis^i(\ol{N},K_N)\\
 & \simeq & \colim_{N\in \ul{\Sigma}^{\fin}\downarrow M} \bH_\Nis^i(\ol{N},(R\ul{b}_s^\Nis D(\ul{c}^\Nis \tau_\Nis))K_N), \quad i\in\Z
\end{eqnarray*}
for any complex $K$ on $\MNST$, where
$K_N:=(\ul{b}_s^\Nis\ul{c}^\Nis\tau^\Nis K)_N$.
\end{itemize}
\end{thm}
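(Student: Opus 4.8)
The plan is to carry out exactly the same two‑step argument that was used to pass from $D(\ulMNS^\fin)$ to $D(\ulMNS)$ and then to $D(\ulMNST)$ in the previous three subsections, now adding one more layer corresponding to the adjunction $(\tau_\Nis,\tau^\Nis)$. For part (1), I would invoke Proposition \ref{pA.3} (the derived‑adjunction machinery) together with the fact established in Lemma \ref{lem:tau!-exact} and Theorem \ref{thm;tauexact} that $\tau_\Nis$ is exact, fully faithful and strongly additive, while $\tau^\Nis$ is exact, strongly additive and preserves injectives. Concretely, $D(\tau^\Nis)$ is right adjoint to $D(\tau_\Nis)$, and since $\tau^\Nis\tau_\Nis=\id$ (noted in the proof of Lemma \ref{lem:coh-MNST-ulMNST}), the unit $\id\to D(\tau^\Nis)D(\tau_\Nis)$ is invertible; this is precisely full faithfulness of $D(\tau_\Nis)$. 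One must check that the derived unit is computed correctly, which follows from exactness of $\tau_\Nis$ (so $D(\tau_\Nis)$ needs no resolution) and from the fact that $\tau^\Nis$ preserves injectives (so $RD(\tau^\Nis)=D(\tau^\Nis)$ on fibrant replacements); I would cite Theorem \cite[Theorem~A.9.1]{kmsy} for this, exactly as in Lemma \ref{lem:coh-MNST-ulMNST}.

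For part (2), I would introduce the functor $\Gamma_M^{T,\MCor}:\MNST\to\Ab$, $G\mapsto G(M)$ for $M\in\MCor$, and observe $\Gamma_M^{T,\MCor}=\ul{\Gamma}_M^T\circ\tau^\Nis$ via the adjunction $(\tau_\Nis,\tau^\Nis)$ — here $\ul{\Gamma}_M^T:\ulMNST\to\Ab$ is the functor from the proof of Proposition \ref{lem;cohMsigmaST}, now evaluated at $\tau M\in\ulMCor$, and I am using $\tau_!\Z_\tr(M)=\Z_\tr(\tau M)$. Then I would show the natural transformation $R\Gamma_M^{T,\MCor}\Rightarrow R\ul{\Gamma}_M^T\circ D(\tau^\Nis)$ of \eqref{eqA.1} is invertible by applying Lemma \ref{layoub}: its first hypothesis (that $\tau^\Nis$ sends injectives to flasques) follows because $\tau^\Nis$ preserves injectives and is exact, hence sends injectives to objects that are acyclic for $\ul{\Gamma}_N^T$ for all $N$ — alternatively, one composes with $\ul{c}^\Nis$ and $\ul{b}_s^\Nis$ and reuses \cite[Lemma 4.6.1]{kmsy} and \cite[Lemma 4.4.3]{kmsy}; combined with \cite[Theorem~A.9.1 and Lemma A.9.3]{kmsy} this already gives $R^p\Gamma_M^{T,\MCor}\iso R^p\ul{\Gamma}_M^T\circ\tau^\Nis$ for all $p\ge 0$, which is Theorem \ref{thm:coh-MNST}. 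The remaining hypothesis of Lemma \ref{layoub} is strong additivity of the three functors: $D(\tau^\Nis)$ is strongly additive by Lemma \ref{lem:tau!-exact} and Corollary \ref{tex1}(3) together with Proposition \ref{pA.1} a); $R\ul{\Gamma}_M^T$ was treated in the proof of Proposition \ref{lem;cohMsigmaST} (check the conditions of Proposition \ref{pA.1} b), using Proposition \ref{p7.1}(1)); and the case of $R\Gamma_M^{T,\MCor}$ reduces to it via the degreewise isomorphism just obtained.

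Having established that $R\Gamma_M^{T,\MCor}\iso R\ul{\Gamma}_M^T\circ D(\tau^\Nis)$, I would chain this with the isomorphism \eqref{eq7.5} from the proof of Proposition \ref{lem;cohMsigmaST}, obtaining
\[
R\Gamma_M^{T,\MCor}\simeq R\Gamma_N^\downarrow\circ R\ul{b}_s^\Nis\circ D(\ul{c}^\Nis)\circ D(\tau^\Nis).
\]
Applying $\Hom_{D(\Ab)}(\Z,-)$ and using the compact projective generator $\Z$ of $\Ab$, together with $a_\Nis\Z_\tr(M)=\Z_\tr(M)$, yields both displayed isomorphisms of (2), with $K_N=(\ul{b}_s^\Nis\ul{c}^\Nis\tau^\Nis K)_N$ in agreement with Notation \ref{n3.7c} extended along $\tau^\Nis$. (Note a minor typographical point: since $\tau^\Nis$ is written for the functor $\ulMNST\to\MNST$, the object $K$ lives on $\MNST$ and one applies $\tau_\Nis$, not $\tau^\Nis$, to push it to $\ulMNST$; the formula $K_N:=(\ul{b}_s^\Nis\ul{c}^\Nis\tau_\Nis K)_N$ is the consistent one, matching $F_M=(\tau_\Nis F)_M$ from the definition just before Theorem \ref{thm:coh-MNST}, and the second isomorphism should read $D(\ul{c}^\Nis\tau_\Nis)$.) The main obstacle I anticipate is not any single deep input — everything rests on results already proven — but rather the bookkeeping needed to verify the hypotheses of Lemma \ref{layoub} and Proposition \ref{pA.1} b) for the newly introduced functor $R\Gamma_M^{T,\MCor}$: one must be careful that the reduction "via the degreewise isomorphism" genuinely transports the colimit‑commutation condition (i) and the vanishing condition (ii) of Proposition \ref{pA.1} b), which is straightforward only because the isomorphism $R^p\Gamma_M^{T,\MCor}\iso R^p\ul{\Gamma}_M^T\circ\tau^\Nis$ is natural in all of $\MNST$ and $\tau^\Nis$ commutes with filtered colimits and direct sums.
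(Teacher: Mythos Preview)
Your argument for part (1) is essentially the paper's: both use that $\tau_\Nis$ and $\tau^\Nis$ are exact and strongly additive, obtain the derived adjunction $(D(\tau_\Nis),D(\tau^\Nis))$, and conclude full faithfulness from $\tau^\Nis\tau_\Nis=\id$.

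For part (2), however, the paper takes a much shorter route than you do. Once (1) is in hand, full faithfulness of $D(\tau_\Nis)$ gives directly
\[
\Hom_{D(\MNST)}(\Z_\tr(M),K[i])\simeq \Hom_{D(\ulMNST)}(\Z_\tr(\tau M),\tau_\Nis K[i]),
\]
and one simply applies Proposition \ref{lem;cohMsigmaST} to $\tau_\Nis K\in D(\ulMNST)$. Your plan instead introduces $\Gamma_M^{T,\MCor}$ and reruns the Ayoub argument (Lemma \ref{layoub}) to prove $R\Gamma_M^{T,\MCor}\simeq R\ul{\Gamma}_M^T\circ D(\tau_\Nis)$; but this isomorphism, evaluated on $\Z_\tr(M)$ for all $M$, \emph{is} the full faithfulness statement you already proved in (1). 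So the extra layer is redundant rather than genuinely different.

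There is also a persistent direction slip in your part (2): you write $\Gamma_M^{T,\MCor}=\ul{\Gamma}_M^T\circ\tau^\Nis$ and $R\ul{\Gamma}_M^T\circ D(\tau^\Nis)$, but $\tau^\Nis:\ulMNST\to\MNST$ does not compose with $\ul{\Gamma}_M^T:\ulMNST\to\Ab$; it must be $\tau_\Nis$ throughout (as you yourself note at the end). With the correct $\tau_\Nis$, the first hypothesis of Lemma \ref{layoub} asks that $\tau_\Nis$ send injectives to $\ul{\Gamma}_M^T$-acyclics. Your stated justification (``$\tau^\Nis$ preserves injectives and is exact'') concerns the wrong functor, and $\tau_\Nis$, being a left adjoint, has no reason to preserve injectives. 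The valid justification is Lemma \ref{lem:coh-MNST-ulMNST}: for $I$ injective in $\MNST$ one has $\Ext^p_{\ulMNST}(\Z_\tr(M),\tau_\Nis I)\simeq \Ext^p_{\MNST}(\Z_\tr(M),I)=0$ for $p>0$ --- which is again the content of (1). So your route can be salvaged, but it keeps circling back to the one-line deduction the paper makes.
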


\begin{proof}
Recall that $\tau_\Nis$ and $\tau^\Nis$ are both exact and strongly
additive: see Lemma \ref{lem:tau!-exact} (2) and Theorem
\ref{thm;tauexact} ($\tau_\Nis$ is strongly additive as a left
adjoint). Then we get from Proposition \ref{lA.2} an adjunction
$(D(\tau_\Nis),D(\tau^\Nis))$ and from Lemma \ref{layoub} an
isomorphism
\[D(\tau^\Nis)D(\tau_\Nis) \osi D(\tau^\Nis\tau_\Nis)\iso D(\id) = \id \]
which shows the full faithfulness of $D(\tau_\Nis)$. This shows (1).
(2) now follows from (1) and Proposition \ref{lem;cohMsigmaST} (2).
\end{proof}

\begin{rk} One can show that the essential image of $D(\tau_\Nis)$ is
\[D_{\MNST}(\ulMNST) :=  \{K\in D(\ulMNST)\mid H^i(K)\in \tau_\Nis(\MNST) \ \forall i\in \Z \}.\]
Since the proof involves delicate and lengthy arguments relying on
the notion of left-completeness, we skip it (see \cite{motmod}).
\end{rk}

\subsection{$D(\protect \MNST)$, $D(\protect \ulMNST)$ and $D(\protect \NST)$}
We leave it to the reader to produce an unbounded version of Theorem
\ref{prop:MNST-NST-coh}.

\section*{Appendix: Categorical toolbox, II}
\addcontentsline{toc}{section}{Appendix: Categorical toolbox,
II}\refstepcounter{section}\renewcommand{\thesection}{A}
\label{sect:app}

\subsection{A spectral sequence} The following convenient proposition is used several times in the paper.

\begin{prop}\label{pA.2} Let $a:\sB\leftrightarrows\sA:i$ be a pair of adjoint functors between abelian categories $(a$ is left adjoint to $i)$. Suppose that $\sA$ has enough injectives and that $a$ is exact. Then, for any $(A,B)\in \sA\times \sB$, there is a convergent spectral sequence
\[\Ext_\sB^p(B,R^qi A)\Rightarrow \Ext_\sA^{p+q}(aB,A).\]
If $B$ is projective, this spectral sequence collapses to
isomorphisms
\begin{equation}\label{eq:Ext-projB}
\sB(B,R^qi A)\simeq \Ext_\sA^{q}(aB,A).
\end{equation}
\end{prop}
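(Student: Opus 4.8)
The plan is to exhibit $\Ext^\bullet_\sA(aB,-)$ as the right derived functor of a composite and apply the Grothendieck spectral sequence. First I would use the adjunction $(a,i)$ to get the natural isomorphism of functors $\sA\to\Ab$
\[\Hom_\sA(aB,-)\simeq \Hom_\sB(B,-)\circ i,\]
coming from $\Hom_\sA(aB,X)=\Hom_\sB(B,iX)$, natural in $X$. Both factors are left exact ($i$ as a right adjoint, $\Hom_\sB(B,-)$ always), and $\sA$ has enough injectives by hypothesis, so the only input of the Grothendieck spectral sequence still to be verified is that $i$ carries injective objects of $\sA$ to objects of $\sB$ acyclic for $\Hom_\sB(B,-)$.

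The key point — and the one place the standing hypotheses are genuinely used — is that $i$ preserves injectives. This is the usual fact that a right adjoint of an exact functor preserves injectives: for $I\in\sA$ injective, the functor $\Hom_\sB(-,iI)\simeq\Hom_\sA(a(-),I)$ is the composite of the exact functor $a$ with the exact functor $\Hom_\sA(-,I)$, hence exact, so $iI$ is injective in $\sB$; in particular it is acyclic for the left exact functor $\Hom_\sB(B,-)$.

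Granting this, the Grothendieck spectral sequence of the composite yields, for each $A\in\sA$, a first-quadrant (hence convergent) spectral sequence
\[E_2^{p,q}=\Ext^p_\sB(B,R^qi\,A)\Rightarrow \Ext^{p+q}_\sA(aB,A).\]
Equivalently, and more hands-on, one picks an injective resolution $A\to I^\bullet$ in $\sA$, observes that $\Hom_\sA(aB,I^\bullet)=\Hom_\sB(B,iI^\bullet)$ computes $\Ext^\bullet_\sA(aB,A)$, that $iI^\bullet$ is a complex of injectives of $\sB$ with $H^q(iI^\bullet)=R^qi\,A$, and runs the second hypercohomology spectral sequence of $\Hom_\sB(B,-)$ on this complex of acyclics.

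Finally, when $B$ is projective we have $\Ext^p_\sB(B,-)=0$ for $p>0$, so the $E_2$-page is concentrated in the row $p=0$; the spectral sequence then degenerates and produces the edge isomorphisms $\Hom_\sB(B,R^qi\,A)\simeq\Ext^q_\sA(aB,A)$. I do not expect any real obstacle beyond recording the preservation of injectives above, together with the tacit use that $\sB$ has enough injectives — which holds in every application in this paper, $\sB$ being in each case a Grothendieck category — or else interpreting $\Ext_\sB$ as Yoneda Ext.
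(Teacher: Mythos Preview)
Your proposal is correct and follows essentially the same route as the paper: identify $\sA(aB,-)\simeq\sB(B,-)\circ i$ via adjunction, invoke the Grothendieck spectral sequence (the paper cites \cite[Theorem~A.9.1, Example~A.9.2]{kmsy} for this), and note the collapse when $B$ is projective. Your version is simply more explicit, spelling out that $i$ preserves injectives because its left adjoint $a$ is exact, and your closing remark about the tacit need for enough injectives in $\sB$ (or Yoneda Ext) is a fair caveat that the paper leaves implicit.
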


\begin{proof} Fix $B$. By adjunction, the composition of functors
\[\sA\by{i}\sB\by{\sB(B,-)}\Ab\]
is isomorphic to $\sA(aB,-)$. We then get the spectral sequence from
\cite[Theorem~A.9.1, Example~A.9.2]{kmsy}.
The last fact is obvious.
\end{proof}

\subsection{Unbounded derived categories}

\begin{thm}[\protect{\cite[Theorem~14.3.1]{ks}}]\label{tA.1} Let $\sA$ be a Grothendieck category.
\begin{itemize}
\item[{\rm a)}]  Let $K(\sA)$ be the unbounded homotopy category of $\sA$. The localisation functor $\lambda_\sA:K(\sA)\to D(\sA)$ has a right adjoint $\rho_\sA$, whose essential image is $($by definition$)$ the full subcategory of \emph{homotopically injective} complexes.\footnote{This is the same notion as Spaltenstein's $K$-injective \cite{spaltenstein}.}
\item[{\rm b)}] Let $F:K(\sA)\to \sT$ be a triangulated functor. Then $F$ has a $($universal$)$ right Kan extension $RF$ relative to $\lambda_\sA$, given by $RF=F\circ \rho_\sA$. In particular, any left exact functor $F:\sA\to \sB$, where $\sB$ is another abelian category, has a total right derived functor $RF:D(\sA)\to D(\sB)$ given by $RF=\lambda_\sB\circ K(F)\circ \rho_\sA$.
\item[{\rm c)}] The restriction of $RF$ to $D^+(\sA)$ is the total derived
functor $R^+F$ $($\emph{cf.} \cite[\S 2, Remark~1.6]{CD}$)$.
\end{itemize}
\end{thm}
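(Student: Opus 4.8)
The plan is to deduce all three parts from a single nontrivial input --- the existence of functorial \emph{homotopically injective resolutions} in $\mathrm{Ch}(\sA)$ --- after which everything is formal category theory. Recall that $I\in K(\sA)$ is \emph{homotopically injective} (or $K$-injective) if $\Hom_{K(\sA)}(A,I)=0$ for every acyclic $A$; equivalently, $\Hom_{K(\sA)}(X,I)\to\Hom_{D(\sA)}(X,I)$ is bijective for all $X$. The claim to establish is that every $X\in K(\sA)$ admits a quasi-isomorphism $\eta_X\colon X\to\rho_\sA X$ with $\rho_\sA X$ homotopically injective, functorially in $X$. Granting it, part a) follows: writing $\mathcal{K}_{\mathrm{hi}}\subseteq K(\sA)$ for the full subcategory of homotopically injective complexes, the composite $\mathcal{K}_{\mathrm{hi}}\hookrightarrow K(\sA)\xrightarrow{\lambda_\sA}D(\sA)$ is fully faithful (by the equivalent form of $K$-injectivity above) and essentially surjective (by the claim, since $\eta_X$ is inverted in $D(\sA)$), hence an equivalence; composing a quasi-inverse with the inclusion produces the desired functor $\rho_\sA\colon D(\sA)\to K(\sA)$, the maps $\eta_X$ assemble into the unit of an adjunction $\lambda_\sA\dashv\rho_\sA$, and the counit $\lambda_\sA\rho_\sA\Rightarrow\id$ is an isomorphism since $\lambda_\sA\rho_\sA X\cong X$ naturally. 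The essential image of $\rho_\sA$ is $\mathcal{K}_{\mathrm{hi}}$ by construction, which one may take as the definition of ``homotopically injective''.

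Part b) is then pure $2$-category theory. Since $\lambda_\sA$ has right adjoint $\rho_\sA$ with $\lambda_\sA\rho_\sA\cong\id$, for any triangulated $F\colon K(\sA)\to\sT$ I claim that $RF:=F\circ\rho_\sA$, together with the natural transformation $F\Rightarrow F\rho_\sA\lambda_\sA=RF\circ\lambda_\sA$ obtained by applying $F$ to the unit of $\lambda_\sA\dashv\rho_\sA$, is the universal right Kan extension of $F$ relative to $\lambda_\sA$. Indeed, for each $Y\in D(\sA)$ the comma category of pairs $(Z,\ \lambda_\sA Z\to Y)$ has the terminal object $(\rho_\sA Y,\ \varepsilon_Y)$, where $\varepsilon_Y\colon\lambda_\sA\rho_\sA Y\to Y$ is the counit (immediate from the adjunction isomorphism), so the pointwise Kan extension exists and equals evaluation at $\rho_\sA$; the required universal property then follows directly from the triangle identities. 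Applying this to the triangulated functor $\lambda_\sB\circ K(F)\colon K(\sA)\to D(\sB)$ attached to a left exact functor $F\colon\sA\to\sB$ --- where only the source is required to be Grothendieck --- yields $RF=\lambda_\sB\circ K(F)\circ\rho_\sA$. For c), if $X\in D^+(\sA)$ then $X$ is represented by a bounded-below complex, which admits a quasi-isomorphism to a bounded-below complex of injectives because $\sA$, being Grothendieck, has enough injectives; such a complex is homotopically injective by the classical bounded-below argument, so $\rho_\sA X$ may be taken in $K^+(\mathrm{Inj}\,\sA)$, and $RF(X)=F(\rho_\sA X)$ is then an ordinary injective-resolution computation, i.e.\ coincides with $R^+F$ in the sense of \cite[\S 2, Remark~1.6]{CD}.

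The real content, and the step I expect to be the main obstacle, is the resolution claim. For bounded-below complexes it is classical; the unbounded case over an arbitrary Grothendieck category is delicate, since Spaltenstein's homotopy-limit construction requires exact products, which a general Grothendieck category need not have. The route I would take is to invoke the \emph{injective model structure} on $\mathrm{Ch}(\sA)$ --- which exists precisely because $\sA$ is a Grothendieck category, with cofibrations the degreewise monomorphisms and weak equivalences the quasi-isomorphisms --- whose fibrant objects are exactly the homotopically injective complexes; functorial fibrant replacement then supplies $\eta_X\colon X\to\rho_\sA X$. Alternatively one runs a direct transfinite induction built on a fixed generator of $\sA$, degreewise injective at each stage and successively killing cohomology in higher degrees, which is in substance the argument of \cite[Theorem~14.3.1]{ks}. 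Once the claim is in hand, parts a)--c) go through as above.
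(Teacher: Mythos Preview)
Your proposal is correct and, on the only part the paper actually argues, takes essentially the same route. The paper does not prove a) and b) at all: it simply cites \cite[Theorem~14.3.1]{ks}, so your expanded discussion of homotopically injective resolutions and the formal derivation of the adjunction and Kan extension is additional detail rather than a different method. For c), your argument --- that a bounded-below complex admits a bounded-below injective resolution, which is automatically homotopically injective, so that $\rho_\sA$ restricted to $D^+(\sA)$ lands in $K^+(\sA)$ and $RF$ reduces to the classical $R^+F$ --- is exactly the paper's one-line justification (``the point is that $\rho_\sA$ carries $D^+(\sA)$ into $K^+(\sA)$'', citing \cite[Theorem~13.3.7]{ks}).
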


Let us justify c), which is not in \cite{ks}: the point is that
$\rho_\sA$ carries $D^+(\sA)$ into $K^+(\sA)$ \cite[Theorem~13.3.7]{ks}.

\begin{defn}\label{dA.1} Let $F:\sA\to \sB$ be a left exact functor between Grothendieck categories. An object $C\in K(\sA)$ is \emph{$F$-acyclic} if the morphism
\[\lambda_\sB K(F)C\to RF\lambda_\sA C = \lambda_\sB K(F) \rho_\sA \lambda_\sA C\]
given by the unit map of the adjunction $(\lambda_\sA,\rho_\sA)$ is
an isomorphism.
\end{defn}

\begin{ex}\label{exA.1} If $F$ is exact, every object of $K(\sA)$ is $F$-acyclic.
\end{ex}

Let $\sA\by{F}\sB\by{G}\sC$ be a chain of left exact functors
between Gro\-then\-dieck categories. The unit map of the adjunction
$(\lambda_\sB,\rho_\sB)$ yields a natural transformation
\begin{equation}\label{eqA.1}
R(GF)\Rightarrow RG \circ RF.
\end{equation}

The following lemma is made tautological by Definition \ref{dA.1}:

\begin{lemma}\label{lA.2} \eqref{eqA.1} is a natural isomorphism if and only if $F$ carries homotopically injectives to $G$-acyclics. In particular, \eqref{eqA.1} is a natural isomorphism provided $G$ is exact (see Example \ref{exA.1}).\qed
\end{lemma}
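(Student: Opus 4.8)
The plan is to unwind the two definitions involved and observe that they match: the natural transformation \eqref{eqA.1} is built from the unit of the adjunction $(\lambda_\sB,\rho_\sB)$, and the notion of $G$-acyclicity in Definition \ref{dA.1} is a condition on exactly that unit, so the asserted equivalence should fall out with essentially no extra work.

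First I would make the comparison map explicit. Using $R(GF)=\lambda_\sC K(GF)\rho_\sA$, $RF=\lambda_\sB K(F)\rho_\sA$ and $RG=\lambda_\sC K(G)\rho_\sB$, one sees that for $C\in D(\sA)$ the morphism \eqref{eqA.1} at $C$ is obtained by applying $\lambda_\sC K(G)$ to the unit map $K(F)\rho_\sA C\to\rho_\sB\lambda_\sB K(F)\rho_\sA C$ of $(\lambda_\sB,\rho_\sB)$. Comparing with Definition \ref{dA.1}, applied to the functor $G$ and the complex $K(F)\rho_\sA C\in K(\sB)$, this is precisely the morphism whose invertibility says that $K(F)\rho_\sA C$ is $G$-acyclic. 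Hence \eqref{eqA.1} is a natural isomorphism if and only if $K(F)\rho_\sA C$ is $G$-acyclic for every $C\in D(\sA)$.

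It then remains to pass from ``$\rho_\sA C$, $C\in D(\sA)$'' to ``all homotopically injective complexes''. By Theorem \ref{tA.1}(a) the essential image of $\rho_\sA$ is the full subcategory of homotopically injective complexes, and any homotopically injective $I$ is homotopy equivalent to $\rho_\sA\lambda_\sA I$ (the unit $I\to\rho_\sA\lambda_\sA I$ is a quasi-isomorphism between homotopically injective complexes, hence a homotopy equivalence). Since $G$-acyclicity depends only on the homotopy type of a complex, the condition above is equivalent to: $K(F)(I)$ is $G$-acyclic for every homotopically injective $I$, i.e.\ to $F$ carrying homotopically injectives to $G$-acyclics. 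Finally, the ``in particular'' clause is immediate from Example \ref{exA.1}: if $G$ is exact then every object of $K(\sB)$ is $G$-acyclic, so the condition is automatically met. The only step needing a little care — and it is genuinely minor — is this last reduction, together with the remark that $G$-acyclicity is a homotopy-invariant notion; everything else is a direct matter of comparing the relevant unit maps, which is why the lemma is ``tautological''.
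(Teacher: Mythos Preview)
Your argument is correct and is precisely the unwinding that the paper has in mind: the paper gives no proof at all (the lemma carries a \qed in its statement and is prefaced by ``The following lemma is made tautological by Definition \ref{dA.1}''), so your explicit comparison of the unit maps and the small remark on homotopy-invariance of $G$-acyclicity simply spell out that tautology.
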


 Here is a first application:

\begin{prop}\label{pA.3} Assume $\sC=\sA$, $F$ right adjoint to $G$, and $G$ exact. Then $RF$ is right adjoint to $RG=D(G)$.
\end{prop}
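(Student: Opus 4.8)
The plan is to verify the adjunction directly on $\Hom$-groups, using the homotopically injective complexes furnished by Theorem \ref{tA.1}. Write $G : \sB \to \sA$ (recall $\sC = \sA$) and $F : \sA \to \sB$, with $G \dashv F$. Since $G$ is exact, every complex is $G$-acyclic (Example \ref{exA.1}), so $RG = D(G)$ and moreover $D(G) \circ \lambda_\sB \cong \lambda_\sA \circ K(G)$; on the other side, $RFY = \lambda_\sB K(F) \rho_\sA Y$ for $Y \in D(\sA)$ by Theorem \ref{tA.1} b). Thus, for $X \in D(\sB)$ and $Y \in D(\sA)$, I must produce a natural isomorphism
\[\Hom_{D(\sA)}(D(G)X,\, Y) \;\cong\; \Hom_{D(\sB)}(X,\, RFY).\]

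The key observation is that $K(F)$ carries homotopically injective complexes over $\sA$ to homotopically injective complexes over $\sB$. Indeed, the additive adjunction $G \dashv F$ induces an adjunction $K(G) \dashv K(F)$ on homotopy categories (the unit and counit act componentwise, and the triangle identities descend). Recall that a complex $I$ is homotopically injective precisely when $\Hom_{K}(A, I) = 0$ for every acyclic complex $A$. Now if $J$ is homotopically injective over $\sA$ and $A \in K(\sB)$ is acyclic, then $K(G)A$ is acyclic because $G$ is exact, whence $\Hom_{K(\sB)}(A, K(F)J) \cong \Hom_{K(\sA)}(K(G)A, J) = 0$; so $K(F)J$ is homotopically injective. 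This is the only place where exactness of $G$ is used.

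Granting this, pick a complex $C \in K(\sB)$ with $\lambda_\sB C \cong X$ and put $J = \rho_\sA Y$, a homotopically injective complex with $\lambda_\sA J \cong Y$ (counit of $(\lambda_\sA,\rho_\sA)$). Then
\[\Hom_{D(\sA)}(D(G)X,\, Y) \;\cong\; \Hom_{D(\sA)}(\lambda_\sA K(G)C,\, \lambda_\sA J) \;\cong\; \Hom_{K(\sA)}(K(G)C,\, J),\]
where the last isomorphism uses the adjunction $(\lambda_\sA, \rho_\sA)$ together with the fact that the unit $J \to \rho_\sA \lambda_\sA J$ is invertible for homotopically injective $J$ ($\lambda_\sA$ being a localization, its right adjoint is fully faithful). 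By $K(G) \dashv K(F)$ this equals $\Hom_{K(\sB)}(C,\, K(F)J)$; and since $K(F)J$ is homotopically injective, the same argument run over $\sB$ rewrites it as $\Hom_{D(\sB)}(\lambda_\sB C,\, \lambda_\sB K(F)J) = \Hom_{D(\sB)}(X,\, RFY)$. Composing these yields the desired isomorphism.

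It then remains only to check naturality in $X$ and $Y$ and independence of the choice of $C$; this is routine, the single point being that $\lambda_\sA$ (resp. $\lambda_\sB$) induces a bijection on morphisms with homotopically injective target, so that the chain of identifications is functorial. I do not expect any real obstacle beyond this bookkeeping — the substance of the statement is entirely the preservation of homotopical injectivity by $K(F)$ recorded in the second paragraph, which is where the hypothesis ``$G$ exact'' intervenes.
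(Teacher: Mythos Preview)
Your argument is correct: the adjunction $K(G)\dashv K(F)$ together with the fact that $K(F)$ preserves homotopically injective complexes (because $K(G)$ preserves acyclics) is exactly the mechanism, and your chain of identifications is sound. The paper does not give an independent proof here; it simply cites \cite[Theorem~14.4.5]{ks}, and what you have written is essentially the specialization of that theorem to the present situation, so your approach and the cited reference amount to the same thing.
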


\begin{proof} This is a special case of \cite[Theorem~14.4.5]{ks}.
\end{proof}

We come back to the general situation. Suppose that $F$ carries
injectives of $\sA$ to $G$-acyclics. Then \cite[Theorem~A.9.1]{kmsy}
implies that \eqref{eqA.1} is an isomorphism when restricted to
$D^+(\sA)$ (\cite[\S 2, Proposition~3.1]{CD}, \cite[Theorem~13.3.7 and Proposition~13.3.13]{ks}). This is not true on $D(\sA)$ in general, as pointed
out by Ayoub and Riou:

\begin{ex} \label{exA.2}Let  $\sB=\Mod\Z[\Z/2]$, $\sA=\sB^\N$ and $\sC=\Ab$; let $F=\bigoplus_\N$ and $G=H^0(\Z/2,-)$. The above hypotheses are verified: since $\Z[\Z/2]$ is Noetherian, a direct sum of injectives is injective. Let $M= (\Z/2[n])_{n\in \N}\in D(\sA)$. We claim that the map
\begin{equation}\label{eqA.2}
R(GF)(M)\to RG\circ RF(M)
\end{equation}
is not an isomorphism. Indeed, $GF = F'G'$ where $G':\sA\to\Ab^\N$
is $H^0(\Z/2,-)$ and $F':\Ab^\N\to \Ab$ is $\bigoplus_\N$. Let
$C=RG(\Z/2)$, so that $H^q(C)=\Z/2$ for $q\ge 0$ and $H^q(C)=0$ for
$q<0$. Then, by Lemma \ref{lA.2}:
\[R(GF)(M) = R(F'G')(M)=RF'\circ RG'(M) = \bigoplus_{n\in \N} C[n].\]

On the other hand,
\[RG\circ RF(M) = RG(\bigoplus_{n\in \N} \Z/2[n]).\]

But, in $D(\sB)$, we have $\bigoplus_{n\in \N} \Z/2[n]\iso
\prod_{n\in \N} \Z/2[n]$, and $RG$ commutes with products as a right
adjoint. Hence
\[RG\circ RF(M)=\prod_{n\in \N} C[n].\]

For $q\in \Z$, we have $H^q(\bigoplus\limits_{n\in \N}
C[n])=\bigoplus\limits_{q+n\ge 0} \Z/2$ and $H^q(\prod\limits_{n\in
\N} C[n])=\prod\limits_{q+n\ge 0} \Z/2$.
\end{ex}

However, we have the following lemma of Ayoub:

\begin{lemma}\label{layoub} Suppose that $F$ carries injectives to $G$-acyclics and that $RF,RG$ and $R(GF)$ are strongly additive. Then \eqref{eqA.1} is an isomorphism.
\end{lemma}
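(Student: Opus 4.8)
The strategy is to use the natural transformation \eqref{eqA.1} $R(GF) \Rightarrow RG \circ RF$ and check that it is an isomorphism on a set of generators, then propagate via strong additivity and exactness. Since we already know (from the paragraph preceding Lemma \ref{layoub}, i.e. the restriction to $D^+(\sA)$ of \eqref{eqA.1} being an isomorphism under the hypothesis that $F$ carries injectives to $G$-acyclics) that \eqref{eqA.2}-type maps are isomorphisms on bounded-below complexes, the plan is: first, observe that for any \emph{single} object $A \in \sA$, placed in degree $0$, the complex $A[0]$ lies in $D^+(\sA)$, so the map $R(GF)(A[0]) \to RG\,RF(A[0])$ is an isomorphism. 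More generally, for any bounded-below complex the map is an isomorphism. Then I would argue that an arbitrary object $K \in D(\sA)$ can be obtained from its (stupid or canonical) truncations $\sigma^{\le n}K$, and these truncations are bounded above; better, I would use that $K$ is a homotopy colimit of its bounded-below truncations $\tau^{\ge -n}K$ (or a suitable filtered/coproduct presentation), so that $K \simeq \hocolim_n \tau^{\ge n} K$ is built as a cone on a map between coproducts $\bigoplus_n \tau^{\ge n}K \to \bigoplus_n \tau^{\ge n}K$.

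The key step: apply the three strongly additive functors $R(GF)$, $RG$, $RF$ to this coproduct presentation. Strong additivity means each of these commutes with the relevant representable coproducts, so each sends $\bigoplus_n \tau^{\ge n}K$ to $\bigoplus_n$ of the values. Since \eqref{eqA.1} is an isomorphism on each bounded-below $\tau^{\ge n}K$, and since \eqref{eqA.1} is a natural transformation compatible with coproducts (both source and target being strongly additive, the comparison map on a coproduct is the coproduct of the comparison maps), we conclude that \eqref{eqA.1} is an isomorphism on $\bigoplus_n \tau^{\ge n}K$. Finally, \eqref{eqA.1} is a morphism of triangulated functors, hence compatible with the distinguished triangle defining $\hocolim$; by the five lemma applied to the long exact sequences (or just the triangle map) it is an isomorphism on $K = \hocolim_n \tau^{\ge n}K$ as well.

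I would carry out the steps in this order: (1) recall the $D^+$-case from the cited results (\cite[Theorem~A.9.1]{kmsy}, \cite[\S 2, Proposition~3.1]{CD}, \cite[Theorem~13.3.7]{ks}); (2) choose the hocolim-of-truncations presentation of an arbitrary $K \in D(\sA)$, writing it as the cone of $1 - \text{shift}$ on $\bigoplus_n \tau^{\ge -n}K$; (3) invoke strong additivity of all three derived functors to identify their values on this coproduct with the coproduct of their values on the (bounded-below) truncations; (4) invoke naturality of \eqref{eqA.1} with respect to coproducts and distinguished triangles to conclude. The main obstacle I anticipate is a bookkeeping issue rather than a conceptual one: one must make sure the comparison map \eqref{eqA.1} genuinely respects the strongly-additive structure — that is, that for a representable coproduct the map $R(GF)(\bigoplus) \to RG\,RF(\bigoplus)$ is identified with $\bigoplus$ of the individual comparison maps — which requires checking that the isomorphisms expressing strong additivity of $R(GF)$, $RG$, and $RF$ are compatible with the unit maps of the relevant adjunctions $(\lambda,\rho)$ defining the derived functors. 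This is where Example \ref{exA.2} shows the subtlety lives: without strong additivity of all three functors (in Ayoub–Riou's example $R(GF)$ is strongly additive but $RG\,RF$ sends the coproduct to a product) the argument collapses, so the verification that the hypothesis is being used in exactly the right place is the delicate point.
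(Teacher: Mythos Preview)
Your proposal is correct and follows essentially the same approach as the paper's proof (attributed to Ayoub): write an arbitrary complex as a homotopy colimit of its bounded-below truncations, use strong additivity of all three derived functors to reduce to the bounded-below case, and invoke the known $D^+$ result. The only cosmetic difference is that the paper uses the \emph{stupid} truncations $\sigma_{\ge n}M$ computed in $K(\sA)$ (where the hocolim identity is obvious termwise) and then passes to $D(\sA)$ via the strong additivity of $\lambda_\sA$, whereas you use canonical truncations $\tau^{\ge -n}K$ directly in $D(\sA)$; either works.
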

(In example \ref{exA.2}, $RG$ is not strongly additive.)

\begin{proof}[Proof (Ayoub).] Let $M\in D(\sA)$. We have to show that \eqref{eqA.2} is an isomorphism. Viewing $M$ as an object of $K(\sA)$, we have an isomorphism
\[\hocolim_n \sigma_{\ge n} M\iso M\]
where $\sigma_{\ge n}$ is the stupid truncation. This isomorphism
still holds in $D(\sA)$, because $\lambda_\sA$ is strongly additive.
By the hypothesis, this reduces us to the case where $M\in
D^+(\sA)$, and therefore to Grothendieck's theorem (\emph{cf.} Theorem
\ref{tA.1} c)).
\end{proof}

Let $F:\sA\to \sB$ be a left exact functor between Grothendieck
categories. In view of Lemma \ref{layoub}, we need a practical
sufficient condition to ensure that $RF$ is strongly additive. The
following ones are adapted to the context of this paper:

\begin{prop} \label{pA.1}  
\leavevmode
\begin{itemize}
\item[{\rm a)}] If $F$ is strongly additive and exact, $RF=D(F)$ is strongly additive.
\item[{\rm b)}] Suppose that
\begin{thlist}
\item For any $p\ge 0$, $R^p F$ is strongly additive.
\item There exists a set $\sE$ of compact projective  generators of $\sB$ such that, for any $E\in \sE$, there is an integer $cd_F(E)$ such that
\[\sB(E,R^p F(A)) = 0 \text{ for } p> cd_F(E) \text{ and for all } A\in \sA.\]
\end{thlist}
Then $RF$ is strongly additive.
\item[{\rm c)}] Suppose that $RF$ admits a left adjoint $G$ which sends a
set $(E_\alpha)$ of compact generators of $D(\sB)$ to compact
objects of $D(\sA)$. Then $RF$ is strongly additive.
\end{itemize}	
\end{prop}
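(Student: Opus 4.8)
The plan is to treat the three parts separately, since a) and c) are purely formal and b) carries all the content.

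For a): I would first recall that $\sA$ and $\sB$, being Grothendieck, satisfy AB5, so the termwise direct sum of a family of complexes computes their coproduct both in $D(\sA)$ and in $D(\sB)$. As $F$ is exact, $RF=D(F)$ merely applies $F$ degreewise, and the strong additivity of $F$ gives $D(F)\big(\bigoplus_\alpha K_\alpha\big)=\bigoplus_\alpha D(F)(K_\alpha)$ already at the level of complexes, with comparison map the identity; hence $RF$ is strongly additive.

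For c): given a family $(K_\beta)$ in $D(\sA)$, I would check that the canonical morphism $\bigoplus_\beta RF(K_\beta)\to RF\big(\bigoplus_\beta K_\beta\big)$ in $D(\sB)$ is invertible by applying to it the functors $\Hom_{D(\sB)}(E_\alpha[n],-)$ for all $\alpha$ and all $n\in\Z$ — legitimate because $(E_\alpha[n])_{\alpha,n}$ is again a set of compact generators of $D(\sB)$, so a morphism whose cone is annihilated by all of these is an isomorphism. Compactness of $E_\alpha$ identifies the left-hand side with $\bigoplus_\beta\Hom_{D(\sB)}(E_\alpha[n],RF(K_\beta))$, which by the adjunction $(G,RF)$ (here $G$, being left adjoint to the triangulated functor $RF$, is itself triangulated) equals $\bigoplus_\beta\Hom_{D(\sA)}(G(E_\alpha)[n],K_\beta)$; on the right-hand side the same adjunction together with the assumed compactness of $G(E_\alpha)$ in $D(\sA)$ gives $\Hom_{D(\sA)}(G(E_\alpha)[n],\bigoplus_\beta K_\beta)=\bigoplus_\beta\Hom_{D(\sA)}(G(E_\alpha)[n],K_\beta)$. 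Naturality makes these identifications compatible with the comparison map, which is therefore an isomorphism.

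For b): I would fix $E\in\sE$ and set $\Phi_E:=\sB(E,-)\circ F:\sA\to\Ab$, a left exact functor. Since $E$ is projective, $\sB(E,-)$ is exact, so $R^p\Phi_E=\sB(E,-)\circ R^pF$; since $E$ is compact, $\sB(E,-)$ preserves coproducts, so, using (i), each $R^p\Phi_E$ is strongly additive, while (ii) yields $R^p\Phi_E=0$ for $p>d:=cd_F(E)$, i.e. $R\Phi_E$ has cohomological amplitude in $[0,d]$. Applying Lemma \ref{lA.2} to $F$ followed by the exact functor $\sB(E,-)$, and using that projectivity of $E$ identifies $\Hom_{D(\sB)}(E[n],-)$ with $H^n\circ D(\sB(E,-))$ on $D(\sB)$, I obtain a natural isomorphism $\Hom_{D(\sB)}(E[n],RF(K))\cong H^n(R\Phi_E(K))$ for $K\in D(\sA)$. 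As $(E[n])_{E\in\sE,\,n\in\Z}$ is a set of compact generators of $D(\sB)$, it then suffices to show that $\bigoplus_\alpha H^n(R\Phi_E(K_\alpha))\to H^n\big(R\Phi_E(\bigoplus_\alpha K_\alpha)\big)$ is an isomorphism for every such $E,n$. Because $R\Phi_E$ has amplitude $[0,d]$, the group $H^n(R\Phi_E(K))$ depends — through the canonical comparison maps — only on the bounded truncation $\tau_{[n-d,n]}K$; since canonical truncations commute with coproducts (coproducts being exact in $\sA$), this reduces the claim to families of complexes concentrated uniformly in degrees $[n-d,n]$. For such a family the hypercohomology spectral sequence $E_2^{p,q}=R^p\Phi_E(H^qK)\Rightarrow H^{p+q}(R\Phi_E(K))$ is finite, hence convergent; its $E_2$-page commutes with coproducts by the strong additivity of the $R^p\Phi_E$ and the exactness of coproducts in $\sA$ and in $\Ab$; and comparing the two spectral sequences on abutments gives the desired isomorphism.

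The only genuine difficulty is that last paragraph: establishing the identification $\Hom_{D(\sB)}(E[n],RF(-))\cong H^n(R\Phi_E(-))$ via Lemma \ref{lA.2}, and then the amplitude/truncation reduction from unbounded to uniformly bounded complexes, after which the finite hypercohomology spectral sequence finishes the job. Parts a) and c) are routine manipulations with coproducts, adjunctions, and compact generators.
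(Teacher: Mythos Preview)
Your proof is correct and follows essentially the paper's approach. Parts a) and c) are identical in spirit to the paper's. For b), the paper is marginally more direct: instead of introducing the auxiliary functor $\Phi_E=\sB(E,-)\circ F$, it uses projectivity of $E$ to write $D(\sB)(E[n],D)\simeq \sB(E,H^n(D))$ immediately, and then asserts that (ii) makes the hypercohomology spectral sequence $\sB(E,R^pFH^q(C))\Rightarrow \sB(E,H^{p+q}(RF\,C))$ converge for \emph{arbitrary} $C\in D(\sA)$, after which the $E_2$-page comparison via (i) and compactness of $E$ finishes. Your truncation reduction to uniformly bounded complexes is precisely a justification of this convergence assertion, so the arguments have the same content; yours is just a bit more explicit about why the unbounded case works.
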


\begin{proof} a) The strong additivity of $F$ easily implies that of $K(F)$, which in turn implies that of $D(F)$ since $\lambda_\sB$ is strongly additive as a left adjoint.

b) Let $(C_i)_{i\in I}\in D(\sA)^I$. We must show that the map
\begin{equation}\label{eqA.3}
\bigoplus_{i\in I} RF(C_i)\to RF(\bigoplus_{i\in I} C_i)
\end{equation}
is an isomorphism. Since the $E[n]$, $E\in \sE$, $n\in \Z$, are a
set of generators of $D(\sB)$, it suffices to check this after
applying $D(\sB)(E[n],-)$ for all $(E,n)$. Since $E$ is projective,
we have an isomorphism
\[D(\sB)(E[n],D)\simeq \sB(E,H^n(D))\]
for any $D\in D(\sB)$; since $E$ is compact in $\sB$, this formula
shows that $E[n]$ is compact in $D(\sB)$. Therefore we must show
that the homomorphisms
\[\bigoplus_{i\in I} \sB(E,H^n(RF C_i))\to \sB(E, H^n(RF \bigoplus_{i\in I} C_i)).\]
are bijective. By (ii), the spectral sequence
\[\sB(E,R^pFH^q(C))\Rightarrow \sB(E,H^{p+q}(RFC))\]
converges for any $C\in D(\sA)$. Thus it suffices to show that the
homomorphisms
\[\bigoplus_{i\in I} \sB(E,R^pFH^q(C_i))\to \sB(E,R^pFH^q(\bigoplus_{i\in I} C_i))\]
are bijective. By (i), this follows from the compactness of $E$.

c) Keep the notation of b). We may test \eqref{eqA.3} on the
$E_\alpha$'s. By their compactness, we must show that the
composition
$$
\bigoplus_{i\in I} D(\sB)(E_\alpha, RF(C_i)[q])\to D(\sB)(E_\alpha,\bigoplus_{i\in I} RF(C_i)[q])
\to D(\sB)(E_\alpha,RF(\bigoplus_{i\in I} C_i)[q])
$$
is an isomorphism for all $q\in\Z$. By adjunction, it is transformed
into
\[\bigoplus_{i\in I} D(\sA)(G(E_\alpha), C_i[q])\to D(\sA)(G(E_\alpha),\bigoplus_{i\in I} C_i[q])\]
which is an isomorphism since the $G(E_\alpha)$ are compact.
\end{proof}

Finally, we need a practical sufficient condition to ensure that, in
Condition (i) of Proposition \ref{pA.1} b), the case $p=0$ implies
the cases $p>0$. This is given by the classical

\begin{lemma}\label{lA.1} Suppose that $F$ is strongly additive and that, in $\sA$, infinite direct sums of injectives are $F$-acyclic. Then $R^pF$ is strongly additive for any $p>0$.
\end{lemma}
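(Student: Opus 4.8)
The plan is to prove the statement by the classical dimension-shifting argument, inducting on $p\ge 1$. Fix a family $(A_i)_{i\in I}$ of objects of $\sA$; the goal is to show that the canonical comparison map
\[\theta_p\colon \bigoplus_{i\in I} R^pF(A_i)\longrightarrow R^pF\Big(\bigoplus_{i\in I} A_i\Big),\]
induced by applying $R^pF$ to the structure morphisms $A_j\to \bigoplus_i A_i$, is an isomorphism. Throughout I use that in a Grothendieck category filtered colimits, and in particular arbitrary direct sums, are exact, so $\bigoplus_i$ is an exact functor both on $\sA$ and on $\sB$. Since $\sA$ has enough injectives, choose for each $i$ a monomorphism $A_i\hookrightarrow J_i$ with $J_i$ injective and let $C_i$ be its cokernel; applying the exact functor $\bigoplus_i$ to the short exact sequences $0\to A_i\to J_i\to C_i\to 0$ yields a short exact sequence $0\to \bigoplus_i A_i\to \bigoplus_i J_i\to \bigoplus_i C_i\to 0$ in $\sA$ whose middle term is a direct sum of injectives, hence $F$-acyclic by hypothesis (so $R^qF(\bigoplus_i J_i)=0$ for $q>0$); each single $J_i$ is likewise $F$-acyclic, being injective.

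For the base case $p=1$, the long exact sequence for $R^\bullet F$ attached to $0\to A_i\to J_i\to C_i\to 0$ collapses to a four-term exact sequence $0\to F(A_i)\to F(J_i)\to F(C_i)\to R^1F(A_i)\to 0$, and likewise for the direct-summed sequence (using $R^1F(\bigoplus_i J_i)=0$). Now take $\bigoplus_i$ of the first family of four-term sequences (exact, since $\bigoplus_i$ is exact) and identify $\bigoplus_i F(-)$ with $F(\bigoplus_i -)$ on $A_i$, $J_i$, $C_i$ via the strong additivity of $F$; naturality of the strong-additivity isomorphism makes these identifications compatible with the maps $F(\bigoplus_i A_i)\to F(\bigoplus_i J_i)\to F(\bigoplus_i C_i)$. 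One thus gets a morphism of four-term exact sequences which is an isomorphism on the first three terms and equal to $\theta_1$ on the last, so $\theta_1$ is an isomorphism. For the inductive step ($p\ge 2$, assuming $R^{p-1}F$ strongly additive), the connecting maps $\delta\colon R^{p-1}F(\bigoplus_i C_i)\to R^pF(\bigoplus_i A_i)$ and $\delta_i\colon R^{p-1}F(C_i)\to R^pF(A_i)$ are isomorphisms, because the intervening values of $R^\bullet F$ on the ($F$-acyclic) $J$'s, all in degrees $\ge 1$, vanish. Naturality of the connecting homomorphism for the morphism of short exact sequences induced by $A_j\hookrightarrow \bigoplus_i A_i$ gives a commuting square relating $\delta_j$ and $\delta$; summing over $j$ yields $\delta\circ\theta_{p-1}=\theta_p\circ\big(\bigoplus_i\delta_i\big)$, and since $\delta$, $\theta_{p-1}$ and $\bigoplus_i\delta_i$ are isomorphisms, so is $\theta_p$. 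This closes the induction.

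The argument is routine, so the main obstacle is purely bookkeeping of naturality: one must verify that the abstract isomorphisms supplied by the long exact sequences are precisely the canonical maps $\theta_p$, so that strong additivity — a statement about a specific morphism — really follows, rather than a mere abstract isomorphism of objects. A minor preliminary point is to note that ``$F$-acyclic'' in the sense of Definition \ref{dA.1}, applied to a single object, agrees with the classical condition $R^qF(-)=0$ for $q>0$; this holds because any single object lies in $D^+(\sA)$, on which $RF$ restricts to $R^+F$ by Theorem \ref{tA.1}\,c).
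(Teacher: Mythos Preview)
Your proof is correct and is precisely the d\'ecalage (dimension-shifting) argument that the paper's one-word proof ``D\'ecalage'' alludes to. You have simply spelled out in detail what the paper leaves implicit.
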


\begin{proof} D\'ecalage.
\end{proof}

\end{document}